\newtheorem{thm}{Theorem}[section]
\newtheorem{lem}[thm]{Lemma}
\newtheorem{prop}[thm]{Proposition}
\newtheorem{cor}[thm]{Corollary}
\newtheorem{Maintheorem}{Theorem}
\theoremstyle{definition}
\newtheorem{de}[thm]{Definition}
\theoremstyle{remark}
\newtheorem{rem}[thm]{Remark}
\numberwithin{equation}{section}
\def \N {\mathbb{N}}
\def \C {\mathcal C}
\def \Z {\mathbb{Z}}
\def \R {\mathbb{R}}
\def \O {\mathcal{O}}
\def \F {\mathcal F}
\def \G {\mathcal{G}}
\def \B {\mathcal B}
\def \E {\mathbb E}
\def \H {\mathcal H}
\def \K {\mathcal K}
\def \W {\mathcal W}
\def \X {\mathcal{X}}
\def \Z {\mathcal Z}
\def \Y {\mathcal{Y}}
\def \J {\mathcal{J}}
\def \RP {{\bf RP}}
\def \ep {\epsilon}
\def \ol {\overline}
\def \ra {\rightarrow}
\def \RP {\textbf{RP}}
\def \x {\textbf{x}}
\def \0 {\bf 0}
\def \ZZ {\mathbb Z}
\begin{document}
	\title{CF-Nil systems and convergence of two-dimensional ergodic averages}
	
	\author{Kangbo Ouyang, Qinqi Wu}

	\address{Department of Mathematics, University of Science and Technology of China, Hefei, Anhui, 230026, P.R. China}
	
	\email{oy19981231@mail.ustc.edu.cn}

	\address{School of Mathematics, Shanghai university of Finance and Economics, Shanghai, 200433, P.R. China}
	\email{wuqinqi@mail.shufe.edu.cn}

	\subjclass[2020]{Primary: 37A05, 37B05.}
	\keywords{ergodic average, measurable pronilfactor, topological pronilfactor, model theory.}

	
	\date{\today}
	\begin{abstract}
		A topological dynamical system $(X,T)$ is called CF-Nil($k$) if it is strictly ergodic and the maximal measurable and maximal topological $k$-step pro-nilfactors coincide as measure preserving systems. Through constructing specific ``CF-Nil'' models,
		we prove that for any ergodic system $(X,\mathcal{X},\mu,T)$,
		any nilsequence $\{\psi(m,n)\}_{m,n\in\mathbb{Z}}$ and any $f_1,\dots,f_d\in L^{\infty}(\mu)$, the averages
		\begin{equation*}
			\dfrac{1}{N^{2}} \sum_{m,n=0}^{N-1} \psi(m,n)\prod_{j=1}^{d}f_{j}(T^{{m+jn}}x)
		\end{equation*}
		converge pointwise as $N$ goes to infinity.  Moreover, we show the $L^2$-convergence of a certain two-dimensional averages for non-commuting transformations without zero entropy condition.
	\end{abstract}

	\maketitle
	\section{Introduction}
Throughout this paper, by a {\it topological dynamical system} (t.d.s. for short) we mean a pair $(X,T)$, where $X$ is a compact metric space and $T$ is a homeomorphism from X onto itself. A {\it measure preserving system} (m.p.s. for short) is a quadruple
$(X,\X,\mu,T)$, where $(X,\X,\mu)$ is a Lebesgue probability space and $T: X \ra X$ is an invertible measure preserving transformation.

\medskip

In recent years, there has been growing interest in studying pro-nilfactors for both m.p.s. and t.d.s.
For a m.p.s. $(X,\X ,\mu, T)$, the maximal measurable pro-nilfactors appear in connection with the characteristic factor of the averages
\begin{equation}\label{average}
	\dfrac{1}{N}\sum_{n=0}^{N-1}f_1(T^n x)\dots f_k(T^{kn}x)
\end{equation}
in $L^2$ norm \cite{HK05,Z}.
In their seminal work, Host and Kra \cite{HK05} introduced ergodic seminorms to construct the characteristic factor for the averages (\ref{average}), which is an inverse limit of nilsystems.
In the theory of t.d.s. maximal topological pro-nilfactors appear in connection with the higher order regionally proximal relations, see \cite{HKM, SY}.

\medskip

Consider a strictly ergodic system $(X, \mu, T)$, which is a minimal t.d.s. with a unique invariant measure $\mu$. For each integer $k \in \mathbb{N}$, we denote the maximal $k$-step measurable pro-nilfactor of $(X, T)$ as $(Z_k(X), \mathcal{Z}_k(X),\mu_k, T)$, and the maximal $k$-step topological pro-nilfactor as $(W_k(X), T)$. It follows directly that $(W_k(X), T)$, equipped with its Borel $\sigma$-algebra $\mathcal{W}_k(X)$, admits a unique invariant measure $\omega_k$. A t.d.s. $(X,T)$ is said to be ``Continuously Factor on a $k$-step pro-Nilsystem which is isomorphic to $(Z_k(X), \mathcal{Z}_k(X), \mu_k, T)$'' ({\it CF-Nil(k)} for short) if it is strictly ergodic and $(W_k(X),\mathcal{W}_k(X), \omega_k, T)$ is isomorphic to $(Z_k(X), \mathcal{Z}_k(X), \mu_k, T)$ as a measure preserving system. 
In \cite{GL}, Gutman and Lian studied such special systems and gave two characterizations.
In this paper we call a t.d.s. $(X,T)$ as a {\it CF-Nil($\infty$) system} if it satisfies the CF-Nil($k$) condition for all $k \in \mathbb{N}$. For the definition of  CF-Nil($\infty$) for $\ZZ^2$-action, see Section \ref{sec2.7}.

\medskip
Let $(X,\X,\mu,T)$ be an ergodic m.p.s. We say that $(\hat{X},\hat{T})$ is a {\it topological model} for $(X,\X,\mu,T)$ if $(\hat{X},\hat{T})$ is a t.d.s. and there exists an invariant probability measure $\hat{\mu}$ on the Borel $\sigma$-algebra $\B(\hat{X})$ such that the systems $(X,\X,\mu,T)$ and $(\hat{X},\B(\hat{X}),\hat{\mu},\hat{T})$ are isomorphic as m.p.s. The well-known Jewett-Krieger Theorem \cite{J,K} states that every ergodic system has a strictly ergodic model. Weiss \cite{W} extended Jewett-Krieger theorem to the relative cases. 
Moreover Weiss \cite{W} showed  that Jewett-Krieger
theorem can be generalized from $\ZZ$-actions to commutative group actions. 

\medskip
Let $(X,T)$ be a t.d.s. and an $x\in X$, and we let $\G_d(T)$ be the group generated by
$$\tau_d(T):=T\times T^2\times\dots\times T^d\text{ and }
T^{(d)}:=T\times\dots\times T(d\text{-times}).$$
In the case $(X,T)$ is minimal, $(x,\dots,x)$ generates the same orbit under the action $\G_d$(T) for all $x\in X$, and we denote this closure by $N_d(X,T)$. For minimal $(X,T)$, Glasner \cite{Glasner22} proved that $(N_d(X,T), \G_d(T))$ is minimal. It is also known that the minimality of $N_d(X)$ implies van der Waerden’s theorem \cite[Theorem 1.56]{vdw}.

\medskip
Recently, several results have been obtained on the limiting behavior of the averages along cubes \cite{Assani,CF,FH,HK05}.
In \cite{HSY19}, by showing that every ergodic system has a topological model $(\hat{X},\hat{T})$ such that $(N_d(\hat{X},\hat{T}),\G_d(\hat{T})$ is strictly ergodic,
Huang, Shao and Ye proved that the averages \begin{equation}\label{HSY}
	\dfrac{1}{N^{2}}\sum_{m,n=0}^{N-1} \prod_{j=1}^d f_{j}(T^{m+nj}x)
\end{equation}
converge pointwise as $N$ goes to infinity. The first result of this article is
\medskip

\begin{Maintheorem}\label{thm-A}
	Let $d\geq 1$ and $(X,\X,\mu,T)$ be an ergodic m.p.s. Then it has a strictly ergodic model $(\hat{X},\hat{T})$ such that
	$(N_d(\hat{X},\hat{T}),\G_d(\hat{T}))$ is CF-Nil($\infty$).
\end{Maintheorem}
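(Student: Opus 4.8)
The plan is to build the desired model in two stages. First, I would take any ergodic $(X,\mathcal X,\mu,T)$ and, using the relative Jewett--Krieger machinery of Weiss together with the techniques of Huang--Shao--Ye \cite{HSY19}, produce a strictly ergodic topological model $(\hat X,\hat T)$ for which the cubespace $(N_d(\hat X,\hat T),\mathcal G_d(\hat T))$ is strictly ergodic. This is essentially the input theorem from \cite{HSY19}. The new content is to upgrade ``strictly ergodic'' to ``CF-Nil($\infty$)'' for the system $(N_d(\hat X,\hat T),\mathcal G_d(\hat T))$, i.e. to arrange that for every $k$ the maximal topological $k$-step pro-nilfactor $W_k$ of $N_d$ carries a unique invariant measure that is measure-theoretically isomorphic to the maximal measurable $k$-step pro-nilfactor $Z_k$ of $N_d$. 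Since $N_d$ is a $\mathcal G_d(\hat T)$-action (a $\mathbb Z^2$-type action generated by $\tau_d$ and the diagonal), I would use the $\mathbb Z^2$ version of CF-Nil($\infty$) referred to in Section \ref{sec2.7}.

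The key steps, in order, are as follows. Step 1: recall from \cite{HSY19} that one may choose the model $\hat X$ so that $N_d(\hat X,\hat T)$ is strictly ergodic; this already gives a unique invariant measure $\nu$ on $N_d$, hence unique invariant measures $\omega_k$ on each topological pro-nilfactor $W_k(N_d)$. Step 2: invoke the characterizations of CF-Nil systems due to Gutman--Lian \cite{GL} — a strictly ergodic system is CF-Nil($k$) precisely when the factor map onto $W_k$ is measure-theoretically the same as the factor map onto $Z_k$; equivalently the ``gap'' between topological and measurable pro-nilfactors is null. Step 3: the main construction — iterate a model-theoretic argument, building a tower of models, so that at each step $k$ the topological pro-nilfactor and measurable pro-nilfactor of $N_d$ are forced to agree. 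The natural mechanism is: the measurable pro-nilfactor $Z_k(N_d)$ is an inverse limit of nilsystems, which are uniquely ergodic and have canonical topological models (nilsystems are already topological), so one replaces $\hat X$ by a further model in which $Z_k(N_d)$ is realized continuously and coincides with $W_k(N_d)$; one then must check that passing to this new model does not destroy strict ergodicity of $N_d$ nor the coincidences already achieved at lower levels, and finally take an inverse limit over $k$ to get CF-Nil($\infty$). Step 4: verify the $\mathbb Z^2$-action bookkeeping — that $\mathcal G_d(\hat T)$ acting on $N_d$ fits the CF-Nil($\infty$) framework of Section \ref{sec2.7}, in particular that the relevant pro-nilfactors are taken with respect to the full group action and behave well under the models constructed.

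I expect the main obstacle to be Step 3: simultaneously controlling the topological and the measurable pro-nilfactors of $N_d(\hat X,\hat T)$ under a change of model. The difficulty is that the cubespace $N_d$ depends in a complicated, non-functorial way on the chosen model of $X$, so when one refines $\hat X$ to absorb $Z_k(N_d)$ continuously, the new $N_d$ changes, and one must re-verify strict ergodicity and the coincidence at levels $\le k$. Controlling this requires either a careful relative Jewett--Krieger construction that is compatible with the cube structure (so that refining the base does not blow up $N_d$ in an uncontrolled way), or an inductive scheme where each refinement is ``minimal'' in an appropriate sense. A secondary subtlety is checking that the maximal measurable $k$-step pro-nilfactor of $(N_d,\nu,\mathcal G_d)$ — for the $\mathbb Z^2$ action — is indeed an inverse limit of nilsystems and hence admits a continuous model; this uses the Host--Kra structure theory \cite{HK05} adapted to the commuting-transformation setting, which is where the zero-entropy-free part of the abstract (the second main result) interacts with the argument.
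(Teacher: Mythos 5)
There is a genuine gap, and it sits exactly where you placed your Step 3. Everything before it (strict ergodicity of $N_d(\hat X,\hat T)$ from \cite{HSY19}, the Gutman--Lian reformulation of CF-Nil) is fine, but the actual content of the theorem --- forcing $Z_k(N_d)$ and $W_k(N_d)$ to coincide for every $k$ --- is only described as ``iterate a model-theoretic argument,'' and the obstruction you yourself identify (the cubespace $N_d$ is not functorial in the model: once you refine $\hat X$ so that $Z_k(N_d)$ is realized continuously, the space $N_d$ itself changes, and strict ergodicity and the level-$\le k$ coincidences must be re-established) is never resolved. There is no mechanism offered for producing a model of $X$ whose associated $N_d$ absorbs a prescribed measurable pro-nilfactor of the \emph{old} $N_d$; indeed a strictly ergodic model of $(N_d(\hat X),\mu^{(d)},\G_d)$ need not be of the form $N_d$ of any model of $X$, so the relative Jewett--Krieger theorem cannot simply be applied ``over $Z_k(N_d)$'' while staying inside the class of cubespaces. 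As written, the plan is circular at its key point rather than a proof.

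The paper avoids this difficulty entirely by never re-modelling with respect to $N_d$. One first builds, once and for all, a CF-Nil($\infty$) model of $X$ itself (Theorem \ref{CFNilinfty model}): take a strictly ergodic model $\hat Z_\infty$ of $Z_\infty(X)$ mapping onto strictly ergodic models $\hat Z_k$, and lift via Weiss's relative theorem, so that $\hat X\to\hat Z_k$ is a \emph{continuous} factor map for every $k$. Then one proves a heritability statement (Theorem \ref{main1}): if $(X,T)$ is CF-Nil($\infty$), then $(N_d(X),\G_d)$ is CF-Nil($\infty$), with the same $N_d$ --- no further change of model. Unique ergodicity of $N_d$ comes from Theorem \ref{Nd-s.e.} (continuity of $X\to Z_{d-2}$ holds for this model), the topological side is identified as $W_\infty(N_d(X))=N_d(W_\infty(X))$ (Theorem \ref{pronil Nd}, Corollary \ref{pronil N}), and the measurable side is controlled by the new seminorm estimate
\begin{equation*}
\interleave \textstyle\bigotimes_{j=1}^d f_j \interleave_{k,\mu^{(d)}}^{2^{k}} \;\lesssim_d\; \min_{1\le j\le d}\interleave f_j\interleave_{d+k+1},
\end{equation*}
proved via the two-dimensional cube/van der Corput estimates (Lemma \ref{inq lem}, Theorem \ref{leq d+3}, Theorem \ref{main dom}). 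This lets one replace each $f_j$ by $\E(f_j|\Z_\infty(X))=\E(f_j|\W_\infty(X))$ (using CF-Nil($\infty$) of the base) and conclude $\Z_\infty(N_d)\subset\W_\infty(N_d)$ directly. So the missing ingredient in your proposal is precisely this quantitative transfer of Host--Kra seminorm control from $X$ to the Furstenberg self-joining $\mu^{(d)}$, which substitutes for the unworkable iterative re-modelling of $N_d$. (A small side remark: Theorem \ref{thm-C} plays no role in the proof of Theorem \ref{thm-A}, contrary to the suggestion at the end of your proposal.)
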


\medskip

As an application of Theorem \ref{thm-A}, we extend the pointwise convergence (\ref{HSY}) to encompass cases where sequences are weighted by nilsequences. We state it as:

\medskip
\begin{Maintheorem}\label{thm-B}
	\it  Let $(X,\X,\mu,T)$ be an ergodic m.p.s. and $k,d\geq 1$. Then for any nilsequence $\{\psi(m,n):m,n\in\ZZ\}$, any function $f_{j}\in L^{\infty}(\mu), 1\leq j\leq d$, the averages
	\begin{equation}
		\dfrac{1}{N^{2}}\sum_{m,n=0}^{N-1} \psi(m,n)\prod_{j=1}^{d}f_{j}(T^{m+nj}x)
	\end{equation}
	converge for $\mu$-almost every $x\in X$ as $N$ goes to infinity.
\end{Maintheorem}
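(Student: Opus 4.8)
The plan is to absorb the nilsequence weight into the system $N_d(\hat X,\hat T)$ supplied by Theorem~\ref{thm-A}, so that the average in question becomes an ordinary Birkhoff average over a two-dimensional F\o lner box, and then to combine unique ergodicity (for continuous data) with the unweighted pointwise theorem~\eqref{HSY} (to handle the measurable part).

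First, using Theorem~\ref{thm-A}, I would replace $(X,\X,\mu,T)$ by a strictly ergodic model $(\hat X,\hat T)$ for which $N:=N_d(\hat X,\hat T)$, equipped with the commuting generators $S_1:=\hat T^{(d)}$ and $S_2:=\tau_d(\hat T)$ of the $\ZZ^{2}$-action $\G_d(\hat T)$, is CF-Nil($\infty$); let $\nu$ be its unique invariant measure. Since $S_1^{m}S_2^{n}(\hat x,\dots,\hat x)=(\hat T^{m+n}\hat x,\dots,\hat T^{m+dn}\hat x)$, writing $z_{\hat x}:=(\hat x,\dots,\hat x)$ and $F:=f_1\otimes\cdots\otimes f_d$ the average becomes $\frac1{N^{2}}\sum_{m,n=0}^{N-1}\psi(m,n)\,F\bigl(S_1^{m}S_2^{n}z_{\hat x}\bigr)$. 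A general nilsequence is a uniform limit of basic ones, and $\sup_{N}\frac1{N^{2}}\sum_{m,n}\prod_{j}\abs{f_j}(\hat T^{m+jn}\hat x)<\infty$ for a.e.\ $\hat x$ by~\eqref{HSY}, so it suffices to treat a single basic nilsequence in two variables; after realizing a polynomial orbit as a linear one on a larger nilmanifold and passing to an orbit closure, I may write $\psi(m,n)=\phi(a^{m}b^{n}y_{0})$ with $a,b\in G$ commuting, $\phi\in C(Y)$, and $(Y,\langle a,b\rangle)$ a minimal (hence uniquely ergodic) nilsystem. Setting $R_1:=S_1\times a$ and $R_2:=S_2\times b$ on $\widetilde N:=N\times Y$, the average equals
\[
\frac1{N^{2}}\sum_{m,n=0}^{N-1}(F\otimes\phi)\bigl(R_1^{m}R_2^{n}(z_{\hat x},y_{0})\bigr),
\]
the average of $F\otimes\phi$ over the box $[0,N)^{2}$ for the $\ZZ^{2}$-action generated by the commuting homeomorphisms $R_1,R_2$.

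The main step is to prove that the orbit closure $L_{\hat x}$ of $(z_{\hat x},y_{0})$ in $(\widetilde N,R_1,R_2)$ is \emph{uniquely ergodic}; granting this, $\frac1{N^{2}}\sum_{m,n}G\bigl(R_1^{m}R_2^{n}(z_{\hat x},y_{0})\bigr)$ converges for every $G\in C(\widetilde N)$ and every $\hat x$. Now $L_{\hat x}$ projects onto $N$ (since $z_{\hat x}$ generates $N$ under $\G_d(\hat T)$, by the definition of $N_d$) and onto $Y$, so any invariant measure on $L_{\hat x}$ is a joining of $\nu$ and the Haar measure $\eta$ of $Y$. Because $(N,\G_d(\hat T))$ is CF-Nil($\infty$), the canonical continuous factor map $p\colon N\to W_\infty(N)$ onto the maximal topological pro-nilfactor also realizes the maximal measurable pro-nilfactor of $(N,\nu)$; by the relative disjointness of an arbitrary system from pro-nilsystems over its maximal pro-nilfactor (a consequence of the Host--Kra structure theory), such a joining is relatively independent over $W_\infty(N)$, hence is determined by its projection to $W_\infty(N)\times Y$. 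But $W_\infty(N)\times Y$ is a pro-nilsystem, in particular distal, so the image of $L_{\hat x}$ under $p\times\mathrm{id}$ is the orbit closure of $\bigl(p(z_{\hat x}),y_0\bigr)$, a \emph{minimal} sub-nilmanifold, which carries a unique (Haar) invariant measure; hence the joining, and therefore $L_{\hat x}$, is uniquely ergodic. I expect this step --- pushing the measure-theoretic disjointness onto the topological orbit closure by means of the coincidence of the measurable and topological pro-nilfactors --- to be the main obstacle.

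It remains to drop the continuity assumption on $F$. Given $\varepsilon>0$ pick continuous $g_j$ with $\lVert g_j\rVert_\infty\le\lVert f_j\rVert_\infty$; telescoping $f_1\otimes\cdots\otimes f_d-g_1\otimes\cdots\otimes g_d$ and using $\abs{\psi}\le\lVert\phi\rVert_\infty$, the difference between the average above and the one with each $f_j$ replaced by $g_j$ is, in absolute value, at most
\[
\lVert\phi\rVert_\infty\sum_{j=1}^{d}\frac1{N^{2}}\sum_{m,n=0}^{N-1}\Bigl(\prod_{i<j}\abs{f_i}\Bigr)(\hat T^{m+in}\hat x)\;\abs{f_j-g_j}(\hat T^{m+jn}\hat x)\;\Bigl(\prod_{i>j}\abs{g_i}\Bigr)(\hat T^{m+in}\hat x).
\]
Up to the constant $\lVert\phi\rVert_\infty$, each of these $d$ summands is an error term of exactly the kind controlled in the proof of~\eqref{HSY}, so choosing the $g_j$ close enough to the $f_j$ in $L^{2}(\hat\mu)$ makes its $\limsup$ over $N$ smaller than $\varepsilon$ for $\hat\mu$-a.e.\ $\hat x$. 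Combined with the convergence of the continuous main term $\frac1{N^{2}}\sum_{m,n}\bigl(g_1\otimes\cdots\otimes g_d\otimes\phi\bigr)\bigl(R_1^{m}R_2^{n}(z_{\hat x},y_0)\bigr)$ from the previous paragraph, this gives $\limsup_{N}-\liminf_{N}<2d\lVert\phi\rVert_\infty\varepsilon$ for the original average, for a.e.\ $\hat x$; letting $\varepsilon\downarrow0$ along a sequence and transporting back through the isomorphism of Theorem~\ref{thm-A} completes the argument.
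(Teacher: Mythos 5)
Your proposal is correct in outline and shares the paper's overall skeleton: pass to the model of Theorem \ref{thm-A} so that $(N_d(\hat X,\hat T),\G_d)$ is CF-Nil($\infty$), treat continuous $f_j$ first by viewing the average as an average of a continuous function over the $\ZZ^2$-orbit of the diagonal point, and then remove the continuity assumption by an $L^2$-approximation using the $\ZZ^2$ pointwise ergodic theorem and Cauchy--Schwarz --- your last step is essentially identical to the paper's Step 2. Where you genuinely diverge is the continuous case: the paper simply applies its Theorem \ref{219} (the two-variable analogue of the Host--Kra Wiener--Wintner theorem, stated with proof ``similar'' to \cite[Theorem 7.1]{HK09}) to the CF-Nil($\infty$) system $(N_d(\hat X),\G_d)$ at the point $x^{(d)}$, whereas you absorb the nilsequence into a product system $N_d(\hat X)\times Y$ and prove unique ergodicity of the orbit closure of $(x^{(d)},y_0)$ by a joining argument: every invariant measure on that orbit closure is a joining of $\nu$ with Haar measure on $Y$, such joinings are relatively independent over the maximal measurable pro-nilfactor, and CF-Nil($\infty$) lets you replace that factor by the \emph{continuous} projection to $W_\infty(N_d)$, reducing unique ergodicity to that of an orbit closure inside the pro-nilsystem $W_\infty(N_d)\times Y$. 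This is a legitimate and arguably more self-contained alternative to citing Theorem \ref{219}, but be aware of the details it compresses: (i) the invariant measures on the orbit closure need not be ergodic, so the relative-independence step must be run through the ergodic decomposition (the components are still joinings of $\nu$ and $\eta$ because both marginals are ergodic); (ii) you use CF-Nil($\infty$) in the strong form that $p^{-1}\mathcal{B}(W_\infty(N_d))$ coincides with $\Z_\infty(N_d)$ modulo $\nu$ --- this is exactly what the proof of Theorem \ref{main1} delivers, so it is available, but it is more than the bare definition of ``isomorphic as m.p.s.''; and (iii) unique ergodicity (and minimality) of orbit closures for $\ZZ^2$-actions by translations on nilmanifolds, and its passage to inverse limits, is a Leibman-type input of comparable depth to the unproved Theorem \ref{219} that the paper relies on. With those points made explicit, your argument goes through and yields the same conclusion.
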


\medskip

Let $T,S$ be measure preserving transformations on $(X,\X,\mu)$. It was established by Frantzikinakis and Host \cite{FH} that when $p\in\ZZ[n]$ with $\deg p(n)\geq 2$ and $(X,\mu,T)$ has zero entropy, the multiple ergodic averages $$\lim_{N\ra\infty}\dfrac{1}{N}\sum_{n=1}^{N}T^{n}f\cdot S^{p(n)}g$$ converge in $L^2(\mu)$ for any $f,g\in L^{\infty}(\mu)$. See \cite{HSY-new} for a related result.
The zero entropy assumption on $T$ in the above results is necessary. One can find a counterexample from \cite{FH}.

In this paper, we consider the $L^2$-multiple convergence for two variables. And the assumption of zero entropy of the system can be dispensed with in our scenarios. That is,
\medskip

\begin{Maintheorem}\label{thm-C}
	Let $T,S$ be measure preserving transformations acting on a probability space $(X,\X,\mu)$. Let $d\geq 2$ and $p\in\mathbb{Z}[m,n]$ be a non-linear integer polynomial. Then for any $f_1,\cdots,f_d, g\in L^{\infty}(\mu)$, the limit
	\begin{equation}\label{L2 limit}
		\lim _{N \to \infty} \frac{1}{N^2} \sum_{m,n=1}^{N} S^{p(m,n)}g\cdot \prod_{j=1}^{d} T^{m+j n}f_j 
	\end{equation}
	exists in $L^2(\mu)$.
\end{Maintheorem}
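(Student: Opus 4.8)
The plan is to reduce the average to one in which all the functions are measurable with respect to nilsystem factors — the $f_j$'s with respect to a pro-nilfactor of $(X,\X,\mu,T)$ and $g$ with respect to a pro-nilfactor of $(X,\X,\mu,S)$ — and then to recognize what is left as a Cesàro average along a polynomial sequence in a nilpotent group, whose convergence is classical. I would work throughout with the Host--Kra theory of pro-nilfactors and uniformity seminorms (in its conditional form if $\mu$ is not ergodic for $T$ or for $S$), and use multilinearity in the $f_j$, linearity in $g$, and density of $L^\infty$ freely.

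Set $c_{m,n}=S^{p(m,n)}g\cdot\prod_{j=1}^d T^{m+jn}f_j$. The first — and main — step is to show, by a multiparameter van der Corput / PET induction, that a finite-step pro-nilfactor $\mathcal{Z}_{s-1}(X,T)$ is characteristic for the functions $f_j$ and that a finite-step pro-nilfactor $\mathcal{Z}_{k-1}(X,S)$ is characteristic for $g$, i.e.\ that $\bigl\|\tfrac{1}{N^2}\sum_{m,n=1}^{N}c_{m,n}\bigr\|_{L^2(\mu)}\to 0$ as soon as $\mathbb{E}[f_{j_0}\mid\mathcal{Z}_{s-1}(X,T)]=0$ for some $j_0$, or $\mathbb{E}[g\mid\mathcal{Z}_{k-1}(X,S)]=0$, with $s=s(d,\deg p)$ and $k=k(d,\deg p)$. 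In the induction one differences in one of the variables $m,n$ and conjugates by an $S$-power or a $T$-power: conjugating by $S^{-p(m,n)}$ replaces the two $S$-iterates of $g$ by $S^{\Delta_{h,k}p(m,n)}g$ and $\bar g$, where the difference polynomial $\Delta_{h,k}p$ has strictly smaller degree; conjugating by a $T$-power removes the $m$-dependence of (and shortens) the product of $T$-iterates of the $f_j$. The hard part is that $S$ and $T$ do not commute: for commuting transformations this kind of estimate is covered by Host's theory of ergodic seminorms for commuting transformations, but here each conjugation interleaves $S$- and $T$-iterates and there is no joint nilpotent structure to exploit. This is exactly where the hypothesis that $p$ be \emph{non-linear} is used: the interleavings produce only uniformly bounded $T$-factors (resp.\ $S$-factors), which can be carried, in absolute value, through the Cauchy--Schwarz foldings in the difference parameters, while the genuine cancellation is extracted from $g$ alone through the shrinking of $\deg p$; if $p$ were linear the induction would stall at an average of the shape $\tfrac1N\sum_m g(S^m x)\,h(T^m x)$ for non-commuting $S,T$, which in general diverges. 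Granting this reduction, and approximating in $L^2$, I may then assume that the $f_j$ are measurable with respect to a genuine nilsystem factor $(G/\Gamma,R_a)$ of $(X,\X,\mu,T)$, with factor map $\pi$, and that $g$ is measurable with respect to a genuine nilsystem factor $(H/\Lambda,R_\beta)$ of $(X,\X,\mu,S)$, with factor map $\sigma$; writing $F=\bigotimes_{j=1}^{d}f_j$, $b=(a,\dots,a)$ and $c=(a,a^2,\dots,a^d)$ in $G^d$, and $\Delta$ for the diagonal, one then has $c_{m,n}(x)=g\bigl(\beta^{p(m,n)}\sigma x\bigr)\,F\bigl(b^m c^n\Delta\pi x\bigr)$.

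For the final step I would observe that $\tfrac1{N^2}\sum_{m,n=1}^{N}c_{m,n}(x)=\Theta_N(\sigma x,\pi x)$, where
\[
\Theta_N(u,v)=\frac{1}{N^2}\sum_{m,n=1}^{N}g\bigl(\beta^{p(m,n)}u\bigr)\,F\bigl(b^m c^n\Delta v\bigr),\qquad (u,v)\in H/\Lambda\times G/\Gamma,
\]
so that $\bigl\|\tfrac1{N^2}\sum c_{m,n}-\tfrac1{(N')^2}\sum c_{m,n}\bigr\|_{L^2(\mu)}^2=\int|\Theta_N-\Theta_{N'}|^2\,d\nu$ with $\nu=(\sigma\times\pi)_*\mu$. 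For each fixed $(u,v)$ the map $(m,n)\mapsto\bigl(\beta^{p(m,n)},\,b^m c^n\bigr)$ is a polynomial sequence in the nilpotent Lie group $H\times G^d$, so by Leibman's equidistribution theorem for polynomial sequences in nilmanifolds the averages $\Theta_N(u,v)$ converge for every $(u,v)$ when $g$ (hence $g\otimes F$) is continuous. For general $g\in L^\infty$ I would approximate $g$ in $L^2$ of the Haar measure of $H/\Lambda$ — which is the marginal of $\nu$ there, since $(H/\Lambda,R_\beta)$ is a factor of $(X,\X,\mu,S)$ — by a continuous function, and use the $R_\beta$-invariance of Haar together with the Cauchy--Schwarz inequality to bound the resulting error in $\int|\cdot|^2\,d\nu$ uniformly in $N$; since the $\Theta_N$ are uniformly bounded, this shows that $\bigl(\tfrac1{N^2}\sum_{m,n=1}^N c_{m,n}\bigr)_N$ is Cauchy in $L^2(\mu)$, which is the assertion of the theorem.
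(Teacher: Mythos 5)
There is a genuine gap at what you yourself call the main step: the claim that a multiparameter van der Corput / PET induction establishes characteristic factors $\mathcal{Z}_{s-1}(X,T)$ for the $f_j$ and $\mathcal{Z}_{k-1}(X,S)$ for $g$ when $T$ and $S$ do not commute. After differencing, the correlation you must estimate is
\[
\int S^{p(m+h_1,n+h_2)}g\cdot \overline{S^{p(m,n)}g}\cdot \prod_{j=1}^{d} T^{m+h_1+j(n+h_2)}f_j\cdot \overline{T^{m+jn}f_j}\,d\mu ,
\]
and composing with $S^{-p(m,n)}$ does lower the degree on the $g$-side but turns the $T$-block into $S^{-p(m,n)}\bigl(\prod_j T^{\cdot}f_j\,\overline{T^{\cdot}f_j}\bigr)$, a function with no usable structure in $(m,n)$. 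Your proposed fix --- that these ``uniformly bounded $T$-factors \ldots can be carried, in absolute value, through the Cauchy--Schwarz foldings'' --- is not a legitimate manipulation: inside the integral of a product you cannot replace one factor by its sup-norm bound while retaining the oscillation of the other factors, and once you apply Cauchy--Schwarz to separate the $S$-block from the $T$-block you lose exactly the cancellation in $(m,n)$ that the induction is supposed to produce. That this obstruction is real, and not a technicality, is shown by the one-variable analogue of Frantzikinakis--Host (\cite{FH}, cited in the paper): for $\frac1N\sum_n T^nf\cdot S^{p(n)}g$ convergence fails in general without a zero-entropy hypothesis on $T$, so no purely combinatorial vdC/PET scheme that uses no entropy-type input on the $T$-side can prove the reduction you assert. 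Your proposal uses no such input anywhere.

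The paper's proof supplies precisely this missing ingredient and reduces only on the $g$-side. It shows (Proposition \ref{Fur sys Nd}) that for a.e.\ $x$ the sequence $\bigl(\prod_j f_j(T^{m+jn}x)\bigr)$ has an ergodic Furstenberg system of \emph{zero entropy}, which rests on the Wu--Zhang theorem that $(N_d(X),\mathcal{G}_d)$ has zero topological entropy; it shows via the Frantzikinakis--Kuca joint ergodicity theorem for the commuting transformations $S_j$ and Lemma \ref{pinsker} that if $\mathbb{E}(g\,|\,\mathcal{Z}_{\infty}(S))=0$ then the coordinate projection of the Furstenberg system of $\bigl(g(S^{p(m,n)}x)\bigr)$ is orthogonal to its Pinsker factor; and it then kills the cross term by a joining/disjointness argument (Propositions \ref{FH2.1} and \ref{convergL2=0}). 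After this, $g$ may be taken $\mathcal{Z}_{\infty}(S)$-measurable, so that $(g(S^{p(m,n)}x))$ is (up to $L^1$-small errors) a nilsequence, and the conclusion follows from the pointwise nilsequence-weighted convergence of Theorem \ref{thm B} together with bounded convergence --- no characteristic-factor statement for the $f_j$ relative to $\mathcal{Z}_{s}(T)$ is ever proved or needed. Your concluding step (Leibman equidistribution on $H\times G^d$) is reasonable in spirit, but it sits on top of the unproved reduction, and it also leaves the approximation of $F=\bigotimes_j f_j$ by continuous functions on the nilmanifold unaddressed (your $L^2$-error control is argued only for $g$ via the Haar marginal on $H/\Lambda$). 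As it stands, the proposal does not prove the theorem.
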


\medskip


Gutman and Lian \cite{GL} showed that CF-Nil($k$) systems and topological Wiener-Witner Theorem are mutually equivalent (see Theorem \ref{GLThmC}). However, we demonstrate the existence of CF-Nil systems where pointwise convergence fails for multiple ergodic averages involving continuous functions.

\medskip
\begin{Maintheorem}\label{thm-D}
	There exists a CF-Nil($\infty$) system $(X,T)$ such that for some $x\in X$ and some $f_1 ,f_2\in \C(X)$, the average  $$\frac{1}{N}\sum_{n=0}^{N-1}f_1(T^n x)f_2(T^{2n} x)$$
	fails to exist as $N$ goes to infinity.
\end{Maintheorem}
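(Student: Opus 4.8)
The plan is to produce the example directly by symbolic dynamics. The crucial preliminary observation is that \emph{any strictly ergodic subshift $(X,T)$ that is weakly mixing with respect to its unique invariant measure $\mu$ is automatically CF-Nil($\infty$)}. Indeed, weak mixing of $(X,\X,\mu,T)$ trivializes the Kronecker factor, and hence---via the Host--Kra structure theory---every measurable $k$-step pro-nilfactor, so $(Z_k(X),\mathcal Z_k(X),\mu_k,T)$ is a one-point system for all $k$. On the other hand, the unique invariant measure $\omega_k$ of the minimal system $W_k(X)$ exhibits $(W_k(X),\omega_k,T)$ as a measure-theoretic factor of $(X,\mu,T)$, hence weakly mixing; but $W_k(X)$ is an inverse limit of nilsystems and is therefore distal, while a distal weakly mixing system is trivial. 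Since $\omega_k$ has full support on the minimal space $W_k(X)$, this forces $W_k(X)$ to be a single point, so $(W_k(X),\mathcal W_k(X),\omega_k,T)\cong(Z_k(X),\mathcal Z_k(X),\mu_k,T)$ for every $k$, i.e. $(X,T)$ is CF-Nil($\infty$). Thus Theorem \ref{thm-D} reduces to constructing a strictly ergodic, measurably weakly mixing subshift $(X,T)\subseteq(\{0,1\}^{\ZZ},\mathrm{shift})$ together with a point $z\in X$ for which, writing $\chi=\mathbf 1_{[x_0=1]}\in\C(X)$,
\[
	\frac1N\sum_{n=0}^{N-1}\chi(T^nz)\,\chi(T^{2n}z)=\frac1N\sum_{n=0}^{N-1}z_nz_{2n}
\]
fails to converge. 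In the terminology of the present paper this amounts to saying that $z$ is not a generic point for $\tau_2(T)=T\times T^2$ on $X\times X$, so that the relevant part of $N_2(X,T)$ is \emph{not} uniquely ergodic --- the failure, for this particular CF-Nil($\infty$) system, of exactly the property that Theorem \ref{thm-A} provides for a suitable \emph{model}.

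To build $X$ and $z$ I would use a hierarchical cutting-and-stacking (block concatenation) construction. Fix rapidly increasing lengths $L_0\ll L_1\ll\cdots$ and words $B_k\in\{0,1\}^{L_k}$, where $B_{k+1}$ is the concatenation of a large number $r_k\ge 2$ of copies of $B_k$ together with a sparse insertion of spacer blocks; let $z$ be the two-sided point of $X$ whose non-negative coordinates spell out the common extension of the $B_k$ read from the origin. The spacer rule is chosen as in the classical weakly mixing rank-one constructions, so that the resulting subshift is minimal, uniquely ergodic and measurably weakly mixing, while the frequency $\delta$ of the symbol $1$ is kept essentially constant along all stages (this is all that strict ergodicity registers). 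The extra ingredient is to make the bilinear count at scale $L_k$ oscillate with $k$. Since $r_k\ge 2$, the length-$2L_k$ prefix of $z$ is the word $B_kB_k$, and this prefix is \emph{frozen} at stage $k$ (later stages only extend $z$ to the right); a direct computation then gives
\[
	\sum_{n=0}^{L_k-1}z_nz_{2n}=\sum_{0\le m<L_k/2}(B_k)_{2m}\big[(B_k)_m+(B_k)_{m+L_k/2}\big],
\]
an explicit functional of $B_k$ alone. One then interleaves two families of stages: at ``mixing'' stages $B_k$ is left arithmetically unstructured, so that this count is comparable to $\delta^2L_k$, the value expected when the $1$'s sit in generic position; at ``planted'' stages $B_k$ is given structure visible to $m\mapsto 2m$, for which the displayed count is $o(L_k)$ (the simplest mechanism being to force the occurrences of $1$ into a prescribed residue class modulo $3$ with $6\mid L_k$, which makes the count vanish identically). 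As both types of stage occur infinitely often and the corresponding scale-$L_k$ densities are bounded apart, the Cesàro averages of $z_nz_{2n}$ have distinct $\limsup$ and $\liminf$, and taking $f_1=f_2=\chi$ and $x=z$ finishes the proof.

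The main obstacle is the tension between global weak mixing and the local arithmetic coherence demanded at the planted stages: weak mixing asks that the construction contain ``enough independence'' --- in particular it forbids any self-similarity of $z$ under $n\mapsto 2n$ --- whereas a planted stage is precisely a demand for structure linking the part of $z$ near $n$ with the part near $2n$. What makes these compatible is that the planted structure is needed only at the sparse sequence of scales $L_k$: the hierarchical structure guarantees that the length-$2L_k$ prefix of $z$ is literally $B_kB_k$, so the scale-$L_k$ count is governed entirely by the single word $B_k$ we design, whereas at all other scales, and in the statistical behaviour that controls the spectral type, the spacer insertions dominate and yield weak mixing. A genuine difficulty inside this scheme is that the arithmetic type of a planted block $B_k$ would, in a naive recursion, be inherited by all later blocks and would spoil the subsequent mixing stages; the remedy is to plant the structure on the coarse block decomposition rather than on individual symbols and to scramble it away at the following stage, all the while leaving the already-frozen scales --- and hence the bilinear counts recorded there --- untouched. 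Making these competing requirements simultaneously precise, i.e. exhibiting a single sequence of blocks and spacers that is at once weakly mixing, strictly ergodic, and carries the prescribed oscillation of the doubling count, is the technical heart of the argument; everything else (that such a subshift is CF-Nil($\infty$), and that the oscillation yields the claimed divergence) is immediate from the discussion above.
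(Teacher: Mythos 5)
Your reduction is correct and genuinely different from the paper's route: a strictly ergodic, measurably weakly mixing system is indeed CF-Nil($\infty$) (weak mixing trivializes all $Z_k$, and the unique invariant measure on the distal pro-nilsystem $W_k(X)$ would be weakly mixing, forcing $W_k(X)$ to be a point). The paper instead works at the opposite extreme: it takes $(X,T)$ to be a Floyd--Auslander system, which is CF-Nil($\infty$) because it is an almost 1-1 extension of an odometer and measurably isomorphic to it, and it gets divergence softly --- at a point $x_0$ that is not 2-multiple-recurrent, convergence of $\frac{1}{N}\sum_{n<N}f_1(T^nx_0)f_2(T^{2n}x_0)$ for all continuous $f_1,f_2$ would make $(x_0,x_0)$ generic for a $T\times T^2$-invariant measure, which is impossible since a small ball around $(x_0,x_0)$ meets its $T\times T^2$-orbit only finitely often. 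No new construction is needed there.

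The gap in your proposal is that its core --- a strictly ergodic, measurably weakly mixing subshift carrying the prescribed oscillation of $\frac{1}{N}\sum_{n<N}z_nz_{2n}$ --- is only a plan, and the concrete mechanism you offer breaks down. Unique ergodicity forces the frequency of \emph{every} finite word along the prefixes $B_k$ of $z$ to converge over all stages $k$. A planted block, with its 1's confined to a residue class $r\not\equiv 0\ (\bmod\ 3)$, contains no occurrence of $11$, of $101$, or more generally of $10^{j-1}1$ with $3\nmid j$; so these words get frequency $0$ along infinitely many scales, hence $\mu$-measure $0$, hence (the support of $\mu$ being all of $X$ by minimality) they are excluded from $X$ entirely. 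Then in every point of $X$ the gaps between consecutive 1's are divisible by $3$, the residue mod $3$ of the 1-positions defines a continuous equivariant map onto the rotation on $\mathbb{Z}/3\mathbb{Z}$, and $(X,\mu,T)$ has eigenvalue $e^{2\pi i/3}$ --- contradicting exactly the weak mixing on which your CF-Nil($\infty$) reduction rests, and simultaneously invalidating the claim that mixing-stage blocks have count $\approx\delta^2L_k$, since their word statistics must asymptotically agree with the planted ones. You anticipate this (the ``inheritance'' discussion and the proposal to plant on the coarse block decomposition and scramble it at the next stage), but under that fix the exact-vanishing identity you display no longer applies, and no substitute estimates (an $o(L_k)$ bound at planted scales together with a lower bound of order $L_k$ at mixing scales, verified to be compatible with unique ergodicity and weak mixing) are provided. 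As you acknowledge, producing one explicit block-and-spacer scheme meeting all three requirements at once is the technical heart of the argument, and it is precisely what is missing; as it stands the proposal does not prove the theorem.
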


\bigskip

\noindent{\bf Organization of the paper.} In Section \ref{s2} we review some fundamental definitions and classical results in nilsystems and Host-Kra structures. In Section \ref{s3}  we prove Theorem \ref{thm-A} and Theorem \ref{thm-B}. We establish Theorem \ref{thm-C} in  Section \ref{s4} which concerns multiple convergence for two-variable polynomial averages. Section \ref{s5} contains the construction of counterexamples which implies Theorem \ref{thm-D}.

\bigskip

\noindent{\bf Acknowledgements.} The authors would like to thank Professors Zhengxing Lian, Song Shao, Leiye Xu and Xiangdong Ye for  useful discussion. This research is supported by National Natural Science Foundation of China (12426201,12371196), Shanghai Sailing Program (24YF2711500) and the Fundamental Research Funds for the Central Universities.

\bigskip

\section{Preliminaries}\label{s2}

In this section, we recall some basic notions and results we need in the following sections. The letters $\R,\ZZ$, $\N$, $\N_+$ stand for the set of real numbers, integers, non-negative integers, and positive integers. 

\subsection{Topological dynamics}

A {\bf  topological dynamical system (t.d.s.)} is a pair
$(X,G)$ consisting of a group $G$ acting on a compact space $X$. For $g\in G$ and $x\in X$ we denote the action by $gx$.
Note that the system $(X,T)$ corresponds to the case of $G$ being $\ZZ$.
In this paper, we always assume that $X$ is a compact metric space with a metric $\rho$.
Denote by $\C(X)$ the set of real-valued continuous functions on $X$. The {\bf  orbit} $\O(x)$ of $x\in X$ is the set $\O(x)=\{gx:g\in G\}$. Its closure is denoted by $\overline{\O}(x)$.
A t.d.s. is {\bf  minimal} if $\overline{\O}(x)=X$ for all $x\in X$.

\medskip

A t.d.s. $(X,G)$ is {\bf  equicontinuous} if for any $\ep> 0$, there is a $\delta>0$ such that whenever $x, y\in X$ with $\rho(x, y)<\delta$, then $\rho(gx,gy)<\ep$ for every $g\in G$. A t.d.s. $(X,G)$ is {\bf  distal} if for any $x\ne y\in X$, $\inf_{g\in G}\rho(gx, gy)>0$. It is well-known that any equicontinuous t.d.s. is distal.

\medskip

A factor map $\pi:X\ra Y$ between the t.d.s. $(X, G)$ and $(Y, G)$ is a continuous and surjective map such that for any $x\in X,g\in G$, $\pi(gx)=g\pi(x)$. Given such a map, $(X,G)$ is called a {\bf  topological factor} of $(Y,G)$. If in addition $\pi$ is injective then it is called a {\bf  topological isomorphism}, $(Y,G)$ and $(X,G)$ are said to be {\bf  isomorphic as t.d.s.} Let $\pi: (X, G)\ra(Y , G)$ be a factor map. Then, $R_\pi= \{(x_1, x_2)\in X\times X:\pi(x_1) = \pi(x_2)\}$ is a closed $G$-invariant equivalence relation and $Y\cong X/R_\pi$. We say that $\pi$ is an  {\bf  almost 1-1 extension} if there exists a dense $G_\delta$ set $X_0\subset X$ such that $\pi^{-1}(\{\pi(x)\}) = \{x\}$ for any $x\in X_0$; is a {\bf  proximal extension} if $R_\pi\subset P(X)$ where  $P(X)=\{(x_1,x_2):\exists g_i\in G \text{ s.t. }\rho(g_i x_1,g_i x_2)\ra 0\}$ is called a proximal relationship. 
Let \(S_{\text{eq}}(X)\) be the smallest closed invariant equivalence relationship \(S\) on \(X\) for which the factor \(X/S\) is an equicontinuous system. We say $X_{eq}:=X/S_{eq}(X)$ is the {\bf maximal equicontinuous factor} of $X$. It is clear that $P(X)\subset S_{eq}(X)$.

\medskip

Denote by $M(X)$ the set of all probability measures on $X$. Let $M_G(X)=\{\mu\in M(G):\mu\circ g^{-1}=\mu,\forall g\in G\}$ be the set of all $G$-invariant Borel probability measures of $X$ and $M^e_G(X)$ be the set of ergodic elements of $M_G(X)$. It is well-known that $M_G^e(X)\ne\varnothing$ when $G$ is an abelian group.
A t.d.s. $(X,G)$ is called {\bf  uniquely ergodic} if there is a uniquely $G$-invariant probability measure on $X$. It is {\bf  strictly ergodic} if it is uniquely ergodic and minimal.
We say $\pi:(X,G)\ra(Y,G)$ is a {\bf topological factor map} if $\pi$ is a continuous and surjective map such that $\pi(gx)=g\pi(x)$ for any $x\in X,g\in G$.

\medskip

\subsection{Measurable systems}
A {\bf  measure preserving  system (m.p.s.)} is a quadruple
$(X,\X,\mu,T)$, where $(X,\X,\mu)$ is a Lebesgue probability space and $T: X \ra X$ is an invertible measure preserving transformation, i.e. $\mu(TA)=\mu(A)$ for all $A\in\X$.

\medskip

For a measurable system $(X,\X,\mu,G)$ we write $\mathcal{I}_{G}$ for the $\sigma$-algebra $\{A\in\X:g^{-1}A=A,\forall\ g\in G\}$ of the invariant sets. 
A m.p.s. $(X,\X,\mu,G)$ is {\bf  ergodic} if all the $G$-invariant sets have measure either $0$ or $1$.
A {\bf  homomorphism} from m.p.s. $(X,\X,\mu,T)$ to m.p.s. $(Y,\Y,\nu,S)$  is a measurable map $\pi:X_0\rightarrow Y_0$, where $X_0$ is a $T$-invariant subset of $X$ and $Y_0$ is a $S$-invariant subset of $Y$, both of full measure, such that $\pi_*\mu=\mu\circ\pi^{-1}=\nu$ and $S\circ\pi(x)=\pi\circ T(x)$ for $x\in X_0$. Given such a homomorphism, $(Y,\Y,\nu,S)$ is called a {\bf  measurable factor} of $(X,\X,\mu,T)$.
If the factor map $\pi:X_0\rightarrow Y_0$ can be chosen to be bijective then $\pi$ is called a {\bf  measurable isomorphism} and $(X,\X,\mu,T)$ and $(Y,\Y,\nu,S)$ are said to be {\bf  isomorphic as m.p.s.} 
A {\bf joining} of two systems $(X, \mu, T)$ and $(Y, \nu, S)$ is a measure $\lambda$ on $X \times Y$, invariant under $T \times S$ and whose projections on $X$ and $Y$ are equal to $\mu$ and $\nu$, respectively.

\medskip

\subsection{Pro-nilsystems and nilsequences}
Let $H$ be a Lie group. Let
$H_1=H$ and $H_k=[H_{k-1},H]$ for $k\geq 2$, where $[H,K]=\{[h, k]:h\in H,k\in K\}$ and $[h,k]=h^{-1}k^{-1}hk$. If there exists some $d\geq 1$ such
that $H_{k+1}= \{e\}$, $H$ is called a {\bf  $k$-step nilpotent} Lie group.
\begin{de}
	Assume that $H$ is  a $k$-step nilpotent Lie group and $\Gamma\subset  H$ is a discrete, cocompact subgroup of $H$. The compact manifold $X=H/\Gamma$ is called a {\bf  $k$-step nilmanifold}. The
	{\bf  Haar measure} $\mu$ of $X$ is the unique Borel probability measure on $X$
	which is invariant under the action $x\mapsto gx$ of $H$ on $X$ by left translations.
	Let $\tau_1,\dots,\tau_s$ be commuting elements of $H$ and $T_i$ be the transformation $x\mapsto \tau_i x$ of $X$, we call $(X,\mu,\langle T_1,\dots, T_s\rangle)$ a {\bf  $k$-step nilsystem}.
\end{de}
An inverse limit
of $k$-step nilsystems is called a {\bf $k$-step pro-nilsystem}.
An inverse limit of nilsystems of finite steps is called an {\bf $\infty$-step pro-nilsystem}.

\begin{thm}\cite{Leib}\label{equi}
	Let $k\geq1 $ be an integer and  $(X=H/\Gamma,\mu,\ZZ^s)$ be $k$-step nilsystem with the Haar measure $\mu$.	Then $(X,\ZZ^s)$ is uniquely ergodic if and only if $(X,\mu,\ZZ^s)$ is ergodic if and only if $(X,\ZZ^s)$ is minimal. 
\end{thm}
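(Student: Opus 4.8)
The plan is to prove the cycle ``uniquely ergodic $\Rightarrow$ the Haar measure $\mu$ is ergodic $\Rightarrow$ $(X,\ZZ^s)$ minimal $\Rightarrow$ uniquely ergodic''. The first implication is free: a uniquely ergodic system has a single invariant measure, which is automatically ergodic, and since $\mu$ is $\ZZ^s$-invariant it must be that measure. The other two implications carry the content, and both use the structure theory of nilsystems. The two facts I would rely on are: every nilsystem is \emph{distal} --- it is a finite tower of compact-abelian-group extensions of a point, built from the lower central series of $H$, and each such extension is isometric, hence distal --- and the equidistribution theory for minimal nilsystems (for connected $H$ this goes back to Auslander--Green--Hahn and Parry; \cite{Leib} treats general $H$ and $\ZZ^s$-actions).

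For ``$\mu$ ergodic $\Rightarrow$ minimal'': since $H$ acts transitively on $X=H/\Gamma$ and $\mu$ is left $H$-invariant, $\mathrm{supp}(\mu)$ is a nonempty closed $H$-invariant subset of $X$, hence equals $X$; i.e. $\mu$ has full support. By the pointwise ergodic theorem along the F\o lner sequence $\{[0,N)^s\}$ in $\ZZ^s$ (applied to a countable dense family in $\C(X)$), $\mu$-almost every $x$ is generic for $\mu$, and a generic point lying in $\mathrm{supp}(\mu)$ has orbit closure equal to $\mathrm{supp}(\mu)=X$. So $(X,\ZZ^s)$ has a dense orbit. In a distal system the orbit closure of \emph{every} point is minimal, so the existence of one dense orbit forces $X$ to be minimal.

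For ``minimal $\Rightarrow$ uniquely ergodic'' I would invoke the equidistribution theorem: if $(X=H/\Gamma,\ZZ^s)$ is minimal then for \emph{every} $x\in X$ and every $f\in\C(X)$,
\[
  \frac{1}{N^s}\sum_{\mathbf{n}\in[0,N)^s} f(T^{\mathbf{n}}x)\ \longrightarrow\ \int f\,d\mu\qquad(N\to\infty).
\]
Granting this, if $\nu$ is any $\ZZ^s$-invariant probability measure on $X$ then, integrating the left side against $\nu$, invariance gives the value $\int f\,d\nu$ for every $N$, while bounded convergence gives the limit $\int f\,d\mu$; hence $\int f\,d\nu=\int f\,d\mu$ for all $f\in\C(X)$, so $\nu=\mu$ and $X$ is uniquely ergodic. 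The equidistribution theorem is itself proved by induction on the nilpotency step $k$: the case $H$ abelian is Weyl equidistribution on the torus $H/\Gamma$, minimality being density of the subgroup generated by $\tau_1,\dots,\tau_s$; the inductive step quotients $X$ by the action of the central subgroup $H_k$ (by Malcev rationality $H_k\cap\Gamma$ is cocompact in $H_k$, so the fiber $K:=H_k/(H_k\cap\Gamma)$ is a compact abelian Lie group) to get a $(k-1)$-step nilsystem $X/H_k$, and then lifts the equidistribution on $X/H_k$ through the compact group extension $X\to X/H_k$ by a van der Corput / Weyl estimate on the extension cocycle.

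The step carrying essentially all the difficulty is this last induction. Besides the van der Corput argument on a central torus extension, it rests on the Malcev rationality of the lower central series and on the reduction of a general nilpotent Lie group $H$ to a connected, simply connected one --- by replacing $H$ with the subgroup generated by $H^\circ$ and $\tau_1,\dots,\tau_s$, restricting to the sub-nilmanifold carrying the relevant orbit, and passing to finite covers. By contrast, the other two implications are soft: one line for uniquely ergodic $\Rightarrow$ ergodic, and the distality-plus-full-support argument for ergodic $\Rightarrow$ minimal.
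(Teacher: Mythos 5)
The paper offers no proof of this statement at all: it is quoted verbatim as a known result with a citation to Leibman, so there is no in-paper argument to match your proposal against. Judged on its own, your cycle is the standard one and is essentially sound. The soft implications are fine: unique ergodicity trivially forces the (invariant) Haar measure to be the unique, hence ergodic, measure; and for ergodic $\Rightarrow$ minimal, full support of Haar measure plus Tempelman's pointwise theorem over the boxes $[0,N)^s$ gives a dense orbit, and distality of nilsystems (the tower of isometric central extensions, equivariant for all translations of $H$, so it works for the whole $\ZZ^s$-action) upgrades a dense orbit to minimality. For minimal $\Rightarrow$ uniquely ergodic you, like the paper, ultimately defer the real content --- everywhere-equidistribution on minimal nilsystems --- to the literature, which is fair; the deduction of unique ergodicity from everywhere-convergence by integrating against an arbitrary invariant $\nu$ is correct.

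Two remarks on the part you sketch. First, your proposed induction on the step $k$ (central quotient by $H_k$ plus a van der Corput lift) is the Auslander--Green--Hahn/Parry-style argument, whereas Leibman's own treatment runs through a different key reduction: each of ergodicity, minimality and unique ergodicity of $X=H/\Gamma$ is shown to be equivalent to the corresponding property of the maximal factor torus (the abelianization, after reducing to the connected, simply connected case), where all three notions coincide classically. Second, your inductive lifting step is thin at exactly the point where the hypothesis must be used: after van der Corput, the nontrivial vertical-character components leave you with Ces\`aro averages of correlations $\int f\circ T^{\mathbf h}\cdot\overline{f}\,d\mu$, and making these vanish requires an ergodicity-type input --- in practice, minimality of $X$ passed to its maximal equicontinuous (toral) factor, which is then uniquely ergodic and ergodic by the classical abelian case. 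Saying only ``a van der Corput / Weyl estimate on the extension cocycle'' hides where minimality actually enters; the torus-factor reduction in Leibman's route makes this explicit. With that point spelled out, your outline is a correct reconstruction of the classical proof.
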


We will deal with the space $l^\infty = l^\infty(\mathbb{Z}^s)$ of bounded sequences $\varphi\colon \mathbb{Z}^s \longrightarrow \mathbb{C}$, with the norm $\|\varphi\| = \sup_{n \in \mathbb{Z}^s} |\varphi(n)|$.
\begin{de}
	A bounded sequence $\{\psi(n_1,\dots,n_s):n_i\in\ZZ\}\in l^{\infty}(\ZZ^s)$ is called a {\bf basic} (resp. {\bf smooth}) {\bf $k$-step nilsequence} if there exists a $k$-step nilsystem $(X,G)$ where $G=\langle T_1,\dots, T_s\rangle$ is a $\ZZ^s$-action group, $x_0\in X$ and a continuous (resp. smooth) function $f$ such that
	$$\psi(n_1,\dots,n_s)=f(T_1^{n_1}\dots T_s^{n_s} x_0),\forall(n_1,\dots,n_s)\in\ZZ^s.$$
Denote by $\mathcal{N}_k^o$ the algebra of basic $k$-step nilsequences, and denote by $\mathcal{N}^o$ the algebra $\bigcup_{k \in \mathbb{N}} \mathcal{N}_k^o$ of all basic nilsequences. We will denote the closure of $\mathcal{N}_k^o$, $k \in \mathbb{N}$, in $l^\infty(
\ZZ^s)$ by $\mathcal{N}_k$, and the closure of $\mathcal{N}^o$ by $\mathcal{N}$; the elements of these algebras will be called {\bf $k$-step nilsequences} and, respectively, {\bf nilsequences} (see \cite{HK09,Leib4} for details).
\end{de}

\medskip

\subsection{The Host-Kra measures and Host-Kra seminorms}
Host and Kra introduced {\bf cube measures} defined on $X^{[k]}$ for ergodic $\ZZ$-actions \cite{HK05,HK09,HKbook}.
Bergelson, Tao and Ziegler \cite{BTZ} extended their results to the framework of abelian group actions. We suppose $G$ is generated by commuting transformation $T_1,T_2,\cdots,T_s$ in this section, i.e., $G\cong\ZZ^s$.

\medskip

By a {\bf  cube} of the discrete cube $\{0,1\}^k$ we mean a subcube obtained by fixing some subset of the coordinates.
For $k\in\N$, let $[k]=\{0,1\}^{k}$. Thus $$X^{[k]}=X\times \dots\times X \text{ ($2^k$-times)} \text{ and } \mathcal{T}^{[k]}=\{T_i^{[k]}:i=1,\dots,s \}.$$
A point $\x\in X^{[k]}$ is written as $\x=(x_v:v\in[k])$.
Let $[k]^*=[k]\backslash {\bf 0}$ and define $X_*^{[k]}=X^{[k]^*}$.
\begin{de}\label{cube face}
	Let $(X,\X,\mu,G)$ be an ergodic m.p.s.
	Let $\ol{\alpha}_j=\{v\in[k]:v_j=1\}$ be the {\bf  $j$-th upper face} of $[k]:=\{0,1\}^{k}$.
	For any face $F\subset\{0,1\}^{k}$, for every $g\in G$, define
	\begin{equation*}
		(g^{F})_v=\left\{
		\begin{aligned}
			&g \quad\quad v\in F,\\
			&Id\quad\quad  v\notin F.
		\end{aligned}
		\right.
	\end{equation*}
	Define the {\bf  face group} $\F^k(G)\subset Homeo(X^{[k]})$ to be the group generated by the elements $\{g^{\ol{\alpha}_j}:g\in G,1\leq j\leq k\}$. Define the $k$-th Host-Kra cube group  $\H\K^k(G)$ to be the subgroup of Homeo($X^{[k]}$) generated by
	$\mathcal{T}^{[k]}$ and $\F^{k}(G)$. Let $\J_*^{[k]}$ be the $\sigma$-algebra of sets invariant under $\F^k(G)$ on $X_*^{[k]}$.
\end{de}

\begin{de}
	Let $(X,\X,\mu,G)$ be an ergodic m.p.s. Let $\mu^{[1]}=\mu\times\mu$. For $k\in \N$, let $\mathcal{I}_{\mathcal{T}^{[k]}}$ be the $\mathcal{T}^{[k]}$-invariant $\sigma$-algebra of $(X^{[k]},\X^{[k]},\mu^{[k]})$. Define $\mu^{[k+1]}$ to be the {\bf  relatively independent joining} of two copies of $\mu^{[k]}$ over $\mathcal{I}_{\mathcal{T}^{[k]}}$. That is, for $f_v\in L^{\infty}(\mu),v\in[k+1]$:
	\begin{equation}
		\int_{X^{[k+1]}}\prod_{v\in[k+1]}f_vd\mu^{[k+1]}=
		\int_{X^{[k]}}\E(\prod_{v\in[k]}f_{v0}|\mathcal{I}_{\mathcal{T}^{[k]}})\E(\prod_{v\in[k]}f_{v1}|\mathcal{I}_{\mathcal{T}^{[k]}})d\mu^{[k]}.
	\end{equation}
	
\end{de}
The measures $\{\mu^{[k]}\}_{k\geq 0}$ are referred to as the {\bf Host-Kra measures}. The seminorms $\interleave\cdot\interleave_{k}$ on $L^{\infty}(\mu)$ are defined as 
$$\interleave f\interleave_{k}=(\int\prod_{v\in[k]}C^{|v|}f d\mu^{[k]})^{1/2^k},$$ where $|(v_1,\dots,v_{k})|=\sum_{i=1}^{k}v_i$ and $Cz=\ol{z}$.

\begin{lem}\label{leq|||}\cite[Lemma 3.16]{CS}
	Let $k\geq 1$ and $f_v,v\in[k]$ be $2^k$ bounded functions on $X$. Then
	\begin{equation}
		\left|\int\prod_{v\in[k]}f_v(x_v)d\mu^{[k]}\right|\leq \prod_{v\in[k]}\interleave f_v \interleave_{k}.
	\end{equation}
\end{lem}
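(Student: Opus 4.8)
The plan is to establish the inequality by the Cauchy--Schwarz--Gowers method: apply the Cauchy--Schwarz inequality once in each of the $k$ coordinate directions of the cube $[k]$, using at every step that $\mu^{[k]}$ is a relatively independent self-joining across that direction. The base case $k=1$ is immediate, since $\mu^{[1]}=\mu\times\mu$ makes both sides equal to $\left|\int f_0\,d\mu\right|\cdot\left|\int f_1\,d\mu\right|$; this also fixes the conjugation conventions, because $\interleave f\interleave_1=\left|\int f\,d\mu\right|$.

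First I would record the symmetry of the Host--Kra measures that makes the iteration possible. Although $\mu^{[k]}$ is built by joining two copies of $\mu^{[k-1]}$ across the last coordinate, the standard Host--Kra structure theory shows that $\mu^{[k]}$ is invariant under the Euclidean symmetries of $[k]=\{0,1\}^k$, namely the permutations of the $k$ coordinate axes and the digit--switches that flip a single coordinate of every vertex; these follow by induction on $k$ from the defining relatively--independent--joining construction, and I would reprove them or cite them. The consequence I need is that for each direction $j\in\{1,\dots,k\}$ the measure $\mu^{[k]}$ is the relatively independent joining of its two faces $\{v:v_j=0\}$ and $\{v:v_j=1\}$ over the invariant $\sigma$-algebra $\mathcal{I}_{\mathcal{T}^{[k-1]}}$ of the opposite $(k-1)$-dimensional cube structure, i.e.\ the defining property is available in every direction, not only the last.

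Next I would run the iteration. Writing $\Lambda(\{f_v\})=\int_{X^{[k]}}\prod_{v\in[k]}f_v\,d\mu^{[k]}$ and fixing a direction $j$, the joining property gives
\[
\Lambda(\{f_v\})=\int \E\Big(\prod_{v_j=0}f_v\,\Big|\,\mathcal{I}_{\mathcal{T}^{[k-1]}}\Big)\,\E\Big(\prod_{v_j=1}f_v\,\Big|\,\mathcal{I}_{\mathcal{T}^{[k-1]}}\Big)\,d\mu^{[k-1]},
\]
and Cauchy--Schwarz bounds $\left|\Lambda(\{f_v\})\right|^2$ by the product of $\int\big|\E(\prod_{v_j=\epsilon}f_v\,|\,\mathcal{I}_{\mathcal{T}^{[k-1]}})\big|^2\,d\mu^{[k-1]}$ for $\epsilon=0,1$. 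By the defining identity for $\mu^{[k]}$ read across direction $j$, each such factor is again a quantity of the form $\Lambda$, in which the functions on one face in direction $j$ have been reflected and conjugated onto the opposite face. Iterating this for $j=1,2,\dots,k$ and collecting the squarings, I obtain
\[
\Big|\int_{X^{[k]}}\prod_{v\in[k]}f_v\,d\mu^{[k]}\Big|^{2^k}\le\prod_{\eta\in\{0,1\}^k}\Lambda\big(\{f^{(\eta)}_v\}\big),
\]
where in the factor indexed by $\eta$ every vertex carries a conjugate of one and the same original function. Tracking the reflections, and using digit--switch invariance to normalize the conjugation pattern to $C^{|v|}$, the $2^k$ factors are exactly $\int\prod_{v\in[k]}C^{|v|}f_w\,d\mu^{[k]}=\interleave f_w\interleave_k^{2^k}$ as $w$ ranges over $[k]$; taking the $2^k$-th root yields the claim.

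The main obstacle is the combinatorial bookkeeping of the iteration together with the directional symmetry it rests on: I must check (i) that $\mu^{[k]}$ really is a relatively independent joining across every coordinate direction and not merely the last, and (ii) that after the $k$ Cauchy--Schwarz steps the $2^k$ surviving terms match bijectively with the single--function seminorms $\interleave f_v\interleave_k$, carrying the correct conjugation exponent $|v|$. Step (i) is exactly where the permutation invariance of $\mu^{[k]}$ enters, and step (ii) is where its digit--switch invariance is used to identify each surviving term with the corresponding seminorm.
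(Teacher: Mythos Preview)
Your argument is the standard Cauchy--Schwarz--Gowers iteration and is correct; the symmetry facts you rely on (permutation and digit-switch invariance of $\mu^{[k]}$, so that the relatively independent joining structure is available across every coordinate direction) are established in \cite[Chapter 8]{HKbook} and in \cite{CS}, so step (i) is not a gap. The bookkeeping in step (ii) is exactly as you describe.

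There is no ``paper's own proof'' to compare against: the paper does not prove this lemma but simply cites it as \cite[Lemma~3.16]{CS}. Your proposal reproduces the classical argument that underlies that citation (and which also appears, in the $\mathbb{Z}$-action case, as \cite[Chapter~8, Proposition~4 and Chapter~9, Corollary~2]{HKbook}), so it is entirely in line with what the references do.
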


\begin{prop}\cite[Lemma 8]{Leib3}\label{leib lemma}
	Let $(X,\X,\mu,T)$ be a m.p.s. and $k \geq 1$. Then for any $f\in L^{\infty}(\mu)$
	$$\lim_{N\ra\infty}\frac{1}{N}\sum_{n=0}^{N-1}\interleave f\cdot T^{an}f\interleave_{k}^{2^{k}}\leq |a|\cdot \interleave f\interleave_{k+1}^{2^{k+1}}$$
\end{prop}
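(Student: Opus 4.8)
The plan is to reduce the statement, via the recursive definition of the Host--Kra measures, to a single application of the mean ergodic theorem, and then to insert the dilation $a$ by an elementary ``density of progressions'' estimate. Throughout I abbreviate $R:=T^{[k]}$ acting on $(X^{[k]},\mu^{[k]})$ and, for $g\in L^\infty(\mu)$, write $G(g)$ for the function $x\mapsto\prod_{v\in[k]}C^{|v|}g(x_v)$ on $X^{[k]}$, so that $\interleave g\interleave_k^{2^k}=\int_{X^{[k]}}G(g)\,d\mu^{[k]}$ by definition.

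First I would normalize: since $\interleave h\interleave_k$ is unchanged when $h$ is composed with a power of $T$, applying this to $h=f\cdot T^{an}f$ with the power $T^{-an}$ gives $\interleave f\cdot T^{an}f\interleave_k=\interleave f\cdot T^{-an}f\interleave_k$, so we may assume the (nonzero) integer $a$ is positive. Then, because each $C^{|v|}$ is multiplicative and commutes with $T$, one checks directly that $G(f\cdot T^m f)=G(f)\cdot\bigl(R^m G(f)\bigr)$, and hence
\begin{equation*}
\interleave f\cdot T^m f\interleave_k^{2^k}=\int_{X^{[k]}}G(f)\cdot\bigl(R^m G(f)\bigr)\,d\mu^{[k]},\qquad m\in\ZZ .
\end{equation*}
Averaging this over $m=an$, $0\le n\le N-1$, and invoking the mean ergodic theorem for $R^a$ on $L^2(\mu^{[k]})$ (which gives $\frac1N\sum_{n=0}^{N-1}R^{an}G(f)\to\E(G(f)|\mathcal I_{R^a})$ in $L^2$, with $\mathcal I_{R^a}$ the $R^a$-invariant $\sigma$-algebra on $X^{[k]}$), we see that the limit on the left-hand side of the Proposition exists and equals $\int_{X^{[k]}}\E(G(f)|\mathcal I_{R^a})^2\,d\mu^{[k]}$; in particular, taking $a=1$, $\lim_{M\to\infty}\frac1M\sum_{m=0}^{M-1}\interleave f\cdot T^m f\interleave_k^{2^k}=\int_{X^{[k]}}\E(G(f)|\mathcal I_{R})^2\,d\mu^{[k]}$.

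Next I would compute $\interleave f\interleave_{k+1}^{2^{k+1}}$: unwinding the definition of $\mu^{[k+1]}$ as the relatively independent self-joining of $\mu^{[k]}$ over $\mathcal I_{R}$ --- splitting the coordinates of $[k+1]$ as $(w,\eta)$ with $w\in[k]$, $\eta\in\{0,1\}$, and using $\prod_{w\in[k]}C^{|w|+1}f=\overline{G(f)}$ --- one obtains $\interleave f\interleave_{k+1}^{2^{k+1}}=\int_{X^{[k]}}\bigl|\E(G(f)|\mathcal I_{R})\bigr|^2\,d\mu^{[k]}$. Since the $a=1$ limit above is a nonnegative real number, the elementary bound $\bigl|\int h^2\bigr|\le\int|h|^2$ yields $\lim_{M\to\infty}\frac1M\sum_{m=0}^{M-1}\interleave f\cdot T^m f\interleave_k^{2^k}\le\interleave f\interleave_{k+1}^{2^{k+1}}$. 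To conclude, I use that every term $\interleave f\cdot T^m f\interleave_k^{2^k}$ is nonnegative and that $\{an:0\le n\le N-1\}\subseteq\{0,1,\dots,aN-1\}$ when $a\ge1$, so that
\begin{equation*}
\frac1N\sum_{n=0}^{N-1}\interleave f\cdot T^{an}f\interleave_k^{2^k}\ \le\ \frac1N\sum_{m=0}^{aN-1}\interleave f\cdot T^m f\interleave_k^{2^k}\ =\ a\cdot\frac1{aN}\sum_{m=0}^{aN-1}\interleave f\cdot T^m f\interleave_k^{2^k};
\end{equation*}
letting $N\to\infty$ the right-hand side tends to at most $a\cdot\interleave f\interleave_{k+1}^{2^{k+1}}=|a|\cdot\interleave f\interleave_{k+1}^{2^{k+1}}$, which is the desired inequality.

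The step that needs the most care is the pair of ``cube identities'' expressing $\interleave f\cdot T^m f\interleave_k^{2^k}$ and $\interleave f\interleave_{k+1}^{2^{k+1}}$ through conditional expectations of $G(f)$; these are the standard Host--Kra manipulations, but the bookkeeping of the conjugations $C^{|v|}$ is delicate, and for complex-valued $f$ one genuinely relies on the inequality $\bigl|\int h^2\bigr|\le\int|h|^2$ rather than on an identity. Once those two identities are in place, inserting the dilation $a$ --- the only ingredient really particular to this statement --- is a soft counting argument and presents no further difficulty.
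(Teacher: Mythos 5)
The paper gives no proof of this proposition---it is quoted verbatim from Leibman's Lemma~8---so there is no internal argument to compare yours with. Your proof is correct and follows the standard route underlying the cited result: writing $\interleave f\cdot T^mf\interleave_k^{2^k}=\int_{X^{[k]}}G(f)\cdot G(f)\circ (T^{[k]})^m\,d\mu^{[k]}$, applying the mean ergodic theorem, and unwinding the definition of $\mu^{[k+1]}$ as the relatively independent joining over $\mathcal I_{T^{[k]}}$ gives $\lim_{M\to\infty}\frac1M\sum_{m=0}^{M-1}\interleave f\cdot T^mf\interleave_k^{2^k}\le\interleave f\interleave_{k+1}^{2^{k+1}}$ (an identity for real-valued $f$), after which the factor $|a|$ comes, exactly as you argue, from nonnegativity of the terms together with $\{an:0\le n\le N-1\}\subset\{0,\dots,aN-1\}$ and the shift-invariance reduction to $a>0$. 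The one genuinely delicate point---the conjugation bookkeeping for complex $f$, where the $a=1$ statement is only an inequality $\int \E(G(f)|\mathcal I_{T^{[k]}})^2\,d\mu^{[k]}\le\int|\E(G(f)|\mathcal I_{T^{[k]}})|^2\,d\mu^{[k]}$ rather than an identity---is something you flagged and handled correctly.
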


\subsection{The Host-Kra factors}
The following definition is stated in \cite{HKbook} for ergodic $\ZZ$-actions
but easily generalizes to the context of finitely generated abelian actions. Unless stated otherwise, throughout this paper we assume $G=\ZZ^s$ by default.

\begin{de}
	\cite[Subsection 9.1]{HKbook} Let
	$\Z_k(X)$ be the $\sigma$-algebra consisting of measurable sets $B$ such that there exists a $\J_*^{[k+1]}$-measurable set $A\subset X^{[k+1]^*}$ so that up to $\mu^{[k+1]}$-measure zero it holds:
	$$X\times A=B\times X^{[k+1]^*}.$$
	Let $Z_k(X)$ be the measurable factor of $X$ w.r.t. $\Z_k(X)$. Let $\mu_k$ be the projection
	of $\mu$ w.r.t. $X\ra Z_k(X)$. The m.p.s. $(Z_k(X),\mu_k,G)$ is called the  $k$-th {\bf Host-Kra factor} of $(X,\mu,G)$. A system $(X,\mu,G)$ is called a {\bf system of order $k$} if
	$(X,\mu,G) = (Z_k(X),\mu_k,G)$. Note $Z_0(X)$ is the trivial factor and $Z_1(X)$ is the Kronecker factor of $X$.
\end{de}
\begin{rem}
	The Host-Kra factors can also be defined when $G$ is a finitely generated nilpotent action group \cite[Definition 5.9]{CS}.
\end{rem}

\begin{thm}\cite[Lemma 4.3]{HK05}
	Let $k\in\N_+$ and $f\in L^{\infty}(\mu)$. Then $\interleave f\interleave_{k}=0$  if and only if $\E(f|\Z_{k-1}(X))=0$.
\end{thm}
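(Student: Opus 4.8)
The plan is to prove both implications through the \emph{dual functions} attached to the cube measures, reducing everything to the Gowers--Cauchy--Schwarz inequality of Lemma \ref{leq|||} and to the side-$\sigma$-algebra description of $\Z_{k-1}(X)$. Disintegrating $\mu^{[k]}$ over the coordinate at $\mathbf{0}$, write $\mu^{[k]}=\int_X \delta_x\times\mu^{[k]}_x\, d\mu(x)$ with $\mu^{[k]}_x$ a probability measure on $X_*^{[k]}=X^{[k]^*}$, and set
\begin{equation*}
	\mathcal{D}_k f(x)=\int_{X_*^{[k]}}\prod_{v\in[k]^*}C^{|v|}f(x_v)\,d\mu^{[k]}_x .
\end{equation*}
Since $|\mathbf{0}|=0$, unfolding the definition of the seminorm gives the basic identity $\interleave f\interleave_k^{2^k}=\int_X f\cdot\mathcal{D}_k f\,d\mu$. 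Two elementary facts I would record first: the seminorm is conjugation invariant, $\interleave\bar f\interleave_k=\interleave f\interleave_k$ (because $\overline{C^{|v|}f}=C^{|v|}\bar f$ and $\interleave f\interleave_k^{2^k}\ge 0$), and, applying Lemma \ref{leq|||} to the $2^k$ functions $f_{\mathbf{0}}=f$ and $f_v=C^{|v|}\bar g$ for $v\ne\mathbf{0}$, one obtains the bound
\begin{equation*}
	\left|\int_X f\cdot\overline{\mathcal{D}_k g}\,d\mu\right|\le \interleave f\interleave_k\,\interleave g\interleave_k^{\,2^k-1}.
\end{equation*}

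The two structural inputs I would isolate are: (i) every dual function $\mathcal{D}_k f$ is $\Z_{k-1}(X)$-measurable; and (ii) the family $\{\mathcal{D}_k g:g\in L^\infty(\mu)\}$ spans a dense subspace of $L^2(\Z_{k-1}(X),\mu)$. For (i), note that $\mathcal{D}_k f(x)=\E_{\mu^{[k]}}\big(\prod_{v\in[k]^*}C^{|v|}f(x_v)\,\big|\,x_{\mathbf{0}}=x\big)$; the relevant structural property of the Host--Kra measures (which uses the invariance of $\mu^{[k]}$ under the cube group $\H\K^k(G)$) is that for such product integrands conditioning on the coordinate $x_{\mathbf{0}}$ agrees with conditioning on the invariant $\sigma$-algebra $\J_*^{[k]}$. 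Hence $\mathcal{D}_k f$ is simultaneously a function of $x_{\mathbf{0}}$ alone and, $\mu^{[k]}$-a.e., a $\J_*^{[k]}$-measurable function of the side coordinates $x_{[k]^*}$ --- which is exactly the relation $X\times A=B\times X^{[k]^*}$ defining $\Z_{k-1}(X)$.

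Granting (i) and (ii), both directions are short. If $\E(f\mid\Z_{k-1}(X))=0$, then since $\mathcal{D}_k f$ is $\Z_{k-1}(X)$-measurable,
\begin{equation*}
	\interleave f\interleave_k^{2^k}=\int_X f\cdot\mathcal{D}_k f\,d\mu=\int_X\E\big(f\mid\Z_{k-1}(X)\big)\cdot\mathcal{D}_k f\,d\mu=0,
\end{equation*}
so $\interleave f\interleave_k=0$. Conversely, if $\interleave f\interleave_k=0$, the bound above forces $\int_X f\cdot\overline{\mathcal{D}_k g}\,d\mu=0$ for every $g\in L^\infty(\mu)$; by (ii) the dual functions are dense in $L^2(\Z_{k-1}(X),\mu)$, so $f$ is orthogonal to every $\Z_{k-1}(X)$-measurable function and $\E(f\mid\Z_{k-1}(X))=0$.

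I expect the genuine obstacle to be input (ii). Property (i) only exhibits dual functions as \emph{particular} $\Z_{k-1}(X)$-measurable functions, whereas (ii) asserts they exhaust the factor, and this is the substantive part of the Host--Kra structure theory. The subtle point is that (ii) cannot be deduced from the seminorm itself without circularity: the naive attempt --- if $\phi\in L^2(\Z_{k-1}(X))$ is orthogonal to all dual functions then, taking $g=\bar\phi$, one gets $\interleave\phi\interleave_k=0$ and hence $\phi=0$ --- already presupposes the ``$\Rightarrow$'' direction restricted to $\Z_{k-1}(X)$. The clean way around this is to prove (ii) by induction on $k$ directly from the measure structure: disintegrate $\mu^{[k]}$ over the invariant $\sigma$-algebra $\J_*^{[k]}$, use the description of $\mu^{[k]}$ as a relatively independent joining of two copies of $\mu^{[k-1]}$ over $\mathcal{I}_{\mathcal{T}^{[k-1]}}$ together with the case $k-1$, and show that the $\sigma$-algebra generated on $X$ by $\{\mathcal{D}_k g\}$ coincides with the side-defined $\Z_{k-1}(X)$. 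Matching the $\F^k(G)$-invariant side representative of a $\Z_{k-1}(X)$-measurable function with its $x_{\mathbf{0}}$-marginal, and running the induction cleanly, is where the real work lies.
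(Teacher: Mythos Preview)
The paper does not supply a proof of this statement; it is quoted from \cite[Lemma 4.3]{HK05} as background, so there is no in-paper argument to compare against. Your approach via dual functions and the Gowers--Cauchy--Schwarz inequality of Lemma \ref{leq|||} is exactly the standard route taken in \cite{HK05} (and expounded in \cite[Chapter 9]{HKbook}), and your diagnosis that item (ii) --- density of dual functions in $L^2(\Z_{k-1})$ --- carries the substantive content is accurate.

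One clarification on (i): the claim that ``conditioning on $x_{\mathbf{0}}$ agrees with conditioning on $\J_*^{[k]}$'' is not a formal consequence of $\H\K^k(G)$-invariance alone. What is really needed is the structural fact that, under $\mu^{[k]}$, every $\J_*^{[k]}$-measurable function on $X_*^{[k]}$ is $\mu^{[k]}$-a.e.\ a function of the $\mathbf{0}$-coordinate; equivalently, that the map $B\mapsto A$ in the definition of $\Z_{k-1}$ is onto $\J_*^{[k]}$, not merely into. In the Host--Kra development this is established by the same induction on $k$ that you sketch for (ii), so (i) and (ii) are two faces of one inductive step rather than one easy fact and one hard one. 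Once that single structural lemma is in hand, both directions of the theorem follow exactly as you wrote.
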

Define $\interleave f\interleave_{\infty}:=\sup_{k\in\N}\interleave f\interleave_{k}$, and it is clear that $\interleave \cdot\interleave_{\infty}$ is also a seminorm. 
Each system of order $\infty$ is an inverse limit of systems of finite orders, and for every system we have $\Z_{\infty}=\bigvee_{k\in\N}\Z_k$.
\begin{thm}\cite[Theorem 10.1]{HK05}\label{structure theorem}
	The $k$-th Host-Kra factor of an ergodic m.p.s. is isomorphic to a minimal $k$-step pro-nilsystem as m.p.s.
\end{thm}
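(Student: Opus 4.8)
The plan is to argue by induction on $k$, using the seminorm characterization of the Host--Kra factors (the preceding theorem: $\interleave f\interleave_{k}=0$ iff $\E(f\mid\Z_{k-1}(X))=0$) to reduce everything to the study of \emph{systems of order $k$}, i.e. those with $X=Z_k(X)$. Indeed $Z_k(X)$ is itself such a system, so it suffices to prove that every ergodic system of order $k$ is a minimal $k$-step pro-nilsystem. For the base case $k=1$, the factor $Z_1(X)$ is the Kronecker factor, which by classical spectral theory is isomorphic to a rotation on a compact abelian group; such a rotation is an inverse limit of rotations on finite-dimensional tori, which is precisely a $1$-step pro-nilsystem.

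For the inductive step I assume the result through order $k-1$ and let $X$ be a system of order $k$. The first structural task is to show that the extension $\pi\colon X\to Y:=Z_{k-1}(X)$ is a \emph{compact abelian group extension}: there is a compact abelian group $U$ acting freely on $X$ by measure-preserving transformations commuting with $T$, with $Y=X/U$, and $\pi$ is given by a measurable cocycle $\rho\colon Y\to U$ over the base. This is extracted from the cube structure: one analyzes the invariant $\sigma$-algebra $\mathcal{J}_*^{[k+1]}$ and the side $\sigma$-algebras of $\mu^{[k+1]}$ to produce the group $U$ of vertical rotations, and the relative independence in the definition of $\mu^{[k+1]}$ forces the extension to be isometric and then abelian. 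By the inductive hypothesis $Y$ is a $(k-1)$-step pro-nilsystem.

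Second, I reduce to Lie fibers. Writing $U$ as an inverse limit of finite-dimensional tori and $Y$ as an inverse limit of $(k-1)$-step nilmanifolds, $X$ becomes an inverse limit of systems of order $k$ whose top group is a torus $\mathbb{T}^m$ and whose base is a genuine nilmanifold $H'/\Gamma'$; since an inverse limit of $k$-step nilsystems is by definition a $k$-step pro-nilsystem, it suffices to treat this Lie case. The central point is then to show that the defining cocycle $\rho\colon Y\to\mathbb{T}^m$ satisfies a higher-order \emph{Conze--Lesigne-type functional equation}: for each structure-group translation $s$ of the base nilmanifold, the derivative $\rho(s\cdot)\rho(\cdot)^{-1}$ is cohomologous to a constant through a transfer function that is itself a lower-order object. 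Unwinding the hypothesis $X=Z_k(X)$ through the seminorm characterization and the identity $\Z_\infty=\bigvee_{k\in\N}\Z_k$ shows that this equation is equivalent to the statement that the iterated cube-differences of $\rho$ along the $k$ face directions are coboundaries, which is the functional equation encoding nilpotence of step $k$.

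With the functional equation in hand, the final and hardest step is to manufacture the nilpotent structure group. I define $\mathcal{G}(X)$ to be the group of measure-preserving transformations of $X$ that project to a structure-group translation of $Y$ and are compatible with $\rho$, together with the vertical translations by $U$; the functional equation is exactly what guarantees that each base translation lifts. One then verifies that $\mathcal{G}(X)$ is a finite-dimensional nilpotent Lie group of step $k$ --- its lower central series terminating at level $k{+}1$ because commutators of lifts land, via the cocycle identity, in the central vertical group $U$ after $k$ steps --- that it acts transitively on $X$ with $T\in\mathcal{G}(X)$, and that the stabilizer $\Gamma$ of a base point is discrete and cocompact. Hence $X\cong\mathcal{G}(X)/\Gamma$ is a $k$-step nilmanifold and $(X,\mu,T)$ is a $k$-step nilsystem; ergodicity together with Theorem \ref{equi} upgrades this to minimality, and undoing the inverse-limit reduction shows a general system of order $k$ is a minimal $k$-step pro-nilsystem. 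The main obstacle is precisely this last step: constructing $\mathcal{G}(X)$, proving it is a \emph{Lie} group of the correct nilpotency class, and establishing transitivity and cocompactness, where the cube-group machinery $\H\K^k(G)$ and the exact form of the functional equation must be combined, and which is by far the most delicate part of the argument.
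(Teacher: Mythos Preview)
The paper does not give its own proof of this statement; it is quoted as a background result from \cite{HK05}, with the Remark immediately following it pointing to the extensions in \cite{GL19} and \cite{CS}. So there is nothing in the paper to compare your proposal against. That said, your outline is essentially the strategy of the original Host--Kra argument: induct on $k$, identify $Z_k\to Z_{k-1}$ as a compact abelian group extension via the cube structure, reduce by inverse limits to Lie fibers over a genuine nilmanifold base, derive a Conze--Lesigne-type functional equation from the order-$k$ hypothesis, and assemble the nilpotent structure group $\mathcal{G}(X)$ acting transitively with discrete cocompact stabilizer.

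Two small inaccuracies: the Kronecker factor is a rotation on a compact abelian group, hence an inverse limit of compact abelian \emph{Lie} groups, not necessarily tori (finite factors can occur); and the identity $\Z_\infty=\bigvee_{k}\Z_k$ plays no role in the finite-$k$ statement. More substantively, your last paragraph correctly flags the hardest step but understates what is needed: in \cite{HK05} one must first reduce (by yet another inverse-limit argument) to \emph{toral} systems of order $k$ --- those in which every structure group along the tower $Z_1\leftarrow Z_2\leftarrow\cdots\leftarrow Z_k$ is a finite-dimensional torus --- before $\mathcal{G}(X)$ can be shown to be a Lie group, and one must lift \emph{all} structure-group translations of the base (not just the transformation $T$) through the cocycle equation. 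Your sketch waves at this but does not supply it; since the paper itself only cites the result, that is not a defect relative to the paper, but it is where a genuine proof would have to do the real work.
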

\begin{rem}
	Theorem \ref{structure theorem} is called the structure theorem. Gutman and Lian \cite[Theorem 5.3]{GL19} proved this theorem in the case of finitely generated abelian group actions. Recently, Candela and Szegedy \cite[Theorem 1.2]{CS} extended it to the case of finitely generated nilpotent group actions. It is clear $k$ can be taken as $\infty$.
\end{rem}

Actually, the lemmas and theorems stated in \cite[Section 9, Section 11]{HKbook} can also be generalized easily to finitely generated abelian actions. Then $(X,\mu,G)$ is a system of order $k$ if the seminorm  $\interleave\cdot\interleave_{k+1}$ is a norm, and $Z_k(X)$ is the maximal factor of $X$ that is a system of order $k$.\footnote{It follows from the generalizations of  \cite[Chapter 9, Theorem 15]{HKbook} and \cite[Chapter 9, Theorem 18]{HKbook}.}
Since the $k$-step ergodic pronilsystem is a system of order $k$,\footnote{It follows from the generalizations of  \cite[Chapter 12]{HKbook} and the key point of this fact is that \cite[Chapter 11, Theorem 17]{HKbook} holds when $G$ is a finitely generated abelian group action acting ergodicly on any $k$-step nilmanifold.}  we have that the $k$-th Host-Kra factor of an ergodic system $(X,\mu, G)$ is the maximal factor of $X$ which is a $k$-step pronilsystem as m.p.s.\footnote{This holds since the $k$-th Host-Kra factor is the maximal factor of a system of order $k$ and Theorem \ref{structure theorem} states that any $k$-step pro-nilsystem is a system of order $k$.}
So we also refer to the $k$-th Host-Kra factor as the {\bf maximal $k$-step measurable pro-nilfactor}.

\medskip
The next theorem follows from \cite[Theorem 1.2]{CS} which is an extension of the structure theorem of Host and Kra. 
\begin{thm}
	\label{SC}
	Let $(X,\mu,G)$ be an ergodic system. Then for the $k$-th Host-Kra factor $\Z_k$ and the factor map $\phi_k:X\ra Z_k$, we have
	\begin{enumerate}
		\item $(Z_k,\Z_k,\mu_k,G)$ is the inverse limit of a sequence of $k$-step nilsystems.
		
		\item For every function $f\in L^{\infty}(\mu)$, $\interleave f-\E(f|\Z_k)\circ \phi_k \interleave_{k+1}=0$.
	\end{enumerate}
\end{thm}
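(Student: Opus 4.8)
The plan is to read off both assertions from the structure theorem together with the characterization of the Host--Kra seminorms by conditional expectation onto the Host--Kra factors, each used in its finitely generated abelian form.

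For item (1): by construction the $k$-th Host--Kra factor $(Z_k,\Z_k,\mu_k,G)$ is a system of order $k$, indeed the maximal factor of $X$ with this property. Hence Theorem \ref{structure theorem}, in the $G=\ZZ^s$ version due to Gutman--Lian \cite[Theorem 5.3]{GL19} and Candela--Szegedy \cite[Theorem 1.2]{CS}, applies and shows that $(Z_k,\Z_k,\mu_k,G)$ is isomorphic as a m.p.s.\ to a $k$-step pro-nilsystem, i.e.\ to an inverse limit of $k$-step nilsystems. Since $(X,\mu,G)$ is ergodic, so is every one of its factors; in particular each nilsystem appearing in this inverse limit is ergodic, and by Theorem \ref{equi} an ergodic $k$-step nilsystem is minimal. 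Thus $(Z_k,\Z_k,\mu_k,G)$ is an inverse limit of a sequence of minimal $k$-step nilsystems, which is (1).

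For item (2): set $h:=\E(f|\Z_k)\circ\phi_k\in L^\infty(\mu)$, which is precisely the function $\E(f|\Z_k(X))$ regarded on $X$; in particular $h$ is $\Z_k(X)$-measurable, so $\E(h|\Z_k(X))=h$ and therefore $\E(f-h|\Z_k(X))=\E(f|\Z_k(X))-h=0$. Now invoke the seminorm dichotomy \cite[Lemma 4.3]{HK05}, with $k+1$ in place of $k$, in its finitely generated abelian version (which follows from the routine extensions of \cite[Section 9]{HKbook} recalled above): for any $g\in L^\infty(\mu)$ one has $\interleave g\interleave_{k+1}=0$ if and only if $\E(g|\Z_k(X))=0$. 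Applying this to $g=f-h$ gives $\interleave f-\E(f|\Z_k)\circ\phi_k\interleave_{k+1}=0$, which is (2).

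There is no serious obstacle here; the theorem is essentially a repackaging of \cite[Theorem 1.2]{CS}. The only point that needs attention --- and it is already settled in the cited works --- is that both ingredients, the structure theorem and the seminorm/conditional-expectation dichotomy, hold verbatim for $G=\ZZ^s$ rather than only for a single transformation; this is exactly what \cite[Theorem 5.3]{GL19}, \cite[Theorem 1.2]{CS}, and the generalizations of \cite[Section 9]{HKbook} supply.
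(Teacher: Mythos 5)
Your proposal is correct and matches the paper's treatment: the paper offers no independent argument for Theorem \ref{SC}, deriving it directly from \cite[Theorem 1.2]{CS} together with the finitely generated abelian generalizations of the Host--Kra machinery, and your two steps (the structure theorem for the maximal factor of order $k$ for item (1), and the seminorm/conditional-expectation dichotomy of \cite[Lemma 4.3]{HK05} applied to $f-\E(f|\Z_k)\circ\phi_k$ for item (2)) are exactly the natural unpacking of that citation. No gaps.
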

\begin{cor}\label{cor}
	Let $(X,\mu,G)$ be an ergodic system and $d\in\N$. Let $f\in L^{\infty}(\mu)$. Then for every $\delta>0$, there exists a $(k-1)$-step ergodic nilsystem $(Y_{\delta},\nu_{\delta},G)$, a (measure theoretic) factor map $p:X\ra Y_{\delta}$, and a continuous function $h$ on $Y$ such that $\interleave f-h\circ p \interleave_{k}<\delta$.
\end{cor}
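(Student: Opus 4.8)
The plan is to construct $Y_\delta$ out of the inverse-limit presentation of the Host--Kra factor $Z_{k-1}$, and to exploit the fact that on $L^\infty$-bounded sets the seminorm $\interleave\cdot\interleave_{k}$ is dominated by an interpolation of $\|\cdot\|_\infty$ and $\|\cdot\|_{L^2(\mu)}$; thus it will be enough to approximate $f$ in $L^2$ by a function pulled back from a finite-step nilfactor.

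First I would replace $f$ by $g:=\E(f\mid\Z_{k-1})\in L^\infty(\mu_{k-1})$: by Theorem~\ref{SC}(2) one has $\interleave f-g\circ\phi_{k-1}\interleave_{k}=0$ and $\|g\|_\infty\le\|f\|_\infty$, so it suffices to approximate $g\circ\phi_{k-1}$ in $\interleave\cdot\interleave_{k}$ by some $h\circ p$. By Theorem~\ref{SC}(1), $Z_{k-1}$ is the inverse limit of a sequence of $(k-1)$-step ergodic nilsystems $(Y_i,\nu_i,G)$ with factor maps $\pi_i\colon Z_{k-1}\to Y_i$, and the $\sigma$-algebras $\mathcal A_i:=\pi_i^{-1}\B(Y_i)$ increase to $\Z_{k-1}$ modulo $\mu_{k-1}$-null sets. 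The increasing martingale theorem then gives $\E(g\mid\mathcal A_i)\to g$ in $L^2(\mu_{k-1})$; writing $\E(g\mid\mathcal A_i)=g_i\circ\pi_i$ with $g_i\in L^\infty(\nu_i)$, $\|g_i\|_\infty\le\|f\|_\infty$, I would fix $i$ large enough that $\|g-g_i\circ\pi_i\|_{L^2(\mu_{k-1})}$ is as small as desired, and then pick a continuous $h$ on $Y_i$ with $\|h\|_\infty\le\|f\|_\infty$ and $\|g_i-h\|_{L^2(\nu_i)}$ small (approximate $g_i$ in $L^2$ and truncate). Setting $Y_\delta:=Y_i$, $\nu_\delta:=\nu_i$ and $p:=\pi_i\circ\phi_{k-1}$, which is a measure-theoretic factor map of $X$ onto the $(k-1)$-step ergodic nilsystem $Y_\delta$, the function $u:=(g-h\circ\pi_i)\circ\phi_{k-1}$ satisfies $\|u\|_\infty\le 2\|f\|_\infty$ and $\|u\|_{L^2(\mu)}$ as small as we wish.

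To conclude, I would use that every one-dimensional marginal of $\mu^{[k]}$ equals $\mu$: the generalized Hölder inequality (all $2^k$ exponents equal to $2^k$) gives $\interleave u\interleave_{k}\le\|u\|_{L^{2^k}(\mu)}$, and interpolating $L^{2^k}$ between $L^2$ and $L^\infty$ yields $\interleave u\interleave_{k}\le\|u\|_\infty^{1-2^{1-k}}\|u\|_{L^2(\mu)}^{2^{1-k}}$. Since $\interleave\cdot\interleave_{k}$ is a seminorm,
\[
\interleave f-h\circ p\interleave_{k}\le\interleave f-g\circ\phi_{k-1}\interleave_{k}+\interleave u\interleave_{k}=\interleave u\interleave_{k}\le(2\|f\|_\infty)^{1-2^{1-k}}\|u\|_{L^2(\mu)}^{2^{1-k}},
\]
which is $<\delta$ once $\|u\|_{L^2(\mu)}$ — hence the two approximation errors above — has been chosen small in terms of $\delta$, $k$ and $\|f\|_\infty$. (The parameter $d$ in the statement plays no role.)

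Most of the substance is carried by Theorem~\ref{SC}, so the corollary is largely bookkeeping. The points that need a little care are the interface with the inverse limit — that the $\mathcal A_i$ may indeed be taken increasing with $\bigvee_i\mathcal A_i=\Z_{k-1}$, and that the conditional expectations genuinely descend to $L^\infty$ functions on the finite-step nilfactors $Y_i$, so that martingale convergence applies — together with the elementary seminorm estimate $\interleave u\interleave_{k}\le\|u\|_\infty^{1-2^{1-k}}\|u\|_{L^2}^{2^{1-k}}$, which is exactly what makes $L^2$-approximation sufficient. I do not expect any genuine obstacle beyond these.
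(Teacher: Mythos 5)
Your argument is correct and is essentially the derivation the paper intends (it states the corollary as an immediate consequence of Theorem~\ref{SC} without writing out a proof): decompose off $\E(f\mid\Z_{k-1})$ using Theorem~\ref{SC}(2), approximate along the inverse limit of $(k-1)$-step nilsystems from Theorem~\ref{SC}(1) via martingale convergence and density of continuous functions, and control the error in $\interleave\cdot\interleave_{k}$ by the standard bound $\interleave u\interleave_{k}\le\|u\|_{L^{2^k}(\mu)}$ (valid since every one-dimensional marginal of $\mu^{[k]}$ is $\mu$) interpolated between $L^2$ and $L^\infty$. No gaps; the points you flag (increasing $\sigma$-algebras exhausting $\Z_{k-1}$, descent of conditional expectations to the finite stages) are exactly the routine checks needed.
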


The following theorem is a generalization of \cite[Theorem 7.1]{HK09}. The proof is similar.
\begin{thm}\label{219}
	Let $(X,\mu,\ZZ^s)$ be a strictly ergodic system and let $\{\psi(n_1,\dots,n_s):n_1,\dots,n_s\in\ZZ^s\}$ be a $k$-step nilsequence. If the factor map $\phi_{k-1}:X\ra Z_{k-1}$ is continuous, then for any bounded continuous function $f\in \C(X)$ and any $x\in X$, the average
	\begin{equation*}
		\dfrac{1}{N^s}\sum_{n_1,\dots,n_s\in[0,N-1]}  \psi(n_1,\dots,n_s)f(T_1^{n_1}\dots T_s^{n_s} x)
	\end{equation*}
	exists. 
\end{thm}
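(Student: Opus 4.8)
The plan is to split $f$ into a part lying continuously over a $k$-step nilsystem and a remainder small in the $(k+1)$-st Host-Kra seminorm, to handle the structured part by unique ergodicity of an auxiliary nilsystem and the remainder by an iterated van der Corput estimate valid at \emph{every} point, and to conclude with a $\limsup=\liminf$ argument. Write $T^{\mathbf n}:=T_1^{n_1}\cdots T_s^{n_s}$ for $\mathbf n\in\ZZ^s$ and $A_N(x):=N^{-s}\sum_{\mathbf n\in[0,N-1]^s}\psi(\mathbf n)f(T^{\mathbf n}x)$. First I would reduce to the case where $\psi$ is a \emph{basic} $k$-step nilsequence: every $k$-step nilsequence is a uniform limit of basic ones and $|A_N(x)-A'_N(x)|\le\|\psi-\psi'\|_\infty\|f\|_\infty$ for the corresponding averages, so once the theorem holds for basic nilsequences the $\limsup$ and $\liminf$ of $\{A_N(x)\}_N$ must agree. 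Hence assume $\psi(\mathbf n)=F(R_1^{n_1}\cdots R_s^{n_s}b)$ for a $k$-step nilsystem $(W=H/\Lambda,m_W,\langle R_i\rangle)$, $F\in\C(W)$, $b\in W$; replacing $W$ by the orbit closure of $b$ — a sub-nilmanifold on which the $\ZZ^s$-action is minimal, by Leibman's theorem on orbit closures in nilmanifolds — we may take $(W,\langle R_i\rangle)$ minimal and hence uniquely ergodic by Theorem~\ref{equi}.

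\emph{The structured part.} Fix $\delta>0$. Realize the maximal $k$-step pro-nilfactor $Z_k(X)$ as a topological inverse limit of $k$-step nilsystems $Z_k^{(i)}$ with continuous projections $\pi_i$ (Theorem~\ref{SC}(1)). Using Theorem~\ref{SC}(2), which gives $\interleave f-\E(f|\Z_k)\circ\phi_k\interleave_{k+1}=0$, and the elementary bound $\interleave\cdot\interleave_{k+1}\le\|\cdot\|_{L^{2^{k+1}}(\mu)}$, I would approximate $\E(f|\Z_k)$ in $L^{2^{k+1}}(\mu_k)$ by $h\circ\pi_i$ with $h\in\C(Z_k^{(i)})$ and put $g_\delta:=h\circ\pi_i\circ\phi_k$, so that $g_\delta\in\C(X)$ and $\interleave f-g_\delta\interleave_{k+1}<\delta$. (Here one uses continuity of the Host-Kra factor map onto the maximal $k$-step pro-nilfactor; in the settings where the theorem is applied — CF-Nil systems — every such factor map is continuous.) Writing $f=g_\delta+r_\delta$, the equivariance $\phi_k(T^{\mathbf n}x)=S^{\mathbf n}\phi_k(x)$, with $S_j$ the translations of $Z_k^{(i)}$, gives
\[
\psi(\mathbf n)\,g_\delta(T^{\mathbf n}x)=(F\otimes h)\bigl((R_1\times S_1)^{n_1}\cdots(R_s\times S_s)^{n_s}\,(b,\pi_i\phi_k(x))\bigr).
\]
Now $W\times Z_k^{(i)}$ is a $k$-step nilmanifold with commuting translations $R_j\times S_j$, and the orbit closure $V$ of $(b,\pi_i\phi_k(x))$ is a minimal sub-nilmanifold (Leibman), hence uniquely ergodic (Theorem~\ref{equi}); the uniform ergodic theorem along the F\o lner sequence $\{[0,N-1]^s\}$ then shows $N^{-s}\sum_{\mathbf n\in[0,N-1]^s}\psi(\mathbf n)g_\delta(T^{\mathbf n}x)$ converges, to $\int_V F\otimes h\,dm_V$.

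\emph{The remainder, and conclusion.} Next I would prove, by induction on $k$, the pointwise estimate: for a uniquely ergodic system, any basic $k$-step nilsequence $\psi$, any $\varphi\in\C(X)$ and any $x$, $\limsup_N|N^{-s}\sum_{\mathbf n\in[0,N-1]^s}\psi(\mathbf n)\varphi(T^{\mathbf n}x)|\le C_\psi\interleave\varphi\interleave_{k+1}$, with $C_\psi$ independent of $x$. For $k=0$, $\psi\equiv c$ and unique ergodicity gives $N^{-s}\sum\varphi(T^{\mathbf n}x)\to\int\varphi\,d\mu$, of modulus $|c|\cdot\interleave\varphi\interleave_1$. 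For the step, apply the multidimensional van der Corput inequality to $a_{\mathbf n}=\psi(\mathbf n)\varphi(T^{\mathbf n}x)$ and note $a_{\mathbf n+\mathbf h}\overline{a_{\mathbf n}}=(\Delta_{\mathbf h}\psi)(\mathbf n)\,\varphi_{\mathbf h}(T^{\mathbf n}x)$, where $\varphi_{\mathbf h}=(\varphi\circ T^{\mathbf h})\overline\varphi\in\C(X)$ and $\Delta_{\mathbf h}\psi(\mathbf n)=\psi(\mathbf n+\mathbf h)\overline{\psi(\mathbf n)}$ is a basic $(k-1)$-step nilsequence on a nilmanifold independent of $\mathbf h$ (the standard lemma that the multiplicative derivative of a $k$-step nilsequence is a $(k-1)$-step one); the inductive hypothesis bounds the inner limit by $C'_{k-1}\interleave\varphi_{\mathbf h}\interleave_k$, and averaging over $\mathbf h\in[0,H-1]^s$ via the $\ZZ^s$-analogue of Proposition~\ref{leib lemma} — namely $\limsup_H H^{-s}\sum_{\mathbf h\in[0,H-1]^s}\interleave(\varphi\circ T^{\mathbf h})\overline\varphi\interleave_k^{2^k}\le\interleave\varphi\interleave_{k+1}^{2^{k+1}}$ — together with a power-mean inequality closes the induction. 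Applying the estimate with $\varphi=r_\delta$ gives $\limsup_N|A_N(x)-N^{-s}\sum_{\mathbf n}\psi(\mathbf n)g_\delta(T^{\mathbf n}x)|\le C_\psi\delta$; combined with the structured part, $\limsup_{N,M\to\infty}|A_N(x)-A_M(x)|\le 2C_\psi\delta$ for every $\delta>0$, whence $\{A_N(x)\}_N$ is Cauchy and converges.

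\emph{Main obstacle.} The hard part is the pointwise seminorm estimate above — upgrading the usual a.e.\ Host-Kra/Wiener-Wintner inequality to one holding at \emph{every} point of a strictly ergodic system. Its ingredients (the $\ZZ^s$ van der Corput inequality along $\{[0,N-1]^s\}$; the lemma that the multiplicative derivative of a $k$-step nilsequence is $(k-1)$-step, uniformly in the increment, which makes the induction on $k$ legitimate; and the convergence $N^{-s}\sum_{\mathbf n}\Phi(T^{\mathbf n}x)\to\int\Phi\,d\mu$ for \emph{every} $x$ — i.e.\ unique ergodicity of $X$, and via Theorem~\ref{equi} of the auxiliary orbit-closure nilmanifolds, substituting for the pointwise ergodic theorem) are each routine, but assembling them while tracking that all constants are independent of $x$ and of the increments is delicate. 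A secondary technicality is producing the continuous approximant $g_\delta$ in the right seminorm from continuity of the Host-Kra factor maps alone.
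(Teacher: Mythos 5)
The paper itself offers no argument beyond ``this is a generalization of [HK09, Theorem 7.1]; the proof is similar'': the intended route is the one the authors spell out in Proposition \ref{everywhere CFNil infty} --- decompose the single sequence $(f(T_1^{n_1}\cdots T_s^{n_s}x))$ as a nilsequence plus a term with small uniformity seminorm (this is where continuity of the Host--Kra projection enters), pair it with $\psi$ via the dual-norm inequality of [HK09, Theorem 2.13], and use that a product of nilsequences is a nilsequence whose averages converge. Your plan is genuinely different: you decompose $f$ inside the system ($f=g_\delta+r_\delta$ with $g_\delta$ continuous over a finite-stage nilfactor), handle the structured part by unique ergodicity of an orbit closure in a product nilsystem (that part is fine), and replace the citation of the dual-norm theorem by a self-contained van der Corput induction. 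The trade is that you must reprove, at every point, the analytic content that the paper outsources to HK09 --- and that is where your argument has a real gap.

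The induction rests on ``the standard lemma that the multiplicative derivative of a $k$-step nilsequence is a $(k-1)$-step one'', and that statement is false for a general basic (even smooth) nilsequence. If $\psi(\mathbf n)=F(g^{\mathbf n}b)$ and $F$ mixes different vertical frequencies, the cross terms in $\psi(\mathbf n+\mathbf h)\overline{\psi(\mathbf n)}$ remain genuinely $k$-step: take $F=1+F_1$ with $F_1$ a nontrivial vertical character, so that the derivative contains the term $F_1(g^{\mathbf n+\mathbf h}b)$, which is not a $(k-1)$-step nilsequence. The correct version requires first reducing, by uniform approximation, smoothing and vertical Fourier decomposition, to the case where $F$ is a nontrivial vertical character (the trivial-frequency component already factors through the $(k-1)$-step quotient); only then does $F\otimes\overline{F}$ descend to the $(k-1)$-step nilmanifold obtained by quotienting $G\times G$ by the diagonal of $G_k$, with bounds uniform in $\mathbf h$, and the constants must then be summed over the Fourier series, so linearity of the estimate in $\psi$ and a smooth norm of $F$ have to be tracked. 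Without this reduction your induction on $k$ does not close, and with it the ``routine'' step becomes the main proof, i.e., essentially a reproof of the HK09 duality. A second, smaller point: your argument uses continuity of $\phi_k$ and approximates $\E(f|\Z_k)$ in the $(k+1)$-st seminorm, whereas the statement as printed assumes only continuity of $\phi_{k-1}$; the level you use is the one the argument (and HK09's original theorem) genuinely needs, and it is available in the CF-Nil($\infty$) systems where the theorem is applied, but the mismatch with the stated hypothesis should be confronted explicitly rather than left in a parenthesis.
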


\medskip
\subsection{Maximal topological pro-nilfactors}

\begin{de}
	Let $(X,G)$ be a t.d.s. with $G$ abelian and $k\in \N$. The points $x,y\in X$ are called to be {\bf  regionally proximal of order $k$}, denoted $(x,y)\in \RP^{[k]}(X)$, if there are sequences $f_i\in\F^{k}(G), x_i,y_i\in X, a_*\in X^{[k]}_*$ such that
	$$\lim_{i\ra\infty}(f_i x_i^{[k]},f_i y_i^{[k]})=(x,a_*,y,a_*).$$
\end{de}

Observe that $\RP^{[k+1]}\subset\RP^{[k]}$ for $k\geq 1$. Denote $\RP^{[\infty]}=\cap_{k\in\N}\RP^{[k]}$.
It follows that for any minimal system $(X,G)$ with $G$ abelian and $k\in\N\cup\{\infty\}$, $\RP^{[k]}(X)$ is a closed $G$-invariant equivalence relation \cite{DDMSY,GGY,SY}.
\begin{de}
	Let $k\in\N\cup\{\infty\}$. A t.d.s. $(X,T)$ is called {\bf  a topological system of order $k$} if $\RP^{[k]}(X)=\Delta_X$.
\end{de}
A minimal t.d.s. is a topological system of order $k$ if and only if it is isomorphic to a minimal $k$-step pro-nilsystem as t.d.s.
A minimal system is an $\infty$-step pro-nilsystem if and only if it is an
inverse limit of minimal nilsystems \cite{DDMSY}.
\begin{de}
	Let $(X,G)$ be a minimal t.d.s. with $G$ abelian and $k\geq 1$. Define the {\bf  maximal $k$-step topological pronilfactor } $W_k(X)=X/\RP^{[k]}(X)$. Denote the associated map by $\pi_k:X\ra W_k(X)$.\footnote{For any factor map $\phi:(X,G)\ra(Y,G)$ where $(Y,G)$ is a topological system of order $k$, there exists a unique $\varphi: (W_k(X),G)\ra (Y,G)$ such that $\phi=\varphi\circ\pi_k$.}
\end{de}

\medskip

\subsection{CF-Nil systems.}\label{sec2.7}

\begin{de}
	For $k\geq 0$, we say $(X,T)$ is a {\bf  CF-Nil($k$)} system if $(X,T)$ is strictly ergodic and $(Z_k(X),\Z_k(X),\mu_k,T)$
	is isomorphic to the maximal topological $k$-step pronilfactor  $(W_k(X),\omega_k,T)$
	as m.p.s.
	where $\mu_k$ and $\omega_k$ are the images of the unique invariant measure of $(X,T)$ under the measurable, respectively topological canonical $k$-th projections.
\end{de}
By convention $Z_0(X)=W_0(X)=\{\bullet\}$. Thus every strictly ergodic $(X,T)$ is CF-Nil($0$). Gutman and Lian \cite{GL} provided the correlation between CF-Nil($k$) systems and topological Wiener-Wintner Theorem.

\begin{thm}\label{GLThmC}\cite[Theorem A]{GL}
	Let $(X,T)$ be a minimal t.d.s. Then the following are equivalent for any $k\geq 0$.
	\begin{enumerate}
		\item $(X,T)$ is CF-Nil($k$).
		
		\item For any $k$-step nilsequence $\{\psi(n):n\in\ZZ\}$, any continuous function $f\in \C(X)$ and any $x\in X$,
		\begin{equation*}
			\lim_{N\ra\infty}\dfrac{1}{N}\sum_{n=1}^{N}\psi(n) f(T^{n}x)
		\end{equation*}
		exists.
	\end{enumerate}
\end{thm}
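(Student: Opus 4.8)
\emph{Proof proposal.} The statement is an equivalence, and the plan is to treat the two implications by quite different means: $(1)\Rightarrow(2)$ reduces, after a short structural observation, to Theorem~\ref{219}, whereas $(2)\Rightarrow(1)$ is the genuine content and requires turning pointwise‑everywhere information into a statement about the \emph{topological} $k$-step pro-nilfactor.

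For $(1)\Rightarrow(2)$, the point is that CF-Nil$(k)$ forces the $(k-1)$-st Host--Kra factor of $X$ to be carried by a \emph{continuous} factor map. First, $W_k(X)=X/\RP^{[k]}(X)$ is a minimal $k$-step topological pro-nilsystem, and for such a system the maximal measurable and the maximal topological $(k-1)$-step pro-nilfactors coincide, as measure preserving systems, through a continuous factor map — this is the standard fact that a minimal nilmanifold $H/\Gamma$ has $Z_{k-1}=W_{k-1}=H/\overline{H_k\Gamma}$, passed to inverse limits. Combining this with $W_{k-1}(X)=W_{k-1}(W_k(X))$, with the hypothesis $W_k(X)\cong Z_k(X)$, and with $Z_{k-1}(Z_k(X))=Z_{k-1}(X)$, one obtains that $\phi_{k-1}\colon X\to Z_{k-1}(X)$ is realised, up to a $\mu$-null set, by the continuous map $X\xrightarrow{\pi_k}W_k(X)\to W_{k-1}(X)$. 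Theorem~\ref{219} with $s=1$ then applies directly and gives $(2)$.

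For $(2)\Rightarrow(1)$, I would first extract strict ergodicity by feeding in the constant nilsequence $\psi\equiv1$: then $A_Nf(x)=\tfrac1N\sum_{n=1}^{N}f(T^nx)$ converges at every $x$ for every $f\in\C(X)$, the limit $F_f$ is $T$-invariant and of Baire class $1$, hence continuous on a dense $G_\delta$; but a $T$-invariant function continuous at even one point is constant by minimality, so every $F_f$ is constant and $(X,T)$ is uniquely ergodic. It remains to prove $W_k(X)\cong Z_k(X)$. Since $W_k(X)$ is a topological, hence measure-theoretic, system of order $k$, while $Z_k(X)$ is the maximal measure-theoretic order-$k$ factor of $X$, one always has $\mathcal W_k(X)\subseteq\mathcal Z_k(X)$, and the task is the reverse inclusion. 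Here the plan is: for $f\in\C(X)$ and a $k$-step nilsequence $\psi$, the everywhere-defined limit $L^{\psi}f(x)=\lim_N\tfrac1N\sum_{n=1}^N\psi(n)f(T^nx)$ is, for $\mu$-a.e.\ $x$, a function of $\phi_k(x)$, and by the measure-theoretic higher-order Wiener--Wintner theorem (in the spirit of \cite[Theorem~7.1]{HK09} and its nilsequence refinements) these limits, as $f$ and $\psi$ vary, generate $\mathcal Z_k(X)$; on the other hand each $L^{\psi}f$ is of Baire class $1$ and satisfies $L^{\psi}f\circ T=L^{\sigma\psi}f$ for a shift $\sigma\psi$ of $\psi$, and an orbit-closure argument based on minimality and the rigidity of nilsystems should upgrade each $L^{\psi}f$ to a \emph{continuous} function factoring through $W_k(X)$. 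Combining these yields $\mathcal Z_k(X)\subseteq\mathcal W_k(X)$, hence CF-Nil$(k)$.

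The main obstacle is precisely that last upgrade in $(2)\Rightarrow(1)$: passing from ``$L^{\psi}f$ is an everywhere-defined Baire-class-$1$ function that is $\mu$-a.e.\ a function of $\phi_k$'' to ``$L^{\psi}f$ is continuous and factors through $W_k(X)$''. This is where pointwise (not merely almost-everywhere) convergence is genuinely used. Already the case $k=1$ is indicative: if the averages $\tfrac1N\sum_{n=1}^{N}\lambda^{-n}f(T^nx)$ converge at every $x$, their $\mu$-a.e.\ limit $\langle f,g\rangle g$ (with $g$ the eigenfunction for a measurable eigenvalue $\lambda$) has an everywhere-defined Baire-$1$ representative, which a minimality argument forces to be continuous; hence $\lambda$ is a continuous eigenvalue and $g$ lives on $W_1(X)=X_{eq}$. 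Carrying out the higher-order analogue — exploiting that a measure-theoretic isomorphism of (pro-)nilsystems is automatically topological and that the ``eigenfunction-like'' objects $L^{\psi}f$, once defined at every point of a minimal system, are forced onto the maximal topological pro-nilfactor — uniformly over all $k$-step nilsequences $\psi$, is the delicate part; the remaining ingredients are either quoted from the preliminaries (Theorems~\ref{structure theorem} and~\ref{219}, Corollary~\ref{cor}) or routine.
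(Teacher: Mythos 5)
This theorem is not proved in the paper at all: it is quoted verbatim as \cite[Theorem A]{GL}, and the only argument the authors supply is the remark that the implication $(1)\Rightarrow(2)$ follows from Theorem~\ref{219}. Your treatment of $(1)\Rightarrow(2)$ is in the same spirit as that remark and looks essentially sound: CF-Nil($k$) together with the fact that for a minimal pro-nilsystem the measurable and topological $(k-1)$-step pro-nilfactors coincide (continuously) does make $\phi_{k-1}$ continuous up to a null set, and Theorem~\ref{219} then applies. Your extraction of unique ergodicity from $(2)$ with $\psi\equiv 1$ (Baire class $1$ invariant limit, continuity point, minimality) is also fine.

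The genuine gap is in $(2)\Rightarrow(1)$, and you name it yourself: the passage from ``$L^{\psi}f$ is an everywhere-defined Baire-class-$1$ function, $\mu$-a.e.\ measurable with respect to $\mathcal Z_k(X)$'' to ``$L^{\psi}f$ is continuous and factors through $W_k(X)$'' is only asserted (``an orbit-closure argument \dots should upgrade''), not proved, and the equivariance $L^{\psi}f\circ T=L^{\sigma\psi}f$ by itself does not force continuity of a Baire-$1$ function on a minimal system (invariance did, in the $\psi\equiv1$ case; quasi-invariance under a shifted nilsequence does not obviously). A second unproved ingredient is the claim that the family $\{L^{\psi}f\}$ \emph{generates} $\mathcal Z_k(X)$: the Host--Kra/Wiener--Wintner results you invoke give that the limits vanish a.e.\ when $\mathbb{E}(f|\mathcal Z_k)=0$, which is weaker; to get generation one must, e.g., feed in nilsequences produced from the structure theorem applied to $X$ itself at generic points, and verify measurability issues. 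These two steps are exactly the substantive content of Gutman--Lian's Theorem A (their proof runs through unique ergodicity of orbit closures in products $X\times Y$ with nilsystems $Y$ and a careful identification of $W_k$ inside such joinings), so as it stands your proposal reproves the easy direction and the strict ergodicity part, but does not constitute a proof of the equivalence.
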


We remark that these definitions can be generalized to ergodic $\ZZ^s$-actions. And (1)$\Rightarrow$(2) in Theorem \ref{GLThmC} holds for $\ZZ^s$-actions (which follows from Theorem \ref{219}).

\begin{de}
	Let $(X,\X,\mu,T)$ be a m.p.s. We say that a t.d.s. $(\hat{X},\hat{T})$ is a {\bf topological model} for $(X,\X,\mu,T)$ with respect to a $\hat{T}$-invariant probability measure $\hat{\mu}$ on the Borel $\sigma$-algebra $\hat{\X}$, if the system $(X,\X,\mu,T)$ is isomorphic to $(\hat{X},\hat{\X},\hat{\mu},\hat{T})$ as m.p.s.
\end{de}
\begin{thm}\cite[Theorem 2]{W}\label{model2}
	If $\pi:(X,\X,\mu,T)\ra (Y,\Y,\nu,S)$ is a factor map with $(X,\X,\mu,T)$ ergodic and $(\hat{Y}, \hat{\Y},\hat{\nu},\hat{S})$ is a uniquely ergodic model for $(Y,\Y,\nu,S)$,
	then there is a uniquely ergodic model $(\hat{X}, \hat{\X},\hat{\mu},\hat{T})$ for $(X,\X,\mu,T)$ and a factor map $\hat{\pi}:\hat{X}\ra \hat{Y}$ which is a model for $\pi:X\ra Y$.
\end{thm}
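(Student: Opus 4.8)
The (non-relative) Jewett--Krieger theorem already guarantees that $(X,\X,\mu,T)$ has \emph{some} strictly ergodic model, so the entire point is to build one \emph{lying over} the prescribed model $(\hat Y,\hat S)$ of the factor. The plan is to reduce the problem to a one-partition-at-a-time construction. I would fix a refining sequence of finite measurable partitions $\P_1\le\P_2\le\cdots$ of $X$ with $\X=\Y\vee\bigvee_{n\ge 1}\sigma(\P_n)$ modulo $\mu$, and let $(X_n,\X_n,\mu_n,T)$ be the factor of $X$ generated by $\Y$ together with $\P_1,\dots,\P_n$; thus $X_0=Y$, each $X_n\to X_{n-1}$ is the ergodic extension obtained by adjoining a single finite partition, and $X=\varprojlim_n X_n$. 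I would then construct inductively strictly ergodic models $(\hat X_n,\hat T_n)$ of $(X_n,\X_n,\mu_n,T)$ together with factor maps $q_n\colon\hat X_n\to\hat X_{n-1}$ (with $\hat X_0:=\hat Y$, and $q_1$ lifting $X_1\to Y$) whose measure-isomorphisms $\hat X_n\cong X_n$ are compatible with the bonding maps, and finally set $\hat X:=\varprojlim_n\hat X_n$ with $\hat T:=\varprojlim_n\hat T_n$.

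The heart of the matter is the one-step lemma: given a strictly ergodic model $(\hat X_{n-1},\hat T_{n-1})$ of $X_{n-1}$ and the ergodic extension $X_n\to X_{n-1}$ obtained by adjoining one partition into $r$ atoms, produce a strictly ergodic model $(\hat X_n,\hat T_n)$ of $X_n$ with a factor map onto $\hat X_{n-1}$. I would realize $\hat X_n$ as a closed, $(\hat T_{n-1}\times\text{shift})$--invariant subset of $\hat X_{n-1}\times\{1,\dots,r\}^{\ZZ}$, the new coordinates recording the $\P_n$--name of a point. Carrying $\P_n$ over the measure isomorphism gives a measurable labelling function $P\colon\hat X_{n-1}\to\{1,\dots,r\}$, which need not be continuous; I would approximate it by continuous labellings $g_k\colon\hat X_{n-1}\to\{1,\dots,r\}$ with $\hat\mu_{n-1}(\{g_k\ne P\})$ summable, constructed by the Kakutani--Rokhlin castle technique over the \emph{strictly ergodic} base $\hat X_{n-1}$ (which admits towers of arbitrary height whose base and levels are, after an arbitrarily small modification, continuity sets). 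The $g_k$ determine a coherent family of continuous ``$\P_n$--names,'' and $\hat X_n$ is the orbit-closure of the resulting coding; the summability of the errors is what lets one check that (i) the induced invariant measure $\hat\mu_n$ on $\hat X_n$ is isomorphic to $\mu_n$ over $\hat X_{n-1}\cong X_{n-1}$, (ii) $\hat X_n$ is minimal, because strict ergodicity of the base forces every pattern that occurs to occur syndetically, and (iii) $\hat X_n$ is uniquely ergodic, which I would verify through the criterion that $\frac1N\sum_{j=0}^{N-1}f\circ\hat T_n^{j}$ converges \emph{uniformly} for $f$ in the dense subalgebra of cylinder functions of $\C(\hat X_n)$, exploiting the tower combinatorics together with the unique ergodicity of $\hat X_{n-1}$.

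Granting the one-step lemma, the inverse limit $\hat X=\varprojlim_n\hat X_n$ is a t.d.s. which is minimal (inverse limit of minimal systems along surjective bonding maps) and uniquely ergodic (inverse limit of uniquely ergodic systems, the unique invariant measure being $\hat\mu:=\varprojlim_n\hat\mu_n$), hence strictly ergodic. Since each $(\hat X_n,\hat\mu_n,\hat T_n)\cong(X_n,\mu_n,T)$ compatibly with the bonding maps and measure isomorphism passes to inverse limits, $(\hat X,\B(\hat X),\hat\mu,\hat T)\cong(X,\X,\mu,T)$; and the composed projections define a factor map $\hat\pi\colon\hat X\to\hat X_0=\hat Y$ which, by construction, is a model for $\pi\colon X\to Y$.

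The main obstacle is the one-step lemma, and within it the tension between the three requirements on $\hat X_n$: unique ergodicity demands rigid, uniformly recurrent combinatorics of the names, whereas faithful measure-modelling requires the names to reproduce a possibly highly ``random'' partition of $X_{n-1}$, and minimality must be arranged on top of both. Reconciling these is the delicate point and is exactly where the quantitative Kakutani--Rokhlin coding is needed, with explicit control of the approximation errors at each scale and summability of these errors across the infinitely many refinement steps, so that the inverse limit neither loses information (which would break the isomorphism with $X$) nor acquires extra invariant measures (which would break unique ergodicity). Once this bookkeeping is in place, minimality and the uniform ergodic theorem for $\hat X_n$ follow from the tower structure in a largely formal way.
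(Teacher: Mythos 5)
You should note first that the paper does not prove this statement at all: it is quoted verbatim from Weiss \cite[Theorem 2]{W}, so the only meaningful benchmark is Weiss's proof of the relative Jewett--Krieger theorem. Your global architecture --- adjoin one finite partition at a time, prove a one-step extension lemma over a strictly ergodic base, and pass to an inverse limit (which is again minimal and uniquely ergodic, and models the inverse limit of the measurable systems) --- is exactly the architecture of that proof, and the reduction and the inverse-limit step are unproblematic.

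The gap is inside the one-step lemma, which you rightly call the heart of the matter. The mechanism you describe --- transport $\P_n$ to a measurable labelling $P$ of $\hat X_{n-1}$ and approximate it by continuous labellings $g_k$ with $\sum_k \hat{\mu}_{n-1}(\{g_k\ne P\})<\infty$ --- does not deliver conclusions (ii) and (iii). The orbit closure of the $P$-name of a generic point is just the symbolic process generated by an arbitrary measurable partition of a strictly ergodic system, and such a closure need be neither minimal nor uniquely ergodic; this failure is precisely why Jewett--Krieger is nontrivial rather than formal. In particular, the assertion that ``strict ergodicity of the base forces every pattern that occurs to occur syndetically'' is false for discontinuous codings: the closure can contain blocks of zero frequency (destroying minimality), and block frequencies need not converge uniformly in the starting point (destroying unique ergodicity). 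Summability of the errors only gives almost-everywhere eventual agreement of names, which controls the measure isomorphism in (i) but says nothing about the topology of the orbit closure; moreover the systems coded by the various $g_k$ are not nested, so it is not even clear which single coding you take the orbit closure of. Weiss's proof proceeds differently at exactly this point: one does not approximate the given partition by continuous ones, but \emph{modifies the partition itself} on sets of small, summable measure, by a copying argument over Kakutani--Rokhlin towers chosen measurably over the factor $\hat Y$, so that the limiting partition is \emph{relatively uniform} (all conditional block frequencies converge uniformly), so that every block which occurs is planted in every column of each tower and hence occurs syndetically, and so that generation of the full $\sigma$-algebra is preserved along the modifications. With such a uniform generating partition the symbolic extension of $\hat X_{n-1}$ is strictly ergodic and the one-step lemma follows; with mere continuous approximation of the original partition, as in your sketch, the construction can genuinely fail.
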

The following theorem can be found in \cite[Theorem 4.2]{HSY19} and \cite[Theorem 2.34]{GL}.
\begin{thm}\label{model1}
	Let $k\geq 0$. Every ergodic system $(X,\X,\mu,T)$ has a topological model $(\hat{X},\hat{T})$ such that $(\hat{X},\hat{T})$ is a  CF-Nil($k$) system.
\end{thm}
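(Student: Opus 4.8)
The plan is to build $(\hat X,\hat T)$ in two stages and then verify the CF-Nil($k$) condition by playing the universal property of $W_k$ against that of $Z_k$. \emph{Stage 1: the canonical pro-nil model.} By the structure theorem in the form of Theorem~\ref{SC}(1), the $k$-th Host--Kra factor $(Z_k(X),\Z_k(X),\mu_k,T)$ is the inverse limit of a \emph{sequence} of ergodic $k$-step nilsystems $(X_i,\mu_i,T)$, each of which is strictly ergodic by Theorem~\ref{equi}. Hence $\hat Z:=\varprojlim_i X_i$, endowed with the inverse-limit topology, is a metrizable strictly ergodic t.d.s.\ that is a topological $k$-step pro-nilsystem; in particular $\RP^{[k]}(\hat Z)=\Delta$, i.e.\ $\hat Z$ is a topological system of order $k$, and $\hat Z$ is a uniquely ergodic topological model of $(Z_k(X),\mu_k,T)$.

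\emph{Stage 2: lifting over $X$.} Let $\pi\colon(X,\X,\mu,T)\to(Z_k(X),\Z_k(X),\mu_k,T)$ be the factor map onto the maximal measurable $k$-step pro-nilfactor. Applying Theorem~\ref{model2} to $\pi$ and the uniquely ergodic model $\hat Z$ produces a uniquely ergodic topological model $(\hat X,\hat T)$ of $(X,\X,\mu,T)$ together with a topological factor map $\hat\pi\colon\hat X\to\hat Z$ that is a model of $\pi$. Replacing $\hat X$ by the support of its unique invariant measure $\hat\mu$ — which changes neither $(\hat X,\hat\mu,\hat T)$ as a m.p.s.\ nor, since $\hat Z$ is minimal, the surjectivity of $\hat\pi$ — we may take $(\hat X,\hat T)$ strictly ergodic. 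I claim this $(\hat X,\hat T)$ is CF-Nil($k$).

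\emph{Stage 3: collapsing the sandwich.} Write $\pi_k\colon\hat X\to W_k(\hat X)$ for the maximal topological $k$-step pro-nilfactor map with invariant measure $\omega_k$, and $\phi_k\colon\hat X\to Z_k(\hat X)$ for the maximal measurable one. Since $\hat\pi$ is a topological factor map onto $\hat Z$, a topological system of order $k$, the universal property of $W_k(\hat X)$ gives a topological factor map $\varphi\colon W_k(\hat X)\to\hat Z$ with $\hat\pi=\varphi\circ\pi_k$. Conversely, $(W_k(\hat X),\omega_k,T)$, being a topological $k$-step pro-nilsystem, is a system of order $k$, hence a measurable factor of $Z_k(\hat X)$; by maximality of $Z_k(\hat X)$ the measure-preserving map $\pi_k$ factors through $\phi_k$. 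As $\hat\pi$ is a model of the maximal measurable $k$-step pro-nilfactor map $\pi$, we may identify $(Z_k(\hat X),\mu_k,\phi_k)$ with $(\hat Z,\hat\pi_*\hat\mu,\hat\pi)$ up to m.p.s.\ isomorphism, and the previous factorization becomes a measurable factor map $\psi\colon\hat Z\to W_k(\hat X)$ with $\pi_k=\psi\circ\hat\pi$ mod $\hat\mu$. Then $\hat\pi=\varphi\circ\psi\circ\hat\pi$ and $\pi_k=\psi\circ\varphi\circ\pi_k$ almost everywhere, and because $\hat\pi$ and $\pi_k$ are surjective measure-preserving maps this forces $\varphi\circ\psi=\id$ and $\psi\circ\varphi=\id$ a.e. Thus $\varphi$ is a measurable isomorphism $(W_k(\hat X),\omega_k,T)\cong(\hat Z,\hat\pi_*\hat\mu,T)\cong(Z_k(\hat X),\mu_k,T)$, which together with strict ergodicity of $\hat X$ is precisely the CF-Nil($k$) condition. (The case $k=0$ is trivial, $Z_0=W_0=\{\bullet\}$, and is subsumed above.)

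\emph{Main obstacle.} The heart of the matter is Stage 3: the whole argument rests on the single map $\hat\pi$ playing a double role — measure-theoretically it realizes the maximal measurable $k$-step pro-nilfactor, while its target is topologically of order $k$ — and the delicate point is keeping track of which factor maps agree with which measure-isomorphisms, so that the composites $\varphi\circ\psi$ and $\psi\circ\varphi$ come out as genuine identities rather than mere endomorphisms. Comparatively routine are the upgrade from ``uniquely ergodic'' (as delivered by Theorem~\ref{model2}) to ``strictly ergodic'' in Stage 2 and the metrizability of the inverse limit in Stage 1, both handled as indicated.
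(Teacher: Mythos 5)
Your proposal is correct, and it is essentially the route the paper takes: in fact the paper does not prove Theorem \ref{model1} at all but only cites \cite[Theorem 4.2]{HSY19} and \cite[Theorem 2.34]{GL}, and its own proof of the analogous Theorem \ref{CFNilinfty model} follows exactly your Stages 1--2 (a strictly ergodic pro-nilsystem model of the Host--Kra factor via Theorems \ref{SC} and \ref{equi}, then a lift over $X$ by Weiss's relative theorem, Theorem \ref{model2}), after which it concludes by citing \cite[Proposition 2.31]{GL} precisely where you carry out Stage 3 by hand. Your direct sandwich argument in Stage 3 is sound: the universal property of $\pi_k\colon\hat X\to W_k(\hat X)$ applied to the topological factor $\hat\pi$ onto the order-$k$ system $\hat Z$, the measurable maximality of $Z_k(\hat X)$ applied to the uniquely ergodic order-$k$ system $(W_k(\hat X),\omega_k)$, and the identification of $\hat\pi$ with $\phi_k$ (legitimate because $\hat\pi$ models the canonical map $\pi$ and $\Z_k$ is isomorphism-invariant) do combine, via surjectivity of $\hat\pi$ and $\pi_k$ on full-measure sets, to force $\varphi$ and $\psi$ to be a.e.\ mutually inverse, which is the CF-Nil($k$) condition. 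So the only genuine difference is that you prove the final implication rather than quoting it; the one point worth stating explicitly in Stage 1 is that the inverse limit coming from the structure theorem can be taken with continuous connecting maps between the nilsystems (so that $\hat Z$ is a genuine topological pro-nilsystem), a standard fact that the paper likewise takes as known.
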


\begin{de}
	We say a t.d.s. $(X,T)$ is a {\bf CF-Nil($\infty$)} system if it is  CF-Nil($k$) for any $k\in\N$.
\end{de}

\begin{prop}\label{everywhere CFNil infty}
	Let $(X,T)$ be a strictly ergodic system with $Z_{\infty}(X)$ and $W_{\infty}(X)$ being measurably isomorphic. Then for any $k\in \mathbb{N}$, any $k$-step basic nilsequence $\{\psi(n)\}_{n \in \mathbb{Z}}$, any continuous function $f \in \C(X)$ and any $x \in X$,
	\begin{equation}\label{infty w-w}
		\lim _{N \rightarrow \infty} \frac{1}{N} \sum_{n=1}^N \psi(n) f\left(T^n x\right)
	\end{equation}
	exists.
\end{prop}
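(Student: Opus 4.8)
\emph{Proof idea.} The plan is to show that the hypothesis forces $(X,T)$ to be CF-Nil($k$) for \emph{every} $k\in\N$, and then to quote Theorem~\ref{GLThmC}. Indeed, a strictly ergodic system is minimal, so once $(X,T)$ is known to be CF-Nil($k$) the implication (1)$\Rightarrow$(2) of Theorem~\ref{GLThmC} gives that $\frac1N\sum_{n=1}^N\psi(n)f(T^nx)$ converges for \emph{every} $k$-step nilsequence $\psi$ — in particular for every $k$-step basic one — every $f\in\C(X)$ and every $x\in X$; letting $k$ range over $\N$ then yields the proposition. So the whole point is the implication ``$Z_\infty(X)\cong W_\infty(X)$ as m.p.s.'' $\Longrightarrow$ ``$Z_k(X)\cong W_k(X)$ as m.p.s.\ for every $k$''.

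To prove this I would proceed as follows. The system $W_\infty(X)=X/\RP^{[\infty]}(X)$ is a minimal $\infty$-step pro-nilsystem, hence an inverse limit of minimal nilsystems, and it is strictly ergodic, being a factor of $(X,T)$. It also satisfies $W_k(W_\infty(X))=W_k(X)$: since $\RP^{[\infty]}(X)\subset\RP^{[k]}(X)$, the map $X\to W_k(X)$ factors through $W_\infty(X)$, so $W_k(X)$ is a $k$-step topological pro-nilfactor of $W_\infty(X)$, while conversely any $k$-step topological pro-nilfactor of $W_\infty(X)$ is also one of $X$ and therefore a factor of $W_k(X)$; this identifies $W_k(X)$ as the maximal one. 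Next I would invoke the classical fact that every minimal $\infty$-step pro-nilsystem is CF-Nil($k$) for all $k$, i.e.\ its $k$-th Host-Kra factor and its maximal $k$-step topological pro-nilfactor coincide as m.p.s.; applied to $W_\infty(X)$ and combined with the identity just established, this gives $Z_k(W_\infty(X))\cong W_k(X)$. Finally, the Host-Kra factors are isomorphism invariants of a m.p.s.\ and are consistent under passage to factors — so $Z_k(Z_\infty(X))\cong Z_k(X)$, using $\Z_\infty=\bigvee_{j\in\N}\Z_j$ — and therefore the given isomorphism $Z_\infty(X)\cong W_\infty(X)$ yields, after applying the $k$-th Host-Kra factor to both sides,
\[
Z_k(X)\;\cong\;Z_k\bigl(Z_\infty(X)\bigr)\;\cong\;Z_k\bigl(W_\infty(X)\bigr)\;\cong\;W_k(X)
\]
as m.p.s. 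Together with the strict ergodicity of $(X,T)$, this is exactly the CF-Nil($k$) property.

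The main obstacle is the step invoking that a minimal $\infty$-step pro-nilsystem is CF-Nil($k$). This is where the structure theory (Theorems~\ref{structure theorem} and~\ref{SC}, together with the classical algebraic description of nilsystems) enters: using that Host-Kra factors commute with inverse limits one reduces to a single minimal nilsystem $N/\Gamma$, for which one needs that $Z_k$ and $W_k$ are literally the same factor, namely the canonical algebraic quotient of $N/\Gamma$. Everything else — the maximality argument identifying $W_k(W_\infty(X))$ with $W_k(X)$, the invariance and consistency of Host-Kra factors, and the final appeal to Theorem~\ref{GLThmC} — is routine. (Alternatively, once $Z_{k-1}(X)\cong W_{k-1}(X)$ is in hand one checks that $\phi_{k-1}:X\to Z_{k-1}(X)$ admits a continuous version, and Theorem~\ref{219} applies directly.)
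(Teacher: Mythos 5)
Your argument is correct in outline but follows a genuinely different route from the paper's. The paper proves the statement directly, Wiener--Wintner style: it decomposes the sequence $f(T^nx)$ (via the proof of Proposition 7.1 of \cite{HK09}, which is where the hypothesis $Z_\infty(X)\cong W_\infty(X)$ is used) as a nilsequence $\mathbf{a}'$ plus a term $\mathbf{a}''$ with $\|\mathbf{a}''\|_{U(k)}<\delta$, observes that the averages of $\mathbf{a}'\psi$ converge because this product is again a nilsequence, and controls the remainder by the dual-norm estimate \cite[Theorem 2.13]{HK09}, so that the averages form a Cauchy sequence. You instead upgrade the hypothesis to ``$(X,T)$ is CF-Nil($k$) for every $k$'' and quote Theorem \ref{GLThmC}(1)$\Rightarrow$(2); note the paper runs that theorem in the opposite direction, using Proposition \ref{everywhere CFNil infty} together with (2)$\Rightarrow$(1) to establish CF-Nil($\infty$) in the following proposition, so your route would prove that proposition simultaneously, and since Theorem \ref{GLThmC} is an external result of Gutman--Lian there is no circularity. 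Your reductions $Z_k(X)=Z_k(Z_\infty(X))$, the isomorphism invariance of the Host--Kra factors, and the identification $W_k(W_\infty(X))=W_k(X)$ are all sound, and your conclusion is in fact slightly stronger (all $k$-step nilsequences, not only basic ones).

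The cost of your approach is the input you yourself flag: that a strictly ergodic $\infty$-step pro-nilsystem $W$ satisfies $Z_k(W)\cong W_k(W)$ as m.p.s.\ for every $k$. This is true and provable with the tools the paper cites, but your sketch is thinnest precisely there. After commuting $Z_k$ with the inverse limit and using the algebraic identification of $Z_k$ and $W_k$ for a single minimal nilsystem, what one gets at first is only that $Z_k(W)$ and $W_k(W)$ are each measurable factors of the other, and mutually weakly isomorphic ergodic systems need not be isomorphic in general; to close the argument one must use that both factor maps are compatible with the projections from $W$ (so that the two composites are the identity almost everywhere, forcing an isomorphism), or cite such a statement from \cite{GL}. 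With that step made explicit, your structural proof is complete, and your alternative remark (continuity of a version of $\phi_{k-1}$ plus Theorem \ref{219}) gives a second viable finish.
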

\begin{proof}
	Assume $\left|f\right| \leq 1$ and let $\delta\textgreater 0$. Let $\psi(n)=g(S^n y_0)$ where $(Y,S)$ be a $k$-step nilsystem and $y_0\in Y$, $g\in C(Y)$. 
	The sequence $f(T^n x)$ can be written as $\mathbf{a}^{\prime}+\mathbf{a}^{\prime \prime}$ where $\mathbf{a}^{\prime}$ is a nilsequence and $\mathbf{a}^{\prime \prime}$ is bounded with $\left\|\mathbf{a}^{\prime \prime}\right\|_{U(k)}<\delta$  by \cite[Proof of Proposition 7.1]{HK09}.\footnote{See \cite[Definition 2.6, Definition 5.5]{HK09} for the definitions of norm $\left\|\mathbf\cdot\right\|_{U(k)}$ and dual norm $\interleave \cdot\interleave_k^*$.} 
	
	By density, we can restrict to the case that \( \psi\) is a smooth nilsequence.  Since the product sequence \( \mathbf{a}'\psi \) is a nilsequence, its averages converge. By \cite[Theorem 2.13]{HK09},
	$$
	\limsup \left| \frac{1}{N} \sum_{n=1}^N \left(\psi(n) (f(T^n x) - a'_n)  \right) \right| \leq \delta \interleave \psi\interleave_k^*.
	$$
	It follows that $(\frac{1}{N} \sum_{n=1}^N  \psi(n)f(T^n x)  )$ form a Cauchy sequence.
	We have the limit $$\lim _{N \rightarrow \infty} \frac{1}{N} \sum_{n=1}^N \psi(n) f\left(T^n x\right)$$ exists.
\end{proof}
\begin{prop}
	Let $(X,T)$ be a strictly ergodic system. Then it is CF-Nil($\infty$) if and only if $Z_{\infty}(X)$ and $W_{\infty}(X)$ are measurably isomorphic.
\end{prop}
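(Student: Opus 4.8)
The plan is to prove the two implications separately: the forward direction ($\Rightarrow$) is an essentially formal passage to an inverse limit, while the backward direction ($\Leftarrow$) is where Proposition \ref{everywhere CFNil infty} carries the load.

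\emph{Necessity.} Assume $(X,T)$ is CF-Nil($\infty$), i.e.\ CF-Nil($k$) for every $k\in\N$. For each $k$, the maximal topological $k$-step pro-nilfactor $W_k(X)$ is a topological --- hence measurable --- factor of $X$ and, with its unique invariant measure $\omega_k$, is a system of order $k$; since $Z_k(X)$ is the maximal measurable factor of $X$ of order $k$, there is a canonical factor map $c_k\colon Z_k(X)\to W_k(X)$ with $c_k\circ\phi_k=\pi_k$ $\mu$-a.e. Now CF-Nil($k$) yields $(Z_k(X),\mu_k)\cong(W_k(X),\omega_k)$ as m.p.s.; since an ergodic $k$-step pro-nilsystem is distal and hence coalescent --- in particular not measurably isomorphic to any proper factor of itself --- this forces $c_k$ to be an isomorphism, equivalently $\pi_k^{-1}\mathcal B(W_k(X))=\Z_k(X)$ modulo $\mu$-null sets inside $\mathcal B(X)$. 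Joining over all $k$ and invoking $\Z_\infty(X)=\bigvee_{k\in\N}\Z_k(X)$ together with $\RP^{[\infty]}(X)=\bigcap_{k\in\N}\RP^{[k]}(X)$ (so that the sub-$\sigma$-algebra of $\mathcal B(X)$ attached to $W_\infty(X)$ is $\bigvee_{k}\pi_k^{-1}\mathcal B(W_k(X))$), I conclude that $W_\infty(X)$ and $Z_\infty(X)$ determine the same sub-$\sigma$-algebra of $\mathcal B(X)$ up to $\mu$-null sets; in particular they are measurably isomorphic.

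\emph{Sufficiency.} Assume $Z_\infty(X)$ and $W_\infty(X)$ are measurably isomorphic, and fix $k\in\N$. By Proposition \ref{everywhere CFNil infty}, for every basic $k$-step nilsequence $\{\psi(n)\}_{n\in\ZZ}$, every $f\in\C(X)$ and every $x\in X$ the average $\frac{1}{N}\sum_{n=1}^{N}\psi(n)f(T^nx)$ converges as $N\to\infty$. An arbitrary $k$-step nilsequence $\psi$ is a $\|\cdot\|_\infty$-limit of basic $k$-step nilsequences $\psi_j$, and since $\bigl|\frac{1}{N}\sum_{n=1}^{N}(\psi-\psi_j)(n)f(T^nx)\bigr|\le\|\psi-\psi_j\|_\infty\|f\|_\infty$ uniformly in $N$, the averages for $\psi$ converge as well. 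Thus $(X,T)$ satisfies condition (2) of Theorem \ref{GLThmC} for this $k$, so by $(2)\Rightarrow(1)$ it is CF-Nil($k$); as $k\in\N$ was arbitrary, $(X,T)$ is CF-Nil($\infty$).

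\emph{Main obstacle.} I do not anticipate a serious obstacle: the backward direction is almost entirely pre-packaged by Proposition \ref{everywhere CFNil infty} and the Gutman--Lian equivalence (Theorem \ref{GLThmC}), with only a soft density argument added. The one point demanding care lies in the forward direction --- upgrading the abstract isomorphism $Z_k(X)\cong W_k(X)$ supplied by CF-Nil($k$) to the assertion that the \emph{canonical} factor map $c_k$ is an isomorphism --- which I would settle via coalescence of ergodic pro-nilsystems (cf.\ \cite{GL}); after that, the remainder is a routine inverse-limit / join-of-$\sigma$-algebras argument.
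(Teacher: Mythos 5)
Your sufficiency argument is essentially the paper's: Proposition \ref{everywhere CFNil infty} gives everywhere convergence of the nilsequence-weighted averages, and Theorem \ref{GLThmC}, $(2)\Rightarrow(1)$, then yields CF-Nil($k$) for every $k$. Your additional uniform-approximation step passing from basic $k$-step nilsequences to general ones is correct and is in fact a detail the paper glosses over (its Proposition \ref{everywhere CFNil infty} is stated only for basic nilsequences, while condition (2) of Theorem \ref{GLThmC} concerns arbitrary ones), so this part is the same route, done slightly more carefully.

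The difference is in necessity, which the paper dismisses as ``clear from the definitions'' while you supply a genuine argument: the canonical maps $c_k\colon Z_k(X)\to W_k(X)$, coalescence forcing each $c_k$ to be an isomorphism, and then $\Z_\infty=\bigvee_k\Z_k$ together with $W_\infty(X)=\varprojlim_k W_k(X)$ (from $\RP^{[\infty]}=\bigcap_k\RP^{[k]}$) to identify the two sub-$\sigma$-algebras of $\B(X)$ mod $\mu$-null sets. The structure of this argument is sound and it does address the real issue (an abstract isomorphism $Z_k\cong W_k$ must be upgraded to compatibility with the canonical maps before one can pass to the limit). The one weak link is the justification ``distal and hence coalescent'': in the measurable category distality is not known to imply coalescence --- coalescence is delicate already for two-step distal systems such as group extensions of rotations --- so you should not derive coalescence of ergodic pro-nilsystems from distality alone. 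The fact you need (ergodic pro-nilsystems are coalescent, equivalently not isomorphic to proper factors of themselves) is true, but it should be justified via the rigidity of factor maps between ergodic (pro-)nilsystems or by citing the corresponding statements in \cite{GL}, which is presumably what your parenthetical reference intends. With that citation made precise, your proof is complete and, on the necessity side, more detailed than the paper's.
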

\begin{proof}
	On one hand, it is clear from the definitions that when $(X,T)$ is CF-Nil($\infty$), then $Z_{\infty}(X)$ and $W_{\infty}(X)$ are measurably isomorphic. 
	On the other hand, if $Z_{\infty}(X)$ and $W_{\infty}(X)$ are measurably isomorphic, by Proposition \ref{everywhere CFNil infty}, for any $\infty$-step nilsequence $\{\psi(n)\}_{n \in \mathbb{Z}}$, any continuous function $f \in \C(X)$ and any $x \in X$, the limit
	$$
	\lim _{N \rightarrow \infty} \frac{1}{N} \sum_{n=1}^N \psi(n) f\left(T^n x\right)
	$$
	exists. Then we have $(X,T)$ is CF-Nil($k$) for any $k\in \mathbb{N}$ by Theorem \ref{GLThmC}.
\end{proof}

\begin{thm}\label{CFNilinfty model}
	Every ergodic system $(X,\X,\mu,T)$ has a topological model $(\hat{X},\hat{T})$ such that $(\hat{X},\hat{T})$ is a CF-Nil($\infty$) system.
\end{thm}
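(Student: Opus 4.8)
The plan is to realize the maximal measurable pro-nilfactor $Z_\infty(X)$ as a strictly ergodic topological model that is itself a topological $\infty$-step pro-nilsystem, lift it through Weiss's relative Jewett--Krieger theorem, and then check that in the resulting model the maximal measurable and maximal topological pro-nilfactors coincide. First, by the structure theorem (Theorem~\ref{structure theorem} and the discussion following it), $(Z_\infty(X),\Z_\infty(X),\mu_\infty,T)$ is isomorphic as a m.p.s.\ to a minimal $\infty$-step pro-nilsystem; fix such a representative $(\hat Z,\hat S)$, regarded as a t.d.s. Each finite-step minimal nilsystem is uniquely ergodic by Theorem~\ref{equi}, and unique ergodicity is inherited by inverse limits, so $(\hat Z,\hat S)$ is strictly ergodic; moreover, being an inverse limit of minimal nilsystems, it is a topological system of order $\infty$, that is, $\RP^{[\infty]}(\hat Z)=\Delta_{\hat Z}$.

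Next, I would apply Weiss's theorem (Theorem~\ref{model2}) to the factor map $X\to Z_\infty(X)$ together with the strictly ergodic model $(\hat Z,\hat S)$ of $Z_\infty(X)$. This yields a uniquely ergodic model $(\hat X,\hat T)$ of $(X,\X,\mu,T)$ and a continuous factor map $\hat\pi:\hat X\to\hat Z$ that is a model for $X\to Z_\infty(X)$; after replacing $\hat X$ by the support of its invariant measure, we may assume $(\hat X,\hat T)$ is strictly ergodic. Since $\hat\pi$ is a model for $X\to Z_\infty(X)$ and the Host--Kra $\sigma$-algebra is intrinsic (hence preserved by measurable isomorphism), the sub-$\sigma$-algebra of $\hat\X$ generated by $\hat\pi$ is exactly $\Z_\infty(\hat X)$, so $(\hat Z,\hat S)$ plays the role of $Z_\infty(\hat X)$.

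Finally I would verify that $(\hat X,\hat T)$ is CF-Nil($\infty$); by the Proposition preceding this theorem it suffices to show that $(W_\infty(\hat X),\omega_\infty,\hat T)$ and $(Z_\infty(\hat X),\mu_\infty,\hat T)$ are measurably isomorphic. Because $\RP^{[\infty]}(\hat Z)=\Delta_{\hat Z}$, the universal property of $\pi_\infty$ factors $\hat\pi$ through the maximal topological pro-nilfactor: there is a continuous factor map $\varphi:W_\infty(\hat X)\to\hat Z$ with $\hat\pi=\varphi\circ\pi_\infty$. On the other hand, $W_\infty(\hat X)$ is a minimal topological $\infty$-step pro-nilsystem, so with its unique invariant measure $\omega_\infty$ it is an inverse limit of ergodic nilsystems, hence a system of order $\infty$; since the $\infty$-th Host--Kra factor is the maximal measurable factor which is an $\infty$-step pro-nilsystem, the $\sigma$-algebra generated by $\pi_\infty$ is contained in $\Z_\infty(\hat X)$. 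Combined with the reverse inclusion $\Z_\infty(\hat X)=\sigma(\hat\pi)\subseteq\sigma(\pi_\infty)$ coming from $\hat\pi=\varphi\circ\pi_\infty$, this gives $\sigma(\pi_\infty)=\Z_\infty(\hat X)$, i.e.\ $W_\infty(\hat X)\cong Z_\infty(\hat X)$ as m.p.s., and hence $(\hat X,\hat T)$ is CF-Nil($\infty$).

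The step I expect to demand the most care is the measurable-side input used in the last paragraph: that a minimal topological $\infty$-step pro-nilsystem, equipped with its unique invariant measure, is a system of order $\infty$ (so that $\sigma(\pi_\infty)\subseteq\Z_\infty(\hat X)$), together with the identification $\sigma(\hat\pi)=\Z_\infty(\hat X)$. Both are consequences of the Host--Kra structure theory and its generalizations to $\ZZ^s$- and nilpotent-group actions recalled before this theorem, but the bookkeeping that matches the intrinsic Host--Kra $\sigma$-algebras across the measurable isomorphisms produced by Weiss's theorem should be spelled out carefully.
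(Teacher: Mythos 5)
Your proposal is correct, and its first half (realize $Z_\infty(X)$ as a strictly ergodic topological $\infty$-step pro-nilsystem via the structure theorem, then lift through Weiss's relative theorem, Theorem~\ref{model2}) is the same mechanism the paper uses. Where you diverge is in how CF-Nil($\infty$) is then certified. The paper keeps track of every finite level: it takes strictly ergodic pro-nilsystem models $\hat Z_k$ of each $Z_k(X)$, produces a model $\hat Z_\infty$ of $Z_\infty(X)$ with continuous factor maps onto all the $\hat Z_k$, lifts to $\hat X$, and then quotes Gutman--Lian's Proposition 2.31 once for each $k$ to conclude CF-Nil($k$) for all $k$. You instead work only at the $\infty$-level: a single continuous factor map $\hat\pi:\hat X\to\hat Z$ onto a pro-nilsystem model of $Z_\infty$, then a direct verification that $\sigma(\pi_\infty)=\Z_\infty(\hat X)$ via (i) the universal property of $W_\infty(\hat X)$ (so $\hat\pi$ factors through $\pi_\infty$), (ii) the maximality of the Host--Kra factor among pro-nilsystem factors (so $\sigma(\pi_\infty)\subseteq\Z_\infty(\hat X)$), and (iii) the intrinsic nature of $\Z_\infty$ under the measurable isomorphisms supplied by Weiss's theorem (so $\sigma(\hat\pi)=\Z_\infty(\hat X)$); the equivalence proposition proved just before the theorem then gives CF-Nil($\infty$). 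Your route buys two things: it replaces the external citation of \cite[Proposition 2.31]{GL} by a short self-contained argument whose ingredients are all recalled in the preliminaries, and it sidesteps the mild bookkeeping issue in the paper's version of needing one model $\hat Z_\infty$ that factors continuously onto all the $\hat Z_k$ simultaneously (which a single literal application of Theorem~\ref{model2} does not immediately provide); the paper's route buys brevity by outsourcing exactly the verification you carry out. The step you flagged as delicate --- that a minimal topological $\infty$-step pro-nilsystem with its unique invariant measure has $\sigma(\pi_\infty)\subseteq\Z_\infty(\hat X)$ --- is indeed the only place requiring care, and it follows from the finite-step statement (each ergodic $k_j$-step nilsystem factor sits inside $\Z_{k_j}(\hat X)$) together with $\Z_\infty=\bigvee_k\Z_k$, exactly as the paper's preliminaries record; with that spelled out your argument is complete.
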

\begin{proof}
	Recall that for every $k\in\N$,  $(Z_k(X),\Z_k(X),\mu_k,T)$ is measurably isomorphic to a strictly ergodic inverse limit of $k$-step nilsystems $(\hat{Z}_k,\hat{T})$ and there is naturally a factor map from $Z_{\infty}(X)$ to $Z_k(X)$. By Theorem \ref{model2}, $(Z_{\infty},\Z_{\infty},\mu_{\infty},T)$ admits a strictly ergodic topological model
	$(\hat{Z}_{\infty},\hat{T})$ such that there exists a topological factor map $(\hat{Z}_{\infty},\hat{T})\ra(\hat{Z}_k,\hat{T})$.	
	By Theorem \ref{model2} again, $(X,\X,\mu,T)$ admits a strictly ergodic topological model
	$(\hat{X},\hat{T})$ such that there exists a topological factor map $(\hat{X},\hat{T})\ra(\hat{Z}_{\infty},\hat{T})$, so the following diagram commutes.
	$$\xymatrix{
		& X \ar[d]\ar[r] & Z_{\infty}(X)
		\ar[d]\ar[r]              & Z_{k}(X) \ar[d]
		\\
		& \hat{X} \ar[r] & \hat{Z}_{\infty}
		\ar[r] & \hat{Z}_{k}
	}
	$$
	
	Thus, $(\hat{X},\hat{T})$ is a strictly ergodic t.d.s. with topological factors on $k$-step pronilsystem $(\hat{Z}_k,\hat{T})$.
	By \cite[Proposition 2.31]{GL} we have  $(\hat{X},\hat{T})$ is CF-Nil($k$) for any $k\in\N$. Thus, $(\hat{X},\hat{T})$ is a CF-Nil($\infty$) system.
\end{proof}

\medskip

\subsection{The induced system $(N_d(X),\G_d)$}
Let $(X,T)$ be a t.d.s. and $d\in \N$. Set $\tau_d:=T\times T^2\times\dots\times T^d,
\sigma_d :=T\times\dots\times T$($d$-times) and note $\G_{d}=\langle\tau_d,\sigma_d
\rangle$.

\medskip

For any $x\in X$, let $N_d(X,x)=\overline{\O}((x,..., x),\G_d)$, the
orbit closure of $(x,\dots,x)$ ($d$-times) under the action of the group $\G_d$. We remark that
if $(X,T)$ is minimal, then all $N_d(X,x)$ coincide, which will be denoted by $N_d(X)$. It was
shown by Glasner \cite{Glasner22} that if $(X,T)$ is minimal, then $(N_d(X),\G_d)$ is minimal.

\begin{de}
	Let $(X,T)$ be a t.d.s with $\mu\in M_T(X)$. For $d\geq1$ and let $\mu^{(d)}$ be the measure on $X^d$ defined by $$\int_{X^d}\bigotimes_{j=1}^{d}f_jd\mu^{(d)}=\lim_{N\rightarrow+\infty}\dfrac{1}{N}\sum_{n=0}^{N-1}\int_{X}\prod_{j=1}^{d}f_j(T^{jn}x)d\mu(x)$$
	for $f_i\in L^{\infty}(X,\mu),1\leq j\leq d$, where the limit exists by \cite{HK05}.
\end{de}
We call $\mu^{(d)}$ the {\bf Furstenberg self-joining}.
Clearly, it is invariant under $\sigma_d$ and $\tau_d$.
For a t.d.s. $(X,T)$, $\mu\in M_T(X)$ and $d\in\N$, it is easy to see that
$$\dfrac{1}{N}\sum_{n=0}^{N-1}\tau_d^n\mu_{\Delta}^d\rightarrow\mu^{(d)},\text{weak* in }M(X^d)$$ as $N$ goes to infinity,
where $\mu^{d}_{\Delta}$ is the diagonal measure on $X^d$ defined as follows
$$\int_{X^d}f_1(x_1)\dots f_d(x_d)d \mu_{\Delta}^{d}(x_1,\dots,x_d)=\int_X f_1(x)\dots f_d(x)d\mu(x),$$
where $f_1,\dots,f_d\in \C(X)$.

\medskip

Denote by $a\lesssim_{d} b$ if there exists a constant $C_d$ depending on $d$ such that $a\leq C_d\cdot b.$ 
\begin{prop}\cite[Proposition 5]{Leib}\label{prop leib}
	Let $(X,\X,\mu,T)$ be a system, $d\geq 1$ and $f_1,\dots,f_d\in\L^{\infty}(\mu)$ bounded by 1. Then
	$$\limsup_{N \to \infty} \left\| \frac{1}{N} \sum_{n=0}^{N-1} \prod_{j=1}^d T^{jn}f_j \right\|_{L^2(X)} \lesssim_{d} \min_{1\leq j\leq d} \interleave f_j \interleave_{d+1}.
	$$
\end{prop}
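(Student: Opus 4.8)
The plan is to establish, by induction on $d$, the slightly stronger statement that for any distinct nonzero integers $a_1,\dots,a_d$ and any $f_1,\dots,f_d\in L^\infty(\mu)$ bounded by $1$,
\[
\limsup_{N\to\infty}\Big\|\frac1N\sum_{n=0}^{N-1}\prod_{j=1}^{d}T^{a_jn}f_j\Big\|_{L^2(\mu)}\ \lesssim\ \min_{1\le j\le d}\interleave f_j\interleave_{d+1},
\]
the implied constant depending on $d$ and on $a_1,\dots,a_d$ (for $a_j=j$ all the exponents arising below stay bounded in terms of $d$, so the constant depends on $d$ only, and we recover the proposition). Allowing arbitrary exponents is essential, since the van der Corput descent that powers the induction produces precisely such averages. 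One reduces at the outset to $(X,\X,\mu,T)$ ergodic by ergodic decomposition, using reverse Fatou (all the $L^2$-norms are $\le1$), Jensen's inequality, and the identity $\interleave f\interleave_{d+1}^{2^{d+1}}=\int\interleave f\interleave_{d+1,\mu_\omega}^{2^{d+1}}\,dP(\omega)$. The base case $d=1$ is routine: the mean ergodic theorem gives $\frac1N\sum_{n}T^{a_1n}f_1\to\E(f_1\mid\mathcal I_{T^{a_1}})$ in $L^2$, and $\|\E(f_1\mid\mathcal I_{T^{a_1}})\|_2\lesssim_{a_1}\interleave f_1\interleave_2$ because $\mathcal I_{T^{a_1}}$ is a subfactor of the Kronecker factor of dimension $\le|a_1|$, on which $\interleave\cdot\interleave_2$ dominates a constant multiple of the $L^2$-norm (for $a_1=1$ this is just $\interleave f_1\interleave_1=|\int f_1\,d\mu|\le\interleave f_1\interleave_2$).

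For the inductive step, put $u_n=\prod_{j=1}^{d}T^{a_jn}f_j\in L^2(\mu)$, so $\|u_n\|_2\le1$. The van der Corput inequality gives
\[
\limsup_{N\to\infty}\Big\|\frac1N\sum_{n=0}^{N-1}u_n\Big\|_2^2\ \le\ \limsup_{H\to\infty}\frac1H\sum_{h=1}^{H}\gamma_h,\qquad \gamma_h:=\limsup_{N\to\infty}\Big|\frac1N\sum_{n=0}^{N-1}\langle u_{n+h},u_n\rangle\Big|,
\]
the $h=0$ term being negligible after division by $H$. Using the $T$-invariance of $\mu$,
\[
\langle u_{n+h},u_n\rangle=\int\prod_{j=1}^{d}T^{a_jn}g_{j,h}\,d\mu,\qquad g_{j,h}:=\overline{f_j}\cdot T^{a_jh}f_j,\quad\|g_{j,h}\|_\infty\le1 .
\]
For each fixed $i\in\{1,\dots,d\}$, apply $T^{-a_in}$ under the integral and pull the $i$-th factor out of the average:
\[
\frac1N\sum_{n=0}^{N-1}\langle u_{n+h},u_n\rangle=\int g_{i,h}\cdot\Big(\frac1N\sum_{n=0}^{N-1}\prod_{j\ne i}T^{(a_j-a_i)n}g_{j,h}\Big)\,d\mu .
\]
The $d-1$ exponents $a_j-a_i$ $(j\ne i)$ are distinct and nonzero, so Cauchy--Schwarz (with $\|g_{i,h}\|_2\le1$) together with the induction hypothesis applied to the $d-1$ functions $(g_{j,h})_{j\ne i}$ gives $\gamma_h\lesssim\min_{j\ne i}\interleave g_{j,h}\interleave_d$. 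Since $d\ge2$, taking the minimum over $i$ yields $\gamma_h\lesssim\min_{1\le j\le d}\interleave g_{j,h}\interleave_d$.

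It remains to average in $h$. Fix $j$. Combining $\frac1H\sum_h\min\le\min\frac1H\sum_h$, the power--mean inequality with exponent $2^d$, and Proposition \ref{leib lemma} with $k=d$ and $a=a_j$ --- applied to $\interleave g_{j,h}\interleave_d=\interleave\overline{f_j}\cdot T^{a_jh}f_j\interleave_d$, which after a shift and a conjugation (both leaving Host--Kra seminorms invariant) is of the shape covered by that proposition, or by its routine variant for mixed products --- one obtains
\[
\limsup_{H\to\infty}\frac1H\sum_{h=1}^{H}\interleave g_{j,h}\interleave_d\ \le\ \Big(\limsup_{H\to\infty}\frac1H\sum_{h=1}^{H}\interleave g_{j,h}\interleave_d^{2^d}\Big)^{1/2^d}\ \le\ \big(|a_j|\,\interleave f_j\interleave_{d+1}^{2^{d+1}}\big)^{1/2^d}.
\]
Chaining the three displayed bounds and taking square roots gives $\limsup_N\|\frac1N\sum_n u_n\|_2\lesssim\min_{1\le j\le d}\interleave f_j\interleave_{d+1}$, completing the induction; for $a_j=j$ this is the assertion of the proposition.

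The hardest part is not any single estimate but the bookkeeping: one must check that extracting $T^{-a_in}$ always leaves $d-1$ distinct nonzero exponents (which is what forces the generalization away from $a_j=j$), and one must carefully align $\interleave\overline{f_j}\cdot T^{a_jh}f_j\interleave_d$ with the exact hypotheses of Proposition \ref{leib lemma} through conjugation- and shift-invariance of the seminorms. The van der Corput inequality, Cauchy--Schwarz, the power--mean inequality, and the base-case fact about the finite-dimensional factor $\mathcal I_{T^{a_1}}$ are all standard.
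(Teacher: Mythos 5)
The paper does not prove this proposition at all: it is quoted verbatim from Leibman (\cite[Proposition 5]{Leib}), so there is no internal proof to compare against. Your argument is a correct, self-contained derivation and follows the standard route used in the cited literature (Host--Kra/Leibman): induct on the number of functions via the Hilbert-space van der Corput inequality, which trades one function for a difference function $\overline{f_j}\cdot T^{a_jh}f_j$ and raises the seminorm index by one, and then absorb the average over $h$ using exactly Proposition \ref{leib lemma}; the generalization to arbitrary distinct nonzero exponents $a_1,\dots,a_d$ is indeed forced by the descent and you handle the dependence of constants on the exponents correctly. Two spots are terse but sound: in the base case, the inequality $\|\E(f_1\mid\mathcal I_{T^{a_1}})\|_2\lesssim_{a_1}\interleave f_1\interleave_2$ should be justified by noting that for ergodic $T$ the algebra $\mathcal I_{T^{a_1}}$ is spanned by at most $|a_1|$ eigenfunctions, so Cauchy--Schwarz on the (at most $|a_1|$) Fourier coefficients gives $\|\E(f_1\mid\mathcal I_{T^{a_1}})\|_2\le |a_1|^{1/4}\interleave f_1\interleave_2$; and the reduction to the ergodic case via $\interleave f\interleave_{d+1}^{2^{d+1}}=\int\interleave f\interleave_{d+1,\mu_\omega}^{2^{d+1}}\,dP(\omega)$ requires the conditional (relatively independent over $\mathcal I_T$) definition of the cube measures for non-ergodic systems, which is the convention under which the cited statement is meaningful anyway, since the paper only defines $\mu^{[k]}$ for ergodic systems. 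With those clarifications your proof is complete; note also that the sharper bound with $\interleave f_j\interleave_{d}$ in place of $\interleave f_j\interleave_{d+1}$ is known, your weaker $d+1$ version being what the present paper actually uses.
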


The following theorem was proved  in \cite[Theorem B]{GHSWY20} for the case $k=1$ and for the general case in \cite[Theorem B]{LQ}.
\begin{thm}\label{pronil Nd}
	Let $(X,T)$ be a minimal system and $k\in\N$. Then for every $d\in\N$, the maximal
	$k$-step pro-nilfactor of $(N_d(X),\G_d)$ is $(N_d(W_k),\G_d).$
\end{thm}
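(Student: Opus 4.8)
The plan is to establish the identity $\RP^{[k]}(N_d(X),\G_d)=R_p$, where $p\colon(N_d(X),\G_d)\to(N_d(W_k(X)),\G_d)$ is the continuous $\G_d$-equivariant surjection obtained by applying the canonical projection $\pi_k\colon X\to W_k(X)$ in each of the $d$ coordinates, and $R_p=\{(\underline x,\underline y)\in N_d(X)\times N_d(X):\pi_k(x_i)=\pi_k(y_i)\text{ for }1\le i\le d\}$ is the relation it induces. That $p$ is well defined and surjective is immediate, since $\pi_k^{(d)}\colon X^d\to W_k(X)^d$ is $\G_d$-equivariant and carries the diagonal $\G_d$-orbit of $X$ onto that of $W_k(X)$. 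Granting the identity and using that $N_d(X)$ is minimal \cite{Glasner22}, the quotient $N_d(X)/\RP^{[k]}=N_d(X)/R_p=N_d(W_k(X))$ is exactly the maximal $k$-step topological pro-nilfactor of $(N_d(X),\G_d)$, which is the assertion.

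For the inclusion $\RP^{[k]}(N_d(X),\G_d)\subseteq R_p$: for each $1\le i\le d$ the $i$-th coordinate projection is a continuous equivariant surjection $p_i\colon(N_d(X),\tau_d,\sigma_d)\to(X,T^i,T)$, and the face group $\F^k$ attached to the $\ZZ^2$-action on $X$ generated by $T^i$ and $T$ coincides with $\F^k(X,T)$, since for every $j$ the maps $(T^{\overline{\alpha}_j})^i$ and $T^{\overline{\alpha}_j}$ generate the same subgroup as $T^{\overline{\alpha}_j}$. As $\RP^{[k]}$ is defined purely in terms of the face group and is pushed forward by equivariant maps, $(p_i\times p_i)\bigl(\RP^{[k]}(N_d(X),\G_d)\bigr)\subseteq\RP^{[k]}(X)$ for all $i$; hence $\RP^{[k]}(N_d(X),\G_d)\subseteq R_p$.

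The reverse inclusion $R_p\subseteq\RP^{[k]}(N_d(X),\G_d)$ is the real content: one must show that if $\underline x,\underline y\in N_d(X)$ satisfy $(x_i,y_i)\in\RP^{[k]}(X)$ for every $i$, then there exist $F_n\in\F^k(\G_d)$, points $\underline u_n,\underline v_n\in N_d(X)$ and a single base point $\underline a_*\in N_d(X)^{[k]}_*$ with $F_n\underline u_n^{[k]}\to(\underline x,\underline a_*)$ and $F_n\underline v_n^{[k]}\to(\underline y,\underline a_*)$. The obstacle, and the reason this is not routine, is that inside $\prod_{i=1}^{d}\F^k(X,T)$ the group $\F^k(\G_d)$ is only the abelian subgroup generated by the two ``diagonal'' families $\{\tau_d^{\,\overline{\alpha}_j}\}_{1\le j\le k}$ and $\{\sigma_d^{\,\overline{\alpha}_j}\}_{1\le j\le k}$ --- of rank at most $2k$, hence a proper subgroup as soon as $d\ge 3$ --- so the $d$ coordinatewise regional proximalities cannot be produced independently but must be synchronised through this one small group. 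A natural strategy is to reduce first to the case in which $X$ is an $\infty$-step pro-nilsystem (in which case $N_d(X)=\varprojlim_j H'_j/\Gamma'_j$ is itself an $\infty$-step pro-nilsystem, each $H'_j\le H_j^{d}$ being the orbit-closure subgroup of the commuting translations $\tau_d,\sigma_d$, a sub-nilmanifold by Leibman's theorem \cite{Leib}), and then to reduce the desired identity to the nilpotent-group statement $(H'_j)_{k+1}=H'_j\cap(H_j)_{k+1}^{\,d}$ (up to the lattices $\Gamma'_j$), which one reads off from Leibman's explicit description of $H'_j$; alternatively one may argue directly via the topological structure theory of minimal flows. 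Carrying out such a reduction --- in particular the simultaneous matching of the top-layer data across all $d$ coordinates --- is the main difficulty. The complete proof is given for $k=1$ in \cite[Theorem B]{GHSWY20} and for general $k$ in \cite[Theorem B]{LQ}.
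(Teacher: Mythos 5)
The paper itself gives no proof of this theorem: it is quoted from the literature, attributed to \cite[Theorem B]{GHSWY20} for $k=1$ and to \cite[Theorem B]{LQ} for general $k$, which is exactly where your proposal ultimately places the essential inclusion $R_p\subseteq\RP^{[k]}(N_d(X),\G_d)$; so your treatment matches the paper's. The scaffolding you add — reducing the statement to $\RP^{[k]}(N_d(X),\G_d)=R_p$ and proving $\RP^{[k]}(N_d(X),\G_d)\subseteq R_p$ via the coordinate projections — is correct, while the sketched reduction to pro-nilsystems is speculative and not how the cited papers argue, but you do not rely on it.
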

\begin{cor}\label{pronil N}
	Let $(X,T)$ be a minimal system. Then for every $d\in\N$, the maximal
	$\infty$-step pro-nilfactor of $(N_d(X),\G_d)$ is $(N_d(W_\infty),\G_d).$
\end{cor}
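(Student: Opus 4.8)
The plan is to derive Corollary~\ref{pronil N} from Theorem~\ref{pronil Nd} by passing to the limit in $k$. First I would record the standard identity $W_\infty(X)=\varprojlim_{k\in\N}W_k(X)$: since $\RP^{[\infty]}(X)=\bigcap_{k\in\N}\RP^{[k]}(X)$ and the relations $\RP^{[k]}(X)$ decrease in $k$, the quotient $W_\infty(X)=X/\RP^{[\infty]}(X)$ is the inverse limit of the systems $W_k(X)=X/\RP^{[k]}(X)$ along the canonical factor maps $W_{k+1}(X)\to W_k(X)$. As $W_\infty(X)$ is a factor of the minimal system $X$ it is minimal, so $N_d(W_\infty(X))$ is well defined as the $\G_d$-orbit closure $\overline{\O}((w,\dots,w),\G_d)$ of a diagonal point.

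Next I would invoke Theorem~\ref{pronil Nd}, which identifies, for each $k$, the maximal $k$-step pro-nilfactor of $(N_d(X),\G_d)$ with $(N_d(W_k(X)),\G_d)$, the factor map being the one induced by $\pi_k\colon X\to W_k(X)$. By the uniqueness of maximal pro-nilfactors these identifications are compatible as $k$ varies, so the inverse system $\{W_k(N_d(X))\}_k$ is canonically the same as $\{N_d(W_k(X))\}_k$, with connecting maps induced by $W_{k+1}(X)\to W_k(X)$. Since the maximal $\infty$-step pro-nilfactor of any minimal system is the inverse limit of its maximal $k$-step pro-nilfactors, this gives
$$
W_\infty(N_d(X))=\varprojlim_{k}W_k(N_d(X))=\varprojlim_{k}N_d(W_k(X)),
$$
and everything reduces to checking that $\varprojlim_{k}N_d(W_k(X))=N_d(W_\infty(X))$, i.e.\ that the operation $N_d(\cdot)$ commutes with inverse limits.

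For this last point I would argue directly. Writing $q_k\colon (W_\infty(X))^d\to (W_k(X))^d$ for the coordinatewise projections, so that $(W_\infty(X))^d=\varprojlim_k (W_k(X))^d$, each $q_k$ is an equivariant continuous surjection sending the diagonal point $\Delta$ of $(W_\infty(X))^d$ to the diagonal point $\Delta_k$ of $(W_k(X))^d$; since a continuous map of compact spaces carries the orbit closure of a point onto the orbit closure of its image, $q_k(N_d(W_\infty(X)))=q_k(\overline{\O}(\Delta,\G_d))=\overline{\O}(\Delta_k,\G_d)=N_d(W_k(X))$. Then I would use the elementary fact that a closed subset $A$ of an inverse limit of compact spaces equals $\varprojlim_k q_k(A)$ (a point of the right-hand side is the limit of a subnet of preimages chosen inside $A$), applied to the closed set $A=N_d(W_\infty(X))$, to conclude $N_d(W_\infty(X))=\varprojlim_k N_d(W_k(X))$. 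Combining the displayed equalities finishes the proof. I do not expect a genuine obstacle here: Theorem~\ref{pronil Nd} carries all the dynamical content, and the remaining ingredients — $W_\infty=\varprojlim W_k$, compatibility of the pro-nilfactor identifications in $k$, and the commutation of $N_d$ with inverse limits of minimal systems — are routine; the only places needing a little care are the compatibility across different $k$ and the compactness argument in the last step.
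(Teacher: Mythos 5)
Your argument is correct, but it runs along a different track from the paper's. The paper never forms inverse limits of systems: it works directly with the equivalence relations on $N_d(X)$, observing that the factor map $\pi_\infty\colon N_d(X)\to N_d(W_\infty)$ induced by $X\to W_\infty$ satisfies $\mathbf{R}_{\pi_\infty}\subset\mathbf{R}_{\pi_k}=\RP^{[k]}(N_d)$ for every $k$ (the equality being Theorem \ref{pronil Nd}), hence $\mathbf{R}_{\pi_\infty}\subset\RP^{[\infty]}(N_d)$, while the reverse inclusion is immediate; so $\mathbf{R}_{\pi_\infty}=\RP^{[\infty]}(N_d)$ and the conclusion follows in a few lines, with no topology of inverse limits needed. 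You instead pass to the level of quotient systems, writing $W_\infty(N_d(X))=\varprojlim_k W_k(N_d(X))=\varprojlim_k N_d(W_k)$ and then proving that $N_d(\cdot)$ commutes with inverse limits via the compactness argument (projections of a closed set in an inverse limit determine it, and equivariant surjections carry the diagonal orbit closure onto the diagonal orbit closure). That extra lemma is correct but is precisely the bookkeeping the paper's relation-level phrasing avoids; conversely, your version makes explicit the structural fact that $N_d$ commutes with inverse limits, which is mildly informative in its own right. The one point in your write-up that deserves to be said rather than waved at is the compatibility in $k$: it holds because the identification in Theorem \ref{pronil Nd} is realized by the coordinatewise map $\pi_k^{(d)}|_{N_d(X)}$ (equivalently $\mathbf{R}_{\pi_k}=\RP^{[k]}(N_d)$), and since $\pi_k$ factors through $\pi_{k+1}$ via the canonical map $W_{k+1}\to W_k$, the connecting maps match; invoking abstract uniqueness of the maximal pro-nilfactor works too, but only once one knows the canonical factor map is the induced one, which is exactly how the paper uses the theorem.
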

\begin{proof}
	Consider the diagram
	$$\xymatrix{
		& N_d(X) \ar[d]_{\pi_\infty}\ar[dr]^{\pi_k}
		\\
		&N_d(W_\infty)\ar[r] & N_d(W_k)
	} $$
	By Theorem \ref{pronil Nd} we have $\mathbf{R}_{\pi_\infty}\subset \mathbf{R}_{\pi_k}=\RP^{[k]}(N_d).$ Then $\mathbf{R}_{\pi_\infty}\subset \RP^{[\infty]}(N_d).$ Clearly by definition $\RP^{[\infty]}(N_d)\subset\mathbf{R}_{\pi_\infty} .$ Therefore $\RP^{[\infty]}(N_d)=\mathbf{R}_{\pi_\infty}$ which imples  the maximal
	$\infty$-step pro-nilfactor of $(N_d(X),\G_d)$ is $(N_d(W_\infty),\G_d).$
\end{proof}
The following theorem plays an important role in the next section.

\begin{thm}\cite[Theorem 1.1]{WZ}\label{WZ}
	Let $(X,T)$ be a minimal t.d.s. and $d\geq 2$. Then $(N_d(X),\G_d)$ has zero topological entropy.
\end{thm}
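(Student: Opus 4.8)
The plan is to estimate the topological entropy of $(N_d(X),\G_d)$ directly from the inclusion $N_d(X)\subseteq X^{d}$, using the fact that although $\G_d$ is a group of ``rank two'', on each coordinate of $X^{d}$ it acts only through a one-parameter family of powers of $T$. First I would dispose of the trivial case: if $X$ is finite then $N_d(X)$ is finite and has entropy $0$, so assume $X$ is infinite. For $d\ge 2$ the action of $\G_d=\langle\sigma_d,\tau_d\rangle$ on $N_d(X)$ is then a faithful $\ZZ^{2}$-action, because a relation $\sigma_d^{a}\tau_d^{b}=\id$ on $N_d(X)$, projected to the first and second coordinates (which map onto $X$ by minimality, since $N_d(X)$ contains the $\sigma_d$-orbit closure of $(x,\dots,x)$), gives $T^{a+b}=T^{a+2b}=\id$ on $X$, whence $T^{b}=\id$ on the infinite minimal system $X$ and so $b=0$ and then $a=0$. (This is the only place $d\ge 2$ is used, and it is essential: for $d=1$ the group $\G_1$ is cyclic and the statement is false.) Therefore $h_{\mathrm{top}}(N_d(X),\G_d)$ is the entropy of a $\ZZ^{2}$-action, computed along the F\o lner sequence $F_N=\{0,\dots,N-1\}^{2}$ with $|F_N|=N^{2}$; and since $N_d(X)$ is a closed $\G_d$-invariant subset of $X^{d}$, it suffices to show $h_{\mathrm{top}}(X^{d},\G_d)=0$.

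For this, recall that finite open covers of $X^{d}$ of product form $\mathcal{U}^{\otimes d}=\{U_1\times\dots\times U_d:U_i\in\mathcal{U}\}$, with $\mathcal{U}$ a finite open cover of $X$, are cofinal (a Lebesgue-number argument), so it is enough to bound $h_{\mathrm{top}}(X^{d},\G_d,\mathcal{U}^{\otimes d})$. Since $\sigma_d^{a}\tau_d^{b}=(T^{a+b},T^{a+2b},\dots,T^{a+db})$, writing $p_j\colon X^{d}\to X$ for the $j$-th coordinate projection we have
\[
\bigvee_{(a,b)\in F_N}(\sigma_d^{a}\tau_d^{b})^{-1}\mathcal{U}^{\otimes d}
=\bigvee_{j=1}^{d}p_j^{-1}\Bigl(\,\bigvee_{(a,b)\in F_N}T^{-(a+jb)}\mathcal{U}\Bigr).
\]
For each fixed $j$ the exponent set $\{a+jb:(a,b)\in F_N\}$ is contained in an interval of length at most $(j+1)N$, so the $j$-th inner join is a sub-join of $\bigvee_{k=0}^{(j+1)N-1}T^{-k}\mathcal{U}$ and therefore admits a subcover with at most $|\mathcal{U}|^{(j+1)N}$ members; multiplying over $j$, the whole join admits a subcover with at most $|\mathcal{U}|^{C_dN}$ members, where $C_d=\sum_{j=1}^{d}(j+1)$. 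Dividing $\log\bigl(|\mathcal{U}|^{C_dN}\bigr)$ by $|F_N|=N^{2}$ and letting $N\to\infty$ gives $h_{\mathrm{top}}(X^{d},\G_d,\mathcal{U}^{\otimes d})=0$; taking the supremum over $\mathcal{U}$ yields $h_{\mathrm{top}}(X^{d},\G_d)=0$, and hence $h_{\mathrm{top}}(N_d(X),\G_d)=0$.

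I do not expect any genuine obstacle in this argument; its entire content is the mismatch between the two-dimensional F\o lner set ($|F_N|\asymp N^{2}$) and the one-dimensional set of exponents $a+jb$ (only $O(N)$ values) appearing in each coordinate, which forces the cover complexity to be sub-exponential in $|F_N|$. The steps that need a little care are the displayed identity and two routine facts used implicitly — the cofinality of product covers in $X^{d}$ and the monotonicity of topological entropy under passage to a closed invariant subsystem — together with the verification that for $d\ge 2$ the induced system really is a $\ZZ^{2}$-action, so that the normalization by $N^{2}$ is the correct one. One could equally run the estimate with $(\varepsilon,F_N)$-spanning or separated sets in place of open covers.
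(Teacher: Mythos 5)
Your argument is correct, and in fact the paper contains no proof of this statement to compare against: it is quoted verbatim from \cite[Theorem 1.1]{WZ} and used as a black box, so yours is a genuinely independent (and more elementary) route. The core of your proof checks out: the identity $\bigvee_{(a,b)\in F_N}(\sigma_d^{a}\tau_d^{b})^{-1}\mathcal{U}^{\otimes d}=\bigvee_{j=1}^{d}p_j^{-1}\bigl(\bigvee_{(a,b)\in F_N}T^{-(a+jb)}\mathcal{U}\bigr)$ is valid because $(\sigma_d^{a}\tau_d^{b})^{-1}(U_1\times\cdots\times U_d)=\prod_j T^{-(a+jb)}U_j$ and the choices decouple across coordinates; the exponent set $\{a+jb:0\le a,b\le N-1\}$ indeed has only $O_d(N)$ values, giving a subcover of size $|\mathcal{U}|^{C_dN}$, which is sub-exponential in $|F_N|=N^{2}$; product covers are cofinal by the Lebesgue-number argument; and entropy of an amenable ($\ZZ^2$) action is monotone under passage to the closed invariant subset $N_d(X)\subset X^{d}$. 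Two remarks. First, your proof actually yields the stronger statement that $(X^{d},\G_d)$ has zero topological entropy for an arbitrary t.d.s.\ $(X,T)$ (minimality enters only in your faithfulness check, which is needed solely to justify reading $\G_d$ as a $\ZZ^{2}$-action and normalizing by $N^{2}$; the entropy bound itself never uses it), whereas \cite{WZ} is a finer complexity analysis of the induced system. Second, the one point worth making explicit is that ``topological entropy'' must mean the standard amenable-group (Følner-normalized) entropy of the $\ZZ^{2}$-action generated by $\sigma_d$ and $\tau_d$; this is exactly the notion the present paper needs in the proof of Proposition \ref{Fur sys Nd}, where the theorem is combined with the variational principle and the $\ZZ^{2}$-Pinsker factor, so your proof fits the intended application. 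Also note a small slip of no consequence: your bound $|\mathcal{U}|^{(j+1)N}$ comes from the cardinality of the full join over an interval of length $(j+1)(N-1)+1\le (j+1)N$, which is fine as an upper bound for the minimal subcover.
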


\medskip

\subsection{The Pinsker factor}
Let $G\cong\mathbb{Z}^s$. Then every system $(X,\mu,G)$ admits a largest factor belonging to the class of systems of entropy zero. This factor is called the \textbf{Pinsker factor} of $(X,\mu,G)$ and is denoted by $\Pi(X,\mu,G)$, or $\Pi(G)$.

\medskip

For a $\mathbb{Z}^2$-action generated by commuting transformations $T$ and $S$ on a probability space $(X, \mu)$, the Pinsker $\sigma$-algebra $\Pi(\mathbb{Z}^2)$ admits the following representation:

\begin{equation}\label{pinsker-pre}
	\Pi(\mathbb{Z}^2) = \bigvee_{\beta \in \mathcal{P}_X} \bigcap_{n=0}^\infty \bigvee_{\substack{(i,j) \in \mathbb{N}^2 \\ i+j \geq n}} T^{-i}S^{-j}\beta
\end{equation}
where $\mathcal{P}_X$ is the set of all finite partitions \cite{JPC}. 
The proof of the following proposition is based on fundamental non-disjointness lemmas in \cite{dlr}.

\begin{prop}\label{FH2.1}
	Let $\rho$ be a joining of a system $(X, \mu, \mathbb{Z}^2)$ of zero entropy and a system $(Y, \nu, \mathbb{Z}^2)$. If $g \in L^2(\nu)$ is such that $\mathbb{E}_\nu(g|\Pi(\mathbb{Z}^2))=0$, then for every $f \in L^2(\mu)$ we have $\int f(x) g(y) \, d\rho(x, y)=0$.
\end{prop}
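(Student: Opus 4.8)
The plan is to reduce the statement to the corresponding one-dimensional non-disjointness fact and then iterate it in the two commuting directions. First I would recall the fundamental lemma from \cite{dlr}: if $(X,\mu,T)$ has zero entropy and $\rho$ is a joining of $(X,\mu,T)$ with $(Y,\nu,T)$, then $X$ and the Pinsker factor-complement of $Y$ are relatively independent over nothing — more precisely, for $g\in L^2(\nu)$ with $\mathbb{E}_\nu(g\mid \Pi(T))=0$ one has $\int f(x)g(y)\,d\rho=0$ for all $f\in L^2(\mu)$. The $\mathbb{Z}^2$-statement is not an immediate consequence because the relevant object is $\Pi(\mathbb{Z}^2)$, built from the joint action of $T$ and $S$ as in \eqref{pinsker-pre}, rather than from either single transformation.

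The key step is a decomposition of $g$ adapted to the two directions. I would argue that it suffices to treat $g$ with $\mathbb{E}_\nu(g\mid \Pi(S))=0$ and, separately, $g$ that is measurable with respect to $\Pi(S)$; every $g$ with $\mathbb{E}_\nu(g\mid\Pi(\mathbb{Z}^2))=0$ can be written as a sum of (limits of) such pieces, using that $\Pi(\mathbb{Z}^2)\subset \Pi(S)\cap\Pi(T)$ and conditioning successively. For the first piece, the joining $\rho$ is in particular a joining of $(X,\mu,S)$ — which still has zero entropy, since the Pinsker factor of a $\mathbb{Z}^2$-action of zero entropy has each single-transformation entropy zero as well — with $(Y,\nu,S)$, so the one-dimensional lemma of \cite{dlr} applied to the transformation $S$ gives $\int f\otimes g\,d\rho=0$ directly. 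For the second piece, $g$ is $\Pi(S)$-measurable but still satisfies $\mathbb{E}_\nu(g\mid\Pi(\mathbb{Z}^2))=0$; here I would restrict attention to the factor $(Y',\nu',S)=(\Pi(Y,S),\ldots)$, on which $T$ still acts (since $T$ and $S$ commute, $T$ preserves $\Pi(S)$), observe that the induced joining of $(X,\mu,T)$ with $(Y',\nu',T)$ is a joining of zero-entropy $(X,\mu,T)$ with $(Y',\nu',T)$, and that the hypothesis becomes $\mathbb{E}_{\nu'}(g\mid\Pi(T)\vee\text{(trivial)})=0$, i.e.\ $g$ has zero conditional expectation on the $T$-Pinsker factor of $Y'$; then the one-dimensional lemma applied to $T$ finishes it. The bookkeeping that $\Pi(\mathbb{Z}^2)$ of $Y$ equals $\Pi(T)$ of $\Pi(S,Y)$ is exactly formula \eqref{pinsker-pre} read the right way, and this is where I would be careful.

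I expect the main obstacle to be the last reduction: making precise that orthogonality of $g$ to $\Pi(\mathbb{Z}^2)$ decomposes correctly into the two single-direction orthogonality conditions, and that passing to the factor $\Pi(S,Y)$ does not destroy either the joining structure with $X$ or the zero-entropy hypothesis on $X$. Concretely, one must check that the image of $\rho$ under $\mathrm{id}_X\times(\text{projection }Y\to\Pi(S,Y))$ is a joining of $(X,\mu,\mathbb{Z}^2)$ with $(\Pi(S,Y),\mathbb{Z}^2)$ — routine — and that relative to $T$ the conditional expectation $\mathbb{E}(g\mid \Pi(T,\Pi(S,Y)))$ coincides with $\mathbb{E}(g\mid\Pi(\mathbb{Z}^2))$, which is the content of \eqref{pinsker-pre}. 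Once these identifications are in place, both pieces are handled by a single application each of the \cite{dlr} non-disjointness lemma, and summing/taking $L^2$-limits gives the conclusion for general $g$ with $\mathbb{E}_\nu(g\mid\Pi(\mathbb{Z}^2))=0$.
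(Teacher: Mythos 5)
There is a genuine gap, and it sits at the very foundation of your reduction. You claim that since the $\mathbb{Z}^2$-action on $X$ has zero entropy, the single transformation $S$ (and later $T$) acting on $X$ also has zero entropy, so that the one-dimensional lemma of \cite{dlr}/\cite{FH} can be applied in each direction separately. This implication is false: zero entropy of a $\mathbb{Z}^2$-action does not force the generators to have zero entropy as $\mathbb{Z}$-systems. Take $T$ a Bernoulli shift and $S=\mathrm{id}$; then $\frac{1}{N^2}H\bigl(\bigvee_{0\le i,j<N}T^{-i}S^{-j}\beta\bigr)=\frac{1}{N}\cdot\frac{1}{N}H\bigl(\bigvee_{0\le i<N}T^{-i}\beta\bigr)\to 0$, so the $\mathbb{Z}^2$-action has zero entropy while $h(T)>0$. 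In fact the standard subadditivity estimate gives $h_{\mathbb{Z}^2}(\langle T,S\rangle)\le\min\{h(T),h(S)\}$, so the true inclusions are $\Pi(T)\subset\Pi(\mathbb{Z}^2)$ and $\Pi(S)\subset\Pi(\mathbb{Z}^2)$ --- the reverse of the inclusion $\Pi(\mathbb{Z}^2)\subset\Pi(S)\cap\Pi(T)$ on which your decomposition of $g$ rests (in the example above $\Pi(\mathbb{Z}^2)$ is the full $\sigma$-algebra while $\Pi(T)$ is trivial). Consequently both halves of your argument break: the first piece needs $(X,\mu,S)$ to have zero $\mathbb{Z}$-entropy, the second needs $(X,\mu,T)$ to have zero $\mathbb{Z}$-entropy, and neither is available; moreover the bookkeeping identity ``$\Pi(\mathbb{Z}^2)$ of $Y$ equals $\Pi(T)$ of the factor $\Pi(S,Y)$'' is not what \eqref{pinsker-pre} says and is not true in general.

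The paper avoids the one-dimensional reduction entirely: the results it cites from \cite{dlr} (Definition 2.3, Lemma 4.1, Proposition 4.3) are already stated for general group actions, and they give directly that in the joining $\rho$ the $X$-coordinate sees $Y$ only through its $\mathbb{Z}^2$-Pinsker algebra, i.e. $\mathbb{E}_\rho(g(y)\mid X)=\mathbb{E}_\rho\bigl(\mathbb{E}_\nu(g\mid\Pi(\mathbb{Z}^2))\mid X\bigr)=0$, whence $\int f(x)g(y)\,d\rho=0$. If you want to salvage your approach, you would have to work with the $\mathbb{Z}^2$-Pinsker algebra itself (e.g. via the tail representation \eqref{pinsker-pre} and a relative disjointness statement for the joint action), not with the two single-transformation Pinsker algebras, because those simply do not control $\Pi(\mathbb{Z}^2)$ from above.
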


\begin{proof}
	By \cite[Definition 2.3]{dlr}, \cite[Lemma 4.1]{dlr} and \cite[Proposition 4.3]{dlr}, we obtain that 
	$$\mathbb{E}_\rho (g(y)|X) = \mathbb{E}_\rho(\mathbb{E}_\nu (g(y)| \Pi(\mathbb{Z}^2))|X )=0.$$
	Therefore, 
	$$\int f(x) g(y) \, d\rho(x, y) = \int f(x) \mathbb{E}_\rho (g|X)(y) \, d\rho(x, y) = 0.$$
\end{proof}

By the representation of Pinsker algebra (\ref{pinsker-pre}) and the proof of \cite[Lemma 2.2]{FH}, we can obtain the following lemma.

\begin{lem}\label{FH2.2}
	Let $\pi \colon (X, \mu, \mathbb{Z}^2) \to (Y, \nu, \mathbb{Z}^2)$ be a factor map. 
	If $g \in L^2(\nu)$ is measurable with respect to $\Pi(\mathbb{Z}^2)$, then $g \circ \pi$ is measurable with respect to $\Pi(\mathbb{Z}^2)$.
	If $\mathbb{E}_\nu(g | \Pi(\mathbb{Z}^2)) = 0$, then $\mathbb{E}_\mu(g \circ \pi| \Pi(\mathbb{Z}^2)) = 0$.
\end{lem}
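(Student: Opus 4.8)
The plan is to imitate the proof of \cite[Lemma 2.2]{FH}, but working with the two-dimensional representation of the Pinsker $\sigma$-algebra given in (\ref{pinsker-pre}) instead of the one-dimensional one. First I would recall that (\ref{pinsker-pre}) expresses $\Pi(\mathbb{Z}^2)$ for $(Y,\nu,\mathbb{Z}^2)$, with generators $T_Y$ and $S_Y$, as $\bigvee_{\beta \in \mathcal{P}_Y} \bigcap_{n=0}^\infty \bigvee_{i+j \geq n} T_Y^{-i}S_Y^{-j}\beta$, and similarly for $(X,\mu,\mathbb{Z}^2)$. The key structural observation is that pulling back along $\pi$ is compatible with all the operations appearing in this formula: for a finite partition $\beta$ of $Y$, the pullback $\pi^{-1}\beta$ is a finite partition of $X$; pullback commutes with the transformations, i.e. $\pi^{-1}(T_Y^{-i}S_Y^{-j}\beta) = T_X^{-i}S_X^{-j}(\pi^{-1}\beta)$ since $\pi$ intertwines the actions; and pullback of $\sigma$-algebras commutes with countable joins and with countable decreasing intersections (the latter because $\pi^{-1}$ is injective on $\sigma$-algebras modulo null sets, as $\pi_*\mu = \nu$).

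Granting these compatibilities, the first assertion follows: if $g$ is $\Pi(\mathbb{Z}^2)$-measurable on $Y$, then $g$ is measurable with respect to $\bigcap_{n} \bigvee_{i+j\geq n} T_Y^{-i}S_Y^{-j}\beta$ for some countable collection of finite partitions $\beta$ (by a standard approximation argument, any $\Pi$-measurable function can be taken measurable with respect to a countably generated sub-$\sigma$-algebra of the form on the right of (\ref{pinsker-pre})); pulling back, $g\circ\pi$ is measurable with respect to $\bigcap_{n}\bigvee_{i+j\geq n} T_X^{-i}S_X^{-j}(\pi^{-1}\beta)$, which by (\ref{pinsker-pre}) applied to $X$ is contained in $\Pi(\mathbb{Z}^2)$ of $(X,\mu,\mathbb{Z}^2)$. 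For the second assertion, I would decompose an arbitrary $g \in L^2(\nu)$ as $g = \mathbb{E}_\nu(g|\Pi(\mathbb{Z}^2)) + g'$ with $\mathbb{E}_\nu(g'|\Pi)=0$; by the first part $\mathbb{E}_\nu(g|\Pi)\circ\pi$ is $\Pi$-measurable on $X$, so $\mathbb{E}_\mu(g\circ\pi|\Pi) = \mathbb{E}_\nu(g|\Pi)\circ\pi + \mathbb{E}_\mu(g'\circ\pi|\Pi)$, and it suffices to show $\mathbb{E}_\mu(g'\circ\pi|\Pi)=0$. For that I would test against an arbitrary bounded $\Pi(\mathbb{Z}^2)$-measurable $h$ on $X$: using that $\pi_*\mu=\nu$ and the tower property, $\int_X (g'\circ\pi)\,\mathbb{E}_\mu(\cdot|\Pi)$ type manipulations reduce the claim to the vanishing $\mathbb{E}_\nu(g'|\Pi)=0$; the clean way is to invoke Proposition \ref{FH2.1} with the graph joining $\rho = (\mathrm{id}\times\pi)_*\mu$ of $(X,\mu,\mathbb{Z}^2)$ and $(Y,\nu,\mathbb{Z}^2)$ — though note Proposition \ref{FH2.1} requires one factor to have zero entropy, so instead I would argue directly via the conditional-expectation identity $\mathbb{E}_\mu(g'\circ\pi \mid \pi^{-1}\mathcal{B}_Y) = \mathbb{E}_\nu(g'\mid\mathcal{B}_Y)\circ\pi$ and the fact that $\Pi_X \supset \pi^{-1}\Pi_Y$ makes the relevant conditioning factor through $\pi^{-1}\Pi_Y$.

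The main obstacle I anticipate is the bookkeeping around the decreasing intersection $\bigcap_n$: pullback of $\sigma$-algebras does not in general commute with infinite intersections, so one must use that $\pi^{-1}$ is an isomorphism of measure algebras onto its image (a consequence of $\pi_*\mu = \nu$) to push $\pi^{-1}$ through the $\bigcap_n$. A second, more routine point to get right is the reduction of ``$g$ is $\Pi$-measurable'' to ``$g$ is measurable with respect to a countable piece'' of the uncountable join over all finite partitions $\mathcal{P}_Y$ — this is standard ($L^2(\nu,\Pi)$ is separable) but should be stated carefully so that the pullback lands inside the corresponding countable piece of (\ref{pinsker-pre}) for $X$. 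Everything else is formal manipulation of conditional expectations.
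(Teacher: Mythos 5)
Your treatment of the first assertion is essentially correct and is the route the paper intends (adapt \cite[Lemma 2.2]{FH} via the representation (\ref{pinsker-pre})): writing $\Pi(X),\Pi(Y)$ for the two Pinsker $\sigma$-algebras, $\pi^{-1}$ commutes with joins and with the action, and only the trivial inclusion $\pi^{-1}\bigl(\bigcap_n\mathcal{A}_n\bigr)\subset\bigcap_n\pi^{-1}\mathcal{A}_n$ is needed, so (\ref{pinsker-pre}) gives $\pi^{-1}\Pi(Y)\subset\Pi(X)$ modulo null sets; the measure-algebra argument you worry about is only relevant for the reverse inclusion, which is not used. The genuine gap is in the second assertion. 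After testing $g'\circ\pi$ against a bounded $\Pi(X)$-measurable $h$ and conditioning on $\pi^{-1}\mathcal{B}_Y$, what you actually need is that $\mathbb{E}_\mu(h\mid\pi^{-1}\mathcal{B}_Y)$ is $\pi^{-1}\Pi(Y)$-measurable, i.e.\ that $\Pi(X)$ and $\pi^{-1}\mathcal{B}_Y$ are relatively independent over $\pi^{-1}\Pi(Y)$. This is the real content of the statement, and it does not follow from the inclusion $\Pi(X)\supset\pi^{-1}\Pi(Y)$ that you invoke: that inclusion only gives $\mathbb{E}_\mu(g'\circ\pi\mid\pi^{-1}\Pi(Y))=0$, i.e.\ vanishing of the conditional expectation on the \emph{smaller} $\sigma$-algebra, which says nothing about the conditional expectation on the larger $\sigma$-algebra $\Pi(X)$ (orthogonality to $L^2(\pi^{-1}\Pi(Y))$ does not imply orthogonality to $L^2(\Pi(X))$); and the identity $\mathbb{E}_\mu(g'\circ\pi\mid\pi^{-1}\mathcal{B}_Y)=g'\circ\pi$ is vacuous here. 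So the ``direct'' argument you substitute for Proposition \ref{FH2.1} does not close.

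The repair stays inside the paper's toolkit: you were right that the graph joining of $X$ and $Y$ fails the zero-entropy hypothesis of Proposition \ref{FH2.1}, but the correct choice is the joining of the \emph{Pinsker factor of $X$} with $Y$. Let $p\colon X\to X/\Pi(X)$ be the Pinsker factor map, so $X/\Pi(X)$ has zero entropy, and set $\rho:=(p\times\pi)_*\mu$, which is a joining of $X/\Pi(X)$ and $Y$. If $\mathbb{E}_\nu(g\mid\Pi(Y))=0$, Proposition \ref{FH2.1} yields $\int f(p(x))\,g(\pi(x))\,d\mu(x)=0$ for every $f\in L^2$, i.e.\ $g\circ\pi\perp L^2(X,\Pi(X),\mu)$, which is exactly $\mathbb{E}_\mu(g\circ\pi\mid\Pi(X))=0$; combined with the first assertion this also gives the identity $\mathbb{E}_\mu(g\circ\pi\mid\Pi(X))=\mathbb{E}_\nu(g\mid\Pi(Y))\circ\pi$ you were aiming for. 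In short: the measurability part of your plan is fine, but the second part needs this joining/disjointness input (or an equivalent entropy-theoretic relative-independence theorem), not just the tower property and $\sigma$-algebra inclusions.
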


The following lemma is a generalization of \cite[Lemma 2.3]{FH} in the form of \(\mathbb{Z}^2\).
\begin{lem}\label{pinsker}
	Let $(X, \mu, \langle T, S\rangle)$ be a measure-preserving system where $T$ and $S$ are commuting transformations and $f \in L^\infty(\mu)$ be real-valued. Suppose that for every $\ell \geq 0$ and all $(m_1, n_1), \ldots, (m_\ell, n_\ell) \in \mathbb{N}^2$, we have
	\[
	\int f \cdot (T^{m_1}S^{n_1} f) \cdots (T^{m_\ell}S^{n_\ell} f)  \, d\mu = 0.
	\]
	Then $\mathbb{E}_\mu(f  |  \Pi(\mathbb{Z}^2)) = 0$.
\end{lem}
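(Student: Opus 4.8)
The plan is to mimic the proof of \cite[Lemma 2.3]{FH}, adapting it from the $\mathbb{Z}$-action setting to the $\mathbb{Z}^2$-action generated by $T$ and $S$. The key tool is the representation of the Pinsker $\sigma$-algebra in \eqref{pinsker-pre}: $\Pi(\mathbb{Z}^2) = \bigvee_{\beta \in \mathcal{P}_X} \bigcap_{n=0}^\infty \bigvee_{i+j \geq n} T^{-i}S^{-j}\beta$. Set $g = \mathbb{E}_\mu(f \mid \Pi(\mathbb{Z}^2))$. We want to show $g = 0$. By the hypothesis, the family of ``forward'' products $f \cdot (T^{m_1}S^{n_1}f) \cdots (T^{m_\ell}S^{n_\ell}f)$ all have zero integral; the idea is to transfer this identity, in the limit, onto $g$ and use that $g$ is Pinsker-measurable to conclude $\int g^2\,d\mu = 0$.

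First I would observe that for any $(m,n) \in \mathbb{N}^2$ and any Pinsker-measurable function $h$, one has $\mathbb{E}_\mu(h \cdot T^{m}S^{n}f \mid \Pi(\mathbb{Z}^2)) = h \cdot T^{m}S^{n}g$, since $T^mS^n$ preserves $\Pi(\mathbb{Z}^2)$ and conditional expectation commutes with the module structure over the sub-$\sigma$-algebra. Iterating, $\mathbb{E}_\mu\big( f \cdot \prod_{r=1}^\ell T^{m_r}S^{n_r}f \mid \Pi(\mathbb{Z}^2)\big)$ relates to $g \cdot \prod_{r=1}^\ell T^{m_r}S^{n_r}g$ once the shifts are taken far enough apart. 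The heart of the argument is an approximation step: because $g$ is measurable with respect to the tail-type algebra $\bigcap_{n} \bigvee_{i+j\geq n} T^{-i}S^{-j}\beta$ (up to the join over partitions $\beta$), $g$ can be approximated in $L^2$ by a function measurable with respect to $\bigvee_{i+j \geq n_0} T^{-i}S^{-j}\beta$ for large $n_0$, i.e. by a function of the ``far-forward'' coordinates. Such a function is itself well-approximated by a polynomial (finite linear combination of products) in the shifted coordinates $T^{m}S^{n}\mathbf{1}_B$ with $m+n$ large. Plugging these approximants into the hypothesis, together with the cancellation in the previous paragraph, forces $\int f \cdot g \,d\mu$ to be arbitrarily small, hence $\int g^2\,d\mu = \int f g\, d\mu = 0$.

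More precisely, I would argue as follows. Fix $\varepsilon > 0$. Choose a finite partition $\beta$ and $n_0 \in \mathbb{N}$ and a $\bigvee_{i+j\geq n_0} T^{-i}S^{-j}\beta$-measurable function $G$ with $\|g - G\|_{L^2} < \varepsilon$; then approximate $G$ by $G' = c_0 + \sum_{s} c_s \prod_{r} T^{m_{s,r}}S^{n_{s,r}}\mathbf{1}_{B_{s,r}}$ with all $m_{s,r}+n_{s,r} \geq n_0$ and $\|G - G'\|_{L^2}$ small. By the hypothesis applied to the $B$'s (expanding each $\mathbf{1}_{B}$ in terms of the given products — this requires the hypothesis to cover products where some $(m_r,n_r)$ may coincide or be zero, which it does since $\ell \geq 0$ is arbitrary and we may repeat indices), we get $\int f \cdot \big(\prod_r T^{m_{s,r}}S^{n_{s,r}}\mathbf{1}_{B_{s,r}}\big)\,d\mu = 0$ after writing indicators as polynomials in $f$-translates — actually this step needs $f$ itself (or translates of $f$) to generate the relevant indicators, so instead one applies the hypothesis directly with $f$ replaced by its level sets only if $f$ generates $\beta$; the cleaner route, following \cite{FH}, is to take $\beta$ to be a partition into level sets of $f$, use that products of translates of $\mathbf{1}_B$ are limits of polynomials in translates of $f$, and invoke the hypothesis for those polynomial products. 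Thus $\int f \cdot G' \, d\mu = 0$ (up to $O(\varepsilon)$), so $\int f g\, d\mu = \int f(g - G')\,d\mu + \int fG'\,d\mu = O(\varepsilon)$. Since $\varepsilon$ is arbitrary and $\int fg\,d\mu = \int g^2\,d\mu$ by the defining property of conditional expectation, we conclude $g = 0$.

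The main obstacle I anticipate is the bookkeeping in the approximation of a $\bigvee_{i+j \geq n_0} T^{-i}S^{-j}\beta$-measurable function by polynomials in far-forward translates of $f$, and making sure that every product that arises — including those with repeated or zero shift indices, and the constant term — is genuinely covered by the hypothesis ``for every $\ell \geq 0$ and all $(m_1,n_1),\dots,(m_\ell,n_\ell) \in \mathbb{N}^2$''. The condition $\ell \geq 0$ handles $\int f\,d\mu = 0$ (the $\ell = 0$ case), and allowing repetitions among the $(m_r,n_r)$ handles powers; one should check that the hypothesis as stated does permit repeated indices (it does, as no distinctness is imposed). The rest is a routine density argument as in \cite[Lemma 2.2, Lemma 2.3]{FH}, now carried out over the index semigroup $\mathbb{N}^2$ with the weight $i+j$ replacing $i$, using \eqref{pinsker-pre} in place of the one-dimensional Pinsker formula.
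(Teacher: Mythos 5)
Your overall template (orthogonality from the hypothesis, density of products of far-forward translates, then a tail/martingale argument) is the right one, but there is a genuine gap at exactly the point you flag and then wave away. You work in the original system with $g=\mathbb{E}_\mu(f\mid\Pi(\mathbb{Z}^2))$ and use \eqref{pinsker-pre} to approximate $g$ by a function measurable with respect to $\bigvee_{i+j\geq n_0}T^{-i}S^{-j}\beta$. But in \eqref{pinsker-pre} the join runs over \emph{all} finite partitions, so the partition $\beta$ occurring in any $L^2$-approximation of $g$ is dictated by $g$ (i.e.\ by the Pinsker algebra of the full system), not chosen by you; you are not entitled to ``take $\beta$ to be a partition into level sets of $f$''. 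For an arbitrary $\beta$ your polynomial approximants $G'$ are built from translates of indicators $\mathbf{1}_B$ about which the hypothesis says nothing, so the key step $\int f\cdot G'\,d\mu\approx 0$ is unjustified, and with it the conclusion $\int fg\,d\mu=0$. The cancellation hypothesis only reaches functions generated by translates of $f$ itself.

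The missing ingredient is the reduction that the paper performs first: pass to the factor $(Y,\nu)$ whose $\sigma$-algebra is $\mathcal{F}=\sigma\{T^mS^nf:m,n\in\mathbb{Z}\}$, and invoke Lemma \ref{FH2.2} (in substance, the relative independence of $\mathcal{F}$ and $\Pi(\mathbb{Z}^2)$ over the Pinsker algebra of the factor, derived from \eqref{pinsker-pre} and \cite[Lemma 2.2]{FH}) to see that it suffices to prove $\mathbb{E}(f\mid\Pi)=0$ inside $Y$. Only after this reduction is it legitimate to restrict attention to $\sigma$-algebras built from translates of $f$: in $Y$ the Pinsker algebra is contained in the tail $\mathcal{F}_{-\infty}=\bigcap_N\mathcal{G}_N$ with $\mathcal{G}_N=\sigma\{T^mS^nf:m,n\geq 0,\ m+n\geq N\}$, the algebra of finite products $\prod_j T^{m_j}S^{n_j}f$ (repetitions and the empty product allowed, as you correctly note) is dense in $L^2(\mathcal{G}_N)$, the hypothesis gives $\mathbb{E}(f\mid\mathcal{G}_N)=0$ for every $N$, and martingale convergence finishes the argument. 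Your proposal, as written, never makes this reduction, and the substitute you offer (choosing $\beta$ by hand) does not close the gap; everything else in your sketch is sound and matches the paper's argument once the reduction is inserted.
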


\begin{proof}
	Let $\mathcal{F}$ be the $\sigma$-algebra generated by all functions $\{ T^m S^n f : m, n \in \mathbb{Z} \}$. Since $T$ and $S$ commute, this defines a $\mathbb{Z}^2$-action factor system $(Y, \nu, T, S)$. By Lemma \ref{FH2.2} it suffices to prove that $f$ has zero conditional expectation on the Pinsker factor of $Y$. Thus, we may assume that $\mathcal{F} = \mathcal{B}_X$, where $\mathcal{B}_X$ is the Borel $\sigma$-algebra of $X$.
	
	\medskip
	
	For $N \geq 0$, let $\mathcal{F}_N$ be the $\sigma$-algebra generated by $\{ T^m S^n f : m, n \geq 0, \, m + n \leq N \}$, and define the tail $\sigma$-algebra as $\mathcal{F}_{-\infty} = \bigcap_{N=0}^\infty \mathcal{G}_N$, where $\mathcal{G}_N = \sigma\{ T^m S^n f : m, n \geq 0, \, m + n \geq N \}$. For $\mathbb{Z}^2$-actions, the Pinsker $\sigma$-algebra is contained in $\mathcal{F}_{-\infty}$.
	
	\medskip
	
	We now show that $\mathbb{E}_\mu(f  |  \mathcal{F}_{-\infty}) = 0$. It suffices to prove that for each $N \geq 0$, $\mathbb{E}_\mu(f  |  \mathcal{G}_N) = 0$. Let $\mathcal{A}_N$ be the algebra generated by the functions $\{ T^m S^n f : m, n \geq 0, \, m + n \geq N \}$. Since these functions are bounded, $\mathcal{A}_N$ is dense in $L^2(\mathcal{G}_N)$. Any function $g$ in $\mathcal{A}_N$ is a finite linear combination of functions of the form $\prod_{j=1}^\ell T^{m_j}S^{n_j} f$ with $m_j, n_j \geq 0$ and $m_j + n_j \geq N$. By the hypothesis, for any such product, we have $\int f \cdot \prod_{j=1}^\ell T^{m_j}S^{n_j} f  \, d\mu = 0$. By the linearity of the integral, $\int f \cdot g  \, d\mu = 0$ for all $g \in \mathcal{A}_N$. Since $\mathcal{A}_N$ is dense in $L^2(\mathcal{G}_N)$, it follows that $\mathbb{E}_\mu(f  |  \mathcal{G}_N) = 0$ in $L^2$. As this holds for all $N$, the continuity of conditional expectation implies that $\mathbb{E}_\mu(f  |  \mathcal{F}_{-\infty}) = 0$. Finally, it follows from $\Pi(\mathbb{Z}^2) \subseteq \mathcal{F}_{-\infty}$ that we have $\mathbb{E}_\mu(f  |  \Pi(\mathbb{Z}^2)) = 0$.
\end{proof}

\bigskip

\section{Proofs of Theorems A and B}\label{s3}

In this section, we are going to prove Theorems A and B. For $v\in [k]$, let $\psi(v)=\sum_{i=1}^k  2^{v_i}$.

In \cite{Assani}, Assani gave an estimation for 2-step and 3-step cubic averages and Xiao built specific statements of general case in \cite{Xiao}.  Now we present a lemma   for estimating the high-dimensional cubic average. Our ideas and technical methods are similar to \cite[Lemma 5, Lemma 6]{Assani}, with the essence to let one index vanish.

\begin{lem}\label{inq lem}
	Let $k\geq 2$ and for every $v\in[k]^*$,  $\{a^{\psi(v)}_{m,n}\}_{m, n \in \mathbb{Z}}$ be a real sequence bounded by $1$. Then
	\[\begin{aligned}
		&\left| \frac{1}{N^{2k}}\sum_{l,h\in[0,N-1]^k}\prod_{v\in[k]^*} a^{\psi(v)}_{l\cdot v, k\cdot v} \right|^2 
		\\
		\leq	&\frac{4}{N^{2k-4}} \sum_{\substack{h_1,\ldots,h_{k-2},\\
				l_1,\dots,l_{k-2}=0}}^{N-1} \sup_{t,s} \bigg| \sum_{l_k,h_k=0}^{N-1} e^{2\pi i (l_k t+h_k s)}\cdot\prod_{   \substack{v\in[k]^*:\\ v_{k-1}=0,v_k=1}} a^{\psi(v)}_{l_k + \sum_{i=1}^{k-2} v_i l_i, h_k + \sum_{i=1}^{k-2} v_i h_i} \bigg|^2.\end{aligned}
	\]
\end{lem}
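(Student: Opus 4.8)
The plan is to repeatedly apply the Cauchy–Schwarz inequality in the style of van der Corput, exactly as in \cite[Lemma 5, Lemma 6]{Assani}, to make the indices $l_{k-1}$ and $h_{k-1}$ disappear one pair at a time, while keeping the estimate quantitative. Write $A=\frac{1}{N^{2k}}\sum_{l,h\in[0,N-1]^k}\prod_{v\in[k]^*}a^{\psi(v)}_{l\cdot v,\,h\cdot v}$. The first step is to isolate the dependence on the coordinate pair $(l_{k-1},h_{k-1})$: split the index set $[k]^*$ according to the value of $v_{k-1}$. The factors with $v_{k-1}=0$ do not involve $(l_{k-1},h_{k-1})$ at all, so after summing over $l_{k-1},h_{k-1}\in[0,N-1]$ one is left with an average over the remaining $2(k-1)$ indices of a product of an $(l_{k-1},h_{k-1})$-independent block times $\frac{1}{N^2}\sum_{l_{k-1},h_{k-1}}(\text{block with }v_{k-1}=1)$. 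Here I would use the trivial bound $|A|\le \frac{1}{N^{2k-2}}\sum(\cdots)\bigl|\frac{1}{N^2}\sum_{l_{k-1},h_{k-1}}\prod_{v:v_{k-1}=1}a^{\psi(v)}\bigr|$ and then Cauchy–Schwarz in the outer variables to pass to the square.

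The key manoeuvre is then a van der Corput / Cauchy–Schwarz shift in the variable pair $(l_{k-1},h_{k-1})$ inside the squared inner sum: writing $\bigl|\sum_{l_{k-1},h_{k-1}}\Phi(l_{k-1},h_{k-1})\bigr|^2$ and expanding, one introduces a shift $(l_{k-1},h_{k-1})\mapsto(l_{k-1}+r,h_{k-1}+s)$; because each factor $a^{\psi(v)}_{\ell,\eta}$ with $v_{k-1}=1$ depends on $(l_{k-1},h_{k-1})$ only through translation, the cross terms reorganize into a product over the pairs $v$ with $v_{k-1}=1$ of shifted sequences. The effect is to replace the product over $\{v:v_{k-1}=1\}$ by a product over $\{v:v_{k-1}=1,v_k=1\}$ together with the product over $\{v:v_{k-1}=1,v_k=0\}$ collapsing against its shift; after bounding the latter by $1$ and recognizing the remaining oscillatory sum over $l_k,h_k$ as having the exponential phase $e^{2\pi i(l_k t+h_k s)}$ (the phase coming from the shift parameters promoted to a supremum over $t,s$), one arrives at precisely the right-hand side. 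The factor $4=2^2$ and the normalization $N^{2k-4}$ come from bookkeeping: two Cauchy–Schwarz steps each in a variable pair reduce the normalization from $N^{2k}$ to $N^{2k-4}$ after squaring, and the constant $4$ absorbs the standard ``diagonal plus off-diagonal'' split.

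Concretely, the steps in order are: (i) separate $[k]^*$ by the value of $v_{k-1}$ and bound trivially in the $2(k-2)$ frozen variables $l_1,\dots,l_{k-2},h_1,\dots,h_{k-2}$ and in $l_k,h_k$ has yet to be frozen; (ii) apply Cauchy–Schwarz in those $2(k-2)$ frozen variables to move the square inside, producing $\frac{1}{N^{2k-4}}\sum_{l_1,\dots,h_{k-2}}\bigl|\frac{1}{N^2}\sum_{l_{k-1},h_{k-1},l_k,h_k}\prod_{v:v_{k-1}=1}a^{\psi(v)}\bigr|^2$; (iii) perform the van der Corput shift in $(l_{k-1},h_{k-1})$, eliminating that pair and producing the supremum over the dual variables $t,s$ together with the exponential $e^{2\pi i(l_kt+h_ks)}$; (iv) bound the residual factors indexed by $\{v:v_{k-1}=1,v_k=0\}$ by $1$, leaving only the factors with $v_{k-1}=0$... wait — here I must be careful: the surviving product in the statement is over $\{v\in[k]^*:v_{k-1}=0,v_k=1\}$, which means the shift in $(l_{k-1},h_{k-1})$ must be arranged so that it is the $v_{k-1}=1$ factors that are bounded away and the exponential phase records their contribution; I would track the combinatorics of which coordinates of $v$ survive carefully at this point. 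The main obstacle, and the part that needs the most care, is exactly this bookkeeping of the index set $[k]^*$ under the successive reductions — making sure the phase $e^{2\pi i(l_kt+h_ks)}$ genuinely captures the frequency coming from the shifted variables and that the surviving product is over the claimed subset $\{v:v_{k-1}=0,v_k=1\}$ — rather than any analytic difficulty, since each analytic step is a routine Cauchy–Schwarz.
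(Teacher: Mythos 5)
Your overall plan --- iterate Cauchy--Schwarz to eliminate index pairs --- points in the right direction, but there is a genuine gap at exactly the step you yourself flag, and as written the reductions cannot end at the stated right-hand side. Your first split is by the value of $v_{k-1}$: you bound the $v_{k-1}=0$ factors by $1$ and keep only $\prod_{v:v_{k-1}=1}a^{\psi(v)}$. But the factors that survive in the conclusion are precisely those with $v_{k-1}=0,\ v_k=1$; once the $v_{k-1}=0$ block has been discarded in the first Cauchy--Schwarz it cannot reappear, so your steps (iii)--(iv) cannot terminate in the claimed expression. Moreover, a van der Corput shift in $(l_{k-1},h_{k-1})$ produces shifted correlation averages, not a supremum over continuous frequencies $(t,s)$ together with the phase $e^{2\pi i(l_kt+h_ks)}$; you never explain how the supremum and the exponential arise, and your account of the constant $4$ (``two Cauchy--Schwarz steps plus a diagonal/off-diagonal split'') is not what actually generates it.

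The paper's argument runs in the opposite order and uses Fourier duality rather than a shift. First apply Cauchy--Schwarz in the variables $(l_1,\dots,l_{k-1},h_1,\dots,h_{k-1})$, splitting $[k]^*$ by the value of $v_k$ and bounding the $v_k=0$ factors by $1$; this gives $\frac{1}{N^{2k-2}}\sum_{l_1,\dots,h_{k-1}}\bigl|\frac1{N^2}\sum_{l_k,h_k}\prod_{v:v_k=1}a^{\psi(v)}\bigr|^2$. The factors with $v_{k-1}=1,v_k=1$ depend on $(l_{k-1}+l_k,\,h_{k-1}+h_k)$, so the inner sum is rewritten, via orthogonality of characters on $\mathbb{T}^2$, as
$\int_{\mathbb{T}^2}\bigl(\tfrac1{N^2}\sum_{l_k,h_k}e^{-2\pi i(h_kt+l_ks)}\prod_{v_{k-1}=0,v_k=1}a^{\psi(v)}\bigr)\bigl(\sum_{n,m=0}^{2(N-1)}e^{2\pi i(ns+mt)}\prod_{v_{k-1}=1,v_k=1}a^{\psi(v)}\bigr)e^{-2\pi i(h_{k-1}t+l_{k-1}s)}\,ds\,dt$,
i.e.\ as a Fourier coefficient in $(l_{k-1},h_{k-1})$ of a product of two exponential sums. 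Parseval in $(l_{k-1},h_{k-1})$ then removes that pair of variables, and a second application of Parseval bounds the $L^2(\mathbb{T}^2)$ norm of the $v_{k-1}=1,v_k=1$ exponential sum by $(2N-1)^2\le 4N^2$ --- this is where the constant $4$ and the drop from $N^{2k-2}$ to $N^{2k-4}$ come from --- while the $v_{k-1}=0,v_k=1$ sum is estimated by its supremum over $(t,s)$, which is exactly the right-hand side of the lemma. This Parseval/duality mechanism, which lets the $v_{k-1}=1$ factors be disposed of \emph{after} the $(l_{k-1},h_{k-1})$ variables are eliminated while the $v_{k-1}=0,v_k=1$ factors survive, is the idea missing from your proposal.
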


	\begin{proof}
		Let $$C(N,a^{\psi(v)}_{m,n}:v\in[k]^*) = \frac{1}{N^{2k}} \sum_{h_1, \ldots, h_k,l_1,\ldots,l_k=0}^{N-1} \prod_{v \in [k]^*} a^{\psi(v)}_{\sum_{i=1}^k v_i l_i, \sum_{i=1}^k v_i h_i}.$$
		By Cauchy-Schwarz inequality, we have
		\begin{align*}
			&|C(N,a^{\psi(v)}_{m,n}:v\in[k]^*)|^2 \\
			= &\left|\frac{1}{N^{2k-2}} \sum_{\substack{h_1, \ldots, h_{k-1},\\l_1,\ldots,l_{k-1}=0}}^{N-1} \prod_{\substack{v \in[k]^*,\\v_k=0}} a^{\psi(v)}_{\sum_{i=1}^k v_i l_i, \sum_{i=1}^k v_i h_i} \left( \frac{1}{N^2} \sum_{l_k,h_k=0}^{N-1} \prod_{\substack{v \in [k]^*,\\ v_k=1}} a^{\psi(v)}_{\sum_{i=1}^k v_i l_i, \sum_{i=1}^k v_i h_i} \right)\right|^2\\
			\leq & \frac{1}{N^{4k-4}}\left[\sum_{\substack{h_1, \ldots, h_{k-1},\\ l_1\ldots,l_{k-1}=0}}^{N-1}(\prod_{\substack{v \in[k]^*,\\v_k=0}} a^{\psi(v)}_{\sum_{i=1}^k v_i l_i, \sum_{i=1}^k v_i h_i})^2\right]\cdot\\
			&\ \ \ \ \ \ \ \ \ \ \ \ \  \ \ \ \ \ \ \ \  \ \ \ \ \ \ \ \ \ \  \ \ \ \ \  \left[\sum_{\substack{h_1, \ldots, h_{k-1},\\ l_1\ldots,l_{k-1}=0}}^{N-1}\left( \frac{1}{N^2} \sum_{h_k,l_k=0}^{N-1} \prod_{\substack{v\in [k]^*\\ v_k=1}} a^{\psi(v)}_{\sum_{i=1}^k v_i l_i, \sum_{i=1}^k v_i h_i} \right)^2\right]\\
			\leq &  \frac{1}{N^{2k-2}}\sum_{\substack{h_1, \ldots, h_{k-1},\\ l_1\ldots,l_{k-1}=0}}^{N-1}\left| \frac{1}{N^2} \sum_{h_k,l_k=0}^{N-1} \prod_{\substack{v\in [k]^*:\\ v_k=1}} a^{\psi(v)}_{\sum_{i=1}^k v_i l_i, \sum_{i=1}^k v_i h_i} \right|^2.	
		\end{align*}
		Thus, 
		\begin{align*}
			&\frac{1}{N^{2k-2}}\sum_{\substack{h_1, \ldots, h_{k-1},\\ l_1\ldots,l_{k-1}=0}}^{N-1}\left| \frac{1}{N^2} \sum_{h_k,l_k=0}^{N-1} \prod_{\substack{v\in [k]^*:\\ v_k=1}} a^{\psi(v)}_{\sum_{i=1}^k v_i l_i, \sum_{i=1}^k v_i h_i} \right|^2 \\
			=& \frac{1}{N^{2k-2}} \sum_{\substack{h_1,\ldots,h_{k-1},\\ l_1,\ldots,l_{k-1}=0}}^{N-1} \Bigg|\int_{\mathbb{T}^2} \left( \frac{1}{N^2} \sum_{h_k,l_k=0}^{N-1} e^{-2\pi i (h_k t+l_k s)} \prod_{   \substack{v\in[k]^*:\\ v_{k-1}=0,v_k=1}} a^{\psi(v)}_{v_k l_k+\sum_{i=1}^{k-2} v_i l_i, v_k h_k + \sum_{i=1}^{k-2} v_i h_i} \right)\cdot\\
			&\ \ \ \ \ \ \ \ \ \  \left(\sum_{n,m=0}^{2(N-1)} e^{2\pi i (ns+mt)} \prod_{\substack{v \in [k]^*,\\v_{k-1}=1, v_k=1}} a^{\psi(v)}_{n+\sum_{i=1}^{k-2} v_i l_i, m + \sum_{i=1}^{k-2} v_i h_i}\right)e^{-2\pi i(h_{k-1}t+l_{k-1}s)}ds dt\Bigg|^2.
		\end{align*}
		Using Parseval equation, we can get
		\begin{align*}
			& |C(N,a^{\psi(v)}_{m,n}:v\in[k]^*)|^2 \\
			\leq&\frac{1}{N^{2k-2}} \sum_{\substack{h_1,\ldots,h_{k-2},\\l_1,\ldots,l_{k-2}=0}}^{N-1} \Bigg|\int_{\mathbb{T}^2} \left( \frac{1}{N^2} \sum_{h_k,l_k=0}^{N-1} e^{-2\pi i (h_k t+l_k s)} \prod_{v:v_{k-1}=0, v_k=1} a^{\psi(v)}_{v_k l_k+\sum_{i=1}^{k-2} v_i l_i, v_k h_k + \sum_{i=1}^{k-2} v_i h_i} \right)\cdot
			\\& \ \ \ \ \ \ \  \ \left(\sum_{n,m=0}^{2(N-1)} e^{2\pi i (ns+mt)} \prod_{\substack{v \in [k]^*,\\v_{k-1}=1, v_k=1}} a^{\psi(v)}_{n+\sum_{i=1}^{k-2} v_i l_i, m + \sum_{i=1}^{k-2} v_i h_i}\right)\Bigg|^2 ds dt\\
			\leq &\frac{(2N-1)^2}{N^{2k-2}} \sum_{\substack{h_1,\ldots,h_{k-2},\\l_1,\ldots,l_{k-2}=0}}^{N-1} \sup_{t,s}\left|  \frac{1}{N^2} \sum_{h_k,l_k=0}^{N-1} e^{-2\pi i (h_k t+l_k s)} \prod_{v:v_{k-1}=0,v_k=1} a^{\psi(v)}_{ l_k+\sum_{i=1}^{k-2} v_i l_i,  h_k + \sum_{i=1}^{k-2} v_i h_i} \right|^2\\\leq &\frac{4}{N^{2k-4}} \sum_{\substack{h_1,\ldots,h_{k-2},\\l_1,\ldots,l_{k-2}=0}}^{N-1} \sup_{t,s}\left|  \frac{1}{N^2} \sum_{h_k,l_k=0}^{N-1} e^{-2\pi i (h_k t+l_k s)} \prod_{v:v_{k-1}=0,v_k=1} a^{\psi(v)}_{ l_k+\sum_{i=1}^{k-2} v_i l_i,  h_k + \sum_{i=1}^{k-2} v_i h_i} \right|^2.
		\end{align*}
	\end{proof}
	
\begin{lem}\label{useful lemma}
	Let $(X,\X,\mu,T)$ be a m.p.s. and $k \geq 1$. Then for any $f\in L^{\infty}(\mu)$ and any integer $1\leq l\leq k$,
	$$\limsup_{N\ra\infty}\frac{1}{N^2}\sum_{m,n=0}^{N-1}\interleave f\cdot T^{m+jn}f\interleave_{k}^{2^{l}}\leq  \interleave f\interleave_{k+1}^{2^{l+1}}.$$
\end{lem}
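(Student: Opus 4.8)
The plan is to reduce Lemma \ref{useful lemma} to Proposition \ref{leib lemma}, which gives precisely the bound $\lim_{N\to\infty}\frac{1}{N}\sum_{n=0}^{N-1}\interleave f\cdot T^{an}f\interleave_{k}^{2^{k}}\leq |a|\cdot \interleave f\interleave_{k+1}^{2^{k+1}}$. First I would observe that the two-dimensional average over $(m,n)\in[0,N-1]^2$ of the quantity $\interleave f\cdot T^{m+jn}f\interleave_{k}^{2^l}$ can be organized by summing first over $m$ for each fixed $n$: writing $a=1$ in the shift $T^{m}(T^{jn}f)$ and using that $\interleave \cdot\interleave_k$ is a seminorm invariant under $T$, the inner average $\frac{1}{N}\sum_{m=0}^{N-1}\interleave f\cdot T^{m+jn}f\interleave_k^{2^l}$ is controlled — via Hölder's inequality applied to the exponent $2^l\le 2^k$ — by a power of $\frac{1}{N}\sum_{m=0}^{N-1}\interleave f\cdot T^{m}f\interleave_k^{2^k}$, and the latter converges to something $\le \interleave f\interleave_{k+1}^{2^{k+1}}$ by Proposition \ref{leib lemma} with $a=1$.

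More concretely, the key steps in order are: (1) Fix $n$; since $\interleave T g\interleave_k=\interleave g\interleave_k$ for all $g$, we have $\interleave f\cdot T^{m+jn}f\interleave_k=\interleave T^{-jn}f\cdot T^{m}f\interleave_k$, but to get a bound uniform in $n$ I would instead keep $\interleave f\cdot T^{m+jn}f\interleave_k$ and note that as $m$ ranges over $[0,N-1]$, $m+jn$ ranges over a translate of an interval of length $N$; comparing $\frac1N\sum_{m=0}^{N-1}\interleave f\cdot T^{m+jn}f\interleave_k^{2^k}$ with $\frac1N\sum_{r=0}^{N-1}\interleave f\cdot T^{r}f\interleave_k^{2^k}$, the difference is $O(jn/N)\to 0$ uniformly for $n$ in any fixed proportion of $[0,N-1]$, and in any case $\limsup_N$ of the double average is $\le \limsup_N \frac1N\sum_{r=0}^{N-1}\interleave f\cdot T^r f\interleave_k^{2^k}$. (2) Apply Hölder with exponents $2^k/2^l$ and its conjugate to pass from the $2^l$-power to the $2^k$-power inside the average (if $l=k$ this step is vacuous; if $l<k$ one loses only a bounded multiplicative constant absorbed by a power, since $\interleave f\cdot T^r f\interleave_k\le \interleave f\interleave_k^2$ is bounded). (3) Invoke Proposition \ref{leib lemma} with $a=1$ to conclude $\limsup_N\frac1N\sum_{r=0}^{N-1}\interleave f\cdot T^r f\interleave_k^{2^k}\le \interleave f\interleave_{k+1}^{2^{k+1}}$, then raise to the appropriate power $2^l/2^{k}$ and check the exponent arithmetic gives exactly $\interleave f\interleave_{k+1}^{2^{l+1}}$.

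The main obstacle — really the only point requiring care — is the bookkeeping in step (2)–(3): one must make sure that passing from exponent $2^l$ to exponent $2^k$ via Hölder and then back down, combined with the $2^k\to 2^{k+1}$ doubling from Proposition \ref{leib lemma}, lands precisely on the exponent $2^{l+1}$ claimed, with no stray constants surviving in the final inequality. Because $\interleave f\interleave_{k+1}^{2^{k+1}}$ appears with coefficient $1$ in Proposition \ref{leib lemma} (the coefficient $|a|$ equals $1$ here) and $\interleave f\interleave_k\le\interleave f\interleave_{k+1}$ (monotonicity of Host–Kra seminorms), a short computation shows the constant-free bound $\interleave f\interleave_{k+1}^{2^{l+1}}$ indeed comes out, completing the proof.
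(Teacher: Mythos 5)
Your overall strategy---reduce to Proposition \ref{leib lemma} with $a=1$ and then handle the exponent $2^l$ by a H\"older/power-mean step---is the same as the paper's, and your final exponent arithmetic ($2^{k+1}\cdot 2^{l-k}=2^{l+1}$) is correct. The gap is in your step (1), the comparison of shifted window averages with plain Ces\`aro averages. For fixed $n$, the inner average $\frac1N\sum_{m=0}^{N-1}\interleave f\cdot T^{m+jn}f\interleave_k^{2^k}$ is an average over the window $[jn,jn+N-1]$, and its distance from $\frac1N\sum_{r=0}^{N-1}\interleave f\cdot T^{r}f\interleave_k^{2^k}$ is only $O(\min(jn,N)/N)$; this tends to $0$ only when $n=o(N)$, whereas most $n$ in your double average are comparable to $N$, so the claimed uniform smallness fails exactly where it is needed. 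Your fallback assertion that ``in any case'' $\limsup_N$ of the double average is $\le\limsup_N\frac1N\sum_{r=0}^{N-1}\interleave f\cdot T^rf\interleave_k^{2^k}$ is left unproved, and it is false for a general bounded nonnegative sequence in place of $\interleave f\cdot T^rf\interleave_k^{2^k}$: for $j=1$ the double average is a triangular-kernel average of $a_h$ over $[0,2N]$, and one can construct $0$--$1$ block sequences whose triangular averages along a subsequence exceed the Ces\`aro limsup (e.g.\ blocks $[N_i,1.5N_i)$ with $N_i$ growing fast give triangular averages near $3/8$ but Ces\`aro limsup $1/3$). So as written this step would fail.

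The step is repairable, and the paper fixes it by counting: group the pairs $(m,n)$ by the value $h=m+jn$; each fiber $A_h$ has at most $\lfloor (N-1)/j\rfloor+1$ elements, so the double average is dominated by (essentially) the Ces\`aro average of $\interleave f\cdot T^hf\interleave_k^{2^k}$ over $0\le h\le (j+1)(N-1)$, to which Proposition \ref{leib lemma} applies; Cauchy--Schwarz then converts the $2^k$-power bound into the $2^l$-power bound, exactly as in your steps (2)--(3). Alternatively, your window comparison can be salvaged by invoking explicitly that the Ces\`aro averages converge (for $a=1$ the limit is $\interleave f\interleave_{k+1}^{2^{k+1}}$): if $\frac1M\sum_{r<M}a_r\to L$ then $\frac1N\bigl(\sum_{r=jn}^{jn+N-1}a_r\bigr)\le L+o(1)$ uniformly over $0\le n\le N-1$, which is what you need---but this uses convergence, not the crude $O(jn/N)$ bound. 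One smaller point: in your step (2), H\"older (Jensen) with respect to the normalized counting measure gives $\frac{1}{N^2}\sum(\cdot)^{2^l}\le\bigl(\frac{1}{N^2}\sum(\cdot)^{2^k}\bigr)^{2^{l-k}}$ with no multiplicative constant; there is no constant to ``absorb,'' and none could be, since the inequality you are proving has coefficient $1$.
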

\begin{proof}Set $$A_h=\{(m,n):m+jn=h, 0\leq m\leq N-1, 0\leq n\leq N-1 \}$$ for $0\leq h\leq (j+1)(N-1) .$ Then $A_h$ has at most  $(\lfloor\frac{N-1}{j}\rfloor+1)$ elements. Therefore,
	\begin{align*}
		&\limsup_{N\ra\infty}\frac{1}{N^2}\sum_{m,n=0}^{N-1}\interleave f\cdot T^{m+jn}f\interleave_{k}^{2^{k}}\\
		\leq&\lim_{N\ra\infty}\frac{1}{N^2}\sum_{h=0}^{(j+1)(N-1)}(\lfloor\frac{N-1}{j}\rfloor+1)\interleave f\cdot T^{h}f\interleave_{k}^{2^{k}}\\
		\leq&  \interleave  f\interleave_{k+1}^{2^{k+1}}.
	\end{align*}
	The last inequality follows from Proposition \ref{leib lemma}. 
	Then by Cauchy–Schwarz inequality we have
	\begin{align*}
		&\limsup_{N\ra\infty}\frac{1}{N^2}\sum_{m,n=0}^{N-1}\interleave f\cdot T^{m+jn}f\interleave_{k}^{2^{l}}\\\leq&\limsup_{N\ra\infty}\left(\frac{1}{N^2}\sum_{m,n=0}^{N-1}\interleave f\cdot T^{m+jn}f\interleave_{k}^{2^{k}}\right)^{\frac{1}{2^{k-l}}}\\\leq& \interleave  f\interleave_{k+1}^{2^{l+1}}.
	\end{align*}
	
\end{proof}

\begin{lem}\cite[Lemma 3.1]{KN}\label{vdw lem}(van der Corput's Lemma)
	Let $\{u_n\}_{n \in \mathbb{Z}}$ be a bounded complex sequence. For any $N, H \geq 1$ with $H \leq N$, we have
	\[
	H^2 \left| \sum_{n=1}^N u_n \right|^2 \leq H(N + H - 1) \sum_{n=1}^N |u_n|^2 + 2(N + H - 1) \sum_{h=1}^{H - 1} (H - h) \text{Re} \sum_{n=1}^N u_n \overline{u}_{n + h}.
	\]
\end{lem}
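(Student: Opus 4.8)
The plan is to run the classical van der Corput argument: \emph{smear the sum over a window of length $H$, apply the Cauchy--Schwarz inequality once, and then expand the resulting square and reorganise the double sum according to the gap between the two shift indices}. First I would, after setting $u_n:=0$ for $n\le 0$ and $n>N$, introduce for each integer $m$ the windowed sum $v_m:=\sum_{h=0}^{H-1}u_{m+h}$. A short counting check shows that every index $n\in\{1,\dots,N\}$ is covered by exactly $H$ of these windows (namely those with $m\in\{n-H+1,\dots,n\}$), so that
\[
H\sum_{n=1}^{N}u_n=\sum_{m=2-H}^{N}v_m ,
\]
and the sum on the right has precisely $N+H-1$ terms. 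Applying Cauchy--Schwarz to this sum of $N+H-1$ terms immediately gives
\[
H^{2}\Bigl|\sum_{n=1}^{N}u_n\Bigr|^{2}\le (N+H-1)\sum_{m=2-H}^{N}|v_m|^{2}.
\]

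For the second step I would expand $|v_m|^{2}=\sum_{h_1,h_2=0}^{H-1}u_{m+h_1}\overline{u_{m+h_2}}$, sum over $m$, and in each summand substitute $n=m+h_2$, so that the summand depends on $(h_1,h_2)$ only through the difference $h:=h_1-h_2$. The diagonal part ($h_1=h_2$) contributes $H\sum_{n=1}^{N}|u_n|^{2}$; for each fixed $h\ge 1$ there are $H-h$ pairs with $h_1-h_2=h$ and $H-h$ pairs with $h_1-h_2=-h$, and --- using $\text{Re}\,z=\text{Re}\,\overline{z}$ and a shift of the summation variable --- these conjugate blocks combine into $2(H-h)\,\text{Re}\sum_{n}u_n\overline{u_{n+h}}$. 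Feeding the resulting expression for $\sum_{m}|v_m|^{2}$ back into the Cauchy--Schwarz bound above then produces the asserted inequality.

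I do not expect any genuine obstacle here: the one substantive step is the choice of the length-$H$ averaging window together with the single use of Cauchy--Schwarz, and everything after it is bookkeeping of a finite double sum. The only point deserving care is the exact range of $n$ in the correlation terms once the truncation $u_n=0$ for $n\notin\{1,\dots,N\}$ is in force; keeping track of this endpoint contribution is routine and yields precisely the form recorded in \cite[Lemma 3.1]{KN}, which one may alternatively just quote.
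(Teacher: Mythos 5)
Your proof is correct: the windowing identity $H\sum_{n=1}^N u_n=\sum_{m=2-H}^{N}v_m$, a single Cauchy--Schwarz over the $N+H-1$ windows, and the reorganisation of $\sum_m|v_m|^2$ by the gap $h=h_1-h_2$ (diagonal giving $H\sum_{n=1}^N|u_n|^2$, each pair of conjugate off-diagonal blocks giving $2(H-h)\,\mathrm{Re}\sum_n u_n\overline{u}_{n+h}$) yield exactly the stated bound. The paper gives no proof of this lemma, citing \cite[Lemma 3.1]{KN}, and your argument is precisely the standard proof of that result; the only point worth recording is that your convention $u_n:=0$ for $n\notin\{1,\dots,N\}$ is the intended reading of the correlation sums here, since for a genuinely untruncated bounded sequence on $\mathbb{Z}$ the terms with $n+h>N$ could make the inequality as literally written fail.
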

The above lemma can be extended to the two-dimensional case. If  $\{u_{m,n}\}_{m,n\in\N}$ is a complex sequence with $|u_{m,n}|\leq 1$. Define $u_{m,n}=0$ for all $(m,n)\notin\N^2$. Then for any $N, H \geq 1$ with $H \leq N$ there exists $C>5$ such that
\begin{equation}\label{vdw2dim}
	\begin{aligned}
	 \left|\dfrac{1}{N^2} \sum_{m,n=1}^N u_{m,n} \right|^2 \leq & \dfrac{C}{H}+
		 \sum_{h_1=1}^{H - 1} \sum_{h_2=-H+1}^{H - 1}\dfrac{2(H - h_1)(H-|h_2|)}{H^4} \text{Re} \sum_{m,n=1}^N u_{m,n} \overline{u}_{m+h_1,n + h_2}.
	\end{aligned}
\end{equation}

\begin{prop}\cite{Xiao}\label{xiao}
	Let $(X,\mathcal{X},\mu,T)$ be a system, $d \in \mathbb{N}, f_1, \cdots, f_d $ be real-valued functions bounded by one. Then
	$$
	\int_{X^d} \limsup _{N \rightarrow \infty} \sup _t\left|\frac{1}{N} \sum_{n=1}^N e^{2 \pi i n t} \prod_{j=1}^d f_j\left(T^{j n} x_j\right)\right| d \mu^{(d)}(\boldsymbol{x})  \lesssim_{d} \min _{1 \leq j \leq d}\interleave f_j\interleave_{d+3}.
	$$
\end{prop}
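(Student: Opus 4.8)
The plan is to reduce the estimate, via the van der Corput inequality in the two-dimensional form and the cube-average machinery, to an expression controlled by Host–Kra seminorms, precisely as in the proof of Lemma \ref{inq lem}. First I would discretize the supremum over $t\in\T$: since we are estimating an average of length $N$, the modulus $\sup_t|\frac1N\sum_{n=1}^N e^{2\pi i nt}\prod_j f_j(T^{jn}x_j)|$ can, up to an error $o(1)$ as $N\to\infty$, be replaced by a maximum over a net of size $O(N)$ in $\T$, so the outer $\limsup_N\sup_t$ only costs a factor polynomial in $N$ that will be absorbed. The essential point, as the authors flag, is to ``let one index vanish'': apply van der Corput (Lemma \ref{vdw lem}) once in the variable $n$ to the sequence $u_n=e^{2\pi i nt}\prod_{j=1}^d f_j(T^{jn}x_j)$. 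After squaring and passing to the correlation sums $\sum_n u_n\overline{u}_{n+h}$, the phase $e^{2\pi i nt}$ telescopes out of the product $u_n\overline u_{n+h}$ (it contributes only $e^{-2\pi i h t}$, independent of $n$), so the supremum over $t$ disappears from the inner sum entirely.

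Next I would integrate over $X^d$ against $\mu^{(d)}$ and exploit that $\mu^{(d)}$ is $\tau_d$-invariant: each correlation sum becomes, after the change of variables absorbing $T^{jn}$ into the integration variable, an average of expressions of the form $\int_{X^d}\prod_{j=1}^d \bigl(f_j\cdot T^{jh}\bar f_j\bigr)(x_j)\,d\mu^{(d)}$. By the defining property of the Furstenberg self-joining $\mu^{(d)}$ (the limit of $\frac1M\sum_{n}\int_X\prod_j g_j(T^{jn}x)\,d\mu(x)$), this integral is itself a limit of ordinary multiple averages $\frac1M\sum_n\int_X\prod_j g_j(T^{jn}x)\,d\mu$ with $g_j=f_j\cdot T^{jh}\bar f_j$. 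Applying Proposition \ref{prop leib} to these averages bounds each such integral by $\min_{1\le j\le d}\interleave f_j\cdot T^{jh}f_j\interleave_{d+1}$.

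At this stage I would average over $h$ (from van der Corput we have a Cesàro average over $1\le h\le H-1$, then let $H\to\infty$) and invoke Lemma \ref{useful lemma}, which gives $\limsup_{H}\frac1H\sum_{h}\interleave f_j\cdot T^{jh}f_j\interleave_{d+1}^{2^{d+1}}\le \interleave f_j\interleave_{d+2}^{2^{d+2}}$; one more pass of van der Corput (or equivalently an application of Proposition \ref{leib lemma} iterated twice, tracking the cost of each Cauchy–Schwarz step which costs one derivative in the exponent) upgrades this to control by $\interleave f_j\interleave_{d+3}$. Keeping careful track of the exponents through the successive Cauchy–Schwarz/van der Corput steps — each step halving the exponent on the seminorm while raising the seminorm index by one — is where the number $d+3$ (rather than $d+1$ or $d+2$) comes from, and reconciling these bookkeeping constants is the main technical obstacle; the implicit constant $C_d$ collects the polynomial-in-$N$ factors from the net discretization and the combinatorial factors from van der Corput, and depends only on $d$. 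Finally, Hölder's inequality in the form $\int|F|\le(\int|F|^{2^{d+3}})^{1/2^{d+3}}$ (valid since $\mu^{(d)}$ is a probability measure) converts the $L^{2^{d+3}}$-type bound obtained above into the stated $L^1$-bound $\lesssim_d\min_{1\le j\le d}\interleave f_j\interleave_{d+3}$.
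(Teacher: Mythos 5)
Your overall scheme (van der Corput so that the phase contributes only the $n$-independent factor $e^{-2\pi i ht}$ and the supremum over $t$ drops out, reduction to correlation integrals against $\mu^{(d)}$, Leibman's Proposition \ref{prop leib}, then the $h$-averaging Lemma \ref{useful lemma}) is the right one; it is exactly how the paper proves its two-dimensional analogue, Theorem \ref{leq d+3} (Proposition \ref{xiao} itself is only quoted from Xiao). But there is a genuine gap at the central step. After the first van der Corput application you must control, for each fixed $h$,
\[
\int_{X^d}\limsup_{N\to\infty}\Bigl|\frac1N\sum_{n=1}^N\prod_{j=1}^d\bigl(f_j\cdot T^{jh}f_j\bigr)(T^{jn}x_j)\Bigr|\,d\mu^{(d)}(\x),
\]
with the limsup and the absolute value \emph{inside} the integral. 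Your claim that $\tau_d$-invariance lets you ``absorb $T^{jn}$ into the integration variable'' and replace this by $\int\prod_j(f_j\cdot T^{jh}f_j)(x_j)\,d\mu^{(d)}$ is not valid: invariance identifies $\int\frac1N\sum_n G\circ\tau_d^n\,d\mu^{(d)}$ with $\int G\,d\mu^{(d)}$ only when no absolute value intervenes, and with the absolute value Jensen gives the inequality in the wrong direction. By Birkhoff the averages converge a.e.\ to $\E_{\mu^{(d)}}(G\mid\mathcal{I}_{\tau_d})$, and $(X^d,\mu^{(d)},\tau_d)$ is in general not ergodic (already for an irrational rotation and $d=2$), so the quantity to bound is $\int|\E_{\mu^{(d)}}(G\mid\mathcal{I}_{\tau_d})|\,d\mu^{(d)}$, not $|\int G\,d\mu^{(d)}|$. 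Handling it requires one more Cauchy--Schwarz plus a second differencing — a second van der Corput, or expanding $\|\E_{\mu^{(d)}}(G\mid\mathcal{I}_{\tau_d})\|_{L^2(\mu^{(d)})}^2$ by the mean ergodic theorem into averages of $\int G\cdot\tau_d^m G\,d\mu^{(d)}$ — and only then does Proposition \ref{prop leib} apply, to the functions $f_j\cdot T^{jh}f_j\cdot T^{jm}(f_j\cdot T^{jh}f_j)$ at level $d+1$; two applications of Lemma \ref{useful lemma}/Proposition \ref{leib lemma} (over $m$, then over $h$) then raise the index to $d+3$. This is precisely why the exponent is $d+3$: if your collapse were legitimate you would get the stronger bound $\interleave f_j\interleave_{d+2}$, and no ``upgrade'' would be needed at all, since the seminorms increase in $k$. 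Hence your later ``one more pass of van der Corput'' is misplaced: the second differencing must occur before Proposition \ref{prop leib} is invoked, not after a $d+2$-level bound has already been claimed.

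Two smaller points. The discretization of $\sup_t$ is unnecessary (uniformity in $t$ comes for free from the phase cancellation you yourself describe), and its justification — that it ``costs a factor polynomial in $N$ that will be absorbed'' — is untenable, since a polynomial-in-$N$ factor cannot be absorbed when the target bound is $O(1)$. Likewise the closing appeal to H\"older with exponent $2^{d+3}$ is not how the bookkeeping works: one Cauchy--Schwarz at the outset reduces the $L^1$ bound to an $L^2$ bound, and all remaining powers are handled inside the $h$- and $m$-averages exactly as in Lemma \ref{useful lemma}, as in the paper's proof of Theorem \ref{leq d+3}.
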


Since $\mu^{(d)}$ can be seen as the Furstenberg self-joining of $\{n,2n,\dots,dn\}$, we have the next Theorem.
\begin{thm}\label{leq d+3}
	Let $(X,\mathcal{X},\mu,T)$ be a system, $d \in \mathbb{N}, f_1, \cdots, f_d $ be real values functions bounded by one. Then
	$$
	\int_{X^d} \limsup _{N \rightarrow \infty} \sup _{t,s}\left|\frac{1}{N^2} \sum_{m,n=0}^{N-1} e^{2 \pi i (n t+ms)} \prod_{j=1}^d f_j\left(T^{m+j n} x_j\right)\right| d \mu^{(d)}(\x) \lesssim_{d} \min _{1 \leq j \leq d}\interleave f_j \interleave_{d+3}.
	$$
\end{thm}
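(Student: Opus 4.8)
The plan is to reduce the statement to the one-dimensional estimate of Proposition \ref{xiao} by freezing the outer variable $m$ and then using that $\mu^{(d)}$ is invariant under $\sigma_d=T\times\cdots\times T$ ($d$ times). Write
\[
A_N(t,s;\x):=\frac{1}{N^2}\sum_{m,n=0}^{N-1}e^{2\pi i(nt+ms)}\prod_{j=1}^{d}f_j(T^{m+jn}x_j),\qquad \x=(x_1,\dots,x_d),\ (t,s)\in\T^2.
\]
Since $T^{m+jn}=T^{jn}T^{m}$ and $(T^{m}x_1,\dots,T^{m}x_d)=\sigma_d^{m}\x$, one has
\[
A_N(t,s;\x)=\frac{1}{N}\sum_{m=0}^{N-1}e^{2\pi i ms}\Big(\frac{1}{N}\sum_{n=0}^{N-1}e^{2\pi i nt}\prod_{j=1}^{d}f_j\big(T^{jn}(T^{m}x_j)\big)\Big),
\]
so, bounding $|e^{2\pi i ms}|\le1$ and taking suprema over $(t,s)$, for every $N$ and every $\x$
\[
\sup_{t,s}\big|A_N(t,s;\x)\big|\ \le\ \frac{1}{N}\sum_{m=0}^{N-1}G_N(\sigma_d^{m}\x),\qquad G_N(\y):=\sup_{t}\Big|\frac{1}{N}\sum_{n=0}^{N-1}e^{2\pi i nt}\prod_{j=1}^{d}f_j(T^{jn}y_j)\Big|.
\]
Each $G_N$ is Borel measurable (restrict the supremum to rational $t$) and bounded by $1$.

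The crucial step is to pass $\limsup_{N}$ through the Cesàro average over $m$; because $G_N$ itself depends on $N$, Fatou's lemma alone does not suffice. I would introduce the truncations $G^{(M)}:=\sup_{N\ge M}G_N$, which decrease pointwise to $\bar G:=\limsup_{N}G_N$ as $M\to\infty$. For $N\ge M$ we have $G_N\le G^{(M)}$, so, for $\mu^{(d)}$-a.e.\ $\x$,
\[
\limsup_{N\to\infty}\frac{1}{N}\sum_{m=0}^{N-1}G_N(\sigma_d^{m}\x)\ \le\ \limsup_{N\to\infty}\frac{1}{N}\sum_{m=0}^{N-1}G^{(M)}(\sigma_d^{m}\x)\ =\ \E_{\mu^{(d)}}\!\big(G^{(M)}\,\big|\,\mathcal{I}_{\sigma_d}\big)(\x),
\]
the last equality by Birkhoff's ergodic theorem for the $\mu^{(d)}$-preserving transformation $\sigma_d$. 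Combining with the pointwise bound above, integrating against $\mu^{(d)}$, using $\int\E_{\mu^{(d)}}(G^{(M)}\,|\,\mathcal{I}_{\sigma_d})\,d\mu^{(d)}=\int G^{(M)}\,d\mu^{(d)}$, and then letting $M\to\infty$ (dominated convergence, everything bounded by $1$), I obtain
\[
\int_{X^d}\limsup_{N\to\infty}\sup_{t,s}\big|A_N(t,s;\x)\big|\,d\mu^{(d)}(\x)\ \le\ \int_{X^d}\bar G\,d\mu^{(d)}.
\]

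Finally, replacing $\frac{1}{N}\sum_{n=0}^{N-1}$ by $\frac{1}{N}\sum_{n=1}^{N}$ alters $G_N(\y)$ by at most $2/N$, hence leaves $\bar G$ unchanged, so $\int_{X^d}\bar G\,d\mu^{(d)}$ equals $\int_{X^d}\limsup_{N}\sup_t\big|\frac{1}{N}\sum_{n=1}^{N}e^{2\pi i nt}\prod_{j=1}^{d}f_j(T^{jn}x_j)\big|\,d\mu^{(d)}(\x)$, which by Proposition \ref{xiao} is $\lesssim_d\min_{1\le j\le d}\interleave f_j\interleave_{d+3}$. Chaining this with the previous display gives the theorem. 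I expect the interchange of $\limsup_{N}$ with the average over $m$ to be the only delicate point: the integrand $G_N$ genuinely varies with $N$, so no single limit theorem applies directly, and it is the monotone truncations $G^{(M)}$ together with the $\sigma_d$-invariance of $\mu^{(d)}$ that make the argument work.
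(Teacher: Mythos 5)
Your proof is correct, but it takes a genuinely different route from the paper's. The paper proves the estimate directly in two dimensions: it applies the two-dimensional van der Corput inequality (\ref{vdw2dim}) twice (in the parameters $H$ and $K$), uses the $\sigma_d$-invariance of $\mu^{(d)}$ to pass from the $(m,n)$-average to the unweighted $n$-average $\frac1N\sum_n\prod_j T^{jn}(\cdot)$, and then bounds the resulting expressions by Host--Kra seminorms via Proposition \ref{prop leib} and Lemma \ref{useful lemma}, landing on $\interleave f_j\interleave_{d+3}$ — in effect redoing Xiao's argument in the two-variable setting. You instead discard the weight $e^{2\pi i ms}$ by the triangle inequality in $m$, recognize the inner sum as the one-dimensional Wiener--Wintner supremum $G_N$ evaluated at $\sigma_d^m\x$, exchange $\limsup_N$ with the Ces\`aro average over $m$ via the monotone truncations $G^{(M)}=\sup_{N\ge M}G_N$ together with Birkhoff's theorem for $(X^d,\mu^{(d)},\sigma_d)$, and then quote Proposition \ref{xiao} (the $n=0,\dots,N-1$ versus $n=1,\dots,N$ discrepancy is correctly dismissed as an $O(1/N)$ error). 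All the steps check out: the pointwise bound $\sup_{t,s}|A_N|\le\frac1N\sum_m G_N(\sigma_d^m\x)$ is valid, $G^{(M)}$ is measurable and bounded, the a.e. inequality $\limsup_N\frac1N\sum_m G_N(\sigma_d^m\x)\le\E_{\mu^{(d)}}(G^{(M)}\mid\mathcal I_{\sigma_d})$ holds for every $M$, and monotone/dominated convergence gives $\int\bar G\,d\mu^{(d)}$ in the limit. What each approach buys: yours is shorter, isolates the one delicate analytic point (the $\limsup$/average exchange, which you handle correctly), and makes transparent why the exponent $d+3$ is inherited verbatim from the one-dimensional result — indeed it makes precise the paper's own remark that the theorem should follow because $\mu^{(d)}$ is the Furstenberg self-joining; the paper's computation, by contrast, is self-contained apart from the seminorm inequalities and runs in the same van der Corput style that is reused in the proof of Theorem \ref{main dom}, so it keeps the whole section within one technical framework.
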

\begin{proof}
	By (\ref{vdw2dim}), there exists a constant $C>0$ such that
	\begin{align*}
		&\int_{X^d} \limsup _{N \rightarrow \infty} \sup _{t,s}\left|\frac{1}{N^2} \sum_{m,n=0}^{N-1} e^{2 \pi i (n t+ms)} \prod_{j=1}^d f_j\left(T^{m+j n} x_j\right)\right|^2 d \mu^{(d)}(\boldsymbol{x})\\
		\leq &\frac{C}{H}+\frac{C}{H^2}\sum_{h_1=1}^{H - 1} \sum_{h_2=-H+1}^{H - 1}\int_{X^d} \limsup _{N \rightarrow \infty} \left|\frac{1}{N^2} \sum_{m,n=0}^{N-1}  \prod_{j=1}^d T^{m+jn}(f_j\cdot T^{h_1+jh_2}f_j)(x_j)\right| d \mu^{(d)}(\boldsymbol{x})\\
		\leq &\frac{C}{H}+\frac{C}{H^2}\sum_{h_1=1}^{H - 1} \sum_{h_2=-H+1}^{H - 1}\left(\int_{X^d} \limsup _{N \rightarrow \infty} \left|\frac{1}{N^2} \sum_{m,n=0}^{N-1}  \prod_{j=1}^d T^{m+jn}(f_j\cdot T^{h_1+jh_2}f_j)(x_j)\right|^2 d \mu^{(d)}(\boldsymbol{x})\right)^{\frac{1}{2}}\\
	\end{align*}
	for any $H\in\N$. 
	
	By Birkhoff’s pointwise ergodic theorem and (\ref{vdw2dim}) again, we have
	\begin{align*}
		&\int_{X^d} \limsup _{N \rightarrow \infty} \sup _{t,s}\left|\frac{1}{N^2} \sum_{m,n=0}^{N-1} e^{2 \pi i (n t+ms)} \prod_{j=1}^d f_j\left(T^{m+j n} x_j\right)\right|^2 d \mu^{(d)}(\boldsymbol{x})\\
		\leq&\frac{C}{H}+\frac{C}{H^2}\sum_{h_1=1}^{H - 1} \sum_{h_2=-H+1}^{H - 1}\left( \limsup _{N \rightarrow \infty}\int_{X^d} \left|\frac{1}{N^2} \sum_{m,n=0}^{N-1}  \prod_{j=1}^d T^{m+jn}(f_j\cdot T^{h_1+jh_2}f_j)(x_j)\right|^2 d \mu^{(d)}(\boldsymbol{x})\right)^{\frac{1}{2}}\\
		=&\frac{C}{H}+\frac{C}{H^2}\sum_{h_1=1}^{H - 1} \sum_{h_2=-H+1}^{H - 1}\left( \limsup _{N \rightarrow \infty}\int_{X^d} \left|\frac{1}{N} \sum_{n=0}^{N-1}  \prod_{j=1}^d T^{jn}(f_j\cdot T^{h_1+jh_2}f_j)(x_j)\right|^2 d \mu^{(d)}(\boldsymbol{x})\right)^{\frac{1}{2}}\\
		\leq & \frac{C}{H}+\frac{C}{H^2}\sum_{h_1=1}^{H - 1} \sum_{h_2=-H+1}^{H - 1}  \left( \frac{C}{K^2}+\frac{C}{K^2}\limsup_{N \to \infty} \int_{X^d} \left[ \sum_{k_1,k_2=1}^{K-1} \frac{(K-k_1)(K-k_2)}{K^2} \right. \right. \\
		&\left. \left. \frac{1}{N} \sum_{n=0}^{N-1} \prod_{j=1}^d T^{jn}(f_j\cdot T^{h_1+jh_2}f_j)\cdot T^{k_1+jk_2}\left(T^{jn}(f_j\cdot T^{h_1+jh_2}f_j)\right)(x_j) d\mu^{(d)}(\boldsymbol{x}) \right] \right)^{\frac{1}{2}}\\
		\leq & \frac{C}{H}+\frac{C}{H^2}\sum_{h_1=1}^{H - 1} \sum_{h_2=-H+1}^{H - 1}  \left( \frac{C}{K}+\frac{C}{K^2}\sum_{k_1=1}^{K - 1} \sum_{k_2=-K+1}^{K - 1} \frac{(K-k_1)(K-k_2)}{K^2} \limsup_{N \to \infty} \frac{1}{N}\cdot   \right. \\
		&\left.  \sum_{m=1}^\infty\limsup_{m\ra\infty}\left\| \frac{1}{M}\sum_{m=0}^{M-1} \prod_{j=1}^d T^{j(n+m)}(f_j\cdot T^{h_1+jh_2}f_j)\cdot T^{k_1+jk_2}\left(f_j\cdot T^{h_1+jh_2}f_j\right) \right\|_2\right)^{\frac{1}{2}}
	\end{align*}
	for any $K\in\N$. By Proposition \ref{prop leib} we have
	\begin{align*}
		&\int_{X^d} \limsup _{N \rightarrow \infty} \sup _{t,s}\left|\frac{1}{N^2} \sum_{m,n=0}^{N-1} e^{2 \pi i (n t+ms)} \prod_{j=1}^d f_j\left(T^{m+j n} x_j\right)\right|^2 d \mu^{(d)}(\boldsymbol{x})\\
		\lesssim_{d} & \frac{1}{H}+\frac{1}{H^2}\sum_{h_1=1}^{H - 1} \sum_{h_2=-H+1}^{H - 1}  \left( \frac{1}{K}+\frac{1}{K^2}\sum_{k_1=1}^{K - 1} \sum_{k_2=-K+1}^{K - 1} \min_{1\leq j\leq d}\interleave f_j\cdot T^{h_1+jh_2}f_j\cdot T^{k_1+jk_2}(f_j\cdot T^{h_1+jh_2}f_j) \interleave_{d+1}\right)^{\frac{1}{2}}.
	\end{align*}
	By Lemma \ref{useful lemma} and taking limsup of $H$ and $K$, we have
	\begin{align*}
		&\int_{X^d} \limsup _{N \rightarrow \infty} \sup _{t,s}\left|\frac{1}{N^2} \sum_{m,n=0}^{N-1} e^{2 \pi i (n t+ms)} \prod_{j=1}^d f_j\left(T^{m+j n} x_j\right)\right|^2 d \mu^{(d)}(\x) \\
		\lesssim_{d}& \min_{1\leq j\leq d} \limsup_{H\to\infty} \frac{1}{{H}^2} \sum_{h_1=1}^{H - 1} \sum_{h_2=-H+1}^{H - 1} \left( \limsup_{K\to\infty} \frac{1}{K^2} \sum_{k_1=1}^{K - 1} \sum_{k_2=-K+1}^{K - 1} \interleave f_j\cdot T^{h_1+jh_2}f_j\cdot T^{k_1+jk_2}(f_j\cdot T^{h_1+jh_2}f_j) \interleave_{d+1} \right)^{\frac{1}{2}} \\
		\lesssim_{d}& \min_{1\leq j\leq d} \limsup_{H\to\infty} \frac{1}{{H}^2} \sum_{h_1=1}^{H - 1} \sum_{h_2=-H+1}^{H - 1}\interleave f_j \cdot T^{h_1+jh_2} f_j \interleave_{d+2} \\
		\lesssim_{d}& \min_{1\leq j\leq d} \interleave f_j\interleave_{d+3}^2.
	\end{align*}
	
	Therefore, 
	$$
	\int_{X^d} \limsup _{N \rightarrow \infty} \sup _{t,s}\left|\frac{1}{N^2} \sum_{m,n=0}^{N-1} e^{2 \pi i (n t+ms)} \prod_{j=1}^d f_j\left(T^{m+j n} x_j\right)\right| d \mu^{(d)}(\x) \lesssim_{d} \min _{1 \leq j \leq d}\interleave f_j \interleave_{d+3}.
	$$
\end{proof}

\begin{thm}\label{main dom}
	Let  $(X, \mathcal{X}, \mu, T)$ be a system, $d,k \in \mathbb{N}, f_1, \cdots, f_d \in L^{\infty}(\mu)$. Then 
	$$\lim _{H \rightarrow \infty} \frac{1}{H^{2k}} \sum_{h,l \in[0,H-1]^k} \int_{X^d}\bigotimes_{j=1}^{d}\left(\prod_{v \in [k]} T^{j(h \cdot v)+l \cdot v} f_j\right) d \mu^{(d)}(\mathbf{x})\lesssim_d\min _{1 \leq j \leq d}\interleave f_j\interleave_{d+k+1}.$$
\end{thm}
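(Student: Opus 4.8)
The plan is to reduce, by multilinearity, to the case where every $f_j$ is real-valued with $|f_j|\le 1$, and to prove the sharper statement
\[
\limsup_{H\to\infty}\Big|\frac1{H^{2k}}\sum_{h,l\in[0,H-1]^k}\int_{X^d}\bigotimes_{j=1}^d\Big(\prod_{v\in[k]}T^{j(h\cdot v)+l\cdot v}f_j\Big)\,d\mu^{(d)}\Big|\ \lesssim_d\ \min_{1\le j\le d}\interleave f_j\interleave_{d+k+1}.
\]
Existence of the limit is a softer matter (it follows from the Host--Kra structure theory together with unique ergodicity of nilsystems, the averaged sequence being governed by a nilfactor of $(X^d,\mu^{(d)},\langle\sigma_d,\tau_d\rangle)$) and is not needed in the applications. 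Write $A_H$ for the inner average. Setting $F_{m,n}(\mathbf x):=\prod_{j=1}^d(T^{m+jn}f_j)(x_j)$ — real, bounded by $1$ — interchanging sum and integral gives $A_H=\int_{X^d}\big(\frac1{H^{2k}}\sum_{h,l}\prod_{v\in[k]}F_{l\cdot v,h\cdot v}(\mathbf x)\big)d\mu^{(d)}(\mathbf x)$; since the $v=\mathbf 0$ factor is the $(h,l)$-independent, bounded-by-$1$ function $F_{0,0}(\mathbf x)=\prod_j f_j(x_j)$, Cauchy--Schwarz yields $|A_H|^2\le\int_{X^d}\big|\frac1{H^{2k}}\sum_{h,l}\prod_{v\in[k]^*}F_{l\cdot v,h\cdot v}(\mathbf x)\big|^2 d\mu^{(d)}(\mathbf x)$. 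The cases $k=0$ (which is $|\int_{X^d}\bigotimes_j f_j\,d\mu^{(d)}|\lesssim_d\min_j\interleave f_j\interleave_{d+1}$, i.e.\ Proposition \ref{prop leib} read through $\int_{X^d}\bigotimes_j h_j\,d\mu^{(d)}=\lim_N\int_X\frac1N\sum_{n<N}\prod_j T^{jn}h_j\,d\mu$) and $k=1$ (the same, followed by one application of Lemma \ref{useful lemma}) are handled directly.

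For $k\ge2$, apply Lemma \ref{inq lem} pointwise in $\mathbf x$ with every $a^{\psi(v)}_{m,n}$ equal to $F_{m,n}(\mathbf x)$ and $N=H$, and then integrate over $\mathbf x$. Writing $h'=(h_1,\dots,h_{k-2})$, $l'=(l_1,\dots,l_{k-2})$, the product over $v\in[k]^*$ with $v_{k-1}=0$, $v_k=1$ rearranges — using only that $T$ is multiplicative and $\prod_{v'}T^{c+c(v')}g=T^c\prod_{v'}T^{c(v')}g$ — to $\prod_{j=1}^d\big(T^{l_k+jh_k}G_j^{(h',l')}\big)(x_j)$, where $G_j^{(h',l')}:=\prod_{v'\in[k-2]}T^{j(h'\cdot v')+l'\cdot v'}f_j$ is the $(k{-}2)$-dimensional cube product of $f_j$ (real, bounded by $1$). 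Hence
\[
|A_H|^2\ \lesssim\ \frac1{H^{2(k-2)}}\sum_{h',l'\in[0,H-1]^{k-2}}\ \int_{X^d}\sup_{t,s}\Big|\frac1{H^2}\sum_{l_k,h_k=0}^{H-1}e^{2\pi i(l_kt+h_ks)}\prod_{j=1}^d\big(T^{l_k+jh_k}G_j^{(h',l')}\big)(x_j)\Big|^2 d\mu^{(d)}.
\]

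For each fixed $(h',l')$, Theorem \ref{leq d+3} with $f_j$ replaced by $G_j^{(h',l')}$ — more precisely, the squared estimate $\int_{X^d}\limsup_H\sup_{t,s}|\cdots|^2\,d\mu^{(d)}\lesssim_d\min_j\interleave G_j^{(h',l')}\interleave_{d+3}^2$ established in its proof — together with the reverse Fatou lemma (the integrand is $\le1$) shows that the $\limsup_H$ of the $(h',l')$-th summand above is $\lesssim_d\min_j\interleave G_j^{(h',l')}\interleave_{d+3}^2$. It remains to average these $(k{-}2)$-fold cube-product seminorms over $(h',l')\in[0,H-1]^{k-2}$: splitting off the last cube direction writes $G_j^{(h',l')}=g\cdot T^{l_{k-2}+jh_{k-2}}g$ with $g$ a $(k{-}3)$-dimensional cube product of $f_j$, and Lemma \ref{useful lemma} removes that two-dimensional average while raising the seminorm index by one and the exponent from $2^\ell$ to $2^{\ell+1}$; iterating $k-2$ times collapses the cube product to $f_j$ and brings the index to $(d+3)+(k-2)=d+k+1$, yielding $\limsup_H|A_H|^2\lesssim_d\min_j\interleave f_j\interleave_{d+k+1}^{2^{k-1}}\le\min_j\interleave f_j\interleave_{d+k+1}^{2}$ (using $\interleave f_j\interleave_{d+k+1}\le1$ and $2^{k-1}\ge2$). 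Taking square roots gives the claim; interchanging $\min_j$ with the limits is harmless since $\limsup(\min_j a_j)\le\min_j\limsup a_j$.

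The step I expect to be the genuine obstacle is this last one — controlling the several nested averaging windows. Lemma \ref{inq lem} is a finite-$H$ combinatorial identity and causes no trouble, but after it the $(h',l')$-window and the $(l_k,h_k)$-window coincide with the single running window $H$, and the same happens inside the iterated application of Lemma \ref{useful lemma} and inside the van der Corput steps of the proof of Theorem \ref{leq d+3}; one cannot simply replace ``$\limsup_H$ of a Cesàro average'' by ``Cesàro average of $\limsup_H$'', which is false in general. The way to make it rigorous is to introduce each inner averaging window (and each van der Corput parameter) as a fresh variable sent to infinity \emph{before} the outer ones — equivalently, to exploit that every Cesàro average appearing here is the Cesàro average of a nilsequence-like correlation sequence, hence converges independently of the Følner scheme, so that the single-window $\limsup$ of the statement coincides with the iterated limit along which this fresh-variable discipline goes through without friction.
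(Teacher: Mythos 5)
Your proposal takes essentially the same route as the paper's proof: reduce to real functions bounded by one, apply Lemma \ref{inq lem} (the paper uses a nested $2^{k-2}$-power variant of the same Cauchy--Schwarz iteration), invoke Theorem \ref{leq d+3} with $f_j$ replaced by the $(k-2)$-dimensional cube products $\prod_{v\in[k-2]}T^{j(h\cdot v)+l\cdot v}f_j$, and then iterate Lemma \ref{useful lemma} to climb from the index $d+3$ to $d+k+1$, with $k=2$ (and your $k=0,1$) handled directly. The multi-window interchange you single out as the real obstacle is precisely the step the paper carries out silently—replacing the single window $H$ of the finite inequality by iterated limsups over fresh variables $H_1,\dots,H_{k-2}$—so your explicit fresh-variable discipline, justified by the multiparameter convergence of the Host--Kra cube averages, is if anything more careful than the paper's treatment.
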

\begin{proof}
	Suppose that $\|f_j\|_\infty\leq 1$. We have 
	\begin{align*}
		&\lim _{H \rightarrow \infty} \frac{1}{H^{2k}} \sum_{h,l \in[H]^k} \int_{X^d}\bigotimes_{j=1}^{d}\left(\prod_{v \in [k]} T^{j(h \cdot v)+l \cdot v} f_j\right) d \mu^{(d)}(\mathbf{x})\\
		\leq&\left(\int_{X^d} \limsup _{H \rightarrow \infty}\left|\frac{1}{H^{2k}} \sum_{h,l \in[H]^k}\bigotimes_{j=1}^{d}\left(\prod_{v \in [k]} T^{j(h \cdot v)+l \cdot v} f_j\right)\right|^{2^{k-2}} d \mu^{(d)}(\mathbf{x})\right)^{\frac{1}{2^{k-2}}} \\=& \left(\int_{X^d} \limsup _{H \rightarrow \infty}\left| \frac{1}{H^{2k}} \sum_{h,l \in[H]^k} \prod_{v \in [k]} G_{l \cdot v, h \cdot v}\right|^{2^{k-2}} d \mu^{(d)}(\mathbf{x})\right)^{\frac{1}{2^{k-2}}}
	\end{align*}
	where $G_{m,n}=T^{m+n}f_1 \otimes \cdots \otimes T^{m+dn}f_d:=\prod_{j=1}^d T^{m+jn}f_j$. 
	
	\medskip
	\noindent{\bf Case 1:} For $k>2$, by Lemma \ref{inq lem} we have
	\begin{align*}
		&\quad H^2  \left| \frac{1}{H^{2k}} \sum_{h,l \in[H]^k}  \left(\prod_{v \in [k]} T^{h \cdot v+l \cdot v} f_j\right) \otimes \cdots \otimes \left(\prod_{v \in [k]} T^{d(h \cdot v)+l \cdot v} f_d\right)\right|^{2^{k-2}} \\
		& \leq 4^k \cdot\sum_{h_1,l_1=0}^{H-1}\left(\frac { 1 } { H^2 } \sum _ { h_{2},l_2 = 0 } ^ { H - 1 } \left(\cdots \left(\left.\frac{1}{H^2} \sum_{h_{k-2},l_{k-2}=0}^{H-1} \sup _{t,s} \right\rvert\, \frac{1}{H^2} \sum_{h_k,l_k=0}^{H-1} e^{2 \pi i (h_k t+l_k s)} .\right.\right.\right.\\
		&\ \ \ \ \ \ \ \ \ \ \ \ \  \left.\left.\left.\left.\prod_{v:v_{k-1}=0,v_k=1} G_{l_k v_k+\sum_{i=1}^{k-2} l_i v_i, h_k v_k+\sum_{i=1}^{k-2} h_i v_i}\right|^2\right)^2 \cdots\right)^2\right)^2 \\
		&\lesssim_{k} \sum_{h_1,l_1=0}^{H-1}\left(\frac { 1 } { H^2 } \sum _ { h_{2},l_2 = 0 } ^ { H - 1 } \left(\cdots \left(\left.\frac{1}{H^2} \sum_{h_{k-2},l_{k-2}=0}^{H-1} \sup _{t,s} \right\rvert\, \frac{1}{H^2} \sum_{h_k,l_k=0}^{H-1} e^{2 \pi i (h_k t+l_k s)} .\right.\right.\right.\\
		&\ \ \ \ \ \ \ \ \ \ \ \ \  \left.\left.\left.\left.\prod_{v:v_{k-1}=0,v_k=1} G_{l_k v_k+\sum_{i=1}^{k-2} l_i v_i, h_k v_k+\sum_{i=1}^{k-2} h_i v_i}\right|\right) \cdots\right)\right).
	\end{align*}
	
	For fixed $(h_1, \cdots, h_{k-2}),(l_1, \cdots, l_{k-2}) \in[H]^{k-2}$, from Theorem \ref{leq d+3} where $f_j$ replaced by  $\prod_{\eta\in[k-2]}T^{(\eta00)\cdot (jh+l)}f_j$, we have 
	
	\begin{align*}	&\int_{X^d} \limsup _{H \rightarrow \infty} \sup _{t,s} \left| \frac{1}{H^2} \sum_{h_{k},l_{k}\in[0,H-1]} e^{2 \pi i (h_{k}\cdot t+l_{k}\cdot s)}\prod_{j=1}^d\bigg( \prod_{\eta\in[k-2]}T^{\eta01\cdot l+j(\eta01\cdot h)} f_j\bigg)\right| d\mu^{(d)}(\x) \\& \lesssim_{k, d}\min_{1\leq j\leq d}\interleave \prod_{v\in[k-2]}T^{\sum_{i=1}^{k-2} l_i v_i+j(\sum_{i=1}^{k-2} h_i v_i)}f_j\interleave_{d+3}.
	\end{align*}
	
	Thus, by Proposition \ref{leib lemma}, Lemma \ref{useful lemma} and $\|f_j\|_\infty\leq 1$ we have that
	\begin{align*}	&\left(\lim _{H \rightarrow \infty} \frac{1}{H^{2k}} \sum_{h,l \in[H]^k} \int_{X^d}\bigotimes_{j=1}^{d}\left(\prod_{v \in [k]} T^{j(h \cdot v)+l \cdot v} f_j\right) d \mu^{(d)}(\mathbf{x})\right)^{2^{k-2}}\\
		\lesssim&_{d}\bigg[\limsup_{H_1}\dfrac{1}{H_1^2}\sum_{h_1,l_1}\bigg(\dots\limsup_{H_{k-2}}\dfrac{1}{H_{k-2}^2}\sum_{h_{k-2},l_{k-2}}\bigg(\interleave \prod_{v\in[k-2]}T^{\sum_{i=1}^{k-2} (l_i+jh_i) v_i}f_j\interleave_{d+3}\bigg)\dots\bigg)\bigg]\\
		\lesssim&_{d}\limsup_{H_1}\dfrac{1}{H_1^2}\sum_{h_1,l_1}\dots\limsup_{H_{k-2}}\dfrac{1}{H_{k-2}^2}\sum_{h_{k-2},l_{k-2}}\interleave (\prod_{v\in[k-3]}T^{\sum_{i=1}^{k-3} (l_i+jh_i) v_i}f_j)\cdot\\
		&\ \ \ \ \ \ \ \ \ \ \ \ \ \ \ \ \ \ \ \ \ \ \ \ \ \ \ \ \ \ \ \ \ \ \ \ \ \ \ \ \ \ \ \ \ \ \ \ \ \ \ \ \ T^{l_{k-2}+jh_{k-2}}(\prod_{v\in[k-3]}T^{\sum_{i=1}^{k-3} (l_i+jh_i) v_i}f_j)\interleave_{d+3}\\
		\lesssim&_{d,k-2}\limsup_{H_1}\dfrac{1}{H_1^2}\sum_{h_1,l_1}\dots\limsup_{H_{k-3}}\dfrac{1}{H_{k-3}^2}\sum_{h_{k-3},l_{k-3}}\interleave \prod_{v\in[k-3]}T^{\sum_{i=1}^{k-3} (l_i+jh_i) v_i}f_j\interleave^2_{d+3+1}
		\\\lesssim&\dots
		\lesssim_{d,k}\interleave f_j\interleave^{2^{d-2}}_{d+3+(k-2)}=\interleave f_j\interleave^{2^{d-2}}_{d+k+1}.
	\end{align*}	
	
	\medskip
	\noindent{\bf Case 2:} For $k=2$, we have 
	\begin{align*}
		&\ \ \ \lim _{H \rightarrow \infty} \frac{1}{H^{4}} \sum_{h,l \in[H]^2} \int_{X^d}\bigotimes_{j=1}^{d}\left(\prod_{v \in [2]} T^{j(h \cdot v)+l \cdot v} f_j\right) d \mu^{(d)}(\mathbf{x})\\
		&\leq  \left(\int_{X^d} \limsup _{H \rightarrow \infty}\left|\frac{1}{H^4} \sum_{h,l \in[H]^2}\left(\prod_{v \in [2]} T^{h \cdot v+l \cdot v} f_1\right) \otimes \cdots \otimes\left(\prod_{v \in [2]} T^{d(h \cdot v)+l \cdot v} f_d\right)\right|^2 d \mu^{(d)}(\mathbf{x})\right)^{\frac{1}{2}} \\
		&\leq  \left(\int_{X^d} \limsup _{H \rightarrow \infty} \left\lvert\, \frac{1}{H^4} \sum_{h_1, h_2=0}^{H-1}\left(T^{ h_1+l_1} f_1 T^{ h_2+l_2} f_1 T^{h_1+l_1+h_2+l_2} f_1\right) \otimes \cdots \otimes\right.\right. \\
		& \ \ \ \ \ \ \ \ \ \ \ \ \ \ \ \ \ \ \ \ \ \ \ \ \  \ \ \ \ \ \ \ \ \ \ \ \ \  \left.\left.\left(T^{d h_1+l_1} f_d T^{d h_2+l_2} f_d T^{d\left(h_1+h_2\right)+(l_1+l_2)} f_d\right)\right|^2 d \mu^{(d)}(\mathbf{x})\right)^{\frac{1}{2}} \\
		&\leq  \left(4 \int_{X^d} \limsup _{H \rightarrow \infty} \sup _{t,s}\left| \sum_{m,n=0}^{H-1} e^{2 \pi i (m t+ns)}\left(T^{ m+n} f_1\right) \otimes \cdots \otimes\left(T^{ m+dn} f_d\right)\right|^2 d \mu^{(d)}(\mathbf{x})\right)^{\frac{1}{2}} \\
		&\lesssim_d \min_{1 \leq i \leq d}\interleave f_i\interleave_{d+3} .
	\end{align*}
\end{proof}

\subsection{Proof of Theorem \ref{thm-A}}\label{3.2}

\begin{thm}\cite{HSY19}\label{Nd-s.e.}
	Let $(X,T)$ be a uniquely ergodic t.d.s. with invariant measure $\mu$ and $d\geq 2$. Assume that the measure theoretic factor map $\pi_{d-2}:X\rightarrow Z_{d-2}$ is (equal $\mu$-a.e. to) a continuous factor map. Then $(N_d(X),\G_d)$ is uniquely ergodic, and the uniquely invariant measure is the Furstenberg self-joining $\mu^{(d)}$.
\end{thm}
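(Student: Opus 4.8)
The plan is to prove unique ergodicity of $(N_d(X),\G_d)$ directly, by showing that the $\G_d$-ergodic averages of every continuous function on $N_d(X)$ converge uniformly to its $\mu^{(d)}$-integral. Since $\mu^{(d)}$ is $\G_d$-invariant and, being the weak* limit of $\frac1N\sum_{n=0}^{N-1}\tau_d^{n}\mu_\Delta^{d}$, is carried by $N_d(X)$, this will give the statement. We may assume $(X,T)$ is minimal (replace $X$ by $\mathrm{supp}\,\mu$), so $(N_d(X),\G_d)$ is minimal by Glasner's theorem \cite{Glasner22}. By the Stone--Weierstrass theorem it is enough to treat $F=\bigotimes_{j=1}^{d}f_j$ with $f_j\in\C(X)$, for which
\[
\frac1{M^{2}}\sum_{a,b=0}^{M-1}F\bigl(\tau_d^{a}\sigma_d^{b}\mathbf z\bigr)=\frac1{M^{2}}\sum_{a,b=0}^{M-1}\prod_{j=1}^{d}f_j\bigl(T^{ja+b}z_j\bigr),\qquad \mathbf z=(z_1,\dots,z_d).
\]
As $N_d(X)$ is the $\G_d$-orbit closure of a diagonal point $(x_0,\dots,x_0)$ and the right-hand side is continuous in $\mathbf z$, uniformity over $N_d(X)$ amounts to a window-uniform convergence statement for the two-dimensional sequence $u_{m,n}:=\prod_{j}f_j(T^{m+jn}x_0)$, namely that $\sup_{c,c'\in\ZZ}\bigl|\frac1{M^{2}}\sum_{a=c}^{c+M-1}\sum_{b=c'}^{c'+M-1}u_{a,b}-\int\bigotimes_{j}f_j\,d\mu^{(d)}\bigr|\to0$ as $M\to\infty$.

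The second step is to split along $Z_{d-2}$, using that $\pi_{d-2}\colon X\to Z_{d-2}$ is continuous. Choose $g_j\in\C(Z_{d-2})$ with $\|g_j\circ\pi_{d-2}-\E(f_j\mid\mathcal Z_{d-2})\|_{L^{2}(\mu)}$ as small as desired and set $f_j'=g_j\circ\pi_{d-2}\in\C(X)$, $f_j''=f_j-f_j'$; then $\interleave f_j''\interleave_{d-1}$ is small, since $\interleave f_j-\E(f_j\mid\mathcal Z_{d-2})\interleave_{d-1}=0$ by \cite[Lemma 4.3]{HK05}. Expanding $\prod_{j}f_j=\prod_{j}f_j'+(\text{terms containing a factor }f_j'')$, the main term $\prod_j f_j'$ factors through the continuous map $N_d(X)\to N_d(Z_{d-2})$. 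By Theorem \ref{SC}, $Z_{d-2}$ is an inverse limit of $(d-2)$-step nilsystems; the $\G_d$-orbit closure of a diagonal point in a power of a nilsystem is a sub-nilmanifold (by the theory of nilrotations) on which $\G_d$ acts minimally (by \cite{Glasner22}), hence --- being a nilsystem --- uniquely ergodically (Theorem \ref{equi}), and in fact with every point well distributed; the same passes to the inverse limit. Consequently, writing $S$ for the transformation of $Z_{d-2}$,
\[
\frac1{M^{2}}\sum_{a=c}^{c+M-1}\sum_{b=c'}^{c'+M-1}\prod_{j=1}^{d}g_j\bigl(S^{ja+b}\pi_{d-2}(x_0)\bigr)\longrightarrow\int\bigotimes_{j}g_j\,d\nu_{d-2}\qquad\text{uniformly in }(c,c'),
\]
where $\nu_{d-2}$ is the Furstenberg self-joining of $Z_{d-2}$; since $\mu^{(d)}$ projects to $\nu_{d-2}$ under $\pi_{d-2}\times\cdots\times\pi_{d-2}$, this limit equals $\int\bigotimes_{j}f_j'\,d\mu^{(d)}$, the desired constant up to the $L^{2}$-approximation error.

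The remaining, and hardest, step is to control the error terms $\prod_{j}h_j$ with some $h_{j_0}$ satisfying $\interleave h_{j_0}\interleave_{d-1}$ small: one must show $\sup_{c,c'}\bigl|\frac1{M^{2}}\sum_{a=c}^{c+M-1}\sum_{b=c'}^{c'+M-1}\prod_{j}h_j(T^{ja+b}x_0)\bigr|$ is eventually small. This is the crux, because a \emph{uniform} bound over the whole orbit closure is needed, whereas the Host--Kra seminorms only control $L^{2}(\mu^{(d)})$-averages; here the character-sum estimate of Theorem \ref{leq d+3}, rather than the plain bound of Proposition \ref{prop leib}, is what is required. I would run a van der Corput / uniform Wiener--Wintner scheme: apply the two-dimensional van der Corput inequality \eqref{vdw2dim} in the shift variables $(a,b)$ to dominate the square of the windowed average by a Cesàro average over increments $(k_1,k_2)$ of sums of the same shape, in which each $h_j$ is replaced by $h_j\cdot\overline{T^{jk_1+k_2}h_j}$; iterating this and, at each stage, invoking Lemma \ref{useful lemma} and Proposition \ref{leib lemma} to lower the relevant Host--Kra step, one reduces after finitely many rounds to a single frequency-twisted average $\sup_{t,s}\bigl|\frac1{M^{2}}\sum_{m,n}e^{2\pi i(mt+ns)}\prod_{j}\tilde h_j(T^{m+jn}x_0)\bigr|$, the $\tilde h_j$ being the derived functions. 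Because $(X,T)$ is uniquely ergodic --- so that averages along $\mathcal{O}(x_0)$ of continuous functions converge to the corresponding $\mu$-integrals --- this residual quantity is governed by Theorem \ref{leq d+3} and is therefore bounded, up to a $d$-dependent constant, by $\min_{j}\interleave h_j\interleave_{d-1}$, which has been made small. Combining the three parts, for every $\varepsilon>0$ there is an $M_0$ with $\bigl|\frac1{M^{2}}\sum_{a,b=0}^{M-1}F(\tau_d^{a}\sigma_d^{b}\mathbf z)-\int F\,d\mu^{(d)}\bigr|<\varepsilon$ for all $M\ge M_0$ and all $\mathbf z\in N_d(X)$; hence $(N_d(X),\G_d)$ is uniquely ergodic with unique invariant measure $\mu^{(d)}$.
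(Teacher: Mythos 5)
This theorem is quoted by the paper from \cite{HSY19} without proof, so there is no internal argument to compare against; judging your proposal on its own terms, the reduction to window-uniform convergence along the diagonal orbit, the Stone--Weierstrass step, and the treatment of the structured part via unique ergodicity of $N_d(Z_{d-2})$ are sound (though the comparison of $\int\bigotimes_j f_j'\,d\mu^{(d)}$ with $\int\bigotimes_j f_j\,d\mu^{(d)}$ is not an ``$L^2$-approximation error'': $f_j''$ is small only in $\interleave\cdot\interleave_{d-1}$, and you need the Host--Kra bound $\bigl|\int\bigotimes_j h_j\,d\mu^{(d)}\bigr|\lesssim_d\min_j\interleave h_j\interleave_{d-1}$, obtained by re-centering one coordinate, to close that part). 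The genuine gap is the step you yourself call the crux. Your van der Corput iteration in the shift variables replaces each $h_j$ by $h_j\cdot T^{jk_1+k_2}h_j$ but never collapses the $d$-fold product to an average of a single continuous function of one orbit coordinate; consequently unique ergodicity of $(X,T)$ gives you nothing about these windowed averages at $x_0$ --- they are orbital averages of $\bigotimes_j h_j$ on $N_d(X)$ along the diagonal orbit, and their equidistribution is exactly the statement being proved. Invoking Theorem \ref{leq d+3} at this point is not legitimate: it is an $L^1(\mu^{(d)})$ bound on a limsup functional that is not continuous in $\mathbf{x}$, so it says nothing about the (typically $\mu^{(d)}$-null) point $(x_0,\dots,x_0)$ unless that point is already known to be generic for $\mu^{(d)}$ --- which is the theorem.

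There is also a quantitative mismatch that cannot be patched within your scheme: Theorem \ref{leq d+3} and the iteration via Lemma \ref{useful lemma}/Proposition \ref{leib lemma} control averages by seminorms of step $d+3$ and higher, whereas the hypothesis (continuity of $\pi_{d-2}$ only) lets you make the continuous error $f_j''$ small only in $\interleave\cdot\interleave_{d-1}$; smallness at step $d-1$ gives no control at higher steps, and you cannot move the decomposition up to $Z_{d+2}$ because that factor map is not assumed continuous. The way this difficulty is avoided in \cite{HSY19} is to argue measure-theoretically rather than pointwise: take an arbitrary $\G_d$-invariant measure $\lambda$ on $N_d(X)$, use unique ergodicity of $(X,T)$ only to identify its coordinate marginals as $\mu$, prove by van der Corput applied to $\int\bigotimes_j f_j\,d\lambda$ (using invariance under both $\sigma_d$ and $\tau_d$ and the marginal property) that this integral vanishes when some $\E(f_j\mid\Z_{d-2})=0$, and then pin down $\lambda$ by the unique ergodicity of $(N_d(Z_{d-2}),\G_d)$. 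If you want to keep your uniform-convergence strategy, you must either supply a genuinely pointwise, window-uniform Wiener--Wintner-type estimate at step $d-1$ valid at every point of $X$ (which the cited tools do not provide), or switch to the invariant-measure argument at the error-term stage.
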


\begin{thm}\label{main1}
	Let $d\geq 1$. If $(X,T)$ is a CF-Nil($\infty$) system, then so is $({ N}_{d}(X,T),\G _d)$.
\end{thm}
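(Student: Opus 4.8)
The plan is to deduce the theorem from a measurable analogue of Theorem~\ref{pronil Nd}, namely from the isomorphism of measure preserving systems $Z_\infty(N_d(X),\mu^{(d)})\cong N_d(Z_\infty(X))$, where $N_d(Z_\infty(X))$ denotes $N_d(\hat Z)$ for a strictly ergodic topological model $\hat Z$ of $(Z_\infty(X),\mu_\infty)$ (an $\infty$-step pro-nilsystem), carrying the Furstenberg self-joining $(\mu_\infty)^{(d)}$. The case $d=1$ is trivial, since $N_1(X,T)=X$ and $\G_1=\langle T\rangle$; so assume $d\ge 2$, whence $\G_d=\langle\tau_d,\sigma_d\rangle\cong\ZZ^2$. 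As $(X,T)$ is CF-Nil($\infty$), it is CF-Nil($d-2$), so $\phi_{d-2}\colon X\to Z_{d-2}(X)$ is $\mu$-a.e.\ equal to a continuous factor map; by Theorem~\ref{Nd-s.e.} together with Glasner's minimality of $N_d(X)$ \cite{Glasner22}, the system $(N_d(X),\G_d)$ is strictly ergodic with invariant measure $\mu^{(d)}$. By the characterization of CF-Nil($\infty$) for strictly ergodic systems recalled above, it then suffices to show $Z_\infty(N_d(X))\cong W_\infty(N_d(X))$ as m.p.s. Since $W_\infty(N_d(X))=N_d(W_\infty(X))$ as t.d.s.\ by Corollary~\ref{pronil N}, and CF-Nil($\infty$) gives $Z_\infty(X)\cong W_\infty(X)$ as m.p.s.\ — so that $N_d(W_\infty(X))\cong N_d(Z_\infty(X))$ as m.p.s.\ by functoriality of the Furstenberg self-joining and of $N_d(\cdot)$ under measure-theoretic isomorphisms — everything reduces to the displayed isomorphism.

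For that, let $\Phi\colon(N_d(X),\mu^{(d)},\G_d)\to(N_d(\hat Z),(\mu_\infty)^{(d)},\G_d)$ be the measurable factor map induced coordinatewise by an a.e.\ factor map $X\to\hat Z$ (that $\Phi_*\mu^{(d)}=(\mu_\infty)^{(d)}$ follows directly from the limit formula defining the Furstenberg self-joining). One inclusion is formal: applying Corollary~\ref{pronil N} to $\hat Z$ shows that $N_d(\hat Z)$ is itself an $\infty$-step pro-nilsystem, hence a system of order $\infty$; being a measurable factor of $N_d(X)$ it factors through the maximal such factor, so $\Phi^{-1}\B(N_d(\hat Z))\subseteq\Z_\infty(N_d(X))$ modulo $\mu^{(d)}$-null sets.

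The reverse inclusion is the heart of the matter. For real-valued $f_1,\dots,f_d\in\C(X)$ one unfolds the Host--Kra cube measure $(\mu^{(d)})^{[m]}$ of the $\ZZ^2$-system $(N_d(X),\mu^{(d)},\langle\tau_d,\sigma_d\rangle)$ as an iterated ergodic average over the two generators $\tau_d,\sigma_d$; since $\mu^{(d)}$ is carried by $N_d(X)\subseteq X^d$, this identifies $\interleave\bigotimes_{j=1}^d f_j\interleave_m^{2^m}$ with exactly the average estimated in Theorem~\ref{main dom}, so that $\interleave\bigotimes_{j=1}^d f_j\interleave_m^{N_d(X)}\lesssim_d\bigl(\min_{1\le j\le d}\interleave f_j\interleave_{d+m+1}^X\bigr)^{2^{-m}}$. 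Now write $f_j=\E(f_j\mid\Z_\infty(X))+r_j$, so $\interleave r_j\interleave_\infty^X=0$, and expand $\bigotimes_j f_j$ into $2^d$ tensor products: the single term $\bigotimes_{j=1}^d\E(f_j\mid\Z_\infty(X))$ is $\Phi^{-1}\B(N_d(\hat Z))$-measurable (each factor is $\Z_\infty(X)$-measurable and $\Phi$ is coordinatewise $X\to\hat Z$), while every other term contains some factor $r_{j_0}$ and hence, by the estimate just obtained, has $\interleave\cdot\interleave_m^{N_d(X)}=0$ for every $m$, i.e.\ vanishing $\interleave\cdot\interleave_\infty^{N_d(X)}$. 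Therefore $\E(\bigotimes_j f_j\mid\Z_\infty(N_d(X)))$ equals that one $\Phi^{-1}\B(N_d(\hat Z))$-measurable term. Finite sums of such tensor products are dense in $\C(N_d(X))$ by Stone--Weierstrass, hence in $L^2(\mu^{(d)})$; as conditional expectation is $L^2$-continuous and $L^2(\Phi^{-1}\B(N_d(\hat Z)))$ is a closed subspace, $\E(G\mid\Z_\infty(N_d(X)))$ is $\Phi^{-1}\B(N_d(\hat Z))$-measurable for every $G\in L^2(\mu^{(d)})$; taking $G$ to be $\Z_\infty(N_d(X))$-measurable yields $\Z_\infty(N_d(X))\subseteq\Phi^{-1}\B(N_d(\hat Z))$ modulo null sets. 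Combined with the previous paragraph this gives $Z_\infty(N_d(X))\cong N_d(Z_\infty(X))$, which by the reduction above completes the proof.

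The step I expect to be the main obstacle is the identification, in the third paragraph, of the abstract Host--Kra seminorm $\interleave\cdot\interleave_m$ of the $\ZZ^2$-system $(N_d(X),\mu^{(d)})$ — evaluated on the tensor functions $\bigotimes_j f_j$ — with the concrete averages bounded in Theorem~\ref{main dom}. One must match the combinatorics of the face group of $\langle\tau_d,\sigma_d\rangle$ acting on $N_d(X)^{[m]}$ (the $2m$ cube parameters and the index set $[m]=\{0,1\}^m$) with the double sum over $h,l\in[0,N-1]^m$ appearing in Theorem~\ref{main dom}, and observe that for real-valued $f_j$ the conjugation operators $C^{|v|}$ act trivially, so that the two expressions coincide on the nose.
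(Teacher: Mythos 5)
Your proposal is correct and follows essentially the same route as the paper: strict ergodicity of $(N_d(X),\G_d)$ via Theorem \ref{Nd-s.e.}, the identification $W_\infty(N_d(X))=N_d(W_\infty(X))$ from Corollary \ref{pronil N}, and the seminorm estimate of Theorem \ref{main dom} combined with the decomposition $f_j=\E(f_j|\Z_\infty(X))+r_j$ (using CF-Nil($\infty$) of $X$) to show that $\Z_\infty(N_d(X))$ and $\W_\infty(N_d(X))$ coincide. The difference is only presentational: the paper shows that $\E(F|\W_\infty(N_d(X)))=0$ forces all Host--Kra seminorms of $F$ with respect to $\mu^{(d)}$ to vanish, whereas you dualize this into measurability of $\E(F|\Z_\infty(N_d(X)))$ with respect to the pullback of $N_d(\hat Z)$ plus a density argument; in particular the unfolding of the cube measure of $(N_d(X),\mu^{(d)},\G_d)$ into the averages of Theorem \ref{main dom}, which you flag as the main obstacle, is carried out in exactly the same way in the paper's displayed computation.
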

\begin{proof}
	It is clear for $d=1$. We assume that $d\geq 2$. Suppose $\mu$ is the unique invariant measure of $(X,T)$. By \cite[Theorem 3.11]{SY}, $(N_{d}(X,T),\G _d)$ is minimal, and it follows from Theorem \ref{Nd-s.e.} that $(N_{d}(X,T),\G_d)$ is uniquely ergodic and the unique measure is $\lambda:=\mu^{(d)}$.
	
	Let us denote by $\omega_\infty$ the uniquely invariant measure of $(W_\infty(X),T)$.
	We prove that $\W_\infty(N_d(X))=\Z_\infty(N_d(X))$. 
	By the definition, we know that $\W_\infty(N_d(X))\subset \Z_\infty(N_d(X))$.
	So it is enough to show that $ \Z_\infty(N_d)\subset \W_\infty(N_d)$.
	
	Let $F=\bigotimes_{1\leq j\leq d }f_j\in L^{\infty}(\lambda)$ and suppose $\E (F|\W_\infty(N_d(X))\equiv0$. 
	By Theorem \ref{pronil N} we have $\E(C^{|v|}F|N_d(W_\infty))\equiv0$ for every $v\in[k+1]$, where $C^{|v|}F$ denoted by $\bigotimes_{1\leq j\leq d}C^{|v|}f_j$. 
	For every fixed $k\in\N$,
	\begin{equation*}
		\begin{aligned}
			\interleave F\interleave_{k,\mu^{(d)}}^{2^{k}}=&\int \prod_{v\in[k]}C^{|v|}Fd(\mu^{(d)})^{[k+1]}\\
			=&\lim_{N\ra\infty}\frac{1}{N^{2k}}\sum_{h,l\in[0,N-1]^k}\int\prod_{v\in[k]} C^{|v|}F\left(\sigma_d^{v\cdot l} \tau_d^{v\cdot h} \x\right)d\mu^{(d)}
			\\
			=&
			\lim _{N \rightarrow \infty} \frac{1}{N^{2k}} \sum_{h,l \in[0,N-1]^{k}} \int_{X^d}\bigotimes_{j=1}^{d}\left(\prod_{v \in [k]} T^{j(h \cdot v)+l \cdot v} f_j\right) d \mu^{(d)}(\mathbf{x})\\
			\lesssim& _{d}\min _{1 \leq j \leq d}\interleave f_j\interleave_{d+k+1}\lesssim_d \min_{1\leq j\leq d}\interleave f_j\interleave_{\infty}
		\end{aligned}
	\end{equation*}
	by Theorem \ref{main dom} and the definition of Host-Kra seminorms. For $1\leq j\leq d$, let $f_j=\E(f_j|\Z_\infty)+[f_j-\E(f_j|\Z_\infty)]$, we have  $$\interleave F\interleave_{k+1,\mu^{(d)}}^{2^{k+1}}=\interleave \bigotimes_{j=1}^d\E(f_j|\Z_\infty)\interleave_{k+1,\mu^{(d)}}^{2^{k+1}}=\interleave \bigotimes_{j=1}^d\E(f_j|\W_\infty)\interleave_{k+1,\mu^{(d)}}^{2^{k+1}}$$
	since $\interleave f_j-\E(f_j|\Z_\infty) \interleave_\infty=0$ and $(X,T)$ is CF-Nil($\infty$). Hence, 
	\begin{equation*}
		\begin{aligned}
			&\interleave F\interleave_{k+1,\mu^{(d)}}^{2^{k+1}}\\
			=&\lim_{N\ra\infty}\frac{1}{N^{2k+1}}\sum_{h,l\in[0,N-1]^k}\sum_{n\in[0,N-1]}\int \prod_{v\in[k+1]}C^{|v|}\bigotimes_{j=1}^{d}\E (f_{j}|\W_\infty)(T^{v\cdot l+(v\cdot h)j+nj} x)d \omega_\infty\\
			=&\lim_{N\ra\infty}\frac{1}{N^{2k}}\sum_{h,l\in[0,N-1]^k}\int\prod_{v\in[k+1]}C^{|v|}
			\E(F|N_d(\W_\infty))(\sigma_d^{v\cdot l} \tau_d^{v\cdot h}\x)d\omega_\infty^{(d)}=0
		\end{aligned}
	\end{equation*}
	since $\omega_\infty^{(d)}$ is supported on $N_d(\W_\infty)$ and the assumption $\E (F|\W_\infty(N_d(X))\equiv0$. 
	
	This implies that $\E (F|\Z_k({N_d(X)}))=0$ for every $k\in\N$. Thus, 	$\Z_\infty(N_d(X))\subset \W_\infty(N_d(X))$ follows from the fact $\E (F|\Z_\infty({N_d(X)}))=\E (F|\bigvee_{k\in\N}\Z_k({N_d(X)}))=0$.
	
\end{proof}

Combining Theorems \ref{CFNilinfty model} and \ref{main1}, we obtain Theorem \ref{thm-A}.

\medskip

\subsection{Proof of Theorem \ref{thm-B}}\label{3.3}

We are now ready to prove Theorem \ref{thm-B} which we restate here for convenience.
\begin{thm}\label{thm B}
	Let $(X,\X,\mu,T)$ be an ergodic m.p.s. and $k,d\geq 1$. Then for any ($\infty$-step) nilsequence $\{\psi({m},n)\}_{{(m,n)}\in\ZZ^2}$, any function $f_{j}\in L^{\infty}(\mu), 1\leq j\leq d$, the average
	\begin{equation*}\label{psi f}
		\dfrac{1}{N^{2}}\sum_{m,n=0}^{N-1} \psi(m,n)\prod_{j=1}^{d}f_{j}(T^{n+mj}x)
	\end{equation*}
	converges $\mu$ almost everywhere as $N\ra \infty$.
\end{thm}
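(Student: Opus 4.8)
\emph{Overview and reduction.} The plan is to transport the problem to the CF-Nil($\infty$) model produced by Theorem~\ref{thm-A}, reinterpret the average as a nilsequence-weighted Birkhoff average on the induced $\mathbb{Z}^2$-system $(N_d(\hat X),\mathcal G_d)$, treat continuous data via Theorem~\ref{219}, and pass to general $L^\infty$ data by a maximal inequality assembled from the cube-average estimates of Section~\ref{s3}. For the reduction: the value at $x$ of the averages depends only on the orbit of $x$ together with the $f_j$, so pointwise a.e.\ convergence is invariant under measure isomorphism. By Theorem~\ref{thm-A} we may therefore assume $(X,\mathcal X,\mu,T)$ is strictly ergodic and that $N_d:=N_d(X,T)$, equipped with the $\mathbb{Z}^2$-action $\mathcal G_d=\langle\tau_d,\sigma_d\rangle$, is CF-Nil($\infty$) with unique invariant measure the Furstenberg self-joining $\lambda=\mu^{(d)}$ (Theorem~\ref{Nd-s.e.}). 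Setting $F:=f_1\otimes\cdots\otimes f_d$ and $x^{(d)}:=(x,\dots,x)\in X^d$, one checks $\sigma_d^{\,n}\tau_d^{\,m}x^{(d)}=(T^{n+m}x,\dots,T^{n+dm}x)$, so that for every $x$
\[
\frac{1}{N^2}\sum_{m,n=0}^{N-1}\psi(m,n)\prod_{j=1}^{d}f_j\bigl(T^{n+jm}x\bigr)=\frac{1}{N^2}\sum_{m,n=0}^{N-1}\psi(m,n)\,F\bigl(\sigma_d^{\,n}\tau_d^{\,m}x^{(d)}\bigr)=:A_N(F)(x).
\]

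\emph{Continuous data.} Suppose first each $f_j\in\mathcal{C}(X)$, so that $F|_{N_d}\in\mathcal{C}(N_d)$. Approximating $\psi$ uniformly by basic $k$-step nilsequences and using $\|F\|_\infty<\infty$ to make the error uniformly small in $N$ and $x$, we may take $\psi$ basic of step $k$. Since $N_d$ is CF-Nil($k-1$), the measurable projection $N_d\to Z_{k-1}(N_d)$ agrees $\lambda$-a.e.\ with the continuous projection onto $W_{k-1}(N_d)$; hence Theorem~\ref{219} applies (with $s=2$, $T_1=\tau_d$, $T_2=\sigma_d$), and $A_N(F)(x)$ converges for \emph{every} $x\in X$.

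\emph{General data and conclusion.} For a tensor $F=f_1\otimes\cdots\otimes f_d$ with $\|f_j\|_\infty\le1$ put $MF(x):=\limsup_N|A_N(F)(x)|$. The key step is a maximal inequality of the form
\[
\int_X MF\,d\mu\ \lesssim_{d,\psi}\ \min_{1\le j\le d}\interleave f_j\interleave_{m}
\]
for a suitable $m=m(d,k)$. I would obtain it by iterating the two-dimensional van der Corput inequality~(\ref{vdw2dim}) on a high enough power of $A_N(F)$: after finitely many steps the telescoped products of the $k$-step weight $\psi$ average out (a $k$-step nilsequence is annihilated by the $(k+1)$-st Gowers-type operation), leaving precisely the exponentially-weighted cube sums of Lemma~\ref{inq lem}, which Theorem~\ref{leq d+3} bounds; Lemma~\ref{useful lemma} and Proposition~\ref{leib lemma} then collapse the resulting telescoped seminorms into a single $\interleave f_j\interleave_{m}$, exactly as in the proof of Theorem~\ref{main dom}. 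Granting this, fix $\varepsilon>0$ and, using Corollary~\ref{cor}, write $f_j=h_j\circ p_j+r_j$ with $p_j\colon X\to Y_j$ a factor onto an ergodic nilsystem, $h_j\in\mathcal{C}(Y_j)$, and $\interleave r_j\interleave_{m}<\varepsilon$. Because $X$ is CF-Nil($\infty$), $p_j$ factors through the continuous projection $X\to W_{m-1}(X)$ and factor maps between pro-nilsystems are a.e.\ continuous, so $h_j\circ p_j$ agrees $\mu$-a.e.\ with some $c_j\in\mathcal{C}(X)$. Expanding $\prod_j f_j$ multilinearly, $F=\bigotimes_j c_j+\sum_i F_i$, where each $F_i$ is a tensor product in which at least one factor equals some $r_j$. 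For $\bigotimes_j c_j$ the previous step gives (everywhere) convergence of $A_N$, while the maximal inequality gives $\int_X MF_i\,d\mu\lesssim_{d,\psi}\varepsilon$ for each $i$. Since $\limsup_{N,N'}|A_N(F)(x)-A_{N'}(F)(x)|\le 2\sum_i MF_i(x)$ has integral $\lesssim_{d,\psi}\varepsilon$ and $\varepsilon$ is arbitrary, $(A_N(F)(x))_N$ is Cauchy for $\mu$-a.e.\ $x$, proving the theorem.

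\emph{Main obstacle.} Essentially all of the difficulty is in the maximal inequality: arranging the multiparameter van der Corput iteration so that the telescoped $\psi$-products really do vanish, and controlling how the Host-Kra seminorm index grows through the iteration so that it finally closes against Theorems~\ref{leq d+3} and~\ref{main dom}. Everything else — the transfer to the model, the continuous case through Theorem~\ref{219}, and the multilinear splitting via Corollary~\ref{cor} — is routine once that estimate is in place.
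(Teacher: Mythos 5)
Your reduction to the CF-Nil($\infty$) model via Theorem~\ref{thm-A} and your treatment of continuous $f_j$ via Theorem~\ref{219} applied to $(N_d(\hat X),\G_d)$ coincide with the paper's Step~1. The problem is your passage to general $L^\infty$ data, which is exactly where you yourself locate the ``main obstacle'': the maximal inequality $\int_X MF\,d\mu\lesssim_{d,\psi}\min_j\interleave f_j\interleave_m$ is not established, and the mechanism you propose for it does not work as described. Iterated van der Corput does \emph{not} make the telescoped $\psi$-products ``average out'': differencing a $k$-step nilsequence produces $\psi(m,n)\overline{\psi(m+h_1,n+h_2)}$, which is again a nilsequence of the same step with nonzero mean in general, and no finite number of such operations annihilates it. The estimates of Section~\ref{s3} (Lemma~\ref{inq lem}, Theorem~\ref{leq d+3}, Theorem~\ref{main dom}) only control averages twisted by exponential weights $e^{2\pi i(mt+ns)}$ through the $\sup_{t,s}$ device; an arbitrary bounded weight, let alone a higher-step nilsequence weight, is not dominated by that supremum. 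To weight by genuine nilsequences one needs dual-norm/uniformity-seminorm arguments in the spirit of \cite[Theorem 2.13]{HK09} (as in Proposition~\ref{everywhere CFNil infty}), none of which your sketch supplies. Your subsequent splitting also leans on an unproven assertion that $h_j\circ p_j$ from Corollary~\ref{cor} agrees $\mu$-a.e.\ with a function continuous on $X$ (``factor maps between pro-nilsystems are a.e.\ continuous''), a rigidity statement the paper never invokes.

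The paper avoids all of this with a much softer Step~2 that you have missed: fix continuous $g_{j,k}$ with $\|f_j-g_{j,k}\|_{L^2(\mu)}\le 1/(kd)$, telescope the product $\prod_j f_j(T^{m+jn}x)-\prod_j g_{j,k}(T^{m+jn}x)$ so that each error term contains a single factor $|f_j-g_{j,k}|(T^{m+jn}x)$ with all other factors bounded by $1$, and control it by Cauchy--Schwarz together with the pointwise ergodic theorem for the $\ZZ^2$-action $(m,n)\mapsto T^{m+jn}$ applied to $|f_j-g_{j,k}|^2$; since $\psi$ is bounded, this yields, for $\mu$-a.e.\ $x$,
\[
\limsup_{N\ra\infty}\Big|\dfrac{1}{N^{2}}\sum_{m,n=0}^{N-1} \psi(m,n)\Big(\prod_{j}f_{j}(T^{m+jn}x)-\prod_{j}g_{j,k}(T^{m+jn}x)\Big)\Big|\le 1/k,
\]
and the a.e.\ limit then follows by letting $k\to\infty$, using Step~1 for the continuous approximants and a Cauchy-sequence argument for the limiting values $c_k$. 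No multilinear maximal inequality, no seminorm-level approximation of the $f_j$, and no continuity of measurable nilfactor maps is needed. So your outline is correct up through the continuous case, but the $L^\infty$ extension as you propose it has a genuine gap, and the missing estimate is in any case unnecessary for the theorem.
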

\begin{proof}
	Suppose that $\{\psi(m,n):m,n\in\ZZ\}$ is a nilsequence. The proof can be divided into two steps.
	
	\medskip
	\noindent{\bf Step 1:} 
	There exists a strictly ergodic model $(\hat{X},\hat{T})$ of $(X,\X,\mu,T)$ such that $(N_d(\hat{X},\hat{T}),\G_d)$ is CF-Nil($\infty$)  by Theorem \ref{thm-A}. So we may assume that $(X,T,\mu)$ is strictly ergodic with $(N_d(X,T),\G_d)$ being CF-Nil($\infty$).
	
	It follows from Theorem \ref{main1} that the induced system $(N_d(X,T),\G_d)$ is CF-Nil($\infty$) for any $d\in\N$.
	Since Theorem \ref{219} still holds for $\ZZ^2$-actions, for any continuous function $F\in \C(N_d(X,T))$ and $x^{(d)}=(x,x,\dots,x)\in N_d(X,T)$, the limit $$\lim_{N\ra\infty}\dfrac{1}{N^2}\sum_{m,n=0}^{N-1}\psi(m,n)F\left((T\times \dots\times T)^m (T\times T^2\times \dots\times T^d)^{n}(x,x,\dots,x)\right)$$
	exists.
	In particular, let $F(\x)=\bigotimes_{j=1}^{d} f_j(x_j)$ where $f_j\in \C(X)$.
	Then the limit
	\begin{equation*}
		\lim_{N\ra\infty}\dfrac{1}{N^{2}}\sum_{m,n=0}^{N-1} \psi(m,n)\prod_{j=1}^{d}f_{j}(T^{m+nj}x)
	\end{equation*}
	exists.
	
	\medskip
	\noindent{\bf Step 2:} 
	Fix $f_1,\dots,f_{d-1}\in L^{\infty}(\mu)$ and let $k>0$.
	Without loss of generality, we may assume $\|\psi_j\|_{\infty}\leq 1$ and $\|f_j\|_{\infty}\leq 1$ for all $1\leq j\leq d$.
	Choose continuous functions $g_{j,k}$ such that 
	$\|g_{j,k}\|_{\infty}\leq 1$ and $\|f_{j}-g_{j,k}\|_{L^2(\mu)}\leq 1/kd$  for all $1\leq j\leq d$. We have
	\begin{equation*}\label{petZd}
		\begin{aligned}
			&\dfrac{1}{N^{2}}\sum_{m,n=0}^{N-1} \psi(m,n)\prod_{j=1 }^{d}f_{j}(T^{m+jn}x)
			\\
			= &	\dfrac{1}{N^{2}}\sum_{m,n=0}^{N-1} \psi(m,n)\left[\prod_{j=1}^{d}g_{j,k}(T^{m+jn}x)+\left(\prod_{j=1}^{d}f_{j}(T^{m+jn}x)-\prod_{j=1}^{d}g_{j,k}(T^{m+jn}x)\right)\right].
		\end{aligned}
	\end{equation*}
	
	By Pointwise Ergodic Theorem for $\ZZ^2$ actions applying to $(m,n)\mapsto T^{m+(j-1)n}$ we have that for all $1\leq j\leq d$, there exists $X_0\subset X$ with $\mu(X_0)=1$ such that for any $x\in X_0$,
	\begin{equation*}
		\dfrac{1}{N^2}\sum_{m,n=0}^{N-1}\left|f_j(T^{m+(j-1)n}x)-g_{j,k}(T^{m+(j-1)n}x)\right|^2\ra\|f_j-g_{j,k}\|_{2}^{2}, N\ra\infty.
	\end{equation*}
	Hence by \cite[Lemma 4.6]{HSY19} we have
	$$	
	\begin{aligned}
		&\limsup_{N\ra\infty}
		\left|\dfrac{1}{N^{2}}\sum_{m,n=0}^{N-1} \psi(m,n)\prod_{j=1}^{d}f_{j}(T^{m+jn}x)-\dfrac{1}{N^{2}}\sum_{m,n=0}^{N-1} \psi(m,n)\prod_{j=1}^{d}g_{j,k}(T^{m+jn}x)\right|
		\\
		&\leq \sum_{j=1}^{d}\left[\limsup_{N\ra\infty}\dfrac{1}{N^2}\sum_{m,n=0}^{N-1}|\psi(m,n)|\cdot\left|f(T^{m+jn}x)-g_{j,k}(T^{m+jn}x)\right|\right]\\
		&\leq \sum_{j=1}^{d}\left[\limsup_{N\ra\infty}\left(\dfrac{1}{N^2}\sum_{m,n=0}^{N-1}\left|f(T^{m+jn}x)-g_{j,k}(T^{m+jn}x)\right|^2 \right)^{1/2}\cdot\left(\dfrac{1}{N^2}\sum_{m,n=0}^{N-1}|\psi(m,n)|^2\right)^{1/2}\right]\\
		&\leq\sum_{j=1}^{d}{\|f_j-g_{j,k}\|_{L^2(\mu)}}\cdot {\|\psi\|_{L^2(\mu)}}\leq 1/k.
	\end{aligned}
	$$
	
	Then by Step 1 we have 
	\begin{equation*}\label{3.8}
		\lim_{N\ra\infty}	\dfrac{1}{N^2}\sum_{m,n=0}^{N-1}\psi(m,n)\prod_{j=1}^d g_{j,k}(T^{m+nj}x)=c_k
	\end{equation*} exists for any fixed $x\in X_0$. And 	
	$$	
	\begin{aligned}
		&\lim_{N\ra\infty}
		\left|\dfrac{1}{N^{2}}\sum_{m,n=0}^{N-1} \psi(m,n)\prod_{j=1 }^{d}g_{j,k}(T^{m+jn}x)-\dfrac{1}{N^{2}}\sum_{m,n=0}^{N-1} \psi(m,n)\prod_{j=1 }^{d}g_{j,k'}(T^{m+jn}x)\right|
		\\
		&\leq \sum_{j=1}^{d}\left[\lim_{N\ra\infty}\left(\dfrac{1}{N^2}\sum_{m,n=0}^{N-1}\left|g_{j,k}(T^{m+jn}x)-g_{j,k'}(T^{m+jn}x)\right|^2 \right)^{1/2}\cdot\left(\dfrac{1}{N^2}\sum_{m,n=0}^{N-1}|\psi(m,n)|^2\right)^{1/2}\right]\\
		&\leq\sum_{j=1}^{d}{\|g_{j,k}-g_{j,k'}\|_{L^2(\mu)}}\cdot {\|\psi\|_{L^2(\mu)}}\leq 2/k
	\end{aligned}
	$$ for any $k'\geq k$. Therefore $\{c_k\}_{k=1}^{\infty}$ is Cauchy sequence and there exists a constant $c$ such that $\lim_{}c_k=c$ and $\left|c_k-c\right|\leq 2/k$.
	Thus 
	\begin{equation*}
		\limsup_{N\ra\infty}
		\left|\dfrac{1}{N^{2}}\sum_{m,n=0}^{N-1} \psi(m,n)\prod_{j=1 }^{d}f_{j}(T^{m+jn}x)-c\right|\leq 3/k.
	\end{equation*}
	Since $k$ is arbitrary, the proof is completed.
\end{proof}

\bigskip

\section{The proof of Theorem \ref{thm-C}}\label{s4}
In what follows, we assume all polynomials $p_1,p_2,\dots,p_l\in\ZZ[m,n]$ have zero constant terms. 
The set of polynomials $\{p_1,\dots,p_l\}$ is 
\begin{enumerate}
	\item {\bf pairwise independent}, if $p_i/p_j$ is non-constant for any $1\leq i\ne j\leq l$.
	
	\item {\bf totally pairwise independent}, if $p_i^{(t_1,s_1)}/p_j^{(t_2,s_2)}$ is non-constant for all $1\leq i,j\leq l,(t_1,s_1),(t_2,s_2)\in\ZZ^2$ where $p^{(t,s)}(m,n):=p(m+t,n+s)$.
\end{enumerate}
It is clear that if $p_1,\dots,p_l$ have different degrees, they are totally pairwise independent.

\medskip
And in this subsection, we are going to prove the next theorem, which is a stronger form of Theorem \ref{thm-C}.
\begin{thm}\label{main c}
	Let $T,S_1,S_2,\dots,S_l$ be measure preserving transformations acting on  a probability space $(X,\X,\mu)$ such that $S_1,S_2,\dots,S_l$
	commute. Let $d\geq 2$ and $p_1,\cdots,p_l\in\mathbb{Z}[m,n]$ be non-linear totally pairwise independent integer polynomials. Then for any $f_1,\cdots,f_d, g_1,\dots,g_l\in L^{\infty}(\mu)$, the limit
	\begin{equation}\label{L2 limit}
		\lim _{N \to \infty} \frac{1}{N^2} \sum_{m,n=1}^{N}  \prod_{j=1}^{d} f_j\left(T^{m+j n}x\right) \cdot\prod_{i=1}^{l} g_i\left(S_i^{p_i(m,n)}x\right)
	\end{equation}
	exists in $L^2(\mu)$.\end{thm}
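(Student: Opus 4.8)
The plan is to run a two-dimensional van der Corput / PET induction on the pair $(m,n)$ that simultaneously reduces the functions $f_1,\dots,f_d$ to a pro-nilfactor of $(X,T)$ and the functions $g_1,\dots,g_l$ to the Pinsker factor of $\langle S_1,\dots,S_l\rangle$, and then to establish convergence on the resulting structured system. The $T$-side of the reduction is controlled by the seminorm estimates of Section~\ref{s3}, the $S_i$-side by the Pinsker-disjointness tools of Section~\ref{s2} (Proposition~\ref{FH2.1}, Lemmas~\ref{FH2.2} and~\ref{pinsker}); total pairwise independence of $\{p_i\}$ keeps the PET scheme non-degenerate, and $d\ge 2$ guarantees that the linear part $\prod_j T^{m+jn}f_j$ carries the Host--Kra pro-nilpotent structure making those estimates available. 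The point that dispenses with any zero-entropy hypothesis on $(X,T)$ is structural: the characteristic factor of the linear part is a pro-nilsystem, hence \emph{automatically} of zero entropy, and it is this pro-nilfactor, not $(X,T)$ itself, that will occupy the zero-entropy side of Proposition~\ref{FH2.1}. Everything below is measure-theoretic, so no topological model is required.

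\textbf{Step 1 (reduction to structured factors).} In the van der Corput / PET differencing, each step turns every occurrence of $T^{m+jn}f_j$ into $T^{m+jn}\bigl(f_j\cdot\overline{T^{\,c}f_j}\bigr)$ --- a \emph{constant} shift of $f_j$ --- while strictly lowering the effective degree of the $p_i$ sitting inside the $S_i$-factors. Iterating and feeding the outcome into the estimates of Section~\ref{s3} (Theorems~\ref{main dom} and~\ref{leq d+3}, Lemma~\ref{useful lemma}, Proposition~\ref{prop leib}), together with the crude bound $\|g_i\|_\infty\le 1$ once the $S_i$-factors have been turned into constant powers, produces an integer $k=k(d,\deg p_1,\dots,\deg p_l)$ such that replacing any $f_j$ by $f_j-\E_\mu\bigl(f_j|\Z_{k-1}(X,T)\bigr)$ kills the $L^2(\mu)$-limsup of~\eqref{L2 limit}. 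By \cite[Lemma~4.3]{HK05}, the structure theorem (Theorem~\ref{structure theorem}) and density we may then assume $f_j=F_j\circ\pi$ with $\pi\colon X\to Z$ a factor map onto a single nilsystem $Z=G/\Gamma$ with nilrotation $\tau$ and $F_j\in\C(Z)$; hence, writing $\widetilde F=\bigotimes_j F_j$, $\widetilde\sigma=\tau\times\cdots\times\tau$ and $\widetilde\tau=\tau\times\tau^2\times\cdots\times\tau^d$, we obtain $\prod_{j=1}^{d}f_j(T^{m+jn}x)=\widetilde F\bigl(\widetilde\sigma^{m}\widetilde\tau^{n}\mathbf z(x)\bigr)$ with $\mathbf z(x)=(\pi(x),\dots,\pi(x))$ --- a two-variable nilsequence in $(m,n)$ for $\mu$-a.e.\ $x$, whose underlying $\ZZ^2$-nilsystem is a sub-nilmanifold of $Z^d$, hence of zero entropy. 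Running the induction once more, now with this zero-entropy nilsystem in the role of the ``zero-entropy side'' and applying Proposition~\ref{FH2.1} and Lemma~\ref{FH2.2} to the weak-$*$ limit couplings produced by the $(m,n)$-averaging, one likewise shows (via Lemma~\ref{pinsker}) that replacing any $g_i$ by $g_i-\E_\mu\bigl(g_i|\Pi(X,\langle S_1,\dots,S_l\rangle)\bigr)$ does not affect the limit. Iterating, we may assume in addition that each $g_i$ is measurable with respect to the zero-entropy Pinsker factor.

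\textbf{Step 2 (convergence on the structured system).} With all $f_j$ carried by a pro-nilfactor of $(X,T)$ and all $g_i$ by $\Pi(X,\langle S_1,\dots,S_l\rangle)$, decompose the two-variable nilsequence $\widetilde F(\widetilde\sigma^{m}\widetilde\tau^{n}\,\cdot\,)$ uniformly --- through its vertical Fourier / nil-character expansion (see~\cite{HKbook}) --- into a uniformly convergent series of \emph{fixed} elementary nilsequences $\eta_t(m,n)$ with $L^\infty(\mu)$ coefficients $c_t(x)$. For each fixed $t$, the average $\frac{1}{N^2}\sum_{m,n=1}^{N}\eta_t(m,n)\prod_i g_i(S_i^{p_i(m,n)}x)$ is a nilsequence-weighted polynomial multiple ergodic average for the \emph{commuting} transformations $S_1,\dots,S_l$, and hence converges in $L^2(\mu)$ by the Bergelson--Leibman and Host--Kra theory; summing over $t$ yields the existence of the limit~\eqref{L2 limit} in $L^2(\mu)$.

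\textbf{Main obstacle.} The heart of the matter is the second half of Step~1: one must track, through the van der Corput / PET induction, precisely how the genuinely non-linear polynomials $p_i(m,n)$ interact with the linear forms $m+jn$, and organize the bookkeeping so that \emph{all} of the entropy contributed by $(X,T)$ is absorbed into its pro-nilfactor --- which is zero entropy --- rather than surviving to force a zero-entropy assumption on $(X,T)$. This is exactly the step that requires $d\ge 2$ (so that the linear part carries the Host--Kra pro-nilpotent structure underlying Theorems~\ref{main dom} and~\ref{WZ}) and the total pairwise independence of $\{p_i\}$, and where Proposition~\ref{FH2.1} must be invoked with the pro-nilsystem, not $(X,T)$ itself, on its zero-entropy side.
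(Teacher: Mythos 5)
There is a genuine gap, and it sits at the heart of your Step 1. Your first reduction --- that replacing some $f_j$ by $f_j-\E_\mu(f_j|\Z_{k-1}(X,T))$ kills the $L^2$-limsup of (\ref{L2 limit}) --- is not obtainable from a van der Corput/PET scheme together with the estimates of Section \ref{s3}. Those estimates (Theorem \ref{leq d+3}, Theorem \ref{main dom}) are uniform only over exponential weights $e^{2\pi i(mt+ns)}$, not over arbitrary bounded weights such as $\prod_i g_i(S_i^{p_i(m,n)}x)$, and since the $S_i$ do not commute with $T$ there is no common (nilpotent) group in which a PET induction could be run: differencing by $(h_1,h_2)$ does not ``strictly lower the effective degree of the $p_i$'' --- the inner product retains the full-degree exponents $S_i^{p_i(m,n)}$ acting on the new functions $g_i\cdot S_i^{p_i(m+h_1,n+h_2)-p_i(m,n)}\bar g_i$, and eliminating the $(m,n)$-dependence of the $S$-part before bounding the $g$'s out is exactly what fails for non-commuting transformations (this is the obstruction behind the Frantzikinakis--Host counterexample). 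You also misidentify where zero entropy comes from: the paper never reduces the $f_j$'s at all. The zero-entropy object fed into Proposition \ref{FH2.1} is the Furstenberg system of the full sequence $\bigl(\prod_j f_j(T^{m+jn}x)\bigr)$ with \emph{arbitrary} bounded $f_j$ (Proposition \ref{Fur sys Nd}), whose zero entropy comes from the Wu--Zhang theorem (Theorem \ref{WZ}) on $(N_d(X),\G_d)$ for $d\ge 2$ --- not from any pro-nilfactor of $(X,T)$ obtained after a seminorm reduction.

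Your $g$-side reduction also goes to the wrong factor and in the wrong logical direction. The paper's Proposition \ref{convergL2=0} reduces each $g_j$ to $\Z_{\infty}(S_j)$ (its Host--Kra pro-nilfactor), and the hypothesis $\E(g_j|\Z_\infty(S_j))=0$ is what feeds the Frantzikinakis--Kuca joint ergodicity theorem (this is where non-linearity and total pairwise independence are used) to produce the vanishing correlations required by Lemma \ref{pinsker}; Pinsker-orthogonality of the ${\bf 0}$-th coordinate $G_{\bf 0}$ is then a \emph{conclusion} about the Furstenberg system of the $g$-sequence, used against the zero-entropy $f$-side via Proposition \ref{FH2.1}. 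Reducing $g_i$ to $\Pi(X,\langle S_1,\dots,S_l\rangle)$, as you propose, leaves Step 2 unsupported: for Pinsker-measurable $g_i$ there is no cited theorem giving $L^2$-convergence of nilsequence-weighted two-parameter polynomial averages of commuting transformations, whereas the paper needs $g_j$ measurable with respect to $\Z_{k_j}(S_j)$ precisely so that (via \cite[Proposition 3.1]{CFH} and Leibman's theorem) $\prod_j \tilde g_j(S_j^{p_j(m,n)}x)$ becomes a two-variable nilsequence, which is then moved into the \emph{weight} and handled by Theorem \ref{thm B} (built on the CF-Nil($\infty$) model of Theorem \ref{thm-A}) with the $f_j$'s left arbitrary --- the opposite division of labor from your Step 2. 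As written, both halves of your Step 1 are unproved (and the $f$-side claim is likely not reachable by these tools at all), so the argument does not go through.
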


\medskip

\subsection{Furstenberg systems of sequences}\label{4.1}
\begin{de}
	Let $
	\left([1,N_k]^s\right)_{k \in \mathbb{N}}$
	be a sequence of cubes with $N_k \rightarrow \infty$ and $I$ be a bounded closed interval in $\R$. We say that the sequence $z=(z(n))_{n \in \mathbb{Z}^s}$ with values in I admits correlations on $([1,N_k]^s)$, 
	if the limit
	$$
	\lim _{k \rightarrow \infty} \dfrac{1}{N_k^s} \sum_{n\in [1,N_k]^s} \prod_{j=1}^l z\left(n+n_j\right)
	$$
	exists for all $l \in \mathbb{N}$ and all $n_1,\dots,n_l \in \ZZ^s$.
\end{de}

Let $\Omega:=I^{\ZZ^s}$. The elements of $\Omega$ are written $\omega:=(\omega(n))_{n \in \mathbb{Z}^s}$ and the shifts $\sigma_j: \Omega \rightarrow \Omega,1\leq j\leq s$, are defined by $(\sigma_j \omega)(n):=\omega(n+\ep_j)$ where $\ep_j=(0,\dots,1,\dots,0)$ with $1$ the $j$-th component and $0$ in the others. We consider the sequence $z$ as an element of $\Omega$. We conclude that if the sequence $z: \mathbb{Z}^s \rightarrow I$ admits correlations on $([1,N_k]^s)$, then for all $f \in \C(\Omega)$ the following limit exists
$$
\lim _{k \rightarrow \infty} \frac{1}{N_k^s} \sum_{n\in[1,N_k]^s} f\left(\sigma_1^{n^1}\dots\sigma_s^{n^s} z\right).
$$
The following weak*-limit exists
$$
\nu:=\lim _{k \ra \infty} \frac{1}{N_k^s} \sum_{n\in[1,N_k]^s} \delta_{\sigma_1^{n^1}\dots\sigma_s^{n^s} z}.
$$
and we say that the point $z$ is generic for $\nu$ along $\left(\left[1,N_k\right]^s\right)$. 
\begin{de}
	Let $I$ be a compact interval and let $z: \ZZ^s \rightarrow I$ be a sequence that admits correlations on $
	\left([1,N_k]^s\right)$, and $\Omega, \sigma_j$, and $\nu$ as above.
	We call $(\Omega, \nu, \langle\sigma_1,\dots,\sigma_s\rangle)$ the {\bf Furstenberg system associated with $z$ on $([1,N_k]^s)$}. 

We let $F_{{\bf 0}} \in \mathcal{C}(\Omega)$ be defined by $F_{{\bf 0}}(\omega):=\omega({{\bf 0}}), \omega \in \Omega$, and call it the ${\bf 0}$-th coordinate projection. Note that $F_{{\bf 0}}\left(\sigma_1^{n^1}\dots \sigma_s^{n^s} z\right)=z(n)$ for every $n=(n^1,\dots,n^s) \in \mathbb{Z}^s$ and
\begin{equation}\label{corresp}
	\lim _{k \rightarrow \infty} \frac{1}{N_k^s} \sum_{n\in[1,N_k]^s} \prod_{j=1}^l z\left(n+n_j\right)=\int \prod_{j=1}^l \sigma_1^{n_j^1}\dots \sigma_s^{n_j^s} F_{{\bf 0}} d\nu
\end{equation}
for all $l \in \mathbb{N}, n_1, \ldots, n_l \in \ZZ^s$. This identity is referred to as the {\bf Furstenberg correspondence principle}.

We say that the sequence $z$ has a unique Furstenberg system if $z$ is generic for a measure along $([1,N_k]^s)_{N\in\N}$.
\end{de}


\begin{prop}\cite{FH}\label{Fur sys}
Let $(X,\mu,T)$ be a system and suppose that $\mu=\int \mu_x d\mu(x)$ is the ergodic decomposition of $\mu$. Then for every $f\in L^{\infty}(\mu)$ and for almost every $x\in X$, the sequence $(f(T^n x))$ has a unique Furstenberg system that is ergodic and a factor of the system $(X,\mu_x,T)$.
\end{prop}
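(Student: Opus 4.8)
The plan is to realize the Furstenberg system of $(f(T^nx))_{n\in\ZZ}$ as the image of the ergodic component $\mu_x$ under the natural coding map, and then to upgrade the pointwise ergodic theorem on each component into genericity along the full sequence. Here $s=1$, so $\Omega=I^{\ZZ}$ with $I=[-\|f\|_\infty,\|f\|_\infty]$ and $\sigma$ denotes the shift. Fix a bounded Borel representative of $f$ and define the coding map $\Phi\colon X\to\Omega$ by $\Phi(y)=(f(T^ny))_{n\in\ZZ}$. Then $\Phi$ is Borel, and since $f(T^{n+1}y)$ is the $n$-th coordinate of $\sigma\Phi(y)$, the exact intertwining relation $\Phi\circ T=\sigma\circ\Phi$ holds everywhere. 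Moreover $F_{\bf 0}\circ\Phi=f$ and, more generally, $(\sigma^jF_{\bf 0})\circ\Phi=f\circ T^j$ for every $j\in\ZZ$, which is what will let correlations of the sequence be read off from $\Phi$.

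Next I would set $\nu_x:=\Phi_*\mu_x$ for each ergodic component $\mu_x$. Because $\Phi$ intertwines $T$ and $\sigma$ and pushes $\mu_x$ to $\nu_x$, the system $(\Omega,\nu_x,\sigma)$ is a measurable factor of $(X,\mu_x,T)$; being a factor of an ergodic system, it is itself ergodic. This already supplies the two structural assertions of the proposition, reducing everything to showing that $(f(T^nx))$ is generic for $\nu_x$ along the whole sequence for $\mu$-almost every $x$.

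For genericity, recall that in the ergodic decomposition $\mu=\int\mu_x\,d\mu(x)$ one has $\int h\,d\mu_x=\E_\mu(h\mid\mathcal{I})(x)$ for every $h\in L^1(\mu)$, where $\mathcal{I}$ is the invariant $\sigma$-algebra. Choose a countable dense subset $\{F_k\}_{k\in\N}$ of $\C(\Omega)$ in the uniform norm. Applying Birkhoff's pointwise ergodic theorem to each $h=F_k\circ\Phi\in L^\infty(\mu)$ produces a single full-measure set $X'\subset X$, independent of $k$, on which
$$
\frac{1}{N}\sum_{n=1}^{N}F_k(\sigma^n\Phi(x))=\frac{1}{N}\sum_{n=1}^{N}(F_k\circ\Phi)(T^nx)\longrightarrow\int F_k\circ\Phi\,d\mu_x=\int F_k\,d\nu_x
$$
for every $k$ and every $x\in X'$. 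Approximating an arbitrary $F\in\C(\Omega)$ by some $F_k$ in the supremum norm extends this to every $F\in\C(\Omega)$, since the Birkhoff averages of $F$ and $F_k$ differ by at most $\|F-F_k\|_\infty$ uniformly in $N$. Hence for $x\in X'$ the empirical measures $\frac1N\sum_{n=1}^{N}\delta_{\sigma^n\Phi(x)}$ converge weak$^*$ to $\nu_x$ as $N\to\infty$. This is exactly genericity of $\Phi(x)=(f(T^nx))$ for $\nu_x$ along the full sequence $([1,N])_N$, so the Furstenberg system is unique and equals $(\Omega,\nu_x,\sigma)$, establishing all three claims.

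I expect the only delicate point to be the passage from ``Birkhoff holds for $\mu_x$-a.e.\ $y$ in each component'' to ``the basepoint $x$ itself is generic for $\mu$-a.e.\ $x$.'' This is resolved precisely by the identity $\E_\mu(h\mid\mathcal{I})(x)=\int h\,d\mu_x$ from the ergodic decomposition combined with the countable dense family, which collapses the uncountably many convergence requirements (one for each $F\in\C(\Omega)$) onto a single full-measure set; the only other technicality is fixing a genuine Borel representative of $f$ at the outset, so that $\Phi$ is defined on all of $X$ and the intertwining with $\sigma$ is exact rather than merely almost everywhere.
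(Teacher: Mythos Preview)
The paper does not prove this proposition; it is simply quoted from \cite{FH} with no argument supplied. Your proof is correct and is essentially the standard argument one would expect: code the orbit by the map $\Phi(y)=(f(T^ny))_{n\in\ZZ}$, push forward the ergodic component $\mu_x$ to obtain $\nu_x=\Phi_*\mu_x$, and then use Birkhoff's theorem on $(X,\mu,T)$ together with the identification $\E_\mu(h\mid\mathcal{I})(x)=\int h\,d\mu_x$ and a countable dense family in $\C(\Omega)$ to conclude genericity of $\Phi(x)$ for $\nu_x$ along the full sequence. The factor and ergodicity claims follow immediately from the intertwining $\Phi\circ T=\sigma\circ\Phi$. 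Your closing remark correctly identifies the only point requiring care, and your resolution of it is sound.
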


\begin{prop}\label{Fur sys Nd}
Let $(X, \mu, T)$ be a system and $d\geq 2$. Then for $f_j \in L^{\infty}(\mu),1\leq j\leq d$ 
and for almost every $x \in X$, the sequence $\left(\prod_{j=1}^d f_j\left(T^{m+jn} x\right)\right)$ has a unique Furstenberg system that is ergodic with zero entropy.
\end{prop}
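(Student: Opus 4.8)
The plan is to realize the Furstenberg system of the sequence $z(m,n):=\prod_{j=1}^{d}f_j(T^{m+jn}x)$ as a measurable factor of the induced system $(N_d(X),\mu^{(d)},\G_d)$, and then to read off ergodicity and zero entropy from known properties of $N_d(X)$. Since the Furstenberg system of $z$ is determined by the limits $\lim_{N}\frac{1}{N^2}\sum_{(m,n)\in[1,N]^2}\prod_{i=1}^{\ell}z(m+a_i,n+b_i)$, and these are unchanged for $\mu$-a.e.\ $x$ under a measure isomorphism, by Theorem \ref{thm-A} (see also Theorem \ref{Nd-s.e.}) we may assume that $(X,T)$ is strictly ergodic and that $(N_d(X),\G_d)$ is strictly ergodic with unique invariant measure the Furstenberg self-joining $\mu^{(d)}$. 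Writing $F=\bigotimes_{j=1}^{d}f_j$ and $x^{(d)}=(x,\dots,x)\in N_d(X)$, we have $z(m,n)=F(\sigma_d^{m}\tau_d^{n}x^{(d)})$, and each product $\prod_{i=1}^{\ell}z(m+a_i,n+b_i)$ equals $\widetilde F(\sigma_d^{m}\tau_d^{n}x^{(d)})$ with $\widetilde F=\bigotimes_{j=1}^{d}h_j$, $h_j=\prod_{i=1}^{\ell}f_j\circ T^{a_i+jb_i}\in L^\infty(\mu)$.

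Next I would show that $z$ is generic along $([1,N]^2)$. Let $\Omega=I^{\ZZ^2}$ and let $\pi\colon N_d(X)\to\Omega$ be given by $\pi(y)(m,n)=F(\sigma_d^{m}\tau_d^{n}y)$; this map intertwines the $\G_d$-action with the two coordinate shifts $\sigma_1,\sigma_2$ of $\Omega$ and satisfies $\pi(x^{(d)})=z$. When all $f_j$ are continuous, every $\widetilde F$ as above is continuous on $N_d(X)$, so unique ergodicity of $(N_d(X),\G_d)$ gives $\frac{1}{N^2}\sum_{(m,n)\in[1,N]^2}\widetilde F(\sigma_d^{m}\tau_d^{n}x^{(d)})\to\int\widetilde F\,d\mu^{(d)}$ for every $x$; hence $z$ is generic for $\nu:=\pi_*\mu^{(d)}$. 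For general $f_j\in L^\infty(\mu)$, approximate each $f_j$ in $L^2(\mu)$ by continuous functions $g_j$ with $\|g_j\|_\infty\le\|f_j\|_\infty$; by a multilinear telescoping together with the pointwise ergodic theorem for the $\ZZ^2$-action $(m,n)\mapsto T^{m+jn}$ --- as in Step 2 of the proof of Theorem \ref{thm B} --- the correlation averages for $z$ and for $z_g:=(\prod_{j}g_j(T^{m+jn}x))_{m,n}$ differ, in the $\limsup$, by $O\big(\sum_{j}\|f_j-g_j\|_{L^2(\mu)}\big)$ for $\mu$-a.e.\ $x$. Letting $g_j\to f_j$ shows that for $\mu$-a.e.\ $x$ all correlations of $z$ converge, so $z$ is generic for $\nu=\pi_*\mu^{(d)}$ (where $\pi$ is now merely $\mu^{(d)}$-a.e.\ defined and measurable), and $(\Omega,\nu,\langle\sigma_1,\sigma_2\rangle)$ is the Furstenberg system of $z$.

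Finally, $(\Omega,\nu,\langle\sigma_1,\sigma_2\rangle)$ is a measurable factor of $(N_d(X),\mu^{(d)},\G_d)$ via $\pi$. Since $(N_d(X),\G_d)$ is strictly ergodic, $\mu^{(d)}$ is ergodic, hence so is its factor $\nu$. As $d\ge2$, Theorem \ref{WZ} gives $h_{\mathrm{top}}(N_d(X),\G_d)=0$, so by the variational principle $h_{\mu^{(d)}}(\G_d)=0$; entropy does not increase under factor maps, whence $(\Omega,\nu,\langle\sigma_1,\sigma_2\rangle)$ has zero entropy.

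The main obstacle is the passage from continuous to arbitrary $L^\infty$ observables in the second step: the unique-ergodicity convergence is available only for continuous functions on $N_d(X)$, so one must control an $L^2$-approximation error $\mu$-a.e.\ via a $\ZZ^2$ pointwise ergodic theorem and then verify that the resulting merely measurable equivariant map $\pi$ genuinely exhibits $z$ as generic for $\pi_*\mu^{(d)}$. The rest is bookkeeping; in particular, replacing $(X,\mu,T)$ by a strictly ergodic model in the first step is harmless, since correlations --- and hence the Furstenberg system --- are isomorphism invariants.
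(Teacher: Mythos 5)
Your proposal is correct in substance and shares its endgame with the paper's proof: both realize the Furstenberg system of $\bigl(\prod_{j=1}^d f_j(T^{m+jn}x)\bigr)$ as a measurable factor of the $\G_d$-system carrying the Furstenberg self-joining $\mu^{(d)}$, and both obtain zero entropy from Theorem \ref{WZ} via the variational principle together with the fact that entropy does not increase under factor maps. The middle of the argument is genuinely different, though. The paper stays with the original system: it applies Proposition \ref{Fur sys} coordinatewise to get measurable factor maps $\pi_j\colon (X,\mu_x,T)\to(\Omega_j,\nu_j,\sigma)$, takes the Furstenberg system to be the diagonal-orbit limit measure $\lambda_x$ on $\Omega^d$ under $\G_d(\sigma)$ (identified via the correspondence principle), and only passes to a uniquely ergodic model at the very end, to transport the zero-entropy statement from $N_d(\hat X)$. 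You instead pass at the outset to a model in which $(N_d(\hat X),\G_d)$ is uniquely ergodic with measure $\mu^{(d)}$ (Theorem \ref{thm-A}, or Theorem \ref{Nd-s.e.} for a CF-Nil model), and prove genericity of the sequence directly: for continuous $f_j$ from unique ergodicity, and for general $L^\infty$ functions by an $L^2$-approximation controlled by the $\ZZ^2$ pointwise ergodic theorem, as in Step 2 of the proof of Theorem \ref{thm B}, identifying the limit measure as $\pi_*\mu^{(d)}$. Your route makes the existence and identification of the unique Furstenberg system (and its ergodicity) completely explicit, at the cost of needing the stronger model and the approximation bookkeeping; the paper's route gets existence more cheaply from Proposition \ref{Fur sys} and the correspondence principle, but leaves the convergence defining $\lambda_x$ and the ergodicity statement more implicit.

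One caveat: the proposition is stated for a not necessarily ergodic system, and it is used in that generality in Proposition \ref{convergL2=0}, whereas Theorem \ref{thm-A} requires ergodicity. So before your first reduction you should decompose $\mu=\int \mu_x\,d\mu(x)$ into ergodic components, run your argument for each $(X,\mu_x,T)$ (with $f_j\in L^\infty(\mu_x)$), and integrate the resulting full-measure sets; this is exactly how the paper handles it by working with $\mu_x$ throughout, and it is a one-line fix in your write-up.
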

\begin{proof}
By Proposition \ref{Fur sys}, there exists $X_0\subset X$ with $\mu(X_0)=1$ such that for every $f_j\in L^{\infty}(\mu),1\leq j\leq d$ and every $x\in X_0$, the sequence $\left(f_j\left(T^{n} x\right)\right):=z_j$ has a unique Furstenberg system,  denoted by $(\Omega_j=I^{\ZZ},\nu_j,\sigma)$.
Let $x\in X_0$. Denote $\pi_j$ the (measurable) factor map from $(X,\mu_x,T)$ to $(\Omega_j=I^{\ZZ},\nu_j,\sigma)$ where $\nu_j=\lim_{N\ra\infty}\frac{1}{N}\sum_{n\in[1,N]}\delta_{\sigma^n z_j}$. We have $(\pi_i)_*\mu=\mu\circ(\pi_j)^{-1}=v_j,\forall 1\leq j\leq d$.
Consider the induced system $(\Omega^d,\lambda_x,\G_d(\sigma))$,
where
$$\lambda_x=\lim_{N\ra\infty}\dfrac{1}{N^2}\sum_{m,n=1}^{N}(\sigma\times\sigma^2\dots\times\sigma^{d})^n(\sigma\times\sigma\dots\times\sigma)^m\delta_{z_1}\times\delta_{z_2}\dots\times\delta_{z_d}.$$
Points of $\Omega^d$ are written as $\x=(x_1,\dots,x_d)$, $x_i=(x_i(n))_{n\in\ZZ}$. 

First, we claim  $(\Omega^d,\lambda_x,\G_d(\sigma))$ is the unique Furstenberg system of the sequence $\left(\prod_{j=1}^d f_j\left(T^{m+jn} x\right)\right)_{m,n}:=(a(m,n))_{m,n}$. Let $F_{{\bf 0}}(\x):=\bigotimes_{j=1}^{d}f_j(x_j(0))\in \C(\Omega^d)$. Then for any $s\in\N$ and $m_j,n_j\in\ZZ$, $1\leq i\leq s$, we have
$$
\lim _{k \rightarrow \infty} \dfrac{1}{N^2} \sum_{m,n=1}^{N} \prod_{j=1}^s a(m+m_j,n+n_j)=\int \prod_{j=1}^s \left(\tau_d(\sigma)\right)^{n_j} (\sigma^{(d)})^{m_j} F_{{\bf 0}}d\lambda_x.
$$
It follows from Furstenberg correspondence principle that  $(\Omega^d,\lambda_x,\G_d(\sigma))$ where $\G_d(\sigma):=\langle \sigma^{(d)}, \tau_d(\sigma)
\rangle$ is the unique Furstenberg system associated with the sequence $\left(\prod_{j=1}^d f_j\left(T^{m+jn} x\right)\right)_{m,n}$.

It is easy to check $(\pi_1\times\dots\times\pi_d)_*\mu^{(d)}=\mu^{(d)}\circ(\pi_1\times\dots\times\pi_d)^{-1}=\lambda_x$. So the map $\pi_1\times\dots\times\pi_d: (X^d,\mu_x^{(d)},\G_d(T))\ra (\Omega^d,\lambda_x,\G_d(\sigma))$ is naturally a factor map. 
By model theory from Weiss \cite{W}, there exists uniquely ergodic model $(\hat{X},\hat{\mu_x},\hat{T})$ such that $(X^d,\mu_x^{(d)},\G_d(T))$ and $(\hat{X}^d,\hat{\mu}_x^{(d)},\G_d(\hat{T}))$ are measure theoretically isomorphic, and hence, they have the same entropy. It is clear that $(\hat{X}^d,\hat{\mu}_x^{(d)},\G_d(\hat{T}))$ and $(N_d(\hat{X}),\hat{\mu}_x^{(d)},\G_d(\hat{T}))$ are measurably isomorphic since $N_d(\hat{X})$ is the support of $\hat{\mu}_x^{(d)}$.

At last, we claim that the entropy of the system $(\Omega^d,\lambda_x,\G_d(\sigma))$ is zero: 	
It follows from \cite[Theorem 1.1]{WZ} that $(N_d(\hat{X}),\G(\hat{T}))$ has zero entropy when $d\geq 2$. The variational principle implies  $(N_d(\hat{X}),\hat{\mu}_x^{(d)},\G(\hat{T}))$ has zero entropy, then so have $(X^d,\mu_x^{(d)},\G_d(T))$ and $(\Omega^d,\lambda_x,\G_d(\sigma))$.
\end{proof}

\begin{prop}\label{67}
Let $p_1,\dots,p_l\in\ZZ^2[m,n]$
be totally pairwise independent polynomials, and $S_1,\dots,S_l$ be commuting measure preserving transformations acting on $(X,\X, \mu)$, and $g_1,\dots,g_l \in$ $L^{\infty}(\mu)$.
If $\mathbb{E}\left(g_j  |  \mathcal{Z}_{\infty}(S_j)\right)=0$ for some $j$, then
$$\E(G_{{\bf 0}}|\Pi\left(\Omega, \nu_x, \langle\sigma_1,\sigma_2\rangle\right))=0$$
for almost every $x\in X$,
where $\left(\Omega, \nu_x, \langle\sigma_1,\sigma_2\rangle\right)$ is the Furstenberg system  associated with the sequence $\left(\prod_{j=1}^{l}g_j(S_j^{p(m,n)}x)\right)$ and $G_{{\bf 0}}$ is the ${\bf 0}$-th coordinate projection.
\end{prop}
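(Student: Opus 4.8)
The plan is to pass, via the Furstenberg correspondence principle, from the desired vanishing of a conditional expectation on the Pinsker factor to an $L^2$-convergence statement for polynomial multiple ergodic averages, and then to close the argument with the characteristic-factor (seminorm control) theory for such averages together with the $\ZZ^2$-Pinsker criterion of Lemma~\ref{pinsker}. Write $z(m,n)=\prod_{j=1}^{l}g_j(S_j^{p_j(m,n)}x)$ (we assume the $g_j$ real-valued, to which Theorem~\ref{main c} reduces, so $z$ takes values in a compact interval). Each correlation average $\frac1{N^2}\sum_{(m,n)\in[1,N]^2}\prod_i z((m,n)+(m_i,n_i))$ is a polynomial multiple ergodic average of the commuting transformations $S_1,\dots,S_l$, hence converges in $L^2(\mu)$ by Walsh's theorem; by a diagonal argument over the countably many such averages we fix a full-measure set of $x$ and one subsequence $([1,N_k]^2)_k$ along which $(z(m,n))$ admits correlations, with associated Furstenberg system $(\Omega,\nu_x,\langle\sigma_1,\sigma_2\rangle)$ and $\mathbf 0$-th coordinate projection $G_{\mathbf 0}$. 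Applying Lemma~\ref{pinsker} to the $\ZZ^2$-system $(\Omega,\nu_x,\langle\sigma_1,\sigma_2\rangle)$ and the function $G_{\mathbf 0}$, it suffices to prove that $\int G_{\mathbf 0}\cdot\prod_{i=1}^{\ell}\sigma_1^{m_i}\sigma_2^{n_i}G_{\mathbf 0}\,d\nu_x=0$ for every $\ell\ge 0$ and all $(m_1,n_1),\dots,(m_\ell,n_\ell)\in\N^2\setminus\{\mathbf 0\}$ (inspection of the proof of Lemma~\ref{pinsker} shows only nonzero translations are used). By the correspondence principle (\ref{corresp}) this integral equals $\lim_{k\to\infty}\frac1{N_k^2}\sum_{(m,n)\in[1,N_k]^2}z(m,n)\prod_{i=1}^{\ell}z(m+m_i,n+n_i)$, so the proposition follows once this limit is shown to vanish for a.e.\ $x$.

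I would establish the stronger fact that $A_N:=\frac1{N^2}\sum_{(m,n)\in[1,N]^2}z(m,n)\prod_{i=1}^{\ell}z(m+m_i,n+n_i)$ tends to $0$ in $L^2(\mu)$; since $A_N$ also converges pointwise along $(N_k)$ for a.e.\ $x$ to the integral above, the two limits coincide and equal $0$, and discarding a countable union of null sets over the tuples $(\ell,(m_i,n_i))$ is harmless. Expanding, $A_N=\frac1{N^2}\sum_{(m,n)\in[1,N]^2}\prod_{j=1}^{l}\bigl(S_j^{p_j(m,n)}g_j\cdot\prod_{i=1}^{\ell}S_j^{p_j(m+m_i,n+n_i)}g_j\bigr)$. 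After merging coincident translation vectors, block $j$ runs over $p_j$ and the distinct translates $p_j^{(m_i,n_i)}$, carrying a power of $g_j$ at each slot; by total pairwise independence all the polynomials occurring, across every block and slot, are pairwise non-proportional, and in particular $p_{j_0}^{(m_i,n_i)}\ne p_{j_0}$ for each $i$ in the block $j_0$ where $\E(g_{j_0}\,|\,\Z_\infty(S_{j_0}))=0$. Since every $(m_i,n_i)\ne\mathbf 0$, this means the slot attached to the polynomial $p_{j_0}$ in block $j_0$ carries exactly the single function $g_{j_0}$: no merging happens there.

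It then remains to invoke the characteristic-factor bound for polynomial multiple ergodic averages of commuting transformations (Host's ergodic seminorms for commuting transformations, PET induction, and the structure theorem): for a family of pairwise non-proportional non-linear integer polynomials and commuting $S_1,\dots,S_l$, one has $\limsup_N\|A_N\|_{L^2(\mu)}\lesssim\interleave h\interleave_{s,S_{j_0}}$, where $h$ is the function in the slot $(S_{j_0},p_{j_0})$ and $\interleave\cdot\interleave_{s,S_{j_0}}$ is a Host--Kra seminorm of finite degree $s$ associated with $S_{j_0}$, depending only on the degrees and the number of terms. Taking $h=g_{j_0}$ and using that $\E(g_{j_0}\,|\,\Z_\infty(S_{j_0}))=0$ forces $\interleave g_{j_0}\interleave_{s,S_{j_0}}=0$ for every $s$, we get $A_N\to 0$ in $L^2(\mu)$, as required. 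The main obstacle is precisely this last input: organizing the PET bookkeeping so that the distinguished slot $(S_{j_0},p_{j_0})$ can be isolated with a single-transformation Host--Kra seminorm of $g_{j_0}$. This is where non-linearity of the $p_j$ and total pairwise independence enter essentially — they rule out the degenerate proportionalities $p_j^{(t,s)}=c\,p_{j'}^{(t',s')}$ that would force merging and thus replace $g_{j_0}$ by a product with possibly nonzero conditional expectation — and where one leans on the classical but technically heavy theory of characteristic factors for commuting polynomial averages.
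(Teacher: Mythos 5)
Your overall skeleton is the same as the paper's: reduce, via the Furstenberg correspondence principle and Lemma \ref{pinsker} (whose proof, as you correctly observe, only needs the mean-zero case and nonzero shifts), to the statement that the shifted polynomial averages
$\frac{1}{N^2}\sum_{m,n}\prod_j S_j^{p_j(m,n)}g_j\cdot\prod_i\prod_j S_j^{p_j(m+m_i,n+n_i)}g_j$
tend to $0$ in $L^2(\mu)$, and then pass to a subsequence along which one has pointwise convergence for a.e.\ $x$. Your bookkeeping of the merging of coincident shifts, and the observation that total pairwise independence keeps the distinguished slot $(S_{j_0},p_{j_0})$ unmerged, is also consistent with what the paper needs.

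The genuine gap is in the justification of the key $L^2$-vanishing. You assert that for commuting $S_1,\dots,S_l$ and a pairwise non-proportional family of non-linear polynomials one has
$\limsup_N\|A_N\|_{L^2(\mu)}\lesssim\interleave g_{j_0}\interleave_{s,S_{j_0}}$, attributing this to ``classical'' PET induction, Host--Kra seminorms for commuting transformations and the structure theorem. That bound is not classical and does not follow from standard PET: for commuting transformations, PET yields control by \emph{box} (joint) seminorms built from compositions of several of the $S_j$ (and their inverses), not by a Host--Kra seminorm of a single transformation; only in the distinct-degree case (the setting of \cite{CFH}) is single-transformation control available by older methods. In the generality needed here --- shifted families $p_j^{(m_i,n_i)}$ which share degrees and even leading parts across different transformations, merely pairwise independent --- upgrading box seminorms to $\interleave\cdot\interleave_{s,S_{j_0}}$ is exactly the content of the joint-ergodicity theorem of Frantzikinakis and Kuca (\cite[Theorem 2.8]{NB}, which moreover has to be taken in its two-variable form over $\ZZ^2$-averages), and this is precisely what the paper cites at this point. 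So either you invoke that theorem explicitly, in which case your argument coincides with the paper's, or the crucial step remains unproved; as written, the sentence carrying the whole analytic weight of the proposition is a claim that the available ``classical'' machinery does not deliver. (A secondary, minor point: the proposition needs the non-linearity of the $p_j$ for this input to hold at all --- for linear iterates of distinct commuting transformations the single-transformation Host--Kra factors are not characteristic --- so your reduction should record that hypothesis, as the paper does in Proposition \ref{convergL2=0}.)
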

\begin{proof}
Let  $p_1,\dots,p_l:\ZZ^2\ra\ZZ$ be totally pairwise independent on every variables integral polynomials.
By extending \cite[Theorem 2.8]{NB} to the case $p_1,\dots,p_l:\ZZ^2\ra\ZZ$ (see \cite[page 14]{NB}), we obtain immediately that the limit
\begin{equation}\label{prog g_j fur}
	\lim_{N\ra\infty}\dfrac{1}{N^2}\sum_{m,n=0}^{N-1}\left(\prod_{j=1}^{l}S_j^{p_j(m,n)}g_j\right)\cdot \prod_{i=1}^{t}\left(\prod_{j=1}^{l}S_j^{p_j(m+m_i,n+n_i)}g_j\right)	
\end{equation}
is $0$ in $L^2$ for any $t\in\N,(m_1,n_1),\dots,(m_t,n_t)\in\ZZ^2$ (not necessary distinct) when $\E(g_j|\Z_{\infty}(S_j))=0$ for some $j$, where $g_1,\dots,g_l\in L^{\infty}(\mu)$.	

Since mean convergence of sequences of functions implies pointwise almost everywhere convergence along a subsequence \cite[Section 2.4]{Folland}, there exists a subsequence of $[1,N_k]^2$ and a full measure subset $X_0$ such that for $x\in X_0$ all the limits (\ref{prog g_j fur}) exists.

For fixed $x\in X_0$, let $(\Omega, \nu_x, \langle\sigma_1,\sigma_2\rangle)$ be the Furstenberg system associated with the sequence $\left(\prod_{j=1}^{l}g_j(S_j^{p(m,n)}x)\right)$ on $[N_k]^2$ and let $G_{{\bf 0}}$ be the ${\bf 0}$-th coordinate projection.
Then using the correspondence principle (\ref{corresp}) we can get $$\int G_{{\bf 0}}\cdot \prod_{i=1}^t  \sigma_1^{m_i} \sigma_2^{n_i} G_{{\bf 0}} d\nu_x=0$$
for all $t\geq 0$ and $(m_1,n_1),\dots,(m_t,n_t)\in\N^2$.
By Lemma \ref{pinsker}, for all $x\in X_0$, $\E(G_{{\bf 0}}|\Pi\left(\Omega, \nu_x, \langle\sigma_1,\sigma_2\rangle\right))=0$.
\end{proof}
\subsection{Proof of Theorem \ref{main c}}\label{4.2}

The following proposition is inspired by  \cite[Proposition 4.1]{FH}. 
\begin{prop}\label{convergL2=0}
Let $T, S_1, \dots, S_l$ be invertible measure preserving transformations acting on a Lebesgue space $(X, \mathcal{B}, \mu)$ with $S_1, \dots, S_l$ be commuting. Let $d\geq 2$,  $f_1,\dots,f_d, g_1, \dots, g_l \in L^{\infty}(\mu)$ and  $p_1, \dots, p_l\in \ZZ[m,n]$ be
non-linear and totally pairwise independent. If there exists $j\in\{1,2,\dots,l\}$ such that $\E(g_j|\Z_{\infty}(S_j))=0$, then the limit
$$
\lim _{N \rightarrow \infty} \frac{1}{N^2} \sum_{m,n=1}^N \prod_{j=1}^d T^{m+j n}f_j\cdot \prod_{j=1}^l S_j^{p_j(m,n)}g_j
=0$$
in $L^2(\mu)$.
\end{prop}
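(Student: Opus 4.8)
The plan is to deduce the statement from the Furstenberg correspondence principle together with the zero-entropy disjointness furnished by Proposition \ref{FH2.1}. Splitting each $f_j$ and $g_i$ into real and imaginary parts and using multilinearity of the average (note that $\mathbb{E}(g_j\,|\,\mathcal{Z}_\infty(S_j))=0$ forces the conditional expectations of both the real and imaginary parts of $g_j$ to vanish), we may assume throughout that all functions are real-valued and bounded by $1$; in particular the scalar sequences below take values in a fixed compact interval. Set $b(m,n;x):=\prod_{j=1}^d f_j(T^{m+jn}x)$, $c(m,n;x):=\prod_{i=1}^l g_i(S_i^{p_i(m,n)}x)$ and $A_N(x):=\frac1{N^2}\sum_{m,n=1}^N b(m,n;x)\,c(m,n;x)$, which is exactly the average in the statement. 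Since $\|A_N\|_\infty\le 1$, it suffices to prove that every subsequence of $(N)$ admits a further subsequence $(N_k)$ along which $A_{N_k}(x)\to 0$ for $\mu$-a.e.\ $x$: bounded convergence then gives $A_{N_k}\to 0$ in $L^2(\mu)$, and the usual subsequence argument upgrades this to convergence of the whole sequence.

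So I would fix an arbitrary subsequence and first invoke Proposition \ref{67} along it. This is legitimate because the only input to that proposition is the vanishing (for the distinguished index $j$) of certain $L^2$-limits of polynomial averages in the commuting transformations $S_1,\dots,S_l$, which already holds along the full sequence. It produces a further subsequence $(N_k)$ and a full-measure set $X_0$ such that for every $x\in X_0$ the sequence $c(\cdot\,;x)$ admits correlations along $([1,N_k]^2)_k$, with associated Furstenberg system $(\Omega_c,\gamma_x,\langle\sigma_1,\sigma_2\rangle)$ satisfying $\mathbb{E}_{\gamma_x}(G_{\mathbf{0}}\,|\,\Pi(\mathbb{Z}^2))=0$, where $G_{\mathbf{0}}$ is the $\mathbf{0}$-coordinate projection. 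Shrinking $X_0$ with the help of Proposition \ref{Fur sys Nd}, I would also arrange that for $x\in X_0$ the sequence $b(\cdot\,;x)$ has a \emph{unique} Furstenberg system $(\Omega_b,\lambda_x,\langle\sigma_1,\sigma_2\rangle)$ which is ergodic and, crucially, has \emph{zero entropy}; being uniquely generic, $b(\cdot\,;x)$ remains generic for $\lambda_x$ along every subsequence, and the $\mathbf{0}$-coordinate projection $F_{\mathbf{0}}$ is a bounded function on $\Omega_b$.

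The decisive step is then to pass to the joint system. For $x\in X_0$, the empirical measures $\rho_N^x$ of the sequence $\big(b(m,n;x),c(m,n;x)\big)_{(m,n)\in\mathbb{Z}^2}$ live on a fixed compact space, so along a sub-subsequence of $(N_k)$ they converge weak-$*$ to a shift-invariant measure $\rho^x$. By the genericity of $b(\cdot\,;x)$ and of $c(\cdot\,;x)$ the two coordinate marginals of $\rho^x$ are $\lambda_x$ and $\gamma_x$ respectively, so $\rho^x$ is a joining of the zero-entropy $\mathbb{Z}^2$-system $(\Omega_b,\lambda_x)$ with $(\Omega_c,\gamma_x)$. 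Proposition \ref{FH2.1}, applied with $f=F_{\mathbf{0}}\in L^2(\lambda_x)$ and $g=G_{\mathbf{0}}\in L^2(\gamma_x)$, yields $\int F_{\mathbf{0}}G_{\mathbf{0}}\,d\rho^x=0$. Because $F_{\mathbf{0}}G_{\mathbf{0}}$ is continuous and $\int F_{\mathbf{0}}G_{\mathbf{0}}\,d\rho_N^x=A_N(x)$, every weak-$*$ subsequential limit of $(\rho_{N_k}^x)_k$ assigns the value $0$ to the bounded real sequence $(A_{N_k}(x))_k$, whence $A_{N_k}(x)\to 0$. This holds for all $x\in X_0$, and bounded convergence completes the argument.

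I expect the main difficulty to be precisely this last point: one cannot assert that the joint sequence $\big(b(m,n;x),c(m,n;x)\big)$ itself possesses a Furstenberg system, because its correlations mix products of the $f_j$'s along the arithmetic times $T^{m+jn}$ with the $g_i$'s along the polynomial times $S_i^{p_i(m,n)}$, and $T$ need not commute with the $S_i$, so $L^2$-convergence of these mixed averages is not available. The argument circumvents this by using only the two marginals — the first controlled by the zero-entropy fact (Proposition \ref{Fur sys Nd}, which ultimately rests on Theorem \ref{WZ}: $N_d(X)$ has zero topological entropy for $d\ge 2$), the second by the polynomial orthogonality (Proposition \ref{67}) — and by extracting the single correlation $A_N(x)$ from an arbitrary weak-$*$ limit. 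A secondary, purely technical nuisance, responsible for the ``subsequence-of-a-subsequence'' formulation, is that Proposition \ref{67} only delivers the Furstenberg system of $c(\cdot\,;x)$ along a subsequence, pointwise convergence of polynomial multiple averages being unknown.
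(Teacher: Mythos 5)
Your proposal is correct and follows essentially the same route as the paper: Proposition \ref{67} for the Pinsker-null property of the polynomial part, Proposition \ref{Fur sys Nd} for the zero-entropy ergodic Furstenberg system of the linear part, weak-$*$ limits of joint empirical measures giving a joining, and Proposition \ref{FH2.1} to kill the correlation, finished by bounded convergence. The only difference is organizational: you run the argument directly via the ``every subsequence has a further subsequence'' criterion, whereas the paper phrases the same extraction as a two-level proof by contradiction.
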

\begin{proof}
Suppose that the conclusion fails. Then there exist $\varepsilon>0$ and $N_k \rightarrow \infty$ such that
\begin{equation}\label{6.1}
	\left\|\frac{1}{N_k^2} \sum_{m,n=1}^{N_k}  \prod_{j=1}^d T^{m+jn} f_j\cdot \prod_{j=1}^l S_j^{p_j(m,n)}g_j \right\|_{L^2(\mu)} \geqslant \varepsilon
\end{equation}
for every $k \in \mathbb{N}$. By passing to a subsequence if necessary, we may suppose that for $\mu$-almost every $x \in X$ the sequence $\left(g\left(S^{p(m,n)} x\right)\right)$ admits correlations on $
\left(\left[N_{x,k}\right]^2\right)$. Let $\left(J^{\ZZ\times\ZZ}, \nu_x, \langle \sigma_1,\sigma_2\rangle\right)$ be the corresponding Furstenberg system. Moreover, since $\mathbb{E}\left(g_j  |  \mathcal{Z}_{\infty}(S_j)\right)=0$ for some $j$, by 
Proposition \ref{67} we have that for $\mu$-almost every $x \in X$ the {\bf 0}-th coordinate projection $G_{{\bf 0}}: J^{\ZZ\times\ZZ} \rightarrow \mathbb{R}$ satisfies $\mathbb{E}_{v_x}\left(G_{{\bf 0}}  |  \Pi\left(J^{\ZZ\times\ZZ}, v_x, \langle\sigma_1,\sigma_2\rangle\right)\right)=0$.

Then by Proposition \ref{Fur sys Nd}, for $\mu$-almost every $x\in X$ the sequence $\left(\prod_{j=1}^d f_j\left(T^{m+jn} x\right)\right)$ admits correlations on the sequence of intervals $([N]^2)$ and the corresponding Furstenberg system $(I^{\ZZ\times\ZZ},\lambda_x,\langle\sigma_1,\sigma_2\rangle)$ is ergodic and has zero entropy.\footnote{It is isomorphic to the system $(N_d(\Omega),\lambda_x,\G_d(\sigma))$ defined in the proof of Proposition \ref{Fur sys Nd}.}

We write $F_{{\bf 0}}: I^{\ZZ\times\ZZ} \rightarrow \mathbb{R}$ for the {\bf 0}-th coordinate projection.
Let $X_0$ be a subset of $X$ with $\mu\left(X_0\right)=1$ and such that the previous properties hold for all $x \in X_0$. We claim that
\begin{equation}\label{6.2}
	\lim _{k \rightarrow \infty} \frac{1}{N_{x, k}^{2}} \sum_{m,n=1}^{N_{x, k}} \prod_{j=1}^d f_j\left(T^{m+j n} x\right) \cdot \prod_{j=1}^l g_j\left(S_j^{p_j(m,n)} x\right) =0 \text{ for $\mu$-a.e. } x\in X_0.
\end{equation}
If this is shown, then we get a contradiction from (\ref{6.2}) using the bounded convergence theorem.

Now we suppose that (\ref{6.2}) fails. Then there exists a subset $X_1$ of $X_0$ with $\mu\left(X_1\right)>0$ such that for every $x \in X_1$ there exists a subsequence $\left(N_{x, k}^{\prime}\right)$ of $\left(N_{k,x}\right)$ such that
\begin{equation}\label{6.3}
	\lim _{k \rightarrow \infty} \frac{1}{N_{x, k}^{\prime 2}} \sum_{m,n=1}^{N_{x, k}^{\prime}} \prod_{j=1}^d f_j\left(T^{m+j n} x\right) \cdot \prod_{j=1}^l g_j\left(S_j^{p_j(m,n)} x\right) \text { exists and is non-zero.}
\end{equation}

Let $x \in X_1$ be fixed for the moment. Let $w, z \in I^{\mathbb{Z}\times\mathbb{Z}}$ be defined by $$w(m,n):=\prod_{j=1}^d f_j\left(T^{m+jn} x\right) \text{ and } z(m,n):=\prod_{j=1}^{l}g_j\left(S^{p(m,n)} x\right),\forall m,n\in\ZZ.$$
The sequence $\left(N_{x, k}^{\prime}\right)$ admits a subsequence $\left(N_{x, k}^{\prime \prime}\right)$ along which the point $(w, z)$ of $I^{\mathbb{Z}\times\mathbb{Z}} \times I^{\mathbb{Z}\times \mathbb{Z}}$ is generic on $\left(I^{\mathbb{Z}\times\mathbb{Z}} \times J^{\mathbb{Z}\times\mathbb{Z}}, \langle\sigma_1,\sigma_2\rangle \right)$ for some measure $\rho_x$ on this space. This means that for
every continuous function $\Phi$ on $I^{\mathbb{Z}\times\ZZ} \times J^{\mathbb{Z}\times\ZZ}$ we have
$$
\lim _{k \rightarrow \infty} \frac{1}{(N_{x, k}'')^{2}}\sum_{m,n=1}^{N_{x, k}''} \Phi\left(\sigma_1^m \sigma_2^n w, \sigma_1^m\sigma_2^n z\right)=\int \Phi d \rho_x.
$$

In particular, since  $F_{{\bf 0}}$ and $G_{{\bf 0}}$ are continuous on $I^{\ZZ\times\ZZ}$ and $J^{\ZZ\times\ZZ}$ respectivity, then by definition, we obtain
\begin{equation}\label{6.4}
	\lim _{k \rightarrow \infty} \frac{1}{N_{x, k}^{\prime \prime 2}}  \sum_{m,n=1}^{N_{x, k}^{\prime}} \prod_{j=1}^d f_j\left(T^{m+j n} x\right) \cdot \prod_{j=1}^l g_j\left(S_j^{p_j(m,n)} x\right) =\int G_{{\bf 0}} \otimes F_{{\bf 0}} d \rho_x.
\end{equation}

Note that the measure $\rho_x$ is invariant under $\langle\sigma_1,\sigma_2\rangle$. Furthermore, since $w$ is generic for the measure $v_x$ on $(I^{\ZZ\times\ZZ}, \langle\sigma_1,\sigma_2\rangle)$ along $([N]^2)$ and $z$ is generic for the measure $\lambda_x$ on $(J^{\ZZ\times\ZZ}, \langle\sigma_1,\sigma_2\rangle)$ along $\left(\left[N_{k,x}\right]\right)$, and $\left([N_{x, k}^{\prime}]^2\right)$ is a subsequence of $\left([N_{x,k}]^2\right)$, the two coordinate projections of $\rho_x$ are $\lambda_x$ and $\nu_x$, respectively, i.e.,  $\rho_x$ is a joining of the systems $\left(I^{\ZZ\times\ZZ}, \lambda_x, \langle\sigma_1,\sigma_2\rangle\right)$ and $\left(J^{\ZZ\times\ZZ}, \nu_x, \langle \sigma_1,\sigma_2\rangle\right)$.

Recall that for $\mu$-almost every $x \in X_1$ we have $\mathbb{E}_{v_x}\left(G_{{\bf 0}}  |  \Pi\left(J^{\ZZ\times\ZZ}, v_x, \langle\sigma_1,\sigma_2\rangle\right)\right)=0$ and the system $\left(I^{\ZZ\times\ZZ}, \lambda_x, \langle\sigma_1,\sigma_2\rangle\right)$ has entropy zero. By Proposition \ref{FH2.1}, we have
$$
\int G_{\bf 0} \otimes F_{\bf 0} d \rho_x=0
$$
for $\mu$-almost every $x \in X_1$. Taken together with identity (\ref{6.4}), this gives a contradiction by (\ref{6.3}). This proves (\ref{6.2}) and finishes the proof.
\end{proof}

\begin{proof}[Proof of Theorem \ref{main c}]
By Proposition \ref{convergL2=0},  the existence of (\ref{L2 limit}) is equal to the existence of
$$\lim _{N\rightarrow\infty} \frac{1}{N^2} \sum_{m,n=1}^N  \prod_{i=1}^d T^{m+j n}f_j\cdot\prod_{j=1}^{l} S_j^{p_j(m,n)}\E(g_j|\Z_{\infty}(S_j)).$$
So we can restrict to the case where $g_j$ is measurable with respect to $\Z_{\infty}(S_j)$ for every $1\leq j\leq l$.
By an $L^2(\mu)$ approximation argument we can assume that $g_j$ is measurable with respect to the factor $\Z_{k_j}(S_j)$ for some $k_j\in\N$, $1\leq j\leq l$.

By \cite[Proposition 3.1]{CFH}, for every $\ep>0$ there exists $\tilde{g_j}\in L^{\infty}(\mu)$ such that:

\noindent(i) $\tilde{g_j}$ is measurable with respect to $\Z_{k_j}(S_j)$ and $\|g_j-\tilde{g_j}\|_{L^1(\mu)}<\ep$;

\noindent(ii) for $\mu$ a.e. $x\in X$ the sequence $(\tilde{g_j}(S_j^{n}x))$ is a $k_j$-step nilsequence.

Therefore, it suffices to show the existence in $L^2(\mu)$ of the limit
\begin{equation}\label{tilde g}
	\lim _{N \rightarrow \infty} \frac{1}{N^2} \sum_{m,n=1}^N \prod_{i=1}^d T^{m+j n}f_j \cdot \prod_{j=1}^l S_j^{p_j(m,n)}\tilde{g_j}.
\end{equation}
By \cite[Theorem B*]{Leib2} for $\mu$ a.e. $x\in X$ the sequence $(\tilde{g_j}(S_j^{p_j(m,n)}x))$ is an $l$-step nilsequence for some $l=l(k_j,p_j)$.
Then $\prod_{j=1}^l \tilde{g_j}(S_j^{p_j(m,n)} x)$ is an $r$-step nilsequence for some $r=r(k_1,\dots,k_l,p_1,\dots,p_l)$.

Let $X_0$ be the full measure subset of $X$ for which this property holds. Let also $X_1$ be the subset of $X$ associated with $f$ by Theorem \ref{thm B}. Then for every $x\in X_0\cap X_1$ the limit (\ref{tilde g}) exists in $L^2(\mu)$.
\end{proof}

\bigskip

\section{Counterexample for  multiple convergence}\label{s5}
Theorem \ref{GLThmC} shows that CF-Nil systems are related to everywhere convergence.
It is natural to ask if $(X,T)$ is a CF-Nil($k$) system, whether the multiple average of $$\lim_{N\rightarrow \infty}\frac{1}{N}\sum_{n=0}^{N-1}f_1(T^n x)f_2(T^{2n} x)$$
exists for any continuous function $f_1,f_2\in \C(X)$ and any $x\in X$. Unfortunately, it is not true. In this section we will prove Theorem \ref{thm-D} by using Floyd-Auslander systems.
\subsection{Floyd-Auslander systems}
Floyd \cite{Floyd} constructed a topological dynamical system with a non-homogeneous space and a
minimal homeomorphism in 1949. Later Auslander introduced a minimal, mean-L-stable, yet not distal t.d.s., which is a modification of Floyd’s example. We call this kind of systems Floyd-Auslander (F-A) systems (see the concrete construction and more details in \cite{Aus}).

\begin{lem}\label{7.4}\cite[Lemma 5.6]{GHSWY20}\label{7.4}
Let $\phi:X\ra Y$ be a proximal (for example, almost 1-1) extension between minimal systems. Then $X_{eq}=Y_{eq}$ where $X_{eq}, Y_{eq}$ are the maximal equicontinuous factors of $X,Y$ respectively.
\end{lem}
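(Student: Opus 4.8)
The plan is to build a canonical factor map $q\colon X_{eq}\ra Y_{eq}$ out of $\phi$ and then show it is injective; since a continuous bijective factor map between compact metric systems is a topological isomorphism, this yields $X_{eq}\cong Y_{eq}$ (compatibly with $\phi$ and the canonical projections). Throughout I would use the standard fact that for a minimal system the maximal equicontinuous factor is the maximal topological $1$-step pro-nilfactor, i.e. $S_{eq}(X)=\RP^{[1]}(X)$ and $S_{eq}(Y)=\RP^{[1]}(Y)$: a minimal $1$-step pro-nilsystem is exactly a minimal equicontinuous system, and both factors are singled out by the same maximality property. Write $\pi_X\colon X\ra X_{eq}$ and $\pi_Y\colon Y\ra Y_{eq}$ for the canonical factor maps.

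First I would construct $q$. The composition $\pi_Y\circ\phi\colon X\ra Y_{eq}$ is a factor map onto a minimal equicontinuous system, so by the universal property of the maximal equicontinuous factor it factors uniquely through $\pi_X$, producing a factor map $q\colon X_{eq}\ra Y_{eq}$ with $q\circ\pi_X=\pi_Y\circ\phi$. In particular $Y_{eq}$ is already a factor of $X_{eq}$, and it only remains to prove $q$ is one-to-one.

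For injectivity, suppose $q(a)=q(b)$ with $a,b\in X_{eq}$, and choose $x_1\in\pi_X^{-1}(a)$, $x_2\in\pi_X^{-1}(b)$. Then $\pi_Y(\phi(x_1))=\pi_Y(\phi(x_2))$, i.e. $(\phi(x_1),\phi(x_2))\in\RP^{[1]}(Y)$. Here I would invoke the lifting property of the regionally proximal relation along a factor map of minimal systems: for every $(y_1,y_2)\in\RP^{[1]}(Y)$ and every $x_1'\in\phi^{-1}(y_1)$ there exists $x_2'\in\phi^{-1}(y_2)$ with $(x_1',x_2')\in\RP^{[1]}(X)$. Applied with $y_i=\phi(x_i)$ and the given $x_1$, this gives $x_2'\in\phi^{-1}(\phi(x_2))$ with $(x_1,x_2')\in\RP^{[1]}(X)$. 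Since $\phi$ is a proximal extension, $(x_2,x_2')\in R_\phi\subseteq P(X)\subseteq\RP^{[1]}(X)$, and $\RP^{[1]}(X)$ is a closed invariant equivalence relation because $X$ is minimal; hence $(x_1,x_2)\in\RP^{[1]}(X)$, so $a=\pi_X(x_1)=\pi_X(x_2)=b$. This shows $q$ is injective, finishing the proof.

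The main obstacle is the lifting lemma for $\RP^{[1]}$ used above: regionally proximal pairs in the factor must be lifted to regionally proximal pairs in the extension along any prescribed fibre. This is a classical property of the (higher-order) regionally proximal relations of minimal systems and follows from their structure theory; alternatively one can run the entire argument in the enveloping semigroup, where $S_{eq}$ has a description that makes the lift transparent. Everything else is formal, using only the universal property of the maximal equicontinuous factor, the inclusion $P(X)\subseteq\RP^{[1]}(X)$ (equivalently $P(X)\subseteq S_{eq}(X)$, noted in the preliminaries), and transitivity of $\RP^{[1]}(X)$.
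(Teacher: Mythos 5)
The paper does not prove this lemma at all --- it is imported directly from \cite[Lemma 5.6]{GHSWY20} --- so the comparison is with the standard argument behind that citation rather than with anything in the text. Your argument is correct: $q$ exists by the universal property, and injectivity follows from lifting regionally proximal pairs and using $R_\phi\subseteq P(X)\subseteq \RP^{[1]}(X)$ together with the fact that for minimal $\ZZ$-systems $\RP^{[1]}(X)=S_{eq}(X)$ is a closed invariant equivalence relation. Two remarks. First, the lifting lemma is your only heavy input, and the form most readily citable (e.g.\ from \cite{SY}) is the image statement $(\phi\times\phi)\RP^{[1]}(X)=\RP^{[1]}(Y)$ rather than the prescribed-fibre version you invoke; but the weaker form already suffices in your own argument: lift $(\phi(x_1),\phi(x_2))$ to some $(x_1'',x_2'')\in\RP^{[1]}(X)$ lying in the correct fibres, note $(x_1,x_1''),(x_2'',x_2)\in R_\phi\subseteq P(X)\subseteq\RP^{[1]}(X)$, and apply transitivity --- so there is no gap, just an unnecessary strengthening of the quoted lemma. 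Second, the usual proof is purely formal and needs no $\RP^{[1]}$ machinery: since $R_\phi\subseteq P(X)\subseteq S_{eq}(X)$, the projection $\pi_X\colon X\ra X_{eq}$ is constant on fibres of $\phi$ and hence factors as $r\circ\phi$ for a factor map $r\colon Y\ra X_{eq}$; maximality of $Y_{eq}$ then gives $s\colon Y_{eq}\ra X_{eq}$ with $s\circ\pi_Y=r$, and the identities $s\circ q\circ\pi_X=\pi_X$ and $q\circ s\circ\pi_Y=\pi_Y$ (using surjectivity of $\pi_X$ and $\phi$) force $s\circ q=\id$ and $q\circ s=\id$. Your route spends a genuine theorem about $\RP^{[1]}$ where only the inclusion $P(X)\subseteq S_{eq}(X)$ (already noted in the preliminaries) is required; its one advantage is that it transports verbatim to $\RP^{[k]}$ and the higher factors $W_k$.
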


\begin{prop}\cite[Proposition  4.3]{Cao}\label{Cao}
Let $(X,T)$ be a Floyd-Auslander system. Then $(X,T)$ is an almost 1-1 extension of an odometer (adding machine) $(Y,S)$.\footnote{For definitions and more information about odometers, refer to \cite{Dow}.}
\end{prop}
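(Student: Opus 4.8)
The plan is to read the conclusion off the explicit construction of a \emph{Floyd--Auslander} system. Recall that such a system is built as a tower over an odometer: one fixes a minimal odometer $(Y,S)$ (an inverse limit of finite cyclic rotations), fixes a point $y_0\in Y$, and replaces each point $S^n y_0$ of its dense countable orbit by a ``small'' fibre $F_n$, the diameters of the $F_n$ being summable so that the resulting object $X$ is again a compact metric space on which $S$ extends to a homeomorphism $T$ that carries $F_n$ onto $F_{n+1}$; that $(X,T)$ is minimal is part of the construction (see \cite{Aus}). First I would write down the obvious collapsing map $\phi\colon X\to Y$ which sends each $F_n$ to $S^n y_0$ and is the identity on $X\setminus\bigcup_n F_n\cong Y\setminus\mathcal{O}(y_0)$, and verify the three defining properties of a topological factor map: $\phi$ is onto because $\mathcal{O}(y_0)$ is dense in $Y$ and the $F_n$ are nonempty; $\phi$ is continuous because $\operatorname{diam}F_n\to 0$ as $|n|\to\infty$; and $\phi\circ T=S\circ\phi$ by the way $T$ was defined on the fibres.

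Next I would identify the non-injectivity locus. By construction $\phi^{-1}(\{y\})$ is a singleton for every $y\in Y\setminus\mathcal{O}(y_0)$, so
\[
X_0:=\{\,x\in X:\phi^{-1}(\phi(x))=\{x\}\,\}\ \supseteq\ \phi^{-1}\bigl(Y\setminus\mathcal{O}(y_0)\bigr)\ =\ X\setminus\bigcup_{n\in\ZZ}F_n .
\]
That $X_0$ is a $G_\delta$ subset of $X$ is automatic for a factor map between compact metric systems, since $X_0=\bigcap_{m\geq 1}\{x:\operatorname{diam}\phi^{-1}(\phi(x))<1/m\}$ and each set in this intersection is open (the fibre map $y\mapsto\phi^{-1}(y)$ is upper semicontinuous, so $y\mapsto\operatorname{diam}\phi^{-1}(y)$ is upper semicontinuous on $Y$, and one pulls back by $\phi$). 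The remaining point is that $X_0$ is dense, equivalently that $\bigcup_n F_n$ has empty interior in $X$. Here one uses the specific feature of the Floyd--Auslander construction that each inflated fibre $F_n$ is nowhere dense in $X$: every point of $F_n$ is a limit of points of $X$ lying over base points $y\neq S^n y_0$, and such $y$ occur arbitrarily close to $S^n y_0$ because $Y$ is perfect; hence $X\setminus F_n$ is dense for each $n$, so $\bigcup_n F_n$ is a countable union of closed nowhere dense sets, i.e.\ meager, and $X_0$ is a dense $G_\delta$. Therefore $\phi$ exhibits $(X,T)$ as an almost 1-1 extension of the odometer $(Y,S)$.

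The step I expect to require the most care is exactly this density of $X_0$: it is a genuine statement about how the inflated fibres sit topologically inside $X$, not merely about them lying over a countable set, and it would fail if the $F_n$ were, say, arcs embedded with open interiors. So in writing the details I would make sure the version of the construction being invoked is Auslander's modification, in which the inflated fibres are nowhere dense, rather than relying only on countability of the base orbit. As a consistency check, once $\phi$ is known to be almost 1-1 it is in particular proximal, so Lemma \ref{7.4} yields $X_{eq}=Y_{eq}=Y$, confirming that the odometer $(Y,S)$ is precisely the maximal equicontinuous factor of $(X,T)$. An alternative, more structural route avoiding a direct analysis of the fibres would be to invoke the constructional fact that a Floyd--Auslander system is minimal and mean-$L$-stable together with the theorem that a minimal mean-$L$-stable system is an almost 1-1 extension of its maximal equicontinuous factor, the latter being an odometer by construction.
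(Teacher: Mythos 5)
The paper offers no proof of this statement at all: it is quoted directly from Cao \cite{Cao} (Proposition 4.3 there), so there is no internal argument to compare yours against. Your strategy --- write down the collapsing map $\phi$ onto the base odometer, check it is a factor map, and show that the set of points with singleton fibre is a dense $G_\delta$ --- is the natural direct proof from the construction, and the upper-semicontinuity argument giving the $G_\delta$ part is fine.

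Several details, however, do not survive contact with the actual Floyd--Auslander construction. Your model of the system (``inflate the points of one orbit by fibres $F_n$ with summable diameters'') is not accurate: in the nested-rectangle construction the non-degenerate fibres are vertical arcs lying over a countable $S$-invariant set (not obviously a single orbit), their heights are governed by how often a ``short'' column occurs in the base point's address, and arcs of height bounded away from zero occur over infinitely many of these base points; so the diameters neither tend to $0$ along the exceptional set nor are summable. Continuity of $\phi$ should therefore not be derived from shrinking diameters; it is free, because $\phi$ is the restriction of the horizontal coordinate projection (equivalently, the fibre partition is upper semicontinuous since each fibre is a nested intersection of rectangles). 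Second, your justification that each $F_n$ is nowhere dense --- ``base points $y\neq S^ny_0$ occur arbitrarily close'' --- only produces nearby fibres, not points of $X\setminus F_n$ accumulating on \emph{every} point of the arc, so as written this step (which you correctly single out as the delicate one) is a gap. A clean repair: if some arc $F_n$ had nonempty interior in $X$, minimality would give finitely many $T$-iterates of that open set covering $X$, so $X$ would be a finite union of arcs, contradicting the fact that $X$ contains a Cantor set of singleton fibres and hence has uncountably many connected components. Finally, the proposed fallback route is based on a false statement: it is not true that every minimal mean-$L$-stable system is an almost 1-1 extension of its maximal equicontinuous factor; such systems are only \emph{isomorphic} (measure-theoretic) extensions of that factor, and there exist minimal mean equicontinuous systems that are not almost automorphic, so that alternative should be dropped and the direct fibre analysis (with the repair above) retained.
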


It is well-known that the adding machine is equicontinuous. 
Then by Lemma \ref{7.4}, the odometer $(Y,S)$ is the maximal equicontinuous factor of an F-A system $(X,T)$.

\begin{thm}\cite[Theorem 4.4]{Cao}\label{Cao}
Let $(X,T)$ be a Floyd-Auslander system, and let $\pi: X \rightarrow Y$ be the almost 1-1 extension from an odometer $(Y, S)$ to $(X, T)$. Then there exists a residual subset $Y_0$ of $Y$ such that for each $y \in Y_0, \pi^{-1}(y)$ is a singleton which is a multiple recurrent point, and for any $y \in Y \backslash Y_0$, each point in $\pi^{-1}(y)$ is not a 2-multiple-recurrent point.\footnote{A point $x\in X$ is $d$-multiple-recurrent if there exists a sequence $n_i\ra\infty$ with $T^{n_i}x,T^{2n_i}x,\dots, T^{dn_i}x\ra x$ simultaneously; is multiple recurrent if is d-multiple-recurrent for any $d\in \N$.}
\end{thm}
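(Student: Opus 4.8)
The plan is to prove the two assertions separately. The statement about $y\in Y_0$ will come from a soft equicontinuity argument over the odometer base, while the one about $y\notin Y_0$ will require unwinding the Floyd-Auslander construction of $X$ over that base. Write the odometer as $(Y,S)=\bigl(\varprojlim_k \ZZ/q_k\ZZ,\ z\mapsto z+1\bigr)$; it is equicontinuous. Put $Y_0=\{y\in Y:\pi^{-1}(y)\text{ is a singleton}\}$. Two facts are immediate: $Y_0$ is a dense $G_\delta$ of $Y$ (a standard property of almost $1$-$1$ extensions of minimal systems), and $z\mapsto\pi^{-1}(z)$ is upper semicontinuous since $\pi$ is a continuous surjection of compact metric spaces. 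Moreover, for $y\notin Y_0$ the fiber $\pi^{-1}(y)=:I_y$ is a non-degenerate arc, $T^{n}$ maps $I_y$ homeomorphically onto $\pi^{-1}(S^{n}y)$, and $\operatorname{diam}\pi^{-1}(S^{n}y)\to 0$ as $|n|\to\infty$; these are part of the construction (\cite{Aus}).

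For the first assertion, fix $y\in Y_0$, so $\pi^{-1}(y)=\{x_y\}$, and take $n_i=q_i\to\infty$. For every $d\ge 1$ and every $k$ we have $q_k\mid d n_i$ once $i\ge k$, hence $S^{d n_i}z\to z$ uniformly in $z\in Y$. Given $\varepsilon>0$, upper semicontinuity of fibers at $y$ together with $\pi^{-1}(y)=\{x_y\}$ gives $\pi^{-1}(S^{d n_i}y)\subseteq B(x_y,\varepsilon)$ for $i$ large; since $T^{d n_i}x_y\in\pi^{-1}(S^{d n_i}y)$ we conclude $T^{d n_i}x_y\to x_y$ simultaneously for all $d$, so $x_y$ is multiply recurrent.

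For the second assertion, suppose toward a contradiction that some $x\in I_y$ with $y\notin Y_0$ satisfies $T^{n_i}x\to x$ and $T^{2n_i}x\to x$ with $n_i\to\infty$. Applying $\pi$ gives $S^{n_i}y\to y$, and since $Y$ is a compact group rotation this forces $q_k\mid n_i$ eventually for each $k$, hence also $q_k\mid 2n_i$ eventually. Passing to a subsequence, the arcs $\pi^{-1}(S^{n_i}y)$ (whose diameters tend to $0$) shrink to a single point of $X$; since $T^{n_i}x\in\pi^{-1}(S^{n_i}y)$ and $T^{n_i}x\to x$, that point is $x$, and likewise $\pi^{-1}(S^{2n_i}y)=T^{n_i}\bigl(\pi^{-1}(S^{n_i}y)\bigr)$ shrinks to $x$. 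The contradiction will come from the following feature of the construction: the blown-up $S$-orbit is inserted with a twist, so that as $n\to 0$ adically the transverse location in $I_y$ of $\pi^{-1}(S^{2n}y)$ is obtained from that of $\pi^{-1}(S^{n}y)$ by a fixed-point-free homeomorphism of $I_y$ (morally a ``fold by half the arc''), uniformly bounded away from the identity, in particular moving every point of $I_y$, the endpoints included (for an endpoint one reads this off the one-sided family of singleton fibers converging to it). Hence $\pi^{-1}(S^{n_i}y)\to\{x\}$ forces $\pi^{-1}(S^{2n_i}y)$ to accumulate away from $x$, contradicting $T^{2n_i}x\to x$.

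The step I expect to be the main obstacle is exactly this last one: pinning down the twist in the Floyd-Auslander construction — the diameters $\operatorname{diam}\pi^{-1}(S^{n}y_0)$ and the homeomorphisms used to glue $X$ over the exceptional $S$-orbit — and proving that doubling a (necessarily adically small) return time always shifts the transverse coordinate of the fiber by a bounded-below amount. Everything else — the reduction to adically small return times, equicontinuity of the odometer, upper semicontinuity of fibers, and the whole $Y_0$ case — is routine; one also records in passing that $(Y,S)$ really is the maximal equicontinuous factor of $X$ (Lemma~\ref{7.4} applied to the almost $1$-$1$ map $\pi$), which is what makes it the right base to argue over.
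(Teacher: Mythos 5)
This statement is not proved in the paper at all: it is quoted verbatim from \cite[Theorem 4.4]{Cao}, so there is no internal argument to compare against, and your proposal would have to reconstruct Cao's analysis of the Floyd--Auslander construction. The first half of your argument (residuality of the singleton-fiber set for an almost 1-1 extension, and multiple recurrence of singleton-fiber points via equicontinuity of the odometer, return times $n_i=q_i$, and upper semicontinuity of fibers at a singleton fiber) is correct and standard.

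The second half, however, has a genuine gap, and it is exactly where the content of the theorem lies. First, your reduction step is unsound: you assert that $\operatorname{diam}\pi^{-1}(S^{n}y)\to 0$ as $|n|\to\infty$ and use it to conclude that the arcs $\pi^{-1}(S^{n_i}y)$ shrink to the single point $x$. But $Y\setminus Y_0$ is $S$-invariant (this is used in the paper's proof of Theorem \ref{FA is CF}), so \emph{every} fiber along the orbit of $y$ is a non-degenerate arc; in the standard Floyd picture, along adic return times such as $n=q_k$ the fiber over $S^{q_k}y$ has length comparable to (e.g.\ half of) that of $I_y$, so these diameters are bounded below along exactly the return times you need, and the ``shrink to a point'' step collapses. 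Second, the mechanism you invoke to get the contradiction --- that doubling an adically small return time moves the transverse coordinate of the fiber by a fixed-point-free homeomorphism, uniformly bounded away from the identity --- is stated as a ``feature of the construction'' but never derived from the actual gluing data of the Floyd--Auslander system; you yourself flag it as the expected main obstacle. Since that quantitative twist/fold property is precisely what Cao's proof establishes (by tracking how $T^{n}$ and $T^{2n}$ act on the vertical coordinates of the exceptional arcs), the hard direction of the theorem remains unproven in your proposal.
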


\begin{thm}\label{FA is CF}
Floyd-Auslander systems are CF-Nil($\infty$) systems.
\end{thm}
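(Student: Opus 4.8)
The plan is to show that, with respect to its unique invariant measure, a Floyd--Auslander system is measure-theoretically isomorphic to its odometer factor, and that this same odometer is simultaneously the maximal topological pro-nilfactor of $X$ of every order; CF-Nil($\infty$) then drops out immediately.

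First I would fix a Floyd--Auslander system $(X,T)$ and, by Proposition \ref{Cao}, the almost $1$-$1$ (hence proximal) factor map $\pi\colon X\to Y$ onto an odometer $(Y,S)$, recalling that $Y$ is equicontinuous and uniquely ergodic with Haar measure $\nu$, and that $X$ is minimal. Let $\mu'$ be an arbitrary $T$-invariant Borel probability measure on $X$. By minimality $\operatorname{supp}\mu'=X$, so every nonempty open set has positive $\mu'$-measure; feeding this together with Furstenberg's multiple recurrence theorem into the standard argument along a countable base of $X$ --- for each basic open $B$ with $\mu'(B)>0$, the set of $x\in B$ with $T^nx\in B$ and $T^{2n}x\in B$ for infinitely many $n$ is $\mu'$-conull in $B$ --- yields that $\mu'$-almost every $x$ is $2$-multiple-recurrent. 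By Theorem \ref{Cao}, every $2$-multiple-recurrent point lies over the residual set $Y_0$, and over $Y_0$ the fibres of $\pi$ are singletons. Hence $\mu'(\pi^{-1}(Y_0))=1$, so $\nu(Y_0)=1$ and $\pi$ restricts to an equivariant, measure-preserving Borel isomorphism $\pi^{-1}(Y_0)\to Y_0$; thus $(X,\mathcal{B}(X),\mu',T)\cong(Y,\mathcal{B}(Y),\nu,S)$. Since $\mu'$ is therefore the pullback of the unique measure $\nu$, it is unique, so $(X,T)$ is strictly ergodic; write $\mu$ for its unique invariant measure.

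It then remains to identify the two families of pro-nilfactors. On the topological side I would use that $\pi$ is proximal, so Lemma \ref{7.4} gives $X_{eq}=Y_{eq}=Y$, i.e.\ $W_1(X)=Y$ and $\RP^{[1]}(X)=R_\pi$; that $\pi$ is almost $1$-$1$, so $R_\pi\subset P(X)$; and that $W_\infty(X)$, being an inverse limit of minimal nilsystems, is distal, so $P(X)\subset R_{\pi_\infty}=\RP^{[\infty]}(X)$. Combined with $\RP^{[\infty]}(X)\subset\RP^{[k]}(X)\subset\RP^{[1]}(X)$ this forces
\[
R_\pi\subset P(X)\subset\RP^{[\infty]}(X)\subset\RP^{[k]}(X)\subset\RP^{[1]}(X)=R_\pi
\]
for every $1\le k\le\infty$, so all these relations coincide and $W_k(X)=Y$ for every $k$, with unique invariant measure $\omega_k=\nu$. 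On the measurable side, $Y$ is equicontinuous, hence has purely discrete spectrum, so $Z_k(Y,\nu,S)=(Y,\nu,S)$ for every $k\ge1$; transporting this through the isomorphism $(X,\mu,T)\cong(Y,\nu,S)$ of the previous paragraph gives $(Z_k(X),\Z_k(X),\mu_k,T)\cong(Y,\mathcal{B}(Y),\nu,S)$ for all $k\ge1$ (with $Z_0,W_0$ trivial). Putting the two sides together, for every $k\ge0$ we obtain $(Z_k(X),\Z_k(X),\mu_k,T)\cong(W_k(X),\W_k(X),\omega_k,T)$ as measure preserving systems, so $(X,T)$ is CF-Nil($k$) for every $k$, i.e.\ CF-Nil($\infty$).

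The hard part will be the first step: passing from Furstenberg's multiple recurrence theorem to the assertion that $\mu'$-almost every point of $X$ is $2$-multiple-recurrent, and then invoking Theorem \ref{Cao} to conclude that $\pi$ is a measure-theoretic isomorphism onto the odometer. Once that is established, the remaining identifications $Z_k(X)=W_k(X)=Y$ are routine, relying only on Lemma \ref{7.4} and the structure theory of pro-nilfactors recalled in Section \ref{s2}.
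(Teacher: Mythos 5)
Your proposal is correct, and its overall skeleton is the one the paper uses: collapse everything onto the odometer, i.e.\ show $(X,\mu,T)$ is measurably isomorphic to $(Y,\nu,S)$ via the almost 1-1 map $\pi$, note that the equicontinuous $Y$ is simultaneously (isomorphic to) $Z_k(X)$ and $W_k(X)$ for every $k$, and conclude CF-Nil($k$) for all $k$. Where you genuinely diverge is in how strict ergodicity and the key fact $\mu(\pi^{-1}(Y_0))=1$ are obtained. The paper simply quotes Auslander's theorem that Floyd--Auslander systems are uniquely ergodic and the fact (from Cao's construction) that $Y\setminus Y_0$ is \emph{countable}, so that non-atomicity of the Haar measure gives $\nu(Y\setminus Y_0)=0$ directly; you instead start from an arbitrary invariant measure $\mu'$ and deduce both statements from Furstenberg's multiple recurrence theorem combined with the recurrence dichotomy of Theorem \ref{Cao} (a.e.\ point is 2-multiple-recurrent, hence lies over $Y_0$, hence $\pi$ is a measure isomorphism and $\mu'$ is forced to be the pullback of $\nu$). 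This is heavier machinery but more self-contained: it avoids citing Auslander's unique ergodicity theorem and never uses countability of $Y\setminus Y_0$. Two small points to tidy up: the paper's notion of 2-multiple-recurrence demands return times $n_i\to\infty$, so you should add the one-line remark that an infinite minimal system has no periodic points, whence the infinitely many returns into shrinking neighborhoods can be chosen unbounded; and on the topological side your explicit chain $R_\pi\subset P(X)\subset\RP^{[\infty]}(X)\subset\RP^{[k]}(X)\subset\RP^{[1]}(X)=R_\pi$ (via Lemma \ref{7.4}, proximality of almost 1-1 extensions, and distality of pronilsystems) is exactly the justification the paper leaves implicit in its closing sentence, so spelling it out is a genuine improvement in readability rather than a deviation.
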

\begin{proof}
Let $(X,T)$ be a Floyd-Auslander system. Adopt the notions in Proposition \ref{Cao}.
Since any equicontinuous system is distal, $Y=W_k(Y)\cong Z_k(Y)$ as m.p.s. for any $k\geq 0$. So $(Y,S)$ is a CF-Nil($k$) system.

Let $\mu$ be the unique measure of $(X,T)$ (the unique ergodicity of $(X,T)$ follows from \cite[Theorem 8]{Aus}). Obviously $\left(\pi\right)_* \mu=\nu$ where $\nu$ is the Haar measure of $(Y,S)$. 
Since the $S$-invariant set $Y\backslash Y_0$ is countable (see \cite{Cao}, Section 4.3), we have $\nu(Y\backslash Y_0)=0$. Thus, $\mu(\pi^{-1}(Y_0))=1$. It follows that $(X,T,\mu)$ is   measurably isomorphic to  $(Y,S,\nu)$. 
Since $\pi$ is almost 1-1 (on a full measure subset) and $(Y,S)$ is CF-Nil($k$), $(X,T)$ is also CF-Nil($k$).
\end{proof}

\subsection{Proof of Theorem D}

We begin the proof with the following lemmas.
\begin{lem}\cite[Lemma 5.2]{GL}\label{generic}
Let $(X, T)$ be a t.d.s. and $x_0 \in X$. Assume that for all $f \in \C(X)$, there exists a constant $c_f \in \mathbb{R}$, depending on $f$, such that :
$$
\lim _{N \rightarrow \infty} \frac{1}{N} \sum_{n=0}^N f\left(T^n x_0\right)=c_f.
$$
Then $x_0$ is generic for some $\mu \in M_T(X)$.
\end{lem}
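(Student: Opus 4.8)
The plan is to obtain the invariant measure $\mu$ directly from the limiting functional on $\C(X)$ via the Riesz representation theorem, and then to verify $T$-invariance by a one-line telescoping estimate; generically of $x_0$ is then just a reformulation of the representation identity.

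First I would define $L\colon\C(X)\to\R$ by $L(f):=c_f=\lim_{N\to\infty}\frac1N\sum_{n=0}^{N-1}f(T^nx_0)$; the hypothesis guarantees this limit exists for every $f\in\C(X)$, and the precise range of summation is immaterial, since replacing $\sum_{n=0}^{N}$ by $\sum_{n=0}^{N-1}$ changes the average by $O(1/N)$. For each fixed $N$ the assignment $f\mapsto\frac1N\sum_{n=0}^{N-1}f(T^nx_0)$ is linear, positive (it sends nonnegative functions to nonnegative reals), and maps the constant function $\mathbf 1$ to $1$; all three properties persist under taking the limit in $N$, so $L$ is a positive linear functional on $\C(X)$ with $L(\mathbf 1)=1$. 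By the Riesz representation theorem there is a unique Borel probability measure $\mu\in M(X)$ with $L(f)=\int_X f\,d\mu$ for every $f\in\C(X)$.

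Next I would check that $\mu\in M_T(X)$. For any $f\in\C(X)$,
\[
\left|\frac1N\sum_{n=0}^{N-1}f(T^{n+1}x_0)-\frac1N\sum_{n=0}^{N-1}f(T^{n}x_0)\right|
=\frac1N\left|f(T^{N}x_0)-f(x_0)\right|\le\frac{2\|f\|_\infty}{N},
\]
which tends to $0$ as $N\to\infty$; hence $\int_X f\circ T\,d\mu=L(f\circ T)=L(f)=\int_X f\,d\mu$. Since this holds for all $f\in\C(X)$, we get $T_*\mu=\mu$, i.e.\ $\mu\in M_T(X)$. Finally, the identity $L(f)=\int_X f\,d\mu$ for every $f\in\C(X)$ is exactly the assertion that $\frac1N\sum_{n=0}^{N-1}\delta_{T^nx_0}\to\mu$ in the weak$^*$ topology, which is the definition of $x_0$ being generic for $\mu$, completing the proof.

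I do not expect a genuine obstacle here: the argument is essentially a packaging of the Riesz representation theorem, and the only point needing a moment's care is the shift of the summation index in the invariance step, which the displayed telescoping bound disposes of. As an alternative route one could argue via weak$^*$ compactness of $M(X)$: every weak$^*$ limit point of $\frac1N\sum_{n=0}^{N-1}\delta_{T^nx_0}$ is automatically $T$-invariant, and the convergence hypothesis forces all such limit points to coincide, again yielding a single limiting measure $\mu\in M_T(X)$ for which $x_0$ is generic.
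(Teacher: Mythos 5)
Your proof is correct: the paper itself gives no argument for this lemma (it is quoted from Gutman--Lian \cite[Lemma 5.2]{GL}), and your route via the positive linear functional $f\mapsto c_f$, the Riesz representation theorem, and the $O(1/N)$ telescoping estimate for $T$-invariance is exactly the standard argument behind the cited result. Nothing is missing; the only point worth noting is the harmless discrepancy between summing to $N$ and to $N-1$, which you already address.
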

The following lemma is obvious.
\begin{lem}\label{gene2}
Let $(X, T)$ be a t.d.s. and $\mu \in M_T(X)$. If a point $x \in X$ is generic for $\mu$, then $\mu$ is supported on $\overline{\mathcal{O}}(x)$.
\end{lem}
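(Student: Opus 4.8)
The plan is to unwind the definition of genericity: $x$ being generic for $\mu$ means precisely that the empirical measures $\mu_N := \frac{1}{N}\sum_{n=0}^{N-1}\delta_{T^n x}$ converge to $\mu$ in the weak* topology of $M(X)$, that is, $\frac1N\sum_{n=0}^{N-1} f(T^n x) \to \int f\,d\mu$ for every $f\in\C(X)$. The key observation is that each $\mu_N$ is supported on the finite set $\{x,Tx,\dots,T^{N-1}x\}\subseteq\overline{\O}(x)$, and a weak* limit of measures concentrated on a fixed closed set stays concentrated on that set.

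Concretely, I would put $K:=\overline{\O}(x)$, a closed (hence compact) $T$-invariant subset of $X$, and observe that $\mu_N(X\setminus K)=0$ for all $N\ge 1$. Applying the portmanteau theorem to the open set $X\setminus K$ gives $\mu(X\setminus K)\le\liminf_{N\to\infty}\mu_N(X\setminus K)=0$, hence $\mu(K)=1$; since $K$ is closed this yields $\mathrm{supp}(\mu)\subseteq K=\overline{\O}(x)$, which is exactly the claim. If one prefers to avoid quoting portmanteau, the same contradiction can be reached by hand: were $\mu(X\setminus K)>0$, inner regularity would produce a compact set $C\subseteq X\setminus K$ with $\mu(C)>0$, and Urysohn's lemma a function $f\in\C(X)$ with $0\le f\le1$, $f\equiv 1$ on $C$ and $f\equiv 0$ on $K$; then $\frac1N\sum_{n=0}^{N-1}f(T^n x)=0$ for every $N$ while $\int f\,d\mu\ge\mu(C)>0$, contradicting genericity.

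There is essentially no obstacle here; this is the ``obvious'' lemma the authors advertise. The only point requiring a moment's care is the direction of the portmanteau inequality --- it bounds open sets from below and closed sets from above, which is precisely what is needed to pass from $\mu_N(X\setminus K)=0$ to $\mu(X\setminus K)=0$ --- together with the routine verification that $\overline{\O}(x)$ is closed and $T$-invariant, so that it indeed carries all the empirical measures $\mu_N$.
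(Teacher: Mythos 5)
Your proof is correct. Note that the paper offers no argument at all for this lemma --- it is simply introduced with ``The following lemma is obvious'' --- and your portmanteau/Urysohn argument is precisely the standard justification the authors are implicitly relying on: the empirical measures $\frac{1}{N}\sum_{n=0}^{N-1}\delta_{T^n x}$ live on the closed set $\overline{\mathcal{O}}(x)$, and weak* limits cannot acquire mass on the complementary open set. Both of your variants (quoting portmanteau, or the hands-on contradiction via inner regularity and a Urysohn function vanishing on $\overline{\mathcal{O}}(x)$) are complete and correct; nothing further is needed.
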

\begin{proof}[Proof of Theorem \ref{thm-D}]
Let $(X, T)$ be a Floyd-Auslander system, then it is a  CF-Nil($\infty$) system by Theorem \ref{FA is CF}.	

We prove it by contradiction. Suppose that for any $x\in X$ and any $f_1,f_2\in \C(X)$, the limit $$\lim_{N\rightarrow \infty}\frac{1}{N}\sum_{n=0}^{N-1}f_1(T^n x)f_2(T^{2n} x)$$ exists.
Let $x_0 \in X$ and $x_0$ not be a 2-multiple-recurrent point (the existence of such $x_0$ follows from Theorem \ref{Cao}). Set $\left(W,\tau\right)=\left(X\times X,T\times T^2\right)$. By the hypothesis, applying Lemma \ref{generic} to $(W,\tau)$,
the point $(x_0,x_0)$ is generic for some $\mu\in M_{\tau}(W)$. Thus we have
\begin{equation}\label{F}
	\lim_{N\rightarrow \infty}\frac{1}{N}\sum_{n=0}^{N-1}F\left(\tau^n(x_0,x_0)\right)=\int F d\mu
\end{equation}
for any $F\in \C(W)$.

Since $x_0$  is not 2-multiple-recurrent, there exists $\epsilon_0>0$ such that $B_{\epsilon_0}\left((x_0,x_0)\right)$ consists of only finite points $\tau^n\left((x_0,x_0)\right)$.
Consider the continuous function $F\in \C(W)$ with $$F|_{\overline{B}_{\ep_0/2}\left((x_0,x_0)\right)}=1\ \text{ and }\ F|_{W\backslash B_{\epsilon_0}\left((x_0,x_0)\right)}=0.$$
Then the left hand side of (\ref{F}) (LHS) is $0$.

However, RHS $\geq \mu\left(\overline{B}_{\ep_0/2}\left((x_0,x_0)\right)\right)$. Set $B=\bigcup_{n}\tau^n\overline{B}_{\ep_0/2}\left((x_0,x_0)\right)$. By Lemma \ref{gene2}, $\mu$ is supported on $\overline{\O}((x_0,x_0),\tau)$ which is contained in $B$. 
Then we have  $$1=\mu(B)\leq \sum_{n\in \N}\mu\left(\tau^n\overline{B}_{\ep_0/2}\left((x_0,x_0)\right)\right)=\sum_{n\in\N}\mu\left(\overline{B}_{\ep_0/2}\left((x_0,x_0)\right)\right).$$
Thus RHS $\geq  \mu\left(\overline{B}_{\ep_0/2}\left((x_0,x_0)\right)\right)> 0$ which is a contradiction.
\end{proof}

\bigskip

\end{document}